\DeclareMathOperator{\Suz}{Suz}
\DeclareMathOperator{\MSuz}{MouSu}
\newcommand{\N}{\mathsf{N}}
\numberwithin{equation}{section}
\newtheorem{theorem}[equation]{Theorem}
\newtheorem{lemma}[equation]{Lemma}
\newtheorem{proposition}[equation]{Proposition}
\newtheorem{corollary}[equation]{Corollary}
\theoremstyle{definition}
\newtheorem{definition}[equation]{Definition}
\newtheorem{remark}[equation]{Remark}
\newtheorem{notation}[equation]{Notation}
\newtheorem{convention}[equation]{Convention}
\newtheorem{hypothesis}[equation]{Hypothesis}
\newtheorem{example}[equation]{Example}
\newtheorem{examples}[equation]{Example}
\begin{document}
\title{Tits Endomorphisms and Buildings of Type $F_4$}
\author{Tom De Medts, Yoav Segev and Richard M. Weiss}
\address{Department of Mathematics \\
	Ghent University \\
	9000 Gent, Belgium}
\email{Tom.DeMedts@UGent.be}
\address{Department of Mathematics \\
        Ben Gurion University
        Beer Sheva 84105, Israel}
\email{yoavs@cs.bgu.ac.il}
\address{Department of Mathematics \\
         Tufts University \\
         Medford, MA 02155, USA}
\email{rweiss@tufts.edu}

\keywords{building, descent, polarity, Moufang set, Moufang quadrangle, Moufang octagon}
\subjclass[2000]{20E42, 51E12, 51E24}

\date{March 3, 2017}

\begin{abstract}
The fixed point building of a polarity of a Moufang quadrangle of type $F_4$
is a Moufang set, as is the fixed point building of a semi-linear automorphism
of order~$2$ of a Moufang octagon that stabilizes at least two panels of
one type but none of the other. We show that these two classes of Moufang
sets are, in fact, the same, that each member of this class can be constructed as
the fixed point building of a group of order~$4$ acting on a building of type $F_4$
and that the group generated by all the root groups of any one of these Moufang sets is
simple.

\bigskip\noindent
{\sc R\'esum\'e.}
L'immeuble de points fixes d'une polarit\'e d'un quadrangle de Moufang de type $F_4$
est un ensemble de Moufang. Il en va de m\^eme pour l'immeuble de points fixes d'un
automorphisme semi-lin\'eaire d'ordre 2 d'un octogone de Moufang qui stabilise au moins
deux cloisons d'un type mais aucun de l'autre. Nous montrons que ces deux classes
d'ensembles de Moufang sont en fait identiques, que chaque membre de cette classe peut
\^etre construit comme l'immeuble de points fixes d'un groupe d'ordre~4 agissant sur
un immeuble de type $F_4$, et que pour chacun de ces ensembles de Moufang,
le groupe engendr\'e par tous les sous-groupes radiciels est un groupe simple.
\end{abstract}
\maketitle


\section{Introduction}\label{sec1}

The notion of a building was introduced by J. Tits in order to give a uniform geometric/combinatorial
description of the groups of rational points of an isotropic absolutely simple group.
The buildings that arise in this context are spherical. In \cite{bn}, Tits classified
irreducible spherical buildings of rank at least~$3$ and this classification was
extended to the rank~$2$ case in \cite{TW} under the assumption that the building
is Moufang (which is automatic when the rank is at least~$3$).

The result of this classification is that most Moufang buildings (as defined in
\ref{nzz6x}) are the
spherical buildings associated with isotropic absolutely simple algebraic groups. The
exceptions are buildings determined by algebraic data involving infinite
dimensional structures, defective quadratic or pseudo-quadratic forms,
inseparable field extension and/or the square root of a Frobenius endomorphism.
Among these exceptions are the mixed buildings of type $B_2$, $G_2$ and $F_4$,
the Moufang quadrangles of type $F_4$ and the Moufang octagons.

The classification results in \cite{bn} and \cite{TW} are proved by coordinatizing
with an appropriate algebraic structure. These methods do not reveal the connection
between the automorphism group of a Moufang
building and an associated split group which is the central
concern in the theory of Galois descent. In \cite{MPW}, this shortcoming
is remedied with a theory of descent for buildings.
This theory provides, in particular,
a combinatorial interpretation of the Tits indices which appear in \cite{boulder}.

In \ref{nzz4}, we define the notion of a {\it descent group} of a building $\Delta$ and
in \ref{goo6}, we define the notion of a {\it Galois subgroup} of ${\rm Aut}(\Delta)$
in the case that $\Delta$ is Moufang.
One of the fundamental results in the theory of Galois descent in buildings in \cite{MPW}
says that the set of residues fixed by
a descent group $\Gamma$ has, in a canonical way, the structure of a building, which we call
the {\it fixed point building} of $\Gamma$ (see \ref{nzz8}). This result
applies to arbitrary descent groups acting on arbitrary buildings. A second
fundamental result (proved in \cite{rhizo}) says that if $\Delta$ is Moufang and $\Gamma$ is
a Galois subgroup of ${\rm Aut}(\Delta)$ acting with finite orbits on $\Delta$ and stabilizing at
least one proper residue of $\Delta$, then $\Gamma$ is a descent group.

A third fundamental result says that if the fixed point building
of a descent group of a Moufang building $\Delta$ has rank~$1$, then the fixed point
building inherits from the Moufang condition on $\Delta$ the structure of a {\it Moufang set}.
Moufang sets, which were first introduced in \cite[4.4]{twin},
are a class of 2-transitive permutation groups. The notion of a Moufang set is
closely related to the notion of a split $BN$-pair of rank~$1$.
All known Moufang sets which are proper (i.e.~not sharply $2$-transitive)
arise as fixed point buildings
of Moufang buildings. For a survey of recent results in the study of Moufang sets, see \cite{yoav}.

Polarities (i.e.~non-type-preserving automorphisms of order~$2$)
of Moufang buildings of type $B_2$, $G_2$ and $F_4$ are a second source of descent groups.
In the case that the building is {\it pseudo-split} (as defined in \cite[28.16]{MPW}),
polarities give rise to the Suzuki and Ree groups.
In \cite{octagons}, Tits characterized Moufang octagons as the fixed
point building of an arbitrary polarity of a building of type $F_4$.

The Moufang quadrangles of type $F_4$ (a class of non-pseudo-split Moufang buildings
of type $B_2$) were discovered in the course of
classifying Moufang buildings of rank~$2$ in \cite{TW}. Subsequently, it was shown
(in \cite{canada}) that these quadrangles can be constructed as the fixed
point building of a type-preserving Galois involution acting on a building of type $F_4$.
(It was due to this result that the designation ``of type $F_4$'' was
chosen in \cite{TW}.)

Among all {\it non}-pseudo-split Moufang buildings of type $B_2$, $G_2$ or $F_4$,
the Moufang quadrangles of type $F_4$ are the only ones that can have a polarity.
If a Moufang quadrangle $\Xi$ of type $F_4$ has a polarity $\rho$,
then the centralizer of $\rho$ in
${\rm Aut}(\Xi)$ induces a Moufang set on the the set $\Xi^{\langle\rho\rangle}$ of
chambers fixed by $\rho$. The first examples of such Moufang
sets were constructed in \cite{mvm}. It is also possible for a Moufang octagon $\Omega$ to have a
type-preserving Galois involution $\kappa$ such that the centralizer of $\kappa$
in ${\rm Aut}(\Omega)$ induces a Moufang set on the set $\Omega^{\langle\kappa\rangle}$
of panels fixed by $\kappa$. In \cite{mvm}, it was conjectured that these two classes
of Moufang sets are the same. Our main goal here is to prove this conjecture.

In the course of verifying this conjecture, we show that each of these Moufang sets is,
in fact, the fixed point
building of an elementary abelian subgroup $\Gamma$ of order~$4$ of the automorphism
group of a building $\Delta$ of type $F_4$. The group $\Gamma$ is not type-preserving.
For this reason, we say (in \ref{nzz29}) that these Moufang sets are {\it of outer $F_4$-type}.
The fixed point building of each of the two polarities in $\Gamma$ is a Moufang octagon (but
the two octagons are not,
in general, isomorphic to each other) and the fixed point building of the third
involution in $\Gamma$ is a Moufang quadrangle of type $F_4$. This lattice of fixed point
buildings is all the more interesting in light of the fact that there is no obvious
connection between an arbitrary Moufang quadrangle of type $F_4$ and an arbitrary Moufang
octagon except that they both ``descend from'' a building of type $F_4$ in characteristic~$2$.

The root groups of a Moufang set of outer $F_4$-type are nilpotent of class~$3$ as are the
root groups in the Ree groups of type $G_2$. In every other known proper Moufang set, the root
groups are either abelian or nilpotent of class~$2$.

In \S\ref{onk60a}--\S\ref{onk60x}, we show that the group generated by all the root groups of a
Moufang set of outer $F_4$-type is simple and describe a number
of other properties of these Moufang sets.

The basic invariant of a Moufang quadrangle $\Xi$ of type~$F_4$ is a pair of quadratic
forms $q$ and $\hat q$ of type $F_4$ (as defined in \ref{def1} below),
one defined over a field $K$ of characteristic~$2$ and the
other over a field $F$ such that $F^2\subset K\subset F$. The quadratic forms $q$ and $\hat q$
are anisotropic but have a defect of dimension
$\dim_{K^2}F$, respectively, $\dim_FK$. (We do not make any assumptions about these
dimensions; in particular, either one or both can be infinite.)

Let $V$ denote the vector space over $K$ on which $q$
is defined. We show that if the Moufang quadrangle $\Xi$ has a polarity $\rho$,
then the quadratic forms $q$ and $\hat q$ are similar to each other and
there is a {\it Tits endomorphism} $\theta$ of $K$
(i.e.~an endomorphism whose square is the Frobenius endomorphism) with image $F$
and a non-associative algebra structure on $V$ with respect to which the quadratic form $q$
satisfies the identity
\begin{equation}\label{nzz14}
q(uv)=q(u)q(v)^\theta
\end{equation}
for all $u,v\in V$ (see \ref{abc51} and \ref{q(uv)}). Thus $q$ is multiplicative ``with a twist''
(cf. \cite{elduque}, for instance).

We call the non-associative algebras that arise in this way {\it polarity algebras}.
In \S\ref{sec5}, we describe polarity algebras in terms of a system of axioms
and deduce from these axioms
equation \eqref{nzz14} along with a number of other intriguing identities.
In \ref{nzz30}, we use some of these identities to
show that $q$ is, up to similarity, an invariant not only of the quadrangle $\Xi$,
but also of the Moufang set $\Xi^{\langle\rho\rangle}$. See also \cite{koen}.

Tits endomorphisms and their extensions
play a central role in the study of polarities. The first thorough study
of this connection is in \cite{ree}; in fact, it was the influence of this paper
which led to the common attribution of these endomorphisms
to Tits. See, in particular, \S\ref{sec6}.

\begin{convention}\label{nzz91}
If $x$ and $y$ are two elements of a group, we set $x^y=y^{-1}xy$ and
$$[x,y]=x^{-1}y^{-1}xy=(y^{-1})^xy=x^{-1}x^y$$
(as in \cite{TW}). As a consequence of these conventions, the following two identities
\begin{equation}\label{nzz90}
[xy,z]=[x,z]^y\cdot[y,z]\text{\quad and\quad}[x,yz]=[x,z]\cdot[x,y]^z
\end{equation}
hold.
\end{convention}

\bigskip
\noindent
{\sc Acknowledgment:} Much of this work was carried out while the authors were
guests of the California Institute of Technology.
The third author was partially supported by DFG-Grant MU 1281/5-1 and
NSA-Grant H982301-15-1-0009. The authors would like to thank the referee for
the extraordinary care he or she took with the manuscript.

\section{Fixed Point Buildings and Moufang Sets}\label{sec2}

Before we can give precise formulations of our main results in \S\ref{nzz33},
we need to introduce some basic notions.

\begin{definition}\label{abc0}
Let $E$ be a field of positive characteristic~$p$.
We will denote the Frobenius endomorphism $x\mapsto x^p$ of $E$
by ${\rm Frob}_E$. A {\it Tits endomorphism} of
$E$ is an endomorphism of $E$ whose square is the Frobenius endomorphism.
An {\it octagonal set} is a pair $(E,\theta)$, where $E$ is a field of characteristic~$2$
and $\theta$ is a Tits endomorphism of $E$.
\end{definition}

\begin{definition}\label{abc3}
Let $\Delta$ be a building. An {\it involution} of $\Delta$ is an automorphism of order~$2$.
A {\it polarity} of $\Delta$ is an involution which is not type-preserving.
(We will only use this term when $\Delta$ is of type $B_2$, $F_4$ or $G_2$.)
\end{definition}

\begin{notation}\label{abc4}
Let $(E,\theta)$ be an octagonal set.
We denote by ${\mathcal O}(E,\theta)$ the Moufang octagon defined in \cite[16.9]{TW}
and we denote by $F_4(E,\theta)$ the building of type $F_4$ called $F_4(E,E^\theta)$ in
\cite[30.15]{affine}.
\end{notation}

\begin{definition}\label{abc6}
A {\it Moufang set} is a pair $(X,\{U_x\}_{x\in X})$, where $X$ is
a set of cardinality at least~$3$,
$U_x$ is a subgroup of ${\rm Sym}(X)$ fixing $x$ and
acting sharply transitively on $X\backslash\{x\}$ for all $x\in X$ and
$g^{-1}U_xg=U_{(x)g}$ for all $x\in X$ and all $g\in G^\dagger:=\langle U_x\mid x\in X\rangle$.
The subgroups $U_x$ are called the {\it root groups} of the Moufang set. A Moufang
set is {\it proper} if the group $G^\dagger$ does not act sharply $2$-transitively on $X$.
\end{definition}

\begin{definition}\label{nzz6x}
As in {\rm\cite[11.2]{spherical}},
we call a building {\it Moufang} if it is spherical, irreducible and of rank at least~$2$
and for each root $\alpha$, the root group $U_\alpha$ (as defined in \cite[11.1]{spherical})
acts transitively on the set of apartments containing $\alpha$.
(There are more general notions of a Moufang building, but they are not relevant in this paper.)
\end{definition}

\begin{proposition}\label{abc7}
Let $\kappa$ be an involution acting on a spherical building $\Delta$.
Then there exists an apartment of $\Delta$ stabilized by $\kappa$.
\end{proposition}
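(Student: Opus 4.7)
The plan is to show that $\kappa$ fixes the pair $\{c,\kappa c\}$ for some chamber $c$ such that $c$ and $\kappa c$ are opposite; the conclusion then follows because two opposite chambers of a spherical building lie in a \emph{unique} common apartment $\Sigma$, so $\kappa\Sigma$, being an apartment containing $\kappa c$ and $\kappa(\kappa c)=c$, must coincide with $\Sigma$.

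To produce such a $c$, I would choose one for which the gallery distance $d(c,\kappa c)$ is maximal; such a chamber exists because $\Delta$, being spherical, has some finite diameter $N$. The heart of the argument is then to show that this maximum equals $N$. Setting $w:=\delta(c,\kappa c)$ and supposing $\ell(w)<N$, I would pick $s\in S$ with $\ell(sw)=\ell(w)+1$ (possible since $w$ is not the longest element $w_0\in W$) and any chamber $c^{*}\neq c$ in the $s$-panel $P$ through $c$; writing $t:=\sigma(s)$, where $\sigma$ is the type permutation induced by $\kappa$ (so $t=s$ in the type-preserving case, and $\sigma^{2}=\mathrm{id}$ in general), the Weyl-distance axioms applied at $P$ and at $\kappa P$ give $\delta(c^{*},\kappa c)=sw$ and $\delta(c^{*},\kappa c^{*})\in\{sw,\,swt\}$. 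Generically this already yields $d(c^{*},\kappa c^{*})\geq\ell(w)+1$, contradicting the choice of $c$.

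The main obstacle is the degenerate subcase in which $\ell(swt)=\ell(w)$ and $\delta(c^{*},\kappa c^{*})=swt$, so the length does not go up. Here I would exploit the freedom of choosing $c^{*}$ within the (possibly thick) panel $P$: the equality $\delta(c^{*},\kappa c^{*})=swt$ forces $\kappa c^{*}$ to be the projection of $c^{*}$ onto $\kappa P$, and the two maps $\kappa|_{P}$ and $\mathrm{proj}_{\kappa P}\colon P\to\kappa P$ are not the same bijection (the first is determined by the isometry $\kappa$, the second by the Weyl-distance relation between the two panels), so for some choice of $c^{*}$ we must have $\delta(c^{*},\kappa c^{*})=sw$ of length $\ell(w)+1$ instead, completing the contradiction. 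Once $c$ and $\kappa c$ are known to be opposite, the first paragraph closes the argument.
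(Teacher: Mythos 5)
Your proof hinges on the claim that some chamber $c$ satisfies ``$c$ and $\kappa c$ are opposite,'' but this intermediate claim is false in general. Take $\Delta$ to be the thin building of type $A_2$ (a single hexagonal apartment with chamber set $W=\langle s,t\rangle\cong S_3$) and let $\kappa$ be the nontrivial diagram automorphism $\sigma$ swapping $s$ and $t$. One computes $\delta(c,\kappa c)=c^{-1}\sigma(c)\in\{1,st,ts\}$ for every chamber $c$, so $\max_c \ell(\delta(c,\kappa c))=2<3=\ell(w_0)$, and no chamber is opposite its image. Nevertheless $\kappa$ stabilizes the unique apartment, and it in fact fixes the two opposite chambers $1$ and $w_0$. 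The point is that a $\kappa$-stable pair of opposite chambers need not be of the special form $\{c,\kappa c\}$; it can consist of two chambers each individually fixed by $\kappa$. Your maximization therefore cannot succeed, and any correct argument must allow for fixed chambers (or, more generally, work with $\kappa$-stable opposite residues).

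The same example also refutes the specific claim used to patch the degenerate subcase. With $c=s$, $w=st$, $s'=t$, $P=\{s,st\}$ and $\kappa P=\{t,ts\}$, one checks that $\mathrm{proj}_{\kappa P}(s)=t$ and $\mathrm{proj}_{\kappa P}(st)=ts$, which is exactly what $\kappa|_P$ does; so $\kappa|_P$ and $\mathrm{proj}_{\kappa P}|_P$ \emph{are} the same bijection here, contrary to your assertion that they cannot coincide. The heuristic that ``one is determined by the isometry, the other by the Weyl-distance relation'' is not a proof, and indeed fails: an automorphism of the building is compatible with projections, so nothing prevents $\kappa|_P$ from realizing the projection map. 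Since you can already see the claim fail for thin panels, and you invoked no thickness hypothesis (the proposition is stated for arbitrary spherical buildings), the argument does not close even in the degenerate case. The paper itself simply cites \cite[25.15]{MPW}, where the proof uses the machinery of stable residues and opposition rather than the naive distance-maximization you attempted.
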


\begin{proof}
This holds by \cite[25.15]{MPW}.
\end{proof}

\begin{definition}\label{nzz1}
Let $\Delta$ be a building and let $\Gamma$ be a subgroup of ${\rm Aut}(\Delta)$.
A {\it $\Gamma$-residue} is a residue of $\Delta$ stabilized by $\Gamma$.
A {\it $\Gamma$-chamber} is a $\Gamma$-residue which is minimal with respect
to inclusion. A {\it $\Gamma$-panel} is a $\Gamma$-residue $P$ such that for some
$\Gamma$-chamber $C$, $P$ is minimal in the set of all $\Gamma$-residues containing
$C$.
\end{definition}

\begin{definition}\label{nzz2}
Let $\Delta$ and $\Gamma$ be as in \ref{nzz1}. A residue of $\Delta$ is {\it proper} if
it is different from $\Delta$ itself. (In particular, chambers are proper residues.)
The group $\Gamma$ is {\it anisotropic} if $\Gamma$ stabilizes no proper residues of $\Delta$,
and $\Gamma$ is {\it isotropic} if this is not the case.
Thus $\Gamma$ is isotropic if and only if there exist $\Gamma$-panels. An automorphism $\xi$
of $\Delta$ is isotropic (or anisotropic) if $\langle\xi\rangle$ is isotropic (or anisotropic).
\end{definition}

\begin{notation}\label{nzz3}
Let $\Delta$ be a building and let $\Gamma$ be an isotropic subgroup of ${\rm Aut}(\Delta)$.
We denote by $\Delta^\Gamma$ the graph with vertex set the set of all $\Gamma$-chambers,
where two $\Gamma$-chambers are joined by an edge of $\Delta^\Gamma$ if and only if
there is a $\Gamma$-panel containing them both.
\end{notation}

\begin{definition}\label{nzz4}
Let $\Delta$ be a building. A {\it descent group} of $\Delta$ is an isotropic subgroup
$\Gamma$ of ${\rm Aut}(\Delta)$ such that each $\Gamma$-panel contains at least three
$\Gamma$-chambers.
\end{definition}

\begin{proposition}\label{nzz6}
Suppose that a building $\Delta$ is Moufang as defined in {\rm\ref{nzz6x}}.
Let $R$ be a residue of $\Delta$, let $\Sigma$ be an apartment
containing chambers of $R$ and let
$U_R$ denote the subgroup generated by the root groups $U_\alpha$ for all
roots $\alpha$ of $\Sigma$ containing $R\cap\Sigma$. Then $U_R$ is independent
of the choice of $\Sigma$.
\end{proposition}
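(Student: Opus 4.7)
The plan is to compare $U_R^\Sigma$ and $U_R^{\Sigma'}$ (defined using $\Sigma$ resp.\ $\Sigma'$) by interpolating between $\Sigma$ and $\Sigma'$ through a chain of apartments meeting $R$, and verifying that each single step leaves the group unchanged.

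The core single-step invariance is this. Suppose $\Sigma,\Sigma'$ share a root $\beta$ with $\beta\supseteq R\cap\Sigma$. Then $R\cap\beta=R\cap\Sigma$, and since $\beta\subseteq\Sigma'$ we also have $R\cap\Sigma=R\cap\beta\subseteq R\cap\Sigma'$, with equality because both sides are apartments of the spherical sub-building $R$ and so have the same cardinality. Call this common trace $\sigma$. By the Moufang hypothesis~\ref{nzz6x} there is $u\in U_\beta$ with $u(\Sigma)=\Sigma'$; this $u$ fixes $\sigma$ pointwise (since $\sigma\subseteq\beta$) and stabilizes $R$. The apartment isomorphism $u\colon\Sigma\to\Sigma'$ carries the roots $\alpha$ of $\Sigma$ containing $\sigma$ bijectively onto the roots of $\Sigma'$ containing $\sigma$, and $uU_\alpha u^{-1}=U_{u(\alpha)}$. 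Since $\beta\supseteq\sigma$, we have $u\in U_\beta\subseteq U_R^\Sigma$, so
\[
U_R^{\Sigma'} \;=\; u\,U_R^\Sigma\, u^{-1} \;=\; U_R^\Sigma.
\]

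It remains to show that any two apartments meeting $R$ can be linked by a finite sequence of such single steps. This breaks into two stages: first, reduce to the case $R\cap\Sigma=R\cap\Sigma'$ by moving through apartments of $\Delta$ whose traces on $R$ trace out a chain of apartments of the (spherical) residue $R$; and second, given apartments $\Sigma,\Sigma'$ with a common trace $\sigma$ on $R$, apply the relative chain-of-apartments theorem to obtain a sequence $\Sigma=\Sigma_0,\dots,\Sigma_n=\Sigma'$ of apartments with $\Sigma_{i-1}\cap\Sigma_i$ containing a root $\beta_i\supseteq\sigma$ for each $i$. Both ingredients are standard consequences of the Moufang property and the apartment theory of spherical buildings, as developed in \cite{TW} and \cite{MPW}.

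The main obstacle is ensuring that at each step of the chain the shared root $\beta_i$ contains the entire trace $R\cap\Sigma_{i-1}$, not merely one of its chambers; this is what makes the relative chain-of-apartments construction non-trivial. Granted this, the single-step invariance above together with an induction on the length of the chain completes the proof.
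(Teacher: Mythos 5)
Your single-step invariance is correct: if $\Sigma$ and $\Sigma'$ share a root $\beta\supseteq R\cap\Sigma$, then the Moufang property supplies $u\in U_\beta\subseteq U_R^\Sigma$ with $u(\Sigma)=\Sigma'$, and conjugation by $u$ both carries $U_R^\Sigma$ onto $U_R^{\Sigma'}$ and normalizes $U_R^\Sigma$. The trouble is in the reduction to this single step, and there your argument has two genuine gaps. For Stage~1, the very single-step you proved already forces $R\cap\Sigma=R\cap\Sigma'$ (you derive this), so it preserves the trace on $R$ and therefore cannot realize the moves that Stage~1 is supposed to perform, which by design change the trace; you supply no other elementary move, nor any argument that some $g\in U_R^\Sigma$ carries $R\cap\Sigma$ onto $R\cap\Sigma'$. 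For Stage~2, the ``relative chain-of-apartments theorem'' you invoke --- that apartments with common trace $\sigma$ on $R$ are linked by a chain of apartments whose consecutive members share a root containing \emph{all} of $\sigma$ --- is not a standard off-the-shelf result: the usual chain-of-apartments statements give shared roots containing a common chamber, not a whole sub-apartment. You flag this yourself as the main obstacle and then ``grant'' it, but granting it grants essentially the entire content of the proposition beyond the single-step observation.

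For comparison, the paper does not attempt a chain argument at all; it simply cites \cite[24.17]{MPW}, where independence of $U_R$ from the choice of apartment is obtained within the theory of unipotent radicals. So your approach is not wrong in spirit --- the single-step is the right germ --- but Stages~1 and~2 are exactly where the substance of a from-scratch proof would have to live, and both are left open.
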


\begin{proof}
This holds by \cite[24.17]{MPW}
\end{proof}

\begin{definition}\label{nzz7}
Let $R$ and $\Delta$ be as in \ref{nzz6}. The group $U_R$ is called the
{\it unipotent radical} of $R$.
\end{definition}

\begin{proposition}\label{onk98}
Let $R$, $\Delta$ and $U_R$ be as in {\rm\ref{nzz7}}. Then $U_R$ acts sharply transitively
on the residues of $\Delta$ opposite $R$
\end{proposition}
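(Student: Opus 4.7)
The plan is to prove sharp transitivity by establishing transitivity and freeness separately, each exploiting the Moufang property of $\Delta$ in a different way.

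For transitivity, let $R'$ be any residue opposite $R$ and let $R^{\mathrm{op}}$ denote the residue of $\Sigma$ opposite $R\cap\Sigma$. A standard opposition result for spherical buildings guarantees an apartment $\Sigma'$ meeting both $R$ and $R'$ in opposite residues. By Proposition \ref{nzz6} together with the Moufang hypothesis, the apartments of $\Delta$ that meet $R$ in the fixed residue $R\cap\Sigma$ are permuted transitively by $U_R$, so there is an element $u\in U_R$ with $\Sigma^u=\Sigma'$. Then $(R^{\mathrm{op}})^u$ is the residue of $\Sigma'$ opposite $R\cap\Sigma$, hence a subresidue of $R'$ of the same type as $R'$; the two therefore coincide.

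For freeness, suppose $u\in U_R$ stabilizes $R^{\mathrm{op}}$. Fix an ordering $\alpha_1,\dots,\alpha_n$ of the roots of $\Sigma$ containing $R\cap\Sigma$ that is compatible with the commutator relations among the root groups as developed in \cite{TW} and \cite{MPW}. Standard Moufang-building theory then yields a unique product decomposition $u=u_1\cdots u_n$ with $u_i\in U_{\alpha_i}$. Each $U_{\alpha_i}$ acts faithfully on a specific panel of $\Sigma$ lying in $R^{\mathrm{op}}$, and an inductive argument that peels off factors using the commutator relations forces each $u_i$ to fix this panel pointwise, hence to be trivial.

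The main obstacle is the freeness step: whereas transitivity reduces cleanly to transitivity of $U_R$ on apartments through $R\cap\Sigma$, freeness requires a careful inductive use of the commutator relations to isolate the effect of each factor $u_i$ on a designated panel of $R^{\mathrm{op}}$. In practice, consistent with the citation style of this preliminary section, the cleanest route is simply to quote the general machinery already developed for arbitrary Moufang spherical buildings in \cite[Chapter 24]{MPW}, which supplies both ingredients in the form required here.
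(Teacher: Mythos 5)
Your closing recommendation --- quote the general Moufang-building machinery in \cite[Chapter~24]{MPW} --- is precisely what the paper does; its proof is the single line that the result holds by \cite[24.21]{MPW}. The preliminary sketch, however, is best discarded: the transitivity of $U_R$ on the set of apartments containing $R\cap\Sigma$ that you invoke is not an immediate consequence of the definition of Moufang in \ref{nzz6x} (which only asserts transitivity of each individual root group $U_\alpha$ on apartments containing its root $\alpha$), and is in fact essentially equivalent to the transitivity half of the proposition being proved, so using it as a stepping stone is circular; likewise the inductive ``peeling off'' of factors for freeness is only a gesture at the uniqueness-of-decomposition argument, not a proof. Since you ultimately defer to the citation, these issues are moot for the final claim, but the sketch as written would not stand on its own.
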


\begin{proof}
This holds by \cite[24.21]{MPW}.
\end{proof}

\begin{definition}\label{nzz25}
Let $\Pi$ be a Coxeter diagram and let $(W,S)$ be the corresponding
Coxeter system. Thus $S$ is both a distinguished set of generators of the Coxeter
group $W$ and the vertex set of $\Pi$. Let $J$ be a subset of $S$ such that
the subdiagram $\Pi_J$ spanned by $J$ is spherical and let $w_J$ denote the
longest element of the Coxeter system $(W_J,J)$, where $W_J$ denotes the
subgroup of $W$ generated by $J$. By \cite[5.11]{spherical}, the map $s\mapsto w_Jsw_J$
is an automorphism of $\Pi$. We denote this automorphism by ${\rm op}_J$.
This map is called the {\it opposite map} of the diagram $\Pi_J$ (and ought, in fact,
to be denoted by ${\rm op}_{\Pi_J}$).
This map stabilizes every connected component of $\Pi_J$ and acts non-trivially
on a given connected component if and only if it is
isomorphic to the Coxeter diagram $A_n$ for some $n\ge2$, to
$D_n$ for some odd $n\ge5$, to $E_6$ or to $I_2(n)$ for some odd $n\ge5$.
In particular, its order is at most~$2$.
\end{definition}

\begin{definition}\label{nzz24}
A {\it Tits index} is a triple $(\Pi,\Theta,A)$ where $\Pi$ is a Coxeter diagram,
$\Theta$ is a subgroup of ${\rm Aut}(\Pi)$ and $A$ is a $\Theta$-invariant subset
of the vertex set $S$ of $\Pi$ such that for each $s\in S\backslash A$,
the subdiagram of $\Pi$ spanned by $A\cup\Theta(s)$ is spherical and $A$ is invariant
under the opposite map ${\rm op}_{A\cup\Theta(s)}$ defined in \ref{nzz25}. Here
$\Theta(s)$ denotes the $\Theta$-orbit containing $s$.
\end{definition}

\begin{definition}\label{nzz26}
Let $T=(\Pi,\Theta,A)$ be a Tits index. For each $s\in S\backslash A$,
let $\tilde s=w_Aw_{A\cup\Theta(s)}$, where $w_J$ for $J=A$ and $J=A\cup\Theta(s)$
is as in \ref{nzz25}. Thus there is one element $\tilde s$ for each $\Theta$-orbit
in $S\backslash A$. Let $\tilde S$ be the set of all these elements $\tilde s$.
By \cite[20.32]{MPW}, $(\tilde W,\tilde S)$ is a Coxeter system. Let $\tilde\Pi$
be the corresponding Coxeter diagram. We call $\Pi$ the {\it absolute Coxeter diagram}
of $T$ and $\tilde\Pi$ the {\it relative Coxeter diagram} of $T$. An algorithm
for calculating the relative Coxeter diagram of a Tits index is described in
\cite[42.3.5(c)]{TW}.
\end{definition}

\begin{examples}\label{nzz27}
Let $T=(\Pi,\Theta,A)$, where $\Pi$ is the Coxeter diagram $F_4$. If $\Theta={\rm Aut}(\Pi)$
and $A=\emptyset$, then $T$ is a Tits index with relative Coxeter diagram is $I_2(8)$.
If $\Theta$ is trivial and $A=\{2,3\}$ with respect to the standard numbering of the
vertex set of $\Pi$, then $T$ is a Tits index with relative Coxeter diagram $B_2$.
\end{examples}

\begin{theorem}\label{nzz8}
Let $\Gamma$ be a descent group of a building $\Delta$. Let $\Pi$ be the
Coxeter diagram of $\Delta$, let $S$ denote the vertex set of $\Pi$
and let $\Theta$ denote the subgroup of ${\rm Aut}(\Pi)$
induced by $\Gamma$. Then the following hold:
\begin{enumerate}[\rm(i)]
\item The graph $\Delta^\Gamma$ defined in {\rm\ref{nzz3}} is a building with respect to a canonical
coloring of its edges.
\item All $\Gamma$-chambers are residues of $\Delta$ of the same type $A\subset S$
and the rank of $\Delta^\Gamma$ is the number of $\Theta$-orbits in $S$ disjoint from $A$.
\item If $A$ is spherical, then the triple
$T:=(\Pi,\Theta,A)$ is a Tits index and $\Delta^\Gamma$ is a building
of type $\tilde\Pi$, where $\tilde\Pi$ is the relative Coxeter diagram of $T$.
\item If $\Delta$ is Moufang and the rank of $\Delta^\Gamma$ is at least~$2$,
then $\Delta^\Gamma$ is also Moufang.
\item Suppose that $\Delta$ is Moufang and that the rank of $\Delta^\Gamma$ is~$1$ and
let $X$ be the set of all $\Gamma$-chambers,
For each $C\in X$, let $\tilde U_C$ denote the subgroup of ${\rm Sym}(X)$ induced
by the centralizer $C_{U_C}(\Gamma)$ of $\Gamma$ in the unipotent radical $U_C$. Then
$$(X,\{\tilde U_C\mid C\in X\})$$
is a Moufang set.
\end{enumerate}
\end{theorem}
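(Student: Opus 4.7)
The plan is to obtain (i)--(v) as a sequence of consequences of the descent machinery developed in \cite{MPW}, stringing together structural results about $\Gamma$-residues, a verification of the Tits-index axioms, and a transfer of the Moufang property.

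For (ii), since $\Gamma$ is isotropic there is a $\Gamma$-chamber $C_0$ of some type $A\subset S$. For any other $\Gamma$-chamber $C$, the projection $\mathrm{proj}_{C}(C_0)$ is a $\Gamma$-invariant sub-residue of $C$, which by minimality of $C$ coincides with $C$; this forces the type of $C$ to contain $A$, and a symmetric argument gives equality. Since $\Gamma$ permutes the panels above each $\Gamma$-chamber, $A$ is $\Theta$-invariant. A $\Gamma$-panel containing $C$ is a minimal $\Gamma$-residue strictly containing $C$, and an analysis of the $\Gamma$-action on the panels above $C$ shows that such a residue has type $A\cup\Theta(s)$ for a unique $\Theta$-orbit $\Theta(s)\subset S\setminus A$; hence the rank count.

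For (i) and (iii), I would color an edge of $\Delta^\Gamma$ by the orbit $\Theta(s)$ with $A\cup\Theta(s)$ the type of the minimal $\Gamma$-residue spanned, and then verify the axioms of \ref{nzz24}. The condition in \ref{nzz4} that every $\Gamma$-panel has at least three $\Gamma$-chambers forces $A\cup\Theta(s)$ to be spherical when $A$ is (a rank-$1$ residue of $\Delta^\Gamma$ with at least three chambers can only arise inside a spherical ambient residue of $\Delta$), and the opposition map of that residue must preserve the $\Gamma$-chamber structure, giving invariance of $A$ under $\mathrm{op}_{A\cup\Theta(s)}$. Thus $T$ is a Tits index. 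Identifying $\Delta^\Gamma$ with a building of type $\tilde\Pi$ then reduces to checking that $\Gamma$-galleries satisfy the Coxeter relations of $(\tilde W,\tilde S)$ from \ref{nzz26}.

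For (iv), given a root $\tilde\alpha$ of $\Delta^\Gamma$, I would lift it to a $\Gamma$-invariant family of mutually parallel roots of $\Delta$ lying in a common $\Gamma$-stable apartment and take the root group of $\tilde\alpha$ to be the image in $\mathrm{Aut}(\Delta^\Gamma)$ of $C_{U_\alpha}(\Gamma)$ for any one of these roots $\alpha$. Transitivity on the apartments of $\Delta^\Gamma$ containing $\tilde\alpha$ then descends from the Moufang property of $\Delta$ via the projection that sends $\Gamma$-stable apartments of $\Delta$ to apartments of $\Delta^\Gamma$. For (v), Proposition~\ref{onk98} gives sharp transitivity of $U_C$ on the residues of $\Delta$ opposite $C$; such a residue lies in $X\setminus\{C\}$ precisely when it is $\Gamma$-stable, and then the unique mover in $U_C$ must commute with $\Gamma$ by the uniqueness, so lies in $C_{U_C}(\Gamma)$. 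The conjugacy axiom in \ref{abc6} is inherited from the analogous identity for $U_C$ in $\Delta$. The main technical obstacle is (iv): producing the root of $\Delta$ lifting a given root of $\Delta^\Gamma$, and showing that $C_{U_\alpha}(\Gamma)$ is already large enough to act transitively on $\Gamma$-stable apartments, requires simultaneous tracking of opposition, parallelism, and the Tits-index combinatorics, and is where the deepest use of \cite{MPW} enters.
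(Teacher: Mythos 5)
The paper does not prove this theorem itself; its ``proof'' is a single sentence citing \cite{MPW} (22.20 and 24.31). Your proposal therefore sketches the argument carried out in that reference, and some of your steps are in the right direction: the use of projections for the uniformity of the type $A$ in~(ii), and the observation in~(v) that the unique element of $U_C$ carrying one $\Gamma$-stable residue opposite $C$ to another must commute with $\Gamma$ by uniqueness, are both the correct ideas.

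There are genuine gaps, though. In~(iii), you claim that the descent-group axiom (each $\Gamma$-panel contains at least three $\Gamma$-chambers) ``forces $A\cup\Theta(s)$ to be spherical,'' and offer as justification the parenthetical ``a rank-$1$ residue of $\Delta^\Gamma$ with at least three chambers can only arise inside a spherical ambient residue of $\Delta$.'' That parenthetical is not a reason; it is a restatement of what has to be shown, and it is not a general fact about counting chambers. The actual argument in \cite{MPW} is a parallelism argument: any two distinct $\Gamma$-chambers in a $\Gamma$-panel must each be the projection of the other (this follows from minimality of $\Gamma$-chambers and $\Gamma$-equivariance of projections), and mutual projection of two residues forces the subdiagram spanned by their types together with the type of the panel to be spherical. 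For~(iv), your sketch simply asserts the two things that need to be proved --- that a root of $\Delta^\Gamma$ lifts to a $\Gamma$-compatible family of roots of $\Delta$ lying in a $\Gamma$-stable apartment, and that $C_{U_\alpha}(\Gamma)$ is large enough to act transitively --- as expected conclusions; you flag this yourself as the main obstacle, which is honest, but as written it is not a proof. Likewise the rank count in~(ii) asserts without argument that the $\Gamma$-panels above $C$ correspond bijectively to $\Theta$-orbits in $S\setminus A$. These are precisely the chapter-length technical developments of \cite{MPW}; the theorem cannot be re-derived at the level of detail your sketch offers, which is presumably why the paper contents itself with a citation.
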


\begin{proof}
Assertions (i) and (ii) hold by \cite[22.20(v) and~(viii)]{MPW}, assertion~(iii) holds
by \cite[22.20(iv) and~(viii)]{MPW} and the remaining two
assertions hold by \cite[24.31]{MPW}.
\end{proof}

\begin{definition}\label{nzz9}
The building $\Delta^\Gamma$ in \ref{nzz2}(i) is called the
{\it fixed point building} of $\Gamma$. The rank of $\Delta^\Gamma$ is called
the {\it relative rank} of $\Gamma$.
If the relative rank of $\Gamma$ is~$1$, we interpret $\Delta^\Gamma$ to mean the Moufang set
described in \ref{nzz8}(v).
\end{definition}

\section{Polarities and Galois subgroups}

In this section we describe two ways in which descent groups arise.

\begin{proposition}\label{nzz5}
Let $\Delta$ be a Moufang building of type $\Pi$, where $\Pi$ is the Coxeter diagram
$B_2$, $G_2$ or $F_4$. Suppose that $\sigma$ is a polarity of $\Delta$ as
defined in {\rm\ref{abc3}} and let $\Gamma=\langle\sigma\rangle$. Then
$\Gamma$ is a descent group of $\Delta$ and
$\Gamma$-chambers are chambers of $\Delta$. If $\Pi=B_2$ or $G_2$,
the fixed point building
$\Delta^\Gamma$ is a Moufang set and if $\Pi=F_4$, the fixed point building $\Delta^\Gamma$ is
a Moufang octagon.
\end{proposition}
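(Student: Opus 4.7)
The strategy is to invoke Theorem~\ref{nzz8} with the Tits index $T = (\Pi, \Theta, \emptyset)$, where $\Theta$ is the order-two subgroup of ${\rm Aut}(\Pi)$ induced by $\sigma$. To apply that theorem I must show that $\Gamma := \langle\sigma\rangle$ is a descent group whose $\Gamma$-chambers are the ordinary chambers of $\Delta$.

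First, I apply Proposition~\ref{abc7} to obtain a $\sigma$-stable apartment $\Sigma$. Since $\sigma$ acts on the Coxeter complex $\Sigma$ as a non-type-preserving involution, a direct case check shows that $\sigma$ fixes at least one chamber of $\Sigma$. In the dihedral apartments of types $B_2$ and $G_2$, the non-type-preserving involutions are precisely the reflections whose axes pass through the midpoints of two opposite chambers, so each fixes exactly two chambers; in the $F_4$ case, a comparable but longer inspection of the $F_4$ Coxeter complex again yields a fixed chamber. Since chambers are minimal residues of $\Delta$, every such fixed chamber is a $\Gamma$-chamber, so the common type $A$ of $\Gamma$-chambers appearing in \ref{nzz8}(ii) is $\emptyset$.

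Next I check the three-chamber condition for $\Gamma$-panels. Let $P$ be a $\Gamma$-panel, let $C$ be a fixed chamber of $\Delta$ contained in $P$, and let $C'$ be the chamber of $P$ opposite $C$. By \ref{onk98}, the unipotent radical $U_{C'}$ acts sharply transitively on the chambers of $P$ opposite $C'$, and under this parameterization the $\sigma$-fixed chambers of $P$ correspond (together with $C'$ itself) to the centralizer $C_{U_{C'}}(\sigma)$. Combining the Moufang structure of $\Delta$ with the explicit coordinatizations of the relevant rank-two residues from \cite{TW}, one sees that this centralizer is parameterized by the fixed subfield of a Tits endomorphism in types $B_2$ and $G_2$ and by an associated octagonal set in type $F_4$; in particular $|C_{U_{C'}}(\sigma)| \ge 3$, and so $P$ contains at least three $\Gamma$-chambers.

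With $\Gamma$ established as a descent group, Theorem~\ref{nzz8}(i)--(iii) says that $\Delta^\Gamma$ is a building whose Coxeter diagram is the relative diagram $\tilde\Pi$ of the Tits index $(\Pi, \Theta, \emptyset)$. When $\Pi = B_2$ or $G_2$, the group $\Theta$ has a single orbit on the vertex set $S$, so $\tilde\Pi$ has a single vertex and \ref{nzz8}(v) produces a Moufang set. When $\Pi = F_4$, Example~\ref{nzz27} identifies $\tilde\Pi$ with $I_2(8)$, and \ref{nzz8}(iv) implies the resulting octagon is Moufang. The main obstacle in the plan is the centralizer count in the previous paragraph; for $\Pi = F_4$ that computation is essentially the content of Tits's construction in \cite{octagons}, and the $B_2$, $G_2$ cases require an analogous (but simpler) analysis of polarities of the surviving Moufang buildings in these types.
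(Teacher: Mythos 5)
Your overall plan matches the paper's: find a $\sigma$-fixed chamber inside a $\sigma$-stable apartment $\Sigma$, verify the three-chamber condition on $\Gamma$-panels, then apply Theorem~\ref{nzz8}. However, both of the load-bearing steps are asserted rather than proved, and neither is routine.

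First, the claim that a non-type-preserving involution of the $F_4$ Coxeter complex necessarily fixes a chamber is not something one ``sees by inspection.'' The elements $w\delta$ of $W(F_4)\rtimes\langle\delta\rangle$ with $(w\delta)^2=1$ form several twisted conjugacy classes, and only the class of $\delta$ itself corresponds to fixing a chamber; the other classes correspond to fixing only higher-rank residues. So there is a genuine question of why the restriction $\sigma|_\Sigma$ lands in the right class, and your ``longer inspection'' does not resolve it. The paper handles this point with care: it first invokes \cite[25.17]{MPW} (using \ref{nzz25} to verify the color-preserving hypothesis on the opposition map of $\Sigma$) to produce a proper $\Gamma$-residue $R$ meeting $\Sigma$, then argues by minimality that if $R$ is not a chamber its type must be $\{1,4\}$ or $\{2,3\}$, in which case applying \cite[25.17]{MPW} again inside $R\cap\Sigma$ contradicts minimality. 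That iterated-residue argument is the substance of this step, and your proposal omits it.

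Second, the centralizer estimate $|C_{U_{C'}}(\sigma)|\ge 3$ is hand-waved, and the justification you offer (``parameterized by the fixed subfield of a Tits endomorphism,'' ``by an associated octagonal set'') is essentially circular: identifying that centralizer with a Ree/Suzuki or octagonal root group is a consequence of the descent machinery being set up here, not an input to it. The paper instead gives a short general argument that works uniformly: in the apartment $\Sigma$ containing the fixed chamber $c$, one chooses a pair of roots $\alpha$ and $\beta=\alpha^\sigma$ through $c$ with $U_\alpha\cap U_\beta=1$ and $[U_\alpha,U_\beta]=1$ (provided by \cite[5.5, 5.6]{TW} in ranks $B_2,G_2$, and by \cite[11.28(i),(iii)]{spherical} applied to the relevant $\{1,4\}$-residue of $F_4$), and then $b:=uu^\sigma$ for any nontrivial $u\in U_\alpha$ is a nontrivial $\sigma$-fixed element of the unipotent radical; together with the sharp transitivity of \ref{onk98} this produces the required third $\Gamma$-chamber. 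You would need to supply an argument of this kind, not invoke the yet-to-be-derived algebraic structure of the fixed point building.

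A smaller point: once the three-chamber count for the $F_4$ panels $R_{14}$ and $R_{23}$ is in place you still need a citation such as \cite[22.37]{MPW} to conclude that $\Gamma$ is a descent group; that step is currently elided. The concluding reduction to \ref{nzz8} and \ref{nzz27} is correct and matches the paper.
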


\begin{proof}
By \ref{abc7}, we can choose an apartment $\Sigma$ stabilized by $\sigma$.
By \ref{nzz25}, the automorphism of $\Sigma$ sending each chamber to its unique opposite is
color-preserving. By \cite[25.17]{MPW}, therefore, there exists a $\Gamma$-residue
$R$ containing chambers of $\Sigma$.
We can assume that $R$ is minimal with respect to containment.

Since $\Gamma$ is not type-preserving,
the type $J$ of $R$ is $\Theta$-invariant, where $\Theta$ is the automorphism group
of $\Pi$. Suppose that $J\ne\emptyset$. Since $J$ is $\Theta$-invariant, we
must have $\Pi=F_4$ and $J$ is either $\{1,4\}$ or $\{2,3\}$
(with respect to the standard numbering of the vertex set of the Coxeter diagram
$F_4$). Thus $R\cap\Sigma$ is a thin building of type $A_1\times A_1$ or $B_2$. By \ref{nzz25},
the map which sends each chamber of $R\cap\Sigma$ to its unique opposite
is again color-preserving. Another application of \cite[25.17]{MPW} thus
implies that $\sigma$ stabilizes a proper residue of $R$. This contradicts
the choice of $R$. With this contradiction, we conclude that there are chambers
of $\Sigma$ fixed by $\sigma$.

Let $c$ be a chamber of $\Sigma$ fixed by $\sigma$ and
let $d$ be the unique chamber of $\Sigma$ opposite $c$. Since $\sigma$
stabilizes $\Sigma$ and $c$, it fixes $d$ as well.
Suppose that $\Pi=B_2$ or $G_2$. Among the roots of $\Sigma$ containing $c$,
there are two such that $c$ is at maximal distance from the root that is opposite in $\Sigma$.
We call these two roots $\alpha$ and $\beta$. They are interchanged by $\sigma$ and
by \cite[5.5 and 5.6]{TW}, we have $U_\alpha\cap U_\beta=1$ and
$[U_\alpha,U_\beta]=1$.
Suppose, instead, that $\Pi=F_4$. Let $c_1$ denote the unique chamber of $\Sigma$ that
is $1$-adjacent to $c$, let $\alpha$ denote the unique root of $\Sigma$ containing
$c$ but not $c_1$ and let $\beta=\alpha^\sigma$. By \cite[11.28(i) and (iii)]{spherical}
with $\{i,j\}=\{1,4\}$, we have $U_\alpha\cap U_\beta=1$ and
$[U_\alpha,U_\beta]=1$ also in this case. We now return to the assumption that $\Pi$ is in
any one of the three cases $B_2$, $G_2$ or $F_4$ and let $u$ be a non-trivial element of $U_\alpha$.
Both $U_\alpha$ and $U_\beta$ are contained in the unipotent radical $U_c$ and
hence $b:=uu^\sigma\in U_c$. Since $U_\alpha\cap U _\beta=1$, we have $b\ne1$ and
since $[U_\alpha,U_\beta]=1$, we have $b^\sigma=b$.
Thus by \ref{onk98}, $d\ne d^b$.
We conclude that $c$, $d$ and $d^b$ are distinct chambers fixed by $\Gamma$.

Suppose that $\Pi$ is $B_2$ or $G_2$. Since the type of a $\Gamma$-residue
is $\Theta$-invariant, $\Delta$ itself is the unique
$\Gamma$-panel. Since there at least three $\Gamma$-chambers, $\Gamma$ is a descent group.
By \ref{nzz8}, the Tits index of $\Gamma$ is $(\Pi,\Theta,\emptyset)$ and
$\Delta^\Gamma$ (interpreted as in \ref{nzz9}) is a Moufang set.

Suppose now that $\Pi=F_4$.
In this case, we let $R_{ij}$ be the unique $\{i,j\}$-residue
containing the chamber $c$ and we let $U_{ij}$ denote the group
induced on $R_{ij}$ by the unipotent radical $U_c$ for $\{i,j\}=\{1,4\}$ and $\{2,3\}$.
Then $R_{14}$ and $R_{23}$ are the two $\Gamma$-panels containing $c$.
Choose $ij=14$ or $23$ and
let $d_{ij}$ be the unique chamber of $R_{ij}\cap\Sigma$ opposite $c$.
Just as above, we can choose a non-trivial element $b_{ij}$ in $U_{ij}$ centralized by
$\sigma$. By \cite[24.8(iii) and 24.33]{MPW}, $U_{ij}$ acts
sharply transitively on the set of chambers of $R_{ij}$ opposite $c$.
Thus $c$, $d_{ij}$ and $d_{ij}^{b_{ij}}$ are
distinct $\Gamma$-chambers contained in $R_{ij}$.
We conclude that both $\Gamma$-panels $R_{14}$ and $R_{23}$ contain at least three $\Gamma$-chambers.
By \cite[22.37]{MPW}, therefore, $\Gamma$ is a descent group of $\Delta$.
By \ref{nzz8}(iii), therefore,
the fixed point building $\Delta^\Gamma$ is of type $\tilde\Pi$,
where $\tilde\Pi$ is the relative Coxeter diagram of the Tits index $(\Pi,\Theta,\emptyset)$.
By \ref{nzz27}, the relative Coxeter diagram of this Tits index is $I_2(8)$.
By \ref{nzz8}(iv), we conclude that $\Delta^\Gamma$ is a Moufang octagon.
\end{proof}

\begin{notation}\label{goo3}
Suppose that $\Delta$ is Moufang.
Let $G^\circ$ denote the group of all type-preserving automorphisms
of $\Delta$, let $G={\rm Aut}(\Delta)$ if $\Delta$ is simply laced
and let $G=G^\circ$ if $\Delta$ is not simply laced. (Thus if $\Delta$ and $\rho$ are
as in \ref{nzz5}, then $\rho\not\in G$.) Let
$G^\dagger$ denote the subgroup of ${\rm Aut}(\Delta)$ generated by all the root groups of $\Delta$.
Root groups are type-preserving, so $G^\dagger\subset G^\circ$.
\end{notation}

\begin{notation}\label{onk1}
Let $\Pi$ be a Coxeter diagram,
let $\Delta$ be a building of type $\Pi$ (as defined
in \cite[7.1]{spherical}) and suppose that $\Delta$ is Moufang as defined in \ref{nzz6x}.
Let $c$ be a chamber of $\Delta$ and let $\Sigma$
be an apartment containing $c$. Let $B_\Pi$ be the set of ordered pairs of $(i,j)$
such that $\{i,j\}$ is an edge of $\Pi$. For each $(i,j)\in B_\Pi$, let $R_{ij}$
be the unique $\{i,j\}$-residue of $\Delta$ containing $c$ and let $\Omega_{ij}$
be the root group sequence of $R_{ij}$ based at $(\Sigma\cap R_{ij},c)$ as defined in
\cite[3.1--3.3]{MPW}.
The first term of $R_{ij}$ acts non-trivially on the $i$-panel of $R_{ij}$ containing $c$.
\end{notation}

\begin{remark}\label{onk3}
Let $(i,j)\in B_\Pi$. Interchanging $i$ and $j$ if necessary,
there exists an isomorphism from $\Omega_{ij}$ to one of the root group sequences
described in \cite[16.1--16.9]{TW} (by the classification of Moufang polygons).
We say that an element $(i,j)$ of $B_\Pi$ is {\it standard} if there is such an isomorphism.
\end{remark}

\begin{notation}\label{onk2x}
Let $F$ be a field of characteristic~$p>0$ and
let $E/F$ be an extension such that $E^p\subset F$. By identifying
$E$ with $E^p$ via ${\rm Frob}_E$, we can regard $F$ as
an extension of $E$. We can recover the extension $E/F$ from
the extension $F/E$ by the same trick. We describe this situation
by saying simply that we have a pair of extensions $\{E/F,F/E\}$.
Let ${\rm Aut}(E,F)$ be the set of all elements of ${\rm Aut}(E)$
stabilizing $F$. This group is canonically isomorphic to the group ${\rm Aut}(F,E)$
of all elements of ${\rm Aut}(F)$ stabilizing $E$.
\end{notation}

\begin{notation}\label{onk2}
Suppose that $\Delta$ is Moufang. By \cite[28.8]{MPW},
the building $\Delta$ has either a field of definition $F$ or a pair $\{E/F,F/E\}$
of defining extensions as in \ref{onk2x}. In the first case, we set $A={\rm Aut}(F)$ and in
the second case, we let $A$ denote the group ${\rm Aut}(E,F)$ defined in \ref{onk2x}.
\end{notation}

\begin{remark}\label{goo5x}
If the building $\Delta$ has a pair $\{E/F,F/E\}$ of defining extensions
rather than a field of definition $F$, then $\Delta\cong B_2^{\mathcal D}(\Lambda)$
for some indifferent set $\Lambda$, $\Delta\cong B_2^{\mathcal F}(\Lambda)$ for some
quadratic space $\Lambda$ of type $F_4$, $\Delta\cong G_2(\Lambda)$ for some inseparable hexagonal
system $\Lambda$ or $\Delta\cong F_4(\Lambda)$ for some
inseparable composition algebra $\Lambda$. (See \cite[30.15 and 30.23]{affine} for the definition
of these terms.) By \cite[35.9, 35.12 and 35.13]{TW}, the pair $\{E/F,F/E\}$ is an invariant
of $\Delta$ in all these cases even though neither $E$ nor $F$ is an invariant.
(By \cite[35.6-35.8,35.10, 35.11 and 35.14]{TW}, the field of definition $F$ {\it is} an
invariant of $\Delta$ in every other case. Thus the group $A$ is an invariant of $\Delta$
in {\it every} case.)
\end{remark}

\begin{notation}\label{goo5}
Suppose that $\Delta$ is Moufang, let $G$ be as in \ref{goo3}, let $\Sigma$, $c$, $B_\Pi$
and $\Omega_{ij}$ be as in \ref{onk1} and let $A$ be as in \ref{onk2}. Let $(s,t)$ be a standard
element of $B_\Pi$ as defined in \ref{onk3} and let $\varphi$ be
an isomorphism from $\Omega_{st}$ to $\Theta$, where $\Theta$ is
one of the root group sequences described in \cite[16.1--16.9]{TW}.
For each $g\in G$ acting trivially on $\Sigma$,
let $g_{st}$ denote the automorphism of $\Omega_{st}$ induced
by $g$ and let $g^*_{st}$ denote the automorphism $\varphi^{-1}g_{st}\varphi$ of $\Theta$.
By \cite[(29.22) and 29.23--29.25]{MPW},
there exists a unique homomorphism $\psi$ from $G$ to $A$ such that the following hold:
\begin{enumerate}[\rm(i)]
\item $G^\dagger$ is contained in the kernel of $\psi$.
\item For each $g\in G$ acting trivially on $\Sigma$,
$\psi(g)$ is equal to the element called $\lambda_\Omega(h)$ in \cite[29.5]{MPW},
where $\Omega=\Theta$ and $h=g^*_{st}$.
\item \cite[4.7(iii)]{rhizo} holds for all non-type-preserving elements $g\in G$.
\end{enumerate}
(We are only interested here in applying $\psi$ to type-preserving elements,
so we do not take the trouble to state (iii) more precisely.)
A homomorphism $\psi\colon G\to A$ satisfying conditions (i)--(iii) for some choice
of $(s,t)$ and $\varphi$
is called a {\it Galois map} of $\Delta$. If $\psi$ and $\psi_1$ are two
Galois maps of $\Delta$, then there is an inner automorphism $\iota$ of $A$ such
that $\psi_1=\psi\cdot\iota$ (by \cite[29.25]{MPW}).
Thus, in particular, all Galois maps of $\Delta$ have the same kernel.
\end{notation}

\begin{definition}\label{goo6}
A {\it Galois subgroup} of ${\rm Aut}(\Delta)$ is a subgroup $\Gamma$ of the group $G$ defined
in \ref{goo3} whose intersection with the kernel of a Galois map of $\Delta$ is trivial.
Since two Galois maps differ by an inner automorphism of the group $A$,
this notion is independent of the choice of the Galois map.
\end{definition}

\begin{theorem}\label{toy1}
Suppose that $\Delta$ is Moufang and that $\Gamma$ is an isotropic
Galois subgroup of ${\rm Aut}(\Delta)$
that acts on the set of chambers of $\Delta$ with finite orbits.
Then $\Gamma$ is a descent group of $\Delta$.
\end{theorem}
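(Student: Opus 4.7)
The plan is to verify the one condition of \ref{nzz4} not already part of the hypothesis: that every $\Gamma$-panel contains at least three $\Gamma$-chambers. Fix a $\Gamma$-panel $P$, a $\Gamma$-chamber $C\subset P$, and a chamber $c\in C$ of $\Delta$. A convexity/averaging argument in the spirit of \ref{abc7}, but using the finite-orbit hypothesis in place of the finiteness of the chamber set, produces a $\Gamma$-invariant rank-$2$ residue $R$ of $\Delta$ containing $P$ together with a $\Gamma$-invariant apartment $\Sigma\subset R$ through $c$. Let $\alpha$ be the root of $\Sigma$ whose wall is $P\cap\Sigma$ and satisfies $c\in\alpha$, and let $U_\alpha$ be the corresponding root group. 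Since conjugation by a $\Gamma$-centralizing element preserves $\Gamma$-stability of residues, it is enough to exhibit two nontrivial $\Gamma$-fixed elements $u_1,u_2\in U_\alpha$ lying in distinct cosets of $\langle u_1\rangle$: then $C^{u_1}$ and $C^{u_2}$ are $\Gamma$-chambers in $P$, pairwise distinct and both distinct from $C$.

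To produce such elements I would use the Galois map $\psi$ of \ref{goo5}. Every root group of a Moufang building is parametrized by the field of definition $F$, or by one of the defining extensions in $\{E/F,F/E\}$ of \ref{onk2x}, and the action of a type-preserving automorphism on such a parametrization factors through $A$ by the construction of $\psi$. The defining property \ref{goo6} of a Galois subgroup gives $\Gamma\cap\ker\psi=1$, so $\psi|_\Gamma$ embeds $\Gamma$ into $A$ as a group of field automorphisms, and the finite-orbit hypothesis on chambers translates into $\psi(\Gamma)$ acting with finite orbits on $F$ (or on $E$). The fixed subfield of this action therefore has finite codimension and is, in particular, infinite, supplying the abundance of nontrivial $\Gamma$-fixed elements in $U_\alpha$ required above.

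The main obstacle is coordinating the choice of $\Gamma$-invariant apartment with the explicit description of $\psi$ in \ref{goo5}(ii), which refers only to elements acting trivially on a fixed apartment; one must verify that the $\Gamma$-action on $U_\alpha$ is genuinely read off by the restriction of $\psi$ to $\Gamma$. In the defective cases of \ref{goo5x}, where root groups mix contributions from both elements of $\{E/F,F/E\}$, this requires separate bookkeeping. These technicalities comprise the substance of the proof of the corresponding statement in \cite{rhizo}, which is the natural reference to cite for a complete argument.
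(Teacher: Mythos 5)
The paper's entire proof of \ref{toy1} is a citation of \cite[12.2(ii)]{rhizo}, and you end your proposal by saying that \cite{rhizo} is ``the natural reference to cite for a complete argument''; in that sense you have located the same proof. But the sketch you interpose has genuine problems, and since it is offered as a reconstruction of the argument rather than as heuristic motivation, they are worth spelling out.

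The main gap is that you quietly assume $\Gamma$-chambers are ordinary chambers of $\Delta$. In general a $\Gamma$-chamber $C$ is a proper residue of positive rank (in \ref{nzz22}, for example, $C$ has type $\{2,3\}$), so a $\Gamma$-panel $P$ can have rank~$3$ or more. In that case there is no ``rank-$2$ residue $R$ of $\Delta$ containing $P$,'' and there is no single root $\alpha$ of an apartment ``whose wall is $P\cap\Sigma$''; the object that plays the role of the root group is the group induced on $P$ by the unipotent radical $U_C$ (as in the second half of the proof of \ref{nzz5}), and producing a nontrivial $\Gamma$-fixed element of that group requires working with several root groups at once, not one. Even in the rank-$0$ case your conclusion is off: an element $u\in U_\alpha$ fixes every chamber of $\alpha$, in particular $c$, hence fixes the $\Gamma$-chamber $C$ containing $c$, so $C^{u_1}=C^{u_2}=C$ and you do not get distinct $\Gamma$-chambers in $P$. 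What one actually does (again as in \ref{nzz5}) is move the residue in $P$ opposite to $C$ by a $\Gamma$-fixed element of $U_C$ and invoke sharp transitivity à la \ref{onk98}. Finally, \ref{abc7} and its source \cite[25.15]{MPW} concern involutions; invoking a ``convexity/averaging argument in the spirit of \ref{abc7}'' does not by itself yield a $\Gamma$-invariant apartment, let alone one adapted to a given $\Gamma$-panel, and this is one of the genuine contributions of \cite{rhizo} that cannot be waved away.

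The second half of your sketch---using the Galois map to translate the $\Gamma$-action on a root group into a field action, and the finite-orbit hypothesis into an infinite fixed field---is the right idea and is faithful to the role $\psi$ plays in \cite{rhizo}, though the bookkeeping you flag (especially in the defective cases of \ref{goo5x}) is indeed where most of the work lies.
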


\begin{proof}
This is \cite[12.2(ii)]{rhizo}.
\end{proof}

\begin{notation}\label{nzz10}
Let $\Delta$ be Moufang and let $\psi$ be a Galois map of $\Delta$.
A {\it Galois involution} $g$ of $\Delta$ is an element of order~$2$ in ${\rm Aut}(\Delta)$
such that $\langle g\rangle$ is a Galois subgroup. A {\it $\chi$-involution} of $\Delta$
for some $\chi\in A$ is a Galois involution $g$ such that $\chi=\psi(g)$.
\end{notation}

\begin{proposition}\label{nzz22}
Let $\Delta$ be a building of type $F_4$ and
suppose that $\Gamma$ is a type-preserving
Galois subgroup of ${\rm Aut}(\Delta)$ acting on the set of chambers of $\Delta$ with finite orbits
such that $\Gamma$-chambers are residues
of type $\{2,3\}$ with respect to the standard numbering of the vertex set of the Coxeter diagram
$F_4$. Then $\Gamma$ is a descent group and $\Delta^\Gamma$
is a Moufang quadrangle of type $F_4$.
\end{proposition}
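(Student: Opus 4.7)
The plan is to verify that $\Gamma$ is a descent group via Theorem \ref{toy1}, invoke Theorem \ref{nzz8} together with Example \ref{nzz27} to conclude that $\Delta^\Gamma$ is a Moufang quadrangle, and then identify this quadrangle as being specifically of type $F_4$.

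Each $\Gamma$-chamber, being a residue of type $\{2,3\}$, is a proper residue of $\Delta$, so $\Gamma$ stabilizes proper residues and is isotropic in the sense of \ref{nzz2}. Together with the Galois hypothesis and the finite-orbit condition, Theorem \ref{toy1} gives that $\Gamma$ is a descent group. Since $\Gamma$ is type-preserving, its image $\Theta$ in ${\rm Aut}(F_4)$ is trivial, and the common type of $\Gamma$-chambers is $A=\{2,3\}$. By Example \ref{nzz27}, $(F_4,\{1\},\{2,3\})$ is a Tits index whose relative Coxeter diagram is $B_2$. Theorem \ref{nzz8}(iii), together with Theorem \ref{nzz8}(iv), then implies that $\Delta^\Gamma$ is a Moufang quadrangle.

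The main obstacle, and what remains, is to recognize $\Delta^\Gamma$ as a Moufang quadrangle of type $F_4$ rather than as one of the other classes appearing in the classification of \cite{TW}. For this I would analyze the root groups of $\Delta^\Gamma$ through the centralizers $C_{U_R}(\Gamma)$ attached to the two $\Gamma$-panels containing a fixed $\Gamma$-chamber. By Remark \ref{goo5x}, $\Delta \cong F_4(\Lambda)$ for an inseparable composition algebra $\Lambda$ defined over a pair of extensions $\{E/F,F/E\}$ in characteristic~$2$, and the action of $\Gamma$ on the coordinatizing data is controlled by the Galois map $\psi$ of \ref{goo5}. A direct computation of the $\Gamma$-fixed subgroups inside the root group sequences of $\Delta$ identifies them with the root group sequences coordinatizing a Moufang quadrangle of type $F_4$ as in \cite[16.7]{TW}; this matching is essentially the content of the descent construction of \cite{canada}, which was carried out there for a single type-preserving Galois involution and extends to the present setting since the hypothesis that $\Gamma$-chambers are $\{2,3\}$-residues forces the very same centralizer calculation.
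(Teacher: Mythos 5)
Your first two steps (invoking \ref{toy1} for the descent-group property, then \ref{nzz27} and \ref{nzz8}(iii)--(iv) to get that $\Delta^\Gamma$ is a Moufang quadrangle) match the paper's proof exactly. For the final step, the paper cites its later result \ref{onk13x}(ii), whose proof (through \ref{onk13}) is precisely the descent/coordinatization computation of \cite{canada} that you outline in your last paragraph, so your approach coincides with the paper's even though you reconstruct the argument rather than cite the theorem.
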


\begin{proof}
By \ref{toy1}, $\Gamma$ is a descent group.
Let $T$ denote the Tits index $(\Pi,\Theta,A)$, where $\Pi$
is the Coxeter diagram $F_4$, $\Theta$ is the trivial subgroup of ${\rm Aut}(\Pi)$
and $A=\{2,3\}$. By \ref{nzz27}, the relative Coxeter diagram of this index is $B_2$.
By \ref{nzz8}(iii), the fixed point building $\Delta^\Gamma$ is of type $B_2$.
By \ref{nzz8}(iv), we conclude that $\Delta^\Gamma$ is a Moufang quadrangle.
By \ref{onk13x}(ii) below,
$\Delta^\Gamma$ is, in fact, a Moufang quadrangle of type $F_4$.
\end{proof}

\begin{proposition}\label{nzz37}
Let $\Omega={\mathcal O}(E,\theta)$ for some octagonal set $(E,\theta)$ and let
$\Gamma$ be a Galois subgroup of ${\rm Aut}(\Omega)$
acting on the set of chambers of $\Delta$ with finite orbits and
fixing panels of one type but none of the other. Then $\Gamma$ is a descent group and
$\Omega^\Gamma$ is a Moufang set.
\end{proposition}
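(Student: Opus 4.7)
The plan is to invoke Theorem \ref{toy1} to see that $\Gamma$ is a descent group, then identify the $\Gamma$-chambers so that Theorem \ref{nzz8}(v) applies. Say $\Gamma$ stabilizes panels of type $\{i\}$ but no panels of type $\{j\}$, where $\{i,j\}=\{1,2\}$ is the vertex set of the Coxeter diagram $I_2(8)$ of $\Omega$. A stabilized panel is a proper $\Gamma$-residue, so $\Gamma$ is isotropic in the sense of \ref{nzz2}; together with the hypotheses that $\Gamma$ is a Galois subgroup acting with finite orbits on chambers, Theorem \ref{toy1} then gives that $\Gamma$ is a descent group of $\Omega$.

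Next I would pin down the type of the $\Gamma$-chambers. Since the Moufang octagon is not simply laced, \ref{goo3} gives $G=G^\circ$, so every Galois subgroup lies in the type-preserving group; in particular, $\Gamma$ is type-preserving, and the induced subgroup $\Theta$ of $\mathrm{Aut}(\Pi)$ is trivial. If $\Gamma$ fixed some chamber $c$, then the two panels on $c$ would both be $\Gamma$-stable, contradicting the absence of fixed $\{j\}$-panels. So $\Gamma$ fixes no chambers, which forces every stabilized $\{i\}$-panel to be a minimal $\Gamma$-residue. Conversely, because the rank of $\Omega$ is~$2$, any $\Gamma$-residue is a chamber, a panel of type $\{i\}$, a panel of type $\{j\}$, or all of $\Omega$; only panels of type $\{i\}$ can be $\Gamma$-stable and minimal. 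Hence the $\Gamma$-chambers are precisely the fixed panels of type $\{i\}$, and their common type is $A=\{i\}$.

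Finally I would feed $(\Pi,\Theta,A)=(I_2(8),\,1,\,\{i\})$ into Theorem \ref{nzz8}. Part~(ii) identifies the rank of $\Omega^\Gamma$ with the number of $\Theta$-orbits in $S\setminus A=\{j\}$, which is~$1$. Since $\Omega$ is Moufang, part~(v) now yields that $\Omega^\Gamma$ carries the canonical structure of a Moufang set, as required.

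The only delicate step is the middle one: arguing that the $\Gamma$-chambers are full panels of $\Omega$ rather than chambers of $\Omega$. The argument hinges on $\Gamma$ being type-preserving, so that a fixed chamber would force fixed panels of both types and contradict the hypothesis. Once this identification of $A$ is in place, the rest reduces to reading off the relative rank from Theorem \ref{nzz8}.
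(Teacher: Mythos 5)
Your proof is correct and follows the same approach as the paper, which simply cites \ref{toy1} for the descent-group claim and \ref{nzz8}(v) for the Moufang-set structure. You have usefully filled in the implicit step—that the $\Gamma$-chambers are panels of type $\{i\}$ and hence the relative rank is~$1$—by observing that $\Gamma$ must be type-preserving (since $\Omega$ is not simply laced, so $G=G^\circ$ in \ref{goo3}) and then ruling out fixed chambers of $\Omega$.
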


\begin{proof}
This holds by \ref{nzz8}(v) and \ref{toy1}.
\end{proof}

\section{Main Results}\label{nzz33}

We can now state our main results. The proofs of \ref{abc1} and \ref{abc2} are
in \S\ref{nzz40}.

\begin{theorem}\label{abc1}
Let $\Xi$ be a Moufang quadrangle of type $F_4$ with a polarity $\rho$.
Then there exists an octagonal set $(E,\theta)$
and an automorphism $\chi$ of the field $E$ of order~$2$
that commutes with the Tits endomorphism $\theta$ such that the following hold:
\begin{enumerate}[\rm(i)]
\item The building $\Delta:=F_4(E,\theta)$ possesses a type-preserving $\chi$-involution
$\xi$ and a polarity $\sigma$ such that
$$\Gamma:=\langle\xi,\sigma\rangle\subset{\rm Aut}(\Delta)$$
is a descent group of order~$4$.
\item There exists an isomorphism from $\Xi$ to the fixed point building
$\Delta^{\langle\xi\rangle}$ which carries the polarity $\rho$ to the restriction of $\sigma$ to
$\Delta^{\langle\xi\rangle}$.
\item The fixed point buildings $\Delta^{\langle\sigma\rangle}$ and
$\Delta^{\langle\sigma\xi\rangle}$ are Moufang octagons,
one isomorphic to ${\mathcal O}(E,\theta)$ and the other to ${\mathcal O}(E,\theta\chi)$.
\item The Moufang sets $\Delta^\Gamma$, $(\Delta^{\langle\xi\rangle})^{\langle\sigma\rangle}$,
$(\Delta^{\langle\sigma\rangle})^{\langle\xi\rangle}$
and $(\Delta^{\langle\sigma\xi\rangle})^{\langle\xi\rangle}$ are canonically isomorphic.
\item The restriction of $\xi$ to each of the two octagons in {\rm(iii)} is a $\chi$-involution
which fixes panels of one type and none of the other.
\end{enumerate}
\end{theorem}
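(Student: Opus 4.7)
The plan is to start from the polarized quadrangle $(\Xi,\rho)$, read off the octagonal datum $(E,\theta)$ from the polarity-algebra structure, and realize all the ingredients inside a single pseudo-split building of type~$F_4$. Using the polarity-algebra machinery forward-referenced in the introduction (see \ref{abc51} and \ref{q(uv)}), extract from $(\Xi,\rho)$ a field $K$ of characteristic~$2$ and a Tits endomorphism $\theta\colon K\to K$ with image $F:=K^\theta$; set $E:=K$ and $\Delta:=F_4(E,\theta)$. By \cite{canada}, $\Xi$ is known to be the fixed point building of a type-preserving Galois involution $\xi'$ acting on \emph{some} Moufang building of type $F_4$; the existence of $\theta$ together with the $F_4$-classification then forces this ambient building to be pseudo-split with parameter $(E,\theta)$, so we may identify it with $\Delta$ and write $\xi$ for the transported involution. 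Meanwhile, \cite{octagons} supplies a polarity $\sigma$ of $\Delta$ whose fixed point building is, by \ref{nzz5}, the Moufang octagon ${\mathcal O}(E,\theta)$.

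Define $\chi:=\psi(\xi)\in A={\rm Aut}(E,F)$, where $\psi$ is any Galois map of $\Delta$ in the sense of \ref{goo5}. Since $\xi$ has order~$2$ and the Galois-subgroup condition of \ref{goo6} forces $\chi\ne 1$, $\chi$ has order~$2$. The identity $\chi\theta=\theta\chi$ is obtained by computing the $\xi$-action on a standard rank-$2$ residue of type $\{1,2\}$ of $\Delta$ (whose root group sequence carries $\theta$ in its Moufang-polygon parametrization) and transporting through the isomorphism $\varphi$ of \ref{goo5}: since $\xi$ is type-preserving, its Galois image $\chi$ must preserve the intrinsic parameter $(E,\theta)$, which at the level of $E$ is exactly commutation with $\theta$. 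Next, pick by \ref{abc7} an apartment $\Sigma$ stabilized by $\sigma$; the argument of \ref{nzz5} yields a $\sigma$-fixed chamber $c\in\Sigma$, and by replacing $\xi$ with a conjugate in its $\psi$-fibre inside $G^\dagger$ (permissible by \ref{goo5}(i), which leaves $\chi$ unchanged) we may arrange that $\xi$ stabilizes $\Sigma$ and fixes $c$. Then $[\sigma,\xi]$ is type-preserving, fixes $c$, and lies in the kernel of $\psi$, hence is trivial, so $\Gamma=\langle\xi,\sigma\rangle$ is elementary abelian of order~$4$. That $\Gamma$ is a descent group then follows by combining \ref{toy1} (applied to $\langle\xi\rangle$), \ref{nzz5} (applied to each of the two polarities $\sigma$ and $\sigma\xi$), and a panel-counting argument in the spirit of \cite[22.37]{MPW}.

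With $\Gamma$ in hand, assertion~(ii) is immediate from the \cite{canada} identification together with the observation that the restriction of the non-type-preserving $\sigma$ to $\Delta^{\langle\xi\rangle}$ is a polarity of $\Xi$; by the uniqueness of the polarity-algebra data (cf.\ the forward reference \ref{nzz30}) this polarity coincides with $\rho$ up to isomorphism. For~(iii), $\Delta^{\langle\sigma\rangle}\cong{\mathcal O}(E,\theta)$ was already noted; to identify $\Delta^{\langle\sigma\xi\rangle}$, observe that $\sigma\xi$ is another polarity of $\Delta$, hence by \ref{nzz5} its fixed point building is a Moufang octagon, whose Tits-endomorphism parameter is read off from the action of $\sigma\xi$ on the root groups of a common apartment: $\sigma$ contributes the original twist $\theta$, while $\xi$ composes with the Galois automorphism $\chi$, giving a net twist $\theta\chi$; the relation $(\theta\chi)^2={\rm Frob}_E$ needed to make $(E,\theta\chi)$ an octagonal set is precisely $\chi\theta=\theta\chi$ together with $\chi^2=1$, both already established. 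Assertion~(iv) then follows formally from the commutativity of the fixed-point operations under the two generators of $\Gamma$, since $\Delta^\Gamma$ is obtained from any of the four iterated double fixed-point constructions; and~(v) is verified by direct inspection of the $\Gamma$-panel types of $\Delta$, using that $\Gamma$-chambers are residues of type $\{2,3\}$ (cf.\ \ref{nzz27}), so the two $\Gamma$-panels are of types $\{1,2,3\}$ and $\{2,3,4\}$, which under each octagon isomorphism correspond to panels of exactly one of the two types.

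The main obstacle I expect is the simultaneous verification of the three compatibilities $\chi^2=1$, $\chi\theta=\theta\chi$, and $[\xi,\sigma]=1$: all three hinge on tracking, through the Galois map $\psi$, how the descent automorphism $\chi$ interacts with the Tits endomorphism $\theta$ at the level of root group sequences of a rank-$2$ residue of $\Delta$. In particular, the identification of $\Delta^{\langle\sigma\xi\rangle}$ with ${\mathcal O}(E,\theta\chi)$ requires moving the polarity $\sigma$ across $\xi$ at the root-group level, which is the cleanest point at which the compound twist $\theta\chi$ actually emerges and where the hypothesis that $\chi$ commutes with $\theta$ becomes essential rather than merely natural.
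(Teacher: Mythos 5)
Your proposal contains a fundamental error that makes the construction fail: you set $E:=K$, but the field $E$ in the octagonal set $(E,\theta)$ of the theorem is \emph{not} the base field $K$ of the quadratic form $q$; it is a separable quadratic extension of $K$, namely a splitting field of $q$ in the sense of \ref{def1}. The building $F_4(K,\theta)$ you would obtain is simply the wrong building: its type-preserving descent under a Galois involution cannot produce the Moufang quadrangle $\Xi$ (whose defining extension pair $\{K/F,F/K\}$ is an invariant by \ref{onk105}), and the polarities of $F_4(K,\theta)$ would yield the Moufang octagon ${\mathcal O}(K,\theta)$, not ${\mathcal O}(E,\theta)$ for a quadratic extension $E/K$.

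Once this is repaired by taking $E$ to be a splitting field of $q$, you hit the genuinely nontrivial obstacle that your proposal silently skips: the Tits endomorphism $\theta$ of $K$ extracted from the polarity algebra must be extended to a Tits endomorphism of $E$, and this extension is not automatic. The paper's example \ref{tru7} exhibits a polarized quadrangle of type $F_4$ where $\theta$ extends to \emph{some} splitting fields of $q$ but not to others, so one cannot pick an arbitrary standard decomposition. The core technical work of Sections~\ref{sec6} and~\ref{sec6x} (culminating in \ref{abc92}, \ref{tru1} and \ref{tru68}) is precisely the demonstration that the polarity singles out a suitable $E$: the polar triple identity $q(d)q(e)=f(de,ed)+f(de,ed)^\theta$ shows that $q(d)q(e)$ is a Tits trace, so by \ref{abc71}(i) the endomorphism $\theta$ extends to the splitting field of $x^2+x+q(d)q(e)$, and one then shows this field can serve as $E$ with the normalization $\alpha=\beta^{-\theta}$. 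Without this ingredient the phrase ``the existence of $\theta$ together with the $F_4$-classification then forces this ambient building to be pseudo-split with parameter $(E,\theta)$'' has no content, because you have not yet specified what ``$\theta$ on $E$'' means.

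A secondary gap: your verification of the compatibilities $\chi^2=1$, $\chi\theta=\theta\chi$ and $[\xi,\sigma]=1$ is asserted but not carried out, and the last of these is subtle. In the paper these are established by writing $\sigma$ and $\xi$ explicitly in terms of the Chevalley-type coordinate system of \ref{wax31}--\ref{wax34} on $F_4(L,E)$ (see \ref{onk40}, \ref{onk51}, \ref{onk52}). The argument you suggest---that $[\sigma,\xi]$ fixes $c$, is type-preserving and lies in the kernel of the Galois map, hence is trivial---does not work, because the kernel of the Galois map is $G^\dagger G_\Sigma^\circ$-large and its elements fixing $c$ need not be trivial; one needs the explicit parametrization to see that the commutator vanishes. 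Similarly, the fact that $\Delta^{\langle\sigma\xi\rangle}\cong{\mathcal O}(E,\theta\chi)$ (assertion~(iii)) is established in the paper via an explicit root-group computation (\ref{onk32}), not by the heuristic ``twist composition'' you describe.
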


\begin{theorem}\label{abc2}
Let $(E,\theta)$ be an octagonal set, let $\chi$ be an automorphism of $E$ of order~$2$,
let $\Omega={\mathcal O}(E,\theta)$, let $\Delta=F_4(E,\theta)$ and suppose that
there exists a $\chi$-involution $\kappa$ of $\Omega$ which
fixes panels of one type but not of the other type.
Then there is a type-preserving $\chi$-involution $\xi$ of $\Delta$ and a polarity
$\sigma$ of $\Delta$ such that the following hold:
\begin{enumerate}[\rm(i)]
\item $\Gamma=\langle\xi,\sigma\rangle\subset{\rm Aut}(\Delta)$ is a descent group of
$\Delta$ of order~$4$.
\item There is an isomorphism
from $\Omega$ to the fixed point building $\Delta^{\langle\sigma\rangle}$ which carries $\kappa$ to
the restriction of $\xi$ to $\Delta^{\langle\sigma\rangle}$.
\item $\langle\xi\rangle$-chambers are
residues of type $\{2,3\}$ with respect to the standard numbering of the vertex set of
the Coxeter diagram $F_4$,
the fixed point building $\Xi:=\Delta^{\langle\xi\rangle}$ is a Moufang quadrangle of type~$F_4$
and the polarity $\sigma$ of $\Delta$ induces a polarity $\rho$ on $\Xi$.
\item The Moufang sets $\Delta^\Gamma$, $(\Delta^{\langle\xi\rangle})^{\langle\sigma\rangle}$,
$(\Delta^{\langle\sigma\rangle})^{\langle\xi\rangle}$
and $(\Delta^{\langle\sigma\xi\rangle})^{\langle\xi\rangle}$ are canonically isomorphic.
\item The automorphism $\chi$ commutes with the Tits endomorphism $\theta$ and
the fixed point building
$\Delta^{\langle\xi\sigma\rangle}$ is isomorphic to ${\mathcal O}(E,\theta\chi)$.
\end{enumerate}
\end{theorem}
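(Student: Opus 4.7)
The plan is to realize $\Omega$ inside $\Delta$ as the fixed point building of a polarity, then lift $\kappa$ to an involution of $\Delta$ commuting with that polarity, and finally read off the assertions from results already in the paper. First I would apply the theorem of Tits (\cite{octagons}) to obtain a polarity $\sigma\in{\rm Aut}(\Delta)$ such that $\Delta^{\langle\sigma\rangle}$ is canonically isomorphic to $\Omega={\mathcal O}(E,\theta)$; by \ref{nzz5}, this fixed point building is in fact a Moufang octagon, which makes the identification possible. Fix an isomorphism $\varphi\colon\Omega\to\Delta^{\langle\sigma\rangle}$ and set $\kappa':=\varphi\kappa\varphi^{-1}$, a $\chi$-involution of $\Delta^{\langle\sigma\rangle}$ fixing panels of one type but not of the other. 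I would then lift $\kappa'$ to an element $\xi\in C_{{\rm Aut}(\Delta)}(\sigma)$ using descent theory: the restriction map from $C_{{\rm Aut}(\Delta)}(\sigma)$ onto the group of Moufang-structure-preserving automorphisms of $\Delta^{\langle\sigma\rangle}$ is surjective, and compatibility of the Galois maps of $\Delta$ and of $\Delta^{\langle\sigma\rangle}$ allows $\xi$ to be chosen as a $\chi$-involution of $\Delta$. Since $\chi$ is a field automorphism (hence trivial on the Coxeter diagram $F_4$), $\xi$ is type-preserving and $\xi\neq\sigma$, so $\Gamma=\langle\xi,\sigma\rangle$ is elementary abelian of order~$4$.

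Under the correspondence between panels of $\Delta^{\langle\sigma\rangle}$ and the $\sigma$-stable residues of $\Delta$ provided by \ref{nzz8}, the hypothesis that $\kappa$ fixes panels of one type but none of the other translates into the statement that the $\langle\xi\rangle$-chambers of $\Delta$ are residues of type $\{2,3\}$; in particular $\xi$ is isotropic, so by \ref{toy1} $\langle\xi\rangle$ is a descent group, and by \ref{nzz22} the fixed point building $\Xi:=\Delta^{\langle\xi\rangle}$ is a Moufang quadrangle of type $F_4$. Since $\sigma$ commutes with $\xi$ it stabilizes $\Xi$, and since $\sigma$ is not type-preserving on $\Delta$ it induces a polarity $\rho$ on $\Xi$. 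That $\Gamma$ itself is a descent group of order~$4$ then follows by counting $\Gamma$-chambers in each of the two $\Gamma$-panels (the $\{2,3\}$- and $\{1,4\}$-residues of $\Delta$ containing a chamber of a $\Gamma$-stable apartment), exactly in the spirit of the second half of the proof of \ref{nzz5}. Assertion (iv) is the standard iterated fixed point statement of descent theory applied to the three chains $1\subset H\subset\Gamma$ with $H\in\{\langle\xi\rangle,\langle\sigma\rangle,\langle\sigma\xi\rangle\}$.

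For (v), the product $\sigma\xi$ is non-type-preserving of order~$2$, i.e.\ a polarity of $\Delta$, so \ref{nzz5} gives that $\Delta^{\langle\sigma\xi\rangle}$ is a Moufang octagon. Its octagonal set can be read off as $(E,\theta\chi)$ by inspecting the root group sequence that $\sigma\xi$ induces on a stabilized apartment, where $\sigma$ contributes the Tits endomorphism $\theta$ that defines $\Omega$ and $\xi$ twists the coordinatization by $\chi$. The requirement that $\theta\chi$ itself be a Tits endomorphism gives $(\theta\chi)^2={\rm Frob}_E=\theta^2$, which on using $\chi^2={\rm id}$ forces $\chi\theta=\theta\chi$. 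The main obstacle throughout is the lifting step in the first paragraph: the kernel of the restriction $C_{{\rm Aut}(\Delta)}(\sigma)\to{\rm Aut}(\Delta^{\langle\sigma\rangle})$ is non-trivial and is controlled, via \ref{onk98}, by the unipotent radicals of $\sigma$-fixed chambers, so one must verify that the coset of lifts of $\kappa'$ really contains an element which is simultaneously an involution and maps to $\chi$ under the Galois map of $\Delta$. This verification is a computation inside those unipotent radicals and their centralizers, closely parallel to the commutator argument carried out in the proof of \ref{nzz5}.
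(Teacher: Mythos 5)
Your overall strategy---invoke Tits' octagon theorem to realize $\Omega\cong\Delta^{\langle\sigma\rangle}$, lift $\kappa$ to a commuting involution $\xi$ of $\Delta$, and read off assertions (i)--(v) using descent theory---matches the paper's proof in outline. However, there are two genuine gaps in the details.

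First, your treatment of the lifting step inverts the difficulty. You identify as ``the main obstacle'' that the restriction map $C_{{\rm Aut}(\Delta)}(\sigma)\to{\rm Aut}(\Delta^{\langle\sigma\rangle})$ has a non-trivial kernel controlled by $\sigma$-fixed unipotent radicals, and propose finding the right element in a coset of this kernel by a commutator computation parallel to that in the proof of~\ref{nzz5}. This is not how the lift works. The paper's Proposition~\ref{onk30} shows that every automorphism of $\Omega$ has a \emph{unique} extension to a type-preserving automorphism of $\Delta$, and that all such extensions automatically commute with $\sigma$; on the type-preserving part of $C_{{\rm Aut}(\Delta)}(\sigma)$ the restriction is injective, so no coset search or commutator verification is needed. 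The kernel of the full restriction map is simply $\langle\sigma\rangle$, not something governed by unipotent radicals via~\ref{onk98}. Once uniqueness is established (the key ingredient is that a type-preserving automorphism of $\Delta$ acting trivially on $\Omega$ must centralize every root group of $\Delta$, which is a different kind of argument than what you sketch), the compatibility of the Galois maps (\ref{onk30x}, \ref{onk30a}) gives the $\chi$-involution property for free.

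Second, your argument for part~(v) is circular: you ``read off'' the octagonal set of $\Delta^{\langle\xi\sigma\rangle}$ as $(E,\theta\chi)$ and \emph{then} deduce $\chi\theta=\theta\chi$ from the requirement that $\theta\chi$ be a Tits endomorphism. But the identification $(E,\theta\chi)$ already presupposes that $\theta\chi$ is a Tits endomorphism, so you have not proved commutation. The paper establishes $\theta\chi=\chi\theta$ in~\ref{onk32}(i) directly from the relation $\sigma\xi=\xi\sigma$, by writing out how each side acts on the root groups $U_{\beta_i}$ of a suitable basis and equating the resulting $\theta\chi$- and $\chi\theta$-twists; only \emph{after} this does one conclude that $\Delta^{\langle\xi\sigma\rangle}\cong{\mathcal O}(E,\chi\theta)$. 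Finally, a more minor issue: for assertion~(i), there is only one $\Gamma$-panel (namely $\Delta$ itself, since $\Gamma$ has relative rank~$1$), not two; the argument should be that $\Delta^\Gamma=\Omega^{\langle\kappa\rangle}$ is a Moufang set by~\ref{nzz37} and hence has at least three $\Gamma$-chambers, rather than a panel-by-panel count in the style of the proof of~\ref{nzz5}.
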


\begin{corollary}\label{nzz28}
The class of Moufang sets of the form $\Xi^{\langle\rho\rangle}$, where $\Xi$ and $\rho$ are as in
{\rm\ref{abc1}}, coincides with the class of Moufang sets of the form $\Omega^{\langle\kappa\rangle}$,
where $\Omega$ and $\kappa$ are as in {\rm\ref{abc2}}.
\end{corollary}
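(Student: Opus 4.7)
My plan is to deduce Corollary \ref{nzz28} as an almost immediate consequence of Theorems \ref{abc1} and \ref{abc2}, using the part (iv) of each that identifies the two pertinent rank-$1$ fixed point buildings as canonically isomorphic Moufang sets.

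For the forward inclusion, I would start with a Moufang quadrangle $\Xi$ of type $F_4$ and a polarity $\rho$, and apply Theorem \ref{abc1} to produce an octagonal set $(E,\theta)$, an automorphism $\chi$ of order~$2$, the building $\Delta=F_4(E,\theta)$ and involutions $\xi,\sigma\in{\rm Aut}(\Delta)$ generating a descent group $\Gamma$ of order~$4$. I would then set $\Omega:=\Delta^{\langle\sigma\rangle}$, which by \ref{abc1}(iii) is isomorphic to $\mathcal{O}(E,\theta)$ (one of the two octagons), and let $\kappa$ denote the automorphism of $\Omega$ induced by $\xi$. By \ref{abc1}(v), $\kappa$ is a $\chi$-involution fixing panels of exactly one type, so $(\Omega,\kappa)$ is a configuration of the type appearing in \ref{abc2}. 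By \ref{abc1}(ii), the isomorphism $\Xi\cong\Delta^{\langle\xi\rangle}$ carries $\rho$ to $\sigma|_{\Delta^{\langle\xi\rangle}}$, hence identifies $\Xi^{\langle\rho\rangle}$ with $(\Delta^{\langle\xi\rangle})^{\langle\sigma\rangle}$. Finally, \ref{abc1}(iv) gives
\[
(\Delta^{\langle\xi\rangle})^{\langle\sigma\rangle}\;\cong\;(\Delta^{\langle\sigma\rangle})^{\langle\xi\rangle}\;=\;\Omega^{\langle\kappa\rangle},
\]
which is the desired Moufang set of the second type.

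For the reverse inclusion, I would proceed symmetrically using Theorem \ref{abc2}. Given $(E,\theta)$, $\chi$, $\Omega=\mathcal{O}(E,\theta)$ and a $\chi$-involution $\kappa$ of $\Omega$ fixing panels of one type but not the other, \ref{abc2} furnishes $\Delta=F_4(E,\theta)$ and involutions $\xi,\sigma$ generating a descent group $\Gamma$ of order~$4$. Set $\Xi:=\Delta^{\langle\xi\rangle}$ and let $\rho$ be the polarity of $\Xi$ induced by $\sigma$; by \ref{abc2}(iii), $\Xi$ is a Moufang quadrangle of type $F_4$ and $\rho$ is indeed a polarity, so $(\Xi,\rho)$ is a configuration of the type appearing in \ref{abc1}. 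Part \ref{abc2}(ii) identifies $\Omega^{\langle\kappa\rangle}$ with $(\Delta^{\langle\sigma\rangle})^{\langle\xi\rangle}$, and \ref{abc2}(iv) then gives
\[
\Omega^{\langle\kappa\rangle}\;=\;(\Delta^{\langle\sigma\rangle})^{\langle\xi\rangle}\;\cong\;(\Delta^{\langle\xi\rangle})^{\langle\sigma\rangle}\;=\;\Xi^{\langle\rho\rangle},
\]
completing the equality of classes.

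There is no real obstacle here: all the nontrivial work (the construction of $\Delta$, $\xi$, $\sigma$, the verification that the various fixed point subbuildings have the claimed type, and the canonical identification of the four rank-$1$ fixed point buildings) has been folded into the two main theorems. The only thing to be careful about is to check, in each direction, that the pair produced really lies in the target class, i.e.\ that the resulting $\kappa$ actually fixes panels of one type but none of the other (supplied by \ref{abc1}(v)) and that the resulting $\rho$ is actually a polarity of a Moufang quadrangle of type $F_4$ (supplied by \ref{abc2}(iii)); once these are invoked, the canonical isomorphisms of Moufang sets in part (iv) of each theorem close the loop.
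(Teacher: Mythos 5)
Your proof is correct and follows essentially the same route as the paper: both directions reduce to the chain of canonical isomorphisms in part (iv) of Theorems \ref{abc1} and \ref{abc2}, with part (v) of \ref{abc1} (resp.\ part (iii) of \ref{abc2}) supplying the verification that the resulting pair $(\Omega,\kappa)$ (resp.\ $(\Xi,\rho)$) belongs to the target class. The paper phrases the argument slightly more compactly by passing through $\Delta^\Gamma$ directly, but the content is the same.
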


\begin{proof}
Let $\Xi$, $\rho$, $\Delta$ and $\Gamma$
be as in \ref{abc1}. By \ref{abc1}(iv), $\Xi^{\langle\rho\rangle}\cong\Delta^\Gamma$
and hence by \ref{abc1}(v), $\Xi^{\langle\rho\rangle}$ is isomorphic to a Moufang set of the form
$\Omega^{\langle\kappa\rangle}$, where $\Omega$ and $\kappa$ are as in \ref{abc2}. Suppose,
conversely, that $\Omega$, $\kappa$, $\Delta$ and $\Gamma$ are as in \ref{abc2}. By
\ref{abc2}(iv), $\Omega^{\langle\kappa\rangle}\cong\Delta^\Gamma$. By \ref{abc2}(iii), it follows
that $\Omega^{\langle\kappa\rangle}$ is isomorphic to a Moufang set of the form $\Xi^{\langle\rho\rangle}$,
where $\Xi$ and $\rho$ are as in \ref{abc1}.
\end{proof}

\begin{remark}\label{onk90}
In both \ref{abc1} and \ref{abc2}, we are using the notion of a $\chi$-involution
(defined in \ref{nzz10})
with respect to Galois maps $\psi_\Delta$, $\psi_\Omega$ and $\psi_{\Omega_\xi}$
as in \ref{onk30x} and \ref{onk32}(iv).
\end{remark}

\begin{definition}\label{nzz29}
We call a Moufang set of the form $\Delta^\Gamma$,
where $\Delta$ and $\Gamma$ are as in \ref{abc1} (or \ref{abc2})
a Moufang set {\it of outer $F_4$-type}
(to distinguish them from other Moufang sets which arise as
the fixed point buildings of type-preserving descent groups of a building of type~$F_4$;
see \cite{tom-vm}).
\end{definition}

\begin{remark}\label{onk95}
With the conventions described in \cite[34.2]{MPW},
$$\includegraphics[scale=.7]{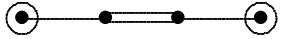}$$
is the Tits index of $\langle\xi\rangle$,
$$\includegraphics[scale=.7]{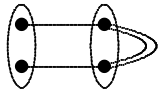}$$
is the Tits index of $\langle\sigma\rangle$ and $\langle\sigma\xi\rangle$ and
$$\includegraphics[scale=.7]{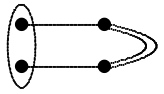}$$
is the Tits index of $\Gamma$, where $\Gamma=\langle\xi,\sigma\rangle$ is as in \ref{abc1}
(or \ref{abc2}).
\end{remark}

The following is proved in \S\ref{nzz60}.

\begin{theorem}\label{nzz13}
The group generated by all the root groups of a Moufang set of outer $F_4$-type is simple.
\end{theorem}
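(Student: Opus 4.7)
The plan is to apply Iwasawa's simplicity criterion: a perfect group $G$ acting faithfully and primitively on a set $X$ is simple, provided there exists an abelian subgroup $A$ normal in some point stabilizer $G_x$ whose $G$-conjugates generate $G$. Here $G = G^\dagger$ is the group generated by the root groups of a Moufang set $(X,\{U_x\}_{x\in X})$ of outer $F_4$-type. The action of $G^\dagger$ on $X$ is faithful because $G^\dagger$ is by \ref{abc6} a subgroup of $\mathrm{Sym}(X)$, and the sharp transitivity of each $U_x$ on $X\setminus\{x\}$ together with the Moufang set axioms yields two-transitivity, hence primitivity. It therefore remains to produce an abelian normal subgroup of $G^\dagger_x$ with the required normal-closure property, and to verify perfectness of $G^\dagger$.

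For the abelian subgroup I would take $A = Z(U_x)$. Since the root groups of a Moufang set of outer $F_4$-type are nilpotent of class~$3$ (as stressed in the introduction), one has $Z(U_x) \neq 1$; being characteristic in $U_x$ and with $U_x$ normal in $G^\dagger_x$ (the Moufang-set axiom $g^{-1}U_xg = U_{gx}$ applied to $g \in G^\dagger_x$), $A$ is normal in $G^\dagger_x$. Any $g \in G^\dagger$ with $g(x)=y$ satisfies $g^{-1}U_x g = U_y$ and hence $g^{-1}Z(U_x)g = Z(U_y)$, so the normal closure $\langle A^{G^\dagger}\rangle$ automatically contains $Z(U_y)$ for every $y \in X$. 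Thus it will suffice to show that within the single stabilizer $G^\dagger_x$, the Hua-subgroup conjugates of $Z(U_x)$ generate all of $U_x$: this forces the normal closure to contain every $U_y$ and therefore to equal $G^\dagger$.

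This last step is the heart of the argument and I expect it to be the main obstacle. It requires the explicit structure of the lower central series $U_x \supseteq \gamma_2(U_x) \supseteq \gamma_3(U_x) \subseteq Z(U_x)$ developed in the earlier sections, together with a careful analysis of how the Hua subgroup acts on each graded quotient via the Tits endomorphism $\theta$ and the polarity-algebra identities, in particular \eqref{nzz14}; one must check that this action is rich enough to reach every class in $U_x/Z(U_x)$ and then to lift back. Finally, to establish perfectness of $G^\dagger$ I would write each element of $U_x$ as a commutator $[h,v]$ with $h$ in the Hua subgroup and $v \in U_x$ suitably chosen, again using the multiplicative quadratic-form identity and the explicit $\mu$-maps of the Moufang set. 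Once these ingredients are assembled, Iwasawa's criterion immediately yields that $G^\dagger$ is simple.
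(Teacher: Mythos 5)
Your overall plan is in the right family—Iwasawa—and the paper also runs an Iwasawa-type argument, but there is a genuine flaw in the key normal-closure step of your version, and the paper sidesteps that step in a way you should understand.

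The flaw: you take $A=Z(U_x)$ and then claim it suffices to show that \emph{within} $G^\dagger_x$ ``the Hua-subgroup conjugates of $Z(U_x)$ generate all of $U_x$''. But $Z(U_x)$ is characteristic in $U_x$, and $U_x\lhd G^\dagger_x$ (the axiom $g^{-1}U_xg=U_{(x)g}$ applied with $g\in G^\dagger_x$), so $Z(U_x)$ is normal in $G^\dagger_x$; in particular every Hua-subgroup conjugate of $Z(U_x)$ is $Z(U_x)$ itself. Since $U_x$ is nilpotent of class~$3$, we have $Z(U_x)\neq U_x$, so the generation statement you need is false. Consequently your route to $\langle A^{G^\dagger}\rangle=G^\dagger$ does not close up as written.

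What the paper does instead is to apply the Iwasawa-style argument with $A=U_c$ itself, replacing ``abelian'' by ``nilpotent'': if $1\neq I\lhd G^\dagger$, then by $2$-transitivity $I$ is transitive, so $IU_c$ contains every conjugate of $U_c$ and hence $G^\dagger=IU_c$; then $G^\dagger/I\cong U_c/(U_c\cap I)$ is nilpotent, and a perfect nilpotent group is trivial. With $A=U_c$ the condition $\langle A^{G^\dagger}\rangle=G^\dagger$ is automatic from the definition of $G^\dagger$, so you never need the problematic normal-closure step. The genuinely non-formal input is the perfectness of $G^\dagger$, and here the paper is concrete where your proposal is only a sketch: it exhibits three $\tau$-invariant subgroups $R_0,R_1,R_2$ of $U_c$ (built in \ref{pil0}) whose associated Moufang subsets are Suzuki Moufang sets, uses the identity $U_x=[B^\dagger_x,U_x]$ for Suzuki groups (\ref{onk91p}) to get each $R_i\subset[G^\dagger,G^\dagger]$, and checks that $U_c=\langle R_0,R_1,R_2\rangle$. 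If you want to salvage your own version, switch to $A=U_c$ (nilpotent Iwasawa) and supply the Suzuki-subset argument for perfectness; the abelian centre $Z(U_c)$ is a dead end here.
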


\section{Moufang Quadrangles of Type $F_4$}\label{sec4}

The Moufang quadrangles of type $F_4$ were first described in \cite[Chapter~14 and 16.7]{TW};
see also \cite{tom1}, \cite{tom2} and \cite{qa}. In \cite[Chapter~16]{MPW}, it is shown
that these quadrangles arise in the study of pseudo-reductive quotients of parahoric subgroups
of groups of absolute type $E_6$, $E_7$ and $E_8$.

\begin{definition}\label{def0}
Let $K$ be a field of characteristic~$2$ and let $F$ be a subfield such that
$K^2\subset F$. We set $t*s={\rm Frob}_K(t)s=t^2s$ for all $t\in K$ and all $s\in F$
and $q_{F/K}(s):=s$ for all $s\in F$. The map $*$
makes $F$ into a vector space over $K$ on which the map $q_{F/K}$ is
a quadratic form. We write $[F]_K$ to refer to $F$ considered as a vector space over $K$ with
respect to $*$.
\end{definition}

\begin{notation}\label{def0a}
An {\it $F_4$-datum} is a 4-tuple $S=(E/K,F,\alpha,\beta)$, where
$E/K$ is a separable quadratic extension with ${\rm char}(K)=2$,
$F$ is a subfield of $K$ containing $K^2$, $\alpha$ is a non-zero element of $F$,
and $\beta$ is a non-zero element of $K$,
such that the quadratic form on $V_S:=E\oplus E\oplus[F]_K$
given by
\begin{align*}
(a,b,s)\mapsto&\beta^{-1}(N(a)+\alpha N(b))+q_{F/K}(s)\\
=&\beta^{-1}(N(a)+\alpha N(b))+s
\end{align*}
is anisotropic, where $N=N_{E/K}$ is the norm of the extension $E/K$
and $q_{F/K}$ is as in \ref{def0}. We denote this quadratic form by $q_S$.
\end{notation}

\begin{definition}\label{def1}
A quadratic form $q$ over a field $K$
is of {\it type $F_4$} if $q$ is similar to $q_S$ for
some $F_4$-datum $(E/K,F,\alpha,\beta)$. If this case, we will say that
$S$ is a {\it standard decomposition} of $q$
and that $E$ is a {\it splitting field} of $q$. We say that a quadratic space
$(K,V,q)$ is of type $F_4$ if the quadratic form $q$ is of type $F_4$.
\end{definition}

\begin{definition}\label{def2}
Let $S=(E/K,F,\alpha,\beta)$ be an $F_4$-datum,
let $D$ denote the composite field $FE^2$ and let $[K]_F$ denote $K$ regarded as
a vector space over its subfield $F$ in the standard way. By \cite[14.8]{TW},
the quadratic form $\hat q_S$ on $\hat V_S:=D\oplus D\oplus[K]_F$ over $F$ given by
\begin{equation}\label{def2x}
(x,y,t)\mapsto\alpha(N(x)+\beta^2 N(y))+t^2
\end{equation}
is a quadratic form of type $F_4$ with standard decomposition $(D/F,K^2,\beta^2,\alpha^{-1})$.
\end{definition}

\begin{notation}\label{abc83}
Let $S=(E/K,F,\alpha,\beta)$ be an $F_4$-datum, let $q:=q_S$ and $V=V_S$ be as in \ref{def0a}
and let $\hat V=\hat V_S$ and $\hat q:=\hat q_S$ be as in \ref{def2}.
Let $f$ and $\hat f$ denote the bilinear forms $\partial q$ and $\partial\hat q$ associated
with $q$ and $\hat q$ and
let $x\mapsto\bar x$ be the non-trivial element of ${\rm Gal}(E/K)$.
\end{notation}

We now introduce the Moufang quadrangle that corresponds to this data.

\begin{notation}\label{abc61}
Let $S$, $\hat V$, $V$, etc. be as in \ref{abc83}, let
$$U_+:=U_1U_2U_3U_4$$
be the group defined in terms of the isomorphisms $x_i\colon V\to U_i$
for $i=2$ and $4$ and $x_i\colon\hat V\to U_i$ for
$i=1$ and $3$ the following commutator
relations taken from \cite[16.7]{TW}):
\begin{align*}
[x_1(x,y,t), x_3(x',y',t')]
&= x_2 \bigl(0,\ 0,\ \alpha \bigl(x\bar{x}'+x'\bar{x}+\beta^2(y\bar{y}'+y'\bar{y})\bigr) \bigr),\\
[x_2(u,v,s), x_4(u',v',s')]
&=x_3\bigl(0,\ 0,\ \beta^{-1}\bigl(u\bar{u}'+u'\bar{u}+\alpha(v\bar{v}'+v'\bar{v}) \bigr) \bigr), \\
[x_1(x,y,t), x_4(u,v,s)]
&=x_2\bigl(tu+\alpha(\bar{x}v+\beta y\bar{v}),\ tv+xu+\beta y\bar{u}, \\
&\hspace*{10ex} t^2s + s\alpha(x\bar{x}+\beta^2y\bar{y})\\
&\hspace*{15ex}
+ \alpha \bigl( u^2x\bar{y}+\bar{u}^2\bar{x}y+\alpha(\bar{v}^2xy+v^2\bar{x}\bar{y})\bigr)\bigr)\\
& \hspace*{2.6ex} \cdot x_3 \bigl( sx+\bar{u}^2y+\alpha v^2\bar{y},
\ sy+\beta^{-2}(u^2x+\alpha v^2\bar{x}), \\
&\hspace*{10ex} st+t\beta^{-1}(u\bar{u}+\alpha v\bar{v})\\
&\hspace*{15ex}
+ \alpha \bigl( \beta^{-1}(xu\bar{v}+\bar{x}\bar{u}v)+y\bar{u}\bar{v}+\bar{y}uv \bigr) \bigr)
\end{align*}
for all $(u,v,s),(u',v',s')\in V$ and all $(x,y,t),(x',y',t')\in\hat V$ and
$$[U_1,U_2]=[U_2,U_3]=[U_3,U_4]=1.$$
The group $U_+$, its subgroups $U_1,\ldots,U_4$ and the isomorphisms
$x_1,\ldots,x_4$ depend only on the $F_4$-datum $S$. By \cite[16.7 and 32.11]{TW},
$$(U_+,U_1,U_2,U_3,U_4)$$
is a root group sequence. Let
$${\mathcal Q}(S)$$
denote the Moufang quadrangle,
$\Sigma$ the apartment of ${\mathcal Q}(S)$ and $c$ the chamber of $\Sigma$ obtained
by applying \cite[8.3]{TW} to this root group sequence.
There is a canonical identification of $U_1,\ldots,U_4$ with the
root groups of $\Xi:={\mathcal Q}(S)$
associated with the four roots of $\Sigma$ containing $c$ and we always identify
$U_+$ with the subgroup of ${\rm Aut}(\Xi)$ generated by these four root groups.
\end{notation}

\begin{definition}\label{abc85}
A {\it Moufang quadrangle of type $F_4$} is a Moufang quadrangle isomorphic
to ${\mathcal Q}(S)$ (as defined in \ref{abc61}) for some $F_4$-datum $S$.
\end{definition}

\begin{notation}\label{abc85y}
Let $S=(E/K,F,\alpha,\beta)$ and $\Omega:=(U_+,U_1,\ldots,U_4)$ be as in \ref{abc61}.
By \cite[28.45]{TW}, there is an anti-isomorphism (as defined in \cite[8.9]{TW})
from $\Omega$ to the root group sequence obtained by applying \ref{abc61} to the $F_4$-datum
$(D/F,K^2,\beta^2,\alpha^{-1})$ in \ref{def2}.
\end{notation}

\begin{notation}\label{abc85x}
Let $S$ and $(K,V,q)$ be as in \ref{abc83} and let $\Omega=(U_+,U_1,\ldots,U_4)$ and
$\Xi={\mathcal Q}(S)$ be as in
\ref{abc61}. The quadrangle $\Xi$
is  called ${\mathcal Q}_{\mathcal F}(K,V,q)$
in \cite[16.7]{TW} and $B_2^{\mathcal F}(K,V,q)$ in \cite[30.15]{affine}.
Suppose that $S'$ is any other $F_4$-datum and let $\Omega'=(U_+',U_1',\ldots,U_4')$
be the root group sequence obtained by applying \ref{abc61} to $S'$.
Then by \cite[35.12]{TW}, there is a type-preserving isomorphism
from $\Omega$ to $\Omega'$ if and only if the quadratic form $q_{S'}$ is
similar to $q$, where $q_{S'}$ is the quadratic form obtained by applying \ref{def0a} to $S'$.
In particular, $\Xi\cong{\mathcal Q}(S')$ for every standard decomposition $S'$ of $q$
(as defined in \ref{def1}).
\end{notation}

\begin{remark}\label{onk105}
Let $\Xi={\mathcal Q}(S)$ for some $F_4$ datum $S=(E/K,F,\alpha,\beta)$.
By \cite[28.4]{MPW} (see \ref{onk2}), $\{K/F,F/K\}$ is the pair of defining extensions of $\Xi$.
By \cite[35.12]{TW}, it is an invariant of $\Xi$.
\end{remark}

\begin{remark}\label{def5}
Let $s_0\in F^*$ and let
$S_1=(E/K,F,\alpha,\beta/s_0)$. Then $S_1$ is an $F_4$-datum,
$V_{S_1}=V$, $\hat V_{S_1}=\hat V$ and
the maps $x_1(x,y,t)\mapsto x_1(x,s_0y,t)$,
$x_3(x,y,t)\mapsto x_3(x/s_0,y,t/s_0)$ and $x_i(u,v,s)\mapsto x_i(u/s_0,v/s_0,s/s_0)$
for $i=2$ and $4$ extend to an isomorphism from ${\mathcal Q}(S)$ to ${\mathcal Q}(S_1)$
(by \cite[7.5]{TW}). Thus by reparametrizing $U_+$, we can
replace the element $x_4(0,0,s_0)$ by $x_4(0,0,1)$ without changing the
element $x_1(0,0,1)$.
\end{remark}

\begin{notation}\label{abc80}
We define two maps, one from $V\times\hat V$ to $V$ and the other
from $\hat V\times V$ to $\hat V$, both denoted either by $\cdot$ or juxtaposition,
so that
\begin{equation}\label{abc9}
[x_1(\hat v),x_4(v)]=x_2(v\cdot\hat v)x_3(\hat v\cdot v)
\end{equation}
in $U_+$ for all $(\hat v,v)\in\hat V\times V$.
(Note that we will also denote scalar multiplication by $\cdot$ or juxtaposition,
but this should not cause any confusion.)
\end{notation}

\begin{remark}\label{tru2}
Let $(K,V,q)$ and $f$ be as in \ref{abc83} and
let $d,e$ be linearly independent elements of $V$ such that $f(d,e)=1$.
Then $q(d)x^2+x+q(e)=q(xd+e)\ne0$ for all $x\in K$ since $q$ is anisotropic,
and the restriction of $q/q(d)$ to $\langle e,d\rangle$ is
isometric to the norm of the quadratic extension $L/K$,
where $L$ is the splitting field of the polynomial
$q(d)x^2+x+q(e)$ over $K$. Note that $L$ is also the splitting field of the polynomial
$x^2+x+q(d)q(e)$ over $K$.
\end{remark}

\begin{theorem}\label{abc77}
Let $(K,V,q)$, $\hat V$ and $f$ be as in {\rm\ref{abc83}} and let
$(U_+,U_1,\ldots,U_4)$ and $x_1,\ldots,x_4$ be as in {\rm\ref{abc61}}.
Let $d,e$ be two elements of $V$ and let $\xi$ be an element of $\hat V$ such that
$f(d,e)=1$ and $f(d,e\xi)=0$. Let $\alpha_0=f(d\xi,e\xi)$, let $\beta_0=q(d)^{-1}$,
let $L$ be the splitting field of the polynomial $p(x)=q(d)x^2+x+q(e)$ over
$K$ and let $\omega$ be a root of $p(x)$ in $L$. Then the following hold:
\begin{enumerate}[\rm(i)]
\item $S_0:=(L/K,F,\alpha_0,\beta_0)$ is a standard decomposition of $q$.
\item There exists an isometry $\pi$ from $(K,V,q)$ to $(K,V_{S_0},q_{S_0})$
sending the elements $d$, $e$, $d\xi$ and $e\xi$ to $(1,0,0)$, $(\omega,0,0)$, $(0,1,0)$ and
$(0,\omega,0)$, respectively, and $(0,0,s)$ to $(0,0,s)$ for all $s\in F$.
\item There exists an isometry $\hat\pi$ from $(F,\hat V,\hat q)$ to
$(F,\hat V_{S_0},\hat q_{S_0})$
sending the elements $\xi$, $\xi e\cdot d^{-1}$, $\xi e\cdot d^{-1}$ and $\xi d^{-1}$
to $(1,0,0)$, $(\omega^2,0,0)$, $(0,1,0)$ and $(0,\omega^2,0)$, respectively, and
$(0,0,t)$ to $(0,0,t)$ for all $t\in K$.
\item Let $(\tilde U_+,\tilde U_1,\ldots,\tilde U_4)$ and $\tilde x_1,\ldots,\tilde x_4$
be the root group sequence and the isomorphisms obtained by applying {\rm\ref{abc61}}
to $S_0$. Then there is an isomorphism from $U_+$ to $\tilde U_+$ extending the maps
$x_i(v)\mapsto\tilde x_i(\pi(v))$ for $i=1$ and $3$ and $x_i(v)\mapsto\tilde x_i(\hat\pi(v))$
for $i=2$ and $4$.
\end{enumerate}
\end{theorem}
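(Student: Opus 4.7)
The plan is to construct the isometries $\pi$ and $\hat\pi$ explicitly, using the hypotheses on $d$, $e$, $\xi$ to produce orthogonal decompositions of $V$ and $\hat V$ matching the standard decomposition of \ref{def0a}, and then to leverage the fact that the commutator relations defining $U_+$ in \ref{abc61} are determined entirely by the quadratic forms $q$, $\hat q$ together with the product maps of \ref{abc80}. In this way, (i)--(iv) reduce to sequential verifications.

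First, Remark \ref{tru2} gives that the polynomial $p(x) = q(d)x^2 + x + q(e)$ is irreducible and separable, so $L/K$ is a separable quadratic extension; its two roots in $L$ are $\omega$ and $\omega + \beta_0$ (in characteristic~$2$), and the norm form satisfies
$$\beta_0^{-1} N(a + b\omega) = q(d)a^2 + ab + q(e)b^2 = q(ad + be).$$
Hence $d \mapsto (1,0,0)$ and $e \mapsto (\omega, 0, 0)$ extends to an isometry from the $K$-span of $\{d, e\}$ in $V$ onto the first summand of $V_{S_0}$.

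Next, I would invoke the structural identities satisfied by the product $V \times \hat V \to V$ of \ref{abc80}, in particular $q(v\xi) = q(v)\hat q(\xi)$ for $v \in V$, together with the induced bilinearization $f(v\xi, w\xi) = f(v,w)\hat q(\xi)$. These give $f(d\xi, e\xi) = \hat q(\xi)$, so $\alpha_0 = \hat q(\xi)$, and then $q(ad\xi + be\xi) = \hat q(\xi) \cdot q(ad + be) = \beta_0^{-1}\alpha_0 N(a + b\omega)$, which matches the second summand of $q_{S_0}$. The hypothesis $f(d, e\xi) = 0$ together with the standard self-orthogonality identities $f(v, v\xi) = 0$ and the symmetry $f(d, e\xi) = f(e, d\xi)$ shows that $\langle d, e\rangle$ and $\langle d\xi, e\xi\rangle$ are $f$-orthogonal; both are orthogonal to the defect $[F]_K$, which $\pi$ maps identically onto the defect of $V_{S_0}$. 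Anisotropy of $q_{S_0}$ then follows from that of $q$, giving (i) and (ii). The construction of $\hat\pi$ in (iii) is symmetric: apply the same procedure to the opposite datum $(D/F, K^2, \beta^2, \alpha^{-1})$ supplied by \ref{def2}, using the anti-isomorphism of \ref{abc85y} to reduce to the case already treated.

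The chief obstacle is (iv): lifting the isometries to a root group isomorphism. The relations $[U_1, U_3]$ and $[U_2, U_4]$ in \ref{abc61} involve only the norms and Galois conjugations of $L/K$ and $D/F$, so they transfer automatically under an isometry of the norm forms. The long mixed commutator $[U_1, U_4]$ is where the product maps of \ref{abc80} intervene; since those products are intrinsic to the quadrangle via the commutator identity \ref{abc9}, they are transported automatically by any pair $(\pi, \hat\pi)$ that respects $q$, $\hat q$, and the defining data. The key verification, therefore, is that $\pi$ and $\hat\pi$ intertwine the two product maps $\cdot$ on $(V, \hat V)$ and $(V_{S_0}, \hat V_{S_0})$; once this is in hand, the universal property of root group sequences (see \cite[8.3]{TW}) yields the desired extension to an isomorphism $U_+ \cong \tilde U_+$.
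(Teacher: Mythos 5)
Your approach to (i) and (ii) is sound in outline, but it rests on the identity $q(v\hat w) = q(v)\hat q(\hat w)$ (from which your bilinearization $f(v\xi, w\xi) = f(v,w)\hat q(\xi)$, and hence $\alpha_0 = \hat q(\xi)\neq 0$, are derived). You call this a ``structural identity'' of the product maps of \ref{abc80}, but it is not among the axioms (F0)--(F12) of \ref{abc30} and does not fall out of them in a line or two; in the body of the paper it is attributed to \cite[8.95]{tom2} (see the proof of \ref{tru50}). Granting it, your orthogonality and norm computations do produce $\pi$ as required, and the self-orthogonality $f(v\hat w, v)=0$ and symmetry $f(d,e\xi)=f(e,d\xi)$ can indeed be extracted from (F11)/(F12). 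For (iii), however, ``apply the same procedure to the opposite datum via the anti-isomorphism of \ref{abc85y}'' does not, as written, yield the specific isometry $\hat\pi$ with the stated images; carrying it through would require choosing analogues of $d,e,\xi$ inside $(\hat V, V)$, identifying the correct root in $D$, and tracking how $\beta_0$, $\alpha_0$ transform under the anti-isomorphism, none of which you have done.

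The decisive gap is in (iv). You correctly isolate what has to be shown---that $\pi$ and $\hat\pi$ intertwine the two product maps---but you then assert the intertwining is ``transported automatically'' because the products are ``intrinsic to the quadrangle via the commutator identity \ref{abc9}.'' This is circular: the products on $(V,\hat V)$ and $(V_{S_0},\hat V_{S_0})$ are defined from the commutator formulas in \ref{abc61} for the two different $F_4$-data $S$ and $S_0$, they are not determined by $q$ and $\hat q$ alone, and the group isomorphism $U_+\to\tilde U_+$ through which you would ``transport'' them is exactly what (iv) asserts exists. The verification that $\pi(v\hat w)=\pi(v)\hat\pi(\hat w)$ and $\hat\pi(\hat v w)=\hat\pi(\hat v)\pi(w)$ is a genuine computation on the generators $d,e,d\xi,e\xi$ using the quadrangular-system identities; it is the real content of \cite[8.98--8.106]{tom2}, which is precisely the reference the paper's own one-line proof relies on. Until that computation is supplied, (iv)---and with it the isomorphism of Moufang quadrangles---is not proved.
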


\begin{proof}
This is proved in \cite[8.98-8.106]{tom2}. See, in particular,
the equations at the top of page~77 in \cite{tom2}.
\end{proof}

\begin{notation}\label{abc25}
Let $[s]_K=(0,0,s)\in V$ for each $s\in F$ and $[t]_F=(0,0,t)\in\hat V$ for each $t\in K$.
Thus $t[s]_K=[t^2s]_K$ and $s[t]_F=[st]_F$ for all $s\in F$ and all $t\in K$.
\end{notation}

\begin{proposition}\label{abc30}
The following identities hold for all $t\in K$, all $s\in F$, all $u,v,w\in V$ and all
$\hat u,\hat v,\hat w\in \hat V$:
\begin{itemize}
\item[\rm(F0)] $x\mapsto x\hat w$ and $\hat x\mapsto\hat xw$ are linear maps from $V$ to $V$
and from $\hat V$ to $\hat V$.
\item[(F1)] $v[t]_F = tv$.
\item[(F2)] $\hat v[s]_K=s\hat v$.
\item[(F3)] $v\cdot s\hat w = v\hat w\cdot [s]_F$.
\item[(F4)] $\hat v\cdot tw = \hat vw\cdot [t^2]_K$.
\item[(F5)] $[t]_Fv = [tq(v)]_F$.
\item[(F6)] $[s]_K\hat v = [s\hat q(\hat v)]_K$.
\item[(F7)] $v\hat w\hat w = v\cdot[\hat q(\hat w)]_F$
\item[(F8)] $\hat vww = \hat v\cdot [q(w)^2]_K$.
\item[(F9)] $v\cdot \hat wv = q(v)v\hat w$.
\item[(F10)] $\hat v\cdot w\hat v = \hat q(\hat v)\hat vw$.
 \item[(F11)] $w(\hat u+\hat v)=w\hat u+w\hat v+[\hat f(\hat uw,\hat v)]_K$.
\item[(F12)] $\hat w(u+v)=\hat wu+\hat wv+[f(u\hat w,v)]_F$.
\end{itemize}
\end{proposition}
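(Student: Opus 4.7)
The plan is to read off explicit formulas for the two multiplications $V\times\hat V\to V$ and $\hat V\times V\to\hat V$ from the defining relation \eqref{abc9} together with the last commutator formula in \ref{abc61}, and then to verify each of the thirteen identities by direct substitution. Writing $v=(u,v_0,s)\in V$ and $\hat w=(x,y,t)\in \hat V$, the argument of $x_2$ in that commutator is exactly $v\cdot\hat w$ and the argument of $x_3$ is $\hat w\cdot v$; each of (F0)--(F12) is obtained by specialising one of these arguments and simplifying the coordinate expressions that result.

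The identities fall naturally into several groups. The scalar identities (F1), (F2), (F5) and (F6) come from setting $x=y=0$ or $u=v_0=0$: in each case all but one monomial in each coordinate vanishes, and the surviving piece of the third coordinate collapses---using the defining formulas for $q$ in \ref{def0a} and for $\hat q$ in \ref{def2}---to the required expression (for example, $st+t\beta^{-1}(u\bar u+\alpha v_0\bar v_0)=tq(v)$ gives (F5)). Identities (F3) and (F4) express compatibility between the ordinary action of a scalar $s\in F$ on $\hat V$ (respectively $t\in K$ on $V$) and its twisted action on $[K]_F$ (respectively $[F]_K$), and follow by factoring the scalar out of each coordinate of the product. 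The additivity and homogeneity asserted in (F0) are then obtained by inspection: the first two coordinates of $v\cdot\hat w$ are manifestly linear in $v$, while the third coordinate is additive in $u,v_0$ (because $(u+u')^2=u^2+(u')^2$ in characteristic~$2$) and linear in $s$; the failure of ordinary $K$-homogeneity in the monomials $u^2 x\bar y$ etc.\ is exactly what makes the twisted action $\lambda*s=\lambda^2 s$ on $[F]_K$ the correct vector-space structure.

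The quadratic identities (F7)--(F10) are verified by applying the product formula twice with a repeated second (respectively first) argument: many of the mixed terms in the third coordinate cancel in characteristic~$2$, and the surviving expression reassembles into $\hat q(\hat w)$, $q(w)^2$, $q(v)$ or $\hat q(\hat v)$ times the appropriate vector, the precise identification again coming from \ref{def0a} and \ref{def2}. Finally, the polarisation identities (F11) and (F12) are proved by expanding $w(\hat u+\hat v)$ and subtracting $w\hat u+w\hat v$: the first two coordinates cancel outright (each piece is already additive in the varying argument), and the surviving cubic cross terms in the third coordinate coincide with $\hat f(\hat u w,\hat v)$ (respectively $f(u\hat w,v)$) under the identifications $\hat f=\partial\hat q$ and $f=\partial q$ from \ref{abc83}.

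The main obstacle is purely computational bookkeeping rather than ideas: the third coordinate of each product is a sum of five or six cubic monomials in $u,v_0,x,y$, so verifying (F11)--(F12) and the quadratic identities requires systematically matching cross terms against $\hat f$ on $\hat V=D\oplus D\oplus[K]_F$ (and dually for $f$). Once those matchings are made, every identity drops out without further trickery.
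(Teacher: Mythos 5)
Your approach --- reading off the explicit formulas for $v\cdot\hat w$ and $\hat w\cdot v$ from \eqref{abc9} together with the last commutator relation in \ref{abc61}, then verifying (F0)--(F12) by coordinate bookkeeping --- is correct and is essentially the paper's. The paper spares itself the raw coordinate computations by recognizing the products as the functions already introduced in \cite[14.15--14.16]{TW} and citing the identities of \cite[14.18]{TW}, but both routes rest on exactly the same identification of the multiplication maps.
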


\begin{proof}
Comparing \cite[16.7]{TW} with \eqref{abc9}, we can write the products $\hat v\cdot v$
and $v\cdot\hat v$ in terms of the functions
given in \cite[14.15--14.16]{TW}. The identities (F0)--(F12) can then be verified
with the help of the identities in \cite[14.18]{TW}.
\end{proof}

\noindent
The identities (F0)--(F12) are the axioms of a {\it radical quadrangular system}
as defined in \cite[Appendix A.3.2]{tom2}. These axioms can be used
to characterize Moufang quadrangles of type $F_4$ (defined in \ref{abc85} below);
see \cite[\S8.5]{tom2} and \cite[Chapter~28]{TW} for details.

\begin{notation}\label{abc12}
Using \ref{abc80}, we can re-write
the commutator relations in \ref{abc61} as follows:
\begin{align*}
[x_1(\hat v), x_3(\hat u)] &=x_2([\hat f(\hat v, \hat u)]_K)\\
[x_2(v), x_4(u)] &=x_3([f(v, u)]_F)\\
[x_1(\hat v), x_4(v)] &=x_2(v\hat v)x_3(\hat v v)\notag
\end{align*}
for all $u,v\in V$ and all $\hat u,\hat v\in\hat V$ as well as $[U_1,U_2]=[U_2,U_3]=[U_3,U_4]=1$.
\end{notation}

\section{The Polarity $\rho$}\label{sec4a}

We continue with all the notation of the previous section.

\begin{hypothesis}\label{abc84}
We suppose now that the Moufang quadrangle $\Xi={\mathcal Q}(S)$ introduced in \ref{abc61}
has a polarity $\rho$.
\end{hypothesis}

\begin{remark}\label{abc84x}
By \ref{abc7}, the polarity $\rho$ fixes an apartment. Since $\rho$ is a non-type-preserving
involution and apartments are circuits of length~$8$, $\rho$ fixes
two opposite chambers of this apartment. Since ${\rm Aut}(\Xi)$
acts transitively on incident pairs of apartments and chambers (by \cite[11.12]{spherical}),
we can assume that $\rho$ fixes the apartment $\Sigma$ and the chamber $c$
in \ref{abc61}. This means that
\begin{equation}\label{abc10}
\rho U_i\rho=U_{5-i}
\end{equation}
for all $i\in[1,4]$.
\end{remark}

\begin{notation}\label{abc97}
Let $\hat\varphi,\ \hat\varphi_1\colon \hat V\to V$ and
$\varphi,\ \varphi_1\colon V\to \hat V$ be the unique
additive bijections such that
\begin{align*}
\rho(x_1(\hat v)) &=x_4(\hat\varphi(\hat v))\\
\rho(x_2(v)) &= x_3(\varphi_1(v))\\
\rho(x_3(\hat v))&=x_2(\hat\varphi_1(\hat v))\\
\rho(x_4(v))&=x_1(\varphi(v))
\end{align*}
for all $v\in V$ and all $\hat v\in\hat V$.
By \ref{def5}, we can assume that $\varphi([1]_K)=[1]_F$. Since
$\rho$ is of order~$2$, we have
$$\hat\varphi=\varphi^{-1}\text{ and } \hat\varphi_1=\varphi_1^{-1}.$$
\end{notation}

\begin{lemma}\label{abc8}
The following hold:
\begin{enumerate}[\rm(i)]
\item $\varphi=\varphi_1$.
\item $\varphi([\hat q(\varphi(v))]_K)=[q(v)]_F$ for all $v\in V$.
\end{enumerate}
\end{lemma}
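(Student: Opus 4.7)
The plan is to apply $\rho$, regarded as an automorphism of $\mathrm{Aut}(\Xi)$ by conjugation, to the ``mixed'' commutator relation
$$[x_1(\hat v), x_4(v)] = x_2(v \hat v)\, x_3(\hat v v)$$
from \ref{abc12} and compare the two sides via the substitutions in \ref{abc97}. Since $\rho$ is a group automorphism, the left-hand side maps to $[x_4(\hat\varphi(\hat v)), x_1(\varphi(v))]$; using $[a,b]=[b,a]^{-1}$, the fact that $U_2$ and $U_3$ commute (so inversion of their product is componentwise), characteristic~$2$, and reapplication of the same commutator relation with $\hat w = \varphi(v) \in \hat V$ and $w = \hat\varphi(\hat v) \in V$, this equals
$$x_2\!\bigl(\hat\varphi(\hat v)\cdot\varphi(v)\bigr)\, x_3\!\bigl(\varphi(v)\cdot\hat\varphi(\hat v)\bigr).$$
The right-hand side maps to $x_3(\varphi_1(v\hat v))\, x_2(\hat\varphi_1(\hat v v))$.

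Because $U_2\cap U_3=1$ and the parametrizations $x_2$, $x_3$ are injective, the $U_2$- and $U_3$-components can be compared separately, yielding the two identities
\begin{equation}\label{planstar}
\hat\varphi(\hat v)\cdot\varphi(v) = \hat\varphi_1(\hat v\, v)
\quad\text{and}\quad
\varphi(v)\cdot\hat\varphi(\hat v) = \varphi_1(v\,\hat v)
\end{equation}
valid for all $v\in V$ and $\hat v\in\hat V$.

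Next I would specialize $\hat v=[1]_F$. The normalization $\varphi([1]_K)=[1]_F$ from \ref{abc97} gives $\hat\varphi([1]_F)=[1]_K$. From (F1) with $t=1$, $v\cdot[1]_F=v$, and from (F2) with $s=1$, $\varphi(v)\cdot[1]_K=\varphi(v)$; substituting into the second identity of \eqref{planstar} immediately yields $\varphi(v)=\varphi_1(v)$, which is (i). For (ii), I would substitute $\hat v=[1]_F$ into the first identity of \eqref{planstar}, using (F5) to rewrite $[1]_F\cdot v=[q(v)]_F$ and (F6) to rewrite $[1]_K\cdot\varphi(v)=[\hat q(\varphi(v))]_K$, obtaining $[\hat q(\varphi(v))]_K=\hat\varphi_1([q(v)]_F)$; applying $\varphi_1$ (which equals $\varphi$ by (i)) to both sides yields exactly $\varphi([\hat q(\varphi(v))]_K)=[q(v)]_F$.

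The main obstacle, such as it is, is purely bookkeeping: one must keep track of which variable lives in $V$ versus $\hat V$ when reapplying the commutator formula, and verify that signs and orderings collapse correctly using characteristic~$2$ and $[U_2,U_3]=1$. No deeper identity from \ref{abc30} beyond (F1), (F2), (F5), (F6) is needed, and the normalization $\varphi([1]_K)=[1]_F$ (secured by \ref{def5}) is precisely what permits the simplifications.
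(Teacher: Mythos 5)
Your proof is correct and takes essentially the same approach as the paper: applying $\rho$ to the $[x_1,x_4]$ commutator relation, exploiting $[U_2,U_3]=1$ and characteristic $2$ to reorder, and specializing $\hat v=[1]_F$ using (F1), (F2), (F5), (F6) together with the normalization $\varphi([1]_K)=[1]_F$. The only cosmetic difference is that you derive the general identity \eqref{planstar} for arbitrary $(\hat v, v)$ and then specialize, whereas the paper sets $\hat v=[1]_F$ from the outset.
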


\begin{proof}
Let $v\in V$.
By \eqref{abc9}, (F1) and (F5), we have
$$[x_1([1]_F),x_4(v)]=x_2(v)x_3([q(v)]_F).$$
Applying $\rho$, we obtain
$$[x_1(\varphi(v)),x_4([1]_K)]=x_2(\varphi_1^{-1}([q(v)]_F))x_3(\varphi_1(v)).$$
By \eqref{abc9}, (F2) and (F6), on the other hand,
$$[x_1(\varphi(v)),x_4([1]_K)]=x_2([\hat q(\varphi(v))]_K)x_3(\varphi(v)).$$
Therefore $\varphi(v)=\varphi_1(v)$ and $\varphi_1^{-1}([q(v)]_F)=[\hat q(\varphi(v))]_K$.
\end{proof}

\begin{lemma}\label{abc14}
$\varphi^{-1}([f(u,v)]_F)=[\hat f(\varphi(u),\varphi(v))]_K$ for all $u,v\in V$
and
$$\varphi([\hat f(\hat u,\hat v)]_K)=[f(\varphi^{-1}(\hat u),\varphi^{-1}(\hat v))]_F$$
for all $\hat u,\hat v\in\hat V$. In particular,
$\varphi([F]_K)=[K]_F$ and $\varphi^{-1}([K]_F)=[F]_K$.
\end{lemma}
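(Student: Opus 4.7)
The plan is to derive both identities by applying the polarity $\rho$ to the commutator relations listed in \ref{abc12} and comparing the two sides via \ref{abc97} and \ref{abc8}(i). For the first identity, I would start with
$$[x_2(v),x_4(u)]=x_3([f(v,u)]_F)$$
and apply $\rho$ to both sides. Because $\rho$ is an automorphism of $\Xi$, applying it commutes with the commutator bracket, and by \ref{abc97} (together with $\varphi=\varphi_1$ from \ref{abc8}(i)) the right-hand side becomes $x_2(\varphi^{-1}([f(v,u)]_F))$, while the left-hand side becomes $[x_3(\varphi(v)),x_1(\varphi(u))]$. Rewriting this last commutator using $[a,b]=[b,a]^{-1}$ and the first relation in \ref{abc12} (and noting that we work in characteristic~$2$, so inversion is harmless in the abelian group $U_2$), one obtains $x_2([\hat f(\varphi(u),\varphi(v))]_K)$. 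Equating the two expressions and using the symmetry of $\hat f$ yields the first identity.

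The second identity is proved in exactly the same way, starting instead from $[x_1(\hat v),x_3(\hat u)]=x_2([\hat f(\hat v,\hat u)]_K)$: applying $\rho$ and comparing with the middle relation of \ref{abc12} gives the formula $\varphi([\hat f(\hat u,\hat v)]_K)=[f(\varphi^{-1}(\hat u),\varphi^{-1}(\hat v))]_F$. The only bookkeeping is to track which of $\varphi$, $\varphi^{-1}$ appears on which side, which is forced by \ref{abc97}.

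For the ``in particular'' clause, I would first observe that, reading off the definitions in \ref{def0a} and \ref{def2}, the bilinear form $f=\partial q$ kills the $[F]_K$-summand of $V$ (because $q_{F/K}(s)=s$ is additive) while being non-degenerate on the complementary $E\oplus E$ part (its restriction there is, up to scalars, the trace form of the separable quadratic extension $E/K$, hence non-degenerate). Thus $[F]_K$ is precisely the radical of $f$, and symmetrically $[K]_F$ is the radical of $\hat f$. Now if $w\in[F]_K$, then $f(w,v)=0$ for every $v\in V$; applying the first identity of the lemma gives $\hat f(\varphi(w),\varphi(v))=0$ for every $v$, and since $\varphi$ is bijective this forces $\varphi(w)\in[K]_F$. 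Hence $\varphi([F]_K)\subseteq[K]_F$, and the reverse containment follows by the same argument from the second identity applied to $\varphi^{-1}$, giving the two equalities.

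The steps are routine given \ref{abc12}, \ref{abc97} and \ref{abc8}(i); the one place where one must be a little careful is the identification of $[F]_K$ and $[K]_F$ as the radicals of $f$ and $\hat f$, since without this the final clause does not follow formally from the two displayed identities. No real obstacle is expected beyond this bookkeeping.
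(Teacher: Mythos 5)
Your argument is correct, but it takes a genuinely different route from the paper's. The paper's proof is a one-liner: it derives the $f$-identity from \ref{abc8}(ii), namely $\varphi^{-1}([q(v)]_F)=[\hat q(\varphi(v))]_K$, by polarization — expand $f(u,v)=q(u+v)+q(u)+q(v)$, use additivity of $\varphi$ term by term, and the $\hat f$-identity appears; the ``in particular'' clause then follows from surjectivity of $f$ and $\hat f$. You instead bypass \ref{abc8}(ii) and apply $\rho$ directly to the two commutator relations $[x_2(v),x_4(u)]=x_3([f(v,u)]_F)$ and $[x_1(\hat v),x_3(\hat u)]=x_2([\hat f(\hat v,\hat u)]_K)$, using $\varphi=\varphi_1$ from \ref{abc8}(i) and the characteristic-$2$ fact that inverting in $U_2$, $U_3$ is harmless. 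Both are valid and both ultimately rest on $\rho$ acting on a commutator relation (the paper's \ref{abc8}(ii) is itself proved that way, just with a different relation); the paper's route is more economical since \ref{abc8}(ii) is already on the table, while yours is more self-contained.

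One small remark on the ``in particular'' clause: your detour through the explicit identification of $[F]_K$ and $[K]_F$ as the radicals of $f$ and $\hat f$ is correct (and follows from the $\hat q_S$-description and separability of $E/K$), but it is more than you need. Since $f$ and $\hat f$ are non-zero $K$- (resp.\ $F$-) bilinear forms, they are surjective onto $K$ and $F$, so every element of $[K]_F$ is of the form $[f(u,v)]_F$ and every element of $[F]_K$ is of the form $[\hat f(\hat u,\hat v)]_K$; the two displayed identities then give both inclusions directly. So the final clause does follow formally from the two identities together with the (weaker) fact that $f$ and $\hat f$ are not identically zero.
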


\begin{proof}
This follows from \ref{abc8}(ii).
\end{proof}

\begin{proposition}\label{abc15}
The following hold for all $v\in V$ and all $\hat v\in\hat V$:
\begin{enumerate}[\rm(i)]
\item $\varphi(v\hat v)=\varphi(v)\varphi^{-1}(\hat v)$.
\item $\varphi^{-1}(\hat v v)=\varphi^{-1}(\hat v)\varphi(v)$.
\end{enumerate}
\end{proposition}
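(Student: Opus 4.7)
The plan is to apply $\rho$ to the third commutator relation in \ref{abc12},
$$[x_1(\hat v), x_4(v)] = x_2(v\hat v)\, x_3(\hat v\cdot v),$$
and then read off the two claimed identities by projecting onto the factors $U_2$ and $U_3$.

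First, I would compute the image of the left-hand side. By \ref{abc97},
$\rho(x_1(\hat v))=x_4(\varphi^{-1}(\hat v))$ and $\rho(x_4(v))=x_1(\varphi(v))$, so
$$\rho\bigl([x_1(\hat v), x_4(v)]\bigr) = [x_4(\varphi^{-1}(\hat v)),\, x_1(\varphi(v))] = [x_1(\varphi(v)),\, x_4(\varphi^{-1}(\hat v))]^{-1}$$
by the identity $[y,x]=[x,y]^{-1}$. Applying \ref{abc12} to the inner commutator and inverting gives an element of the abelian group $U_2U_3$ (recall $[U_2,U_3]=1$), which in characteristic~$2$ simplifies to
$$x_2\bigl(\varphi^{-1}(\hat v)\cdot\varphi(v)\bigr)\, x_3\bigl(\varphi(v)\cdot\varphi^{-1}(\hat v)\bigr).$$

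Next I would compute the image of the right-hand side. Using \ref{abc97} and the equality $\varphi_1=\varphi$ from \ref{abc8}(i) (so $\hat\varphi_1=\varphi^{-1}$), I get
$$\rho\bigl(x_2(v\hat v)\,x_3(\hat v\cdot v)\bigr) = x_3\bigl(\varphi(v\hat v)\bigr)\, x_2\bigl(\varphi^{-1}(\hat v\cdot v)\bigr) = x_2\bigl(\varphi^{-1}(\hat v\cdot v)\bigr)\, x_3\bigl(\varphi(v\hat v)\bigr),$$
again using $[U_2,U_3]=1$.

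Equating the two expressions and using that the product decomposition in $U_2 U_3\cong U_2\times U_3$ is unique (since $U_2\cap U_3=1$), I would then compare the $U_2$-components to obtain (ii) and the $U_3$-components to obtain (i). The only subtlety is bookkeeping for the reversal of the commutator and the swap of $U_2$ and $U_3$ under $\rho$; both are neutralized by the commutativity of $U_2$ and $U_3$ and by the fact that we work in characteristic~$2$, so I do not anticipate any real obstacle.
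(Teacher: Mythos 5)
Your proof is correct and matches the paper's own argument: the paper also obtains both identities by applying $\rho$ to the commutator relation \eqref{abc9} and invoking $\varphi_1=\varphi$ from \ref{abc8}(i). The bookkeeping you carry out around the commutator reversal, the swap of $U_2$ and $U_3$, and the unique $U_2\times U_3$-decomposition is exactly what the paper's one-line proof implicitly relies on.
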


\begin{proof}
Applying $\rho$ to the identity \eqref{abc9}, we obtain both claims by
\ref{abc8}(i).
\end{proof}

\begin{notation}\label{abc40}
By \ref{abc14}, there exists a unique additive bijection $\theta$ from $K$ to $F$
such that $\varphi^{-1}([t]_F)=[t^\theta]_K$. Note that by \ref{abc14},
this means that
\begin{equation}\label{abc69}
\hat f(\varphi(u),\varphi(v))=f(u,v)^\theta
\end{equation}
for all $u,v\in V$.
\end{notation}

\begin{proposition}\label{abc41}
The following hold:
\begin{enumerate}[\rm(i)]
\item The map $\theta$ defined in {\rm\ref{abc40}} is a Tits endomorphism of $K$.
\item $\varphi(tv)=t^\theta\varphi(v)$ for all $v\in V$ and all $t\in K$.
\item $q(v)^\theta=\hat q(\varphi(v))$ for all $v\in V$.
\item $u\cdot\varphi(tv)=t^\theta\cdot u\varphi(v)$ for all $u,v\in V$ and all $t\in K$.
\end{enumerate}
\end{proposition}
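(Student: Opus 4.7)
Proof plan for Proposition \ref{abc41}.

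My strategy is to prove the four assertions in the order (iii), (ii), (iv), (i), because each earlier part feeds into the next, with the hardest bit — establishing $\theta^2=\mathrm{Frob}_K$ — coming last. For (iii), I would simply apply $\varphi^{-1}$ to the identity in Lemma \ref{abc8}(ii), which says $\varphi([\hat q(\varphi(v))]_K)=[q(v)]_F$. By the definition of $\theta$ in \ref{abc40}, $\varphi^{-1}([q(v)]_F)=[q(v)^\theta]_K$, and matching the ``$K$-coordinate'' of $[\,\cdot\,]_K$ yields $\hat q(\varphi(v))=q(v)^\theta$. For (ii), I would specialize Proposition \ref{abc15}(i) to $\hat v=[t]_F$: the left-hand side becomes $\varphi(v[t]_F)=\varphi(tv)$ by (F1), while the right-hand side becomes $\varphi(v)\cdot\varphi^{-1}([t]_F)=\varphi(v)\cdot[t^\theta]_K=t^\theta\varphi(v)$ by (F2), giving $\varphi(tv)=t^\theta\varphi(v)$.

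With (ii) in hand, assertion (iv) is a short computation: $u\cdot\varphi(tv)=u\cdot(t^\theta\varphi(v))=u\varphi(v)\cdot[t^\theta]_F=t^\theta\cdot u\varphi(v)$, where the middle equality is (F3) (with $s=t^\theta\in F$ and $\hat w=\varphi(v)$) and the last is (F1). For the multiplicativity part of (i), one applies (ii) twice: $\varphi((st)v)=(st)^\theta\varphi(v)$ and also $\varphi(s(tv))=s^\theta\varphi(tv)=s^\theta t^\theta\varphi(v)$, so cancelling the non-zero vector $\varphi(v)$ (for $v\ne 0$) gives $(st)^\theta=s^\theta t^\theta$. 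Additivity of $\theta$ is built into its definition, and $1^\theta=1$ because by our normalization $\varphi([1]_K)=[1]_F$; so $\theta$ is a ring homomorphism $K\to F\subset K$.

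The main obstacle is showing $\theta^2=\mathrm{Frob}_K$. My plan here is to apply $\varphi^{-1}$ to the two sides of the axiom (F4), namely $\hat v\cdot tw=\hat vw\cdot[t^2]_K$, for $t\in K$, $w\in V$, $\hat v\in\hat V$. On the left, \ref{abc15}(ii) combined with (ii) gives $\varphi^{-1}(\hat v\cdot tw)=\varphi^{-1}(\hat v)\cdot\varphi(tw)=\varphi^{-1}(\hat v)\cdot t^\theta\varphi(w)$, and then (F3) rewrites this as $\varphi^{-1}(\hat v)\varphi(w)\cdot[t^\theta]_F=\varphi^{-1}(\hat vw)\cdot[t^\theta]_F$ (the last equality being another application of \ref{abc15}(ii)). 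On the right, \ref{abc15}(ii) gives $\varphi^{-1}(\hat vw\cdot[t^2]_K)=\varphi^{-1}(\hat vw)\cdot\varphi([t^2]_K)$. Comparing and using that $\varphi^{-1}(\hat vw)$ can be chosen non-zero and that the action of $[t^\theta]_F$ and $\varphi([t^2]_K)$ on $V$ via the product with a fixed non-zero vector is injective (by (F1) applied to each side), we conclude $\varphi([t^2]_K)=[t^\theta]_F$. On the other hand, by the definition of $\theta$, $\varphi([t^2]_K)=[s]_F$ where $s\in K$ is the unique element with $s^\theta=t^2$; matching this against $\varphi([t^2]_K)=[t^\theta]_F$ forces $(t^\theta)^\theta=t^2$, i.e.\ $\theta^2=\mathrm{Frob}_K$, completing (i). The only subtlety I anticipate is the bookkeeping at this last step — making sure the injectivity argument for recovering a scalar from $v\mapsto v\cdot[r]_F$ is used on a genuinely non-zero $v=\varphi^{-1}(\hat vw)$, which can be arranged by choosing $\hat v$ and $w$ with $\hat vw\ne 0$.
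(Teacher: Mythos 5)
Your proof is correct, and for the hard part, (i), it follows a genuinely different route from the paper's. You first prove (ii) and (iii) directly from \ref{abc8}(ii), \ref{abc15}(i), (F1), (F2) and the definition of $\theta$ in \ref{abc40} — these arguments coincide with the paper's — and then reverse the dependency: you derive multiplicativity of $\theta$ from the $\theta$-semilinearity in (ii), via $\varphi((st)v)=\varphi(s(tv))$, rather than establishing (i) first. The paper instead computes $\varphi^{-1}\big([t]_F\cdot[s]_K\big)$ in two ways using (F1), (F2), \ref{abc15}(ii), \ref{abc25} and \ref{abc40} to get the single identity $(ts)^\theta=(s^{\theta^{-1}})^2t^\theta$ (valid for $t\in K$, $s\in F$), from which both multiplicativity and $\theta^2=\mathrm{Frob}_K$ are extracted by specialization and a short bootstrap through $K^2\subset F$. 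Your multiplicativity argument is cleaner, since it holds on all of $K$ at once without the bootstrap; on the other hand, your derivation of $\theta^2=\mathrm{Frob}_K$ from (F4) together with (F3) and \ref{abc15}(ii) is somewhat longer than the paper's, which never invokes (F3) or (F4). The injectivity step you flag at the end is indeed what you need: you must show both $[t^\theta]_F$ and $\varphi([t^2]_K)$ lie in $[K]_F$ (the latter because $t^2\in F$ and $\varphi([F]_K)=[K]_F$ by \ref{abc14}), so that (F1) identifies both products with scalar multiplication on $V$, where cancellation of a nonzero $\varphi^{-1}(\hat v w)$ (achievable, e.g., with $\hat v=[1]_F$, $w\ne 0$, since $q$ is anisotropic) is legitimate. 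Both proofs are sound; they are organized around different subsets of the axioms (F0)--(F12).
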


\begin{proof}
Choose $t\in K$ and $s\in F$. By \ref{abc25} and (F2), we have $[t]_F\cdot[s]_K=[st]_F$.
Applying $\varphi^{-1}$, we obtain $[t^\theta]_K\cdot[s^{\theta^{-1}}]_F=[(st)^\theta]_K$.
By \ref{abc25} and (F1), on the other hand, we have
$[t^\theta]_K\cdot[s^{\theta^{-1}}]_F=[(s^{\theta^{-1}})^2t^\theta]_K$.
Therefore
\begin{equation}\label{abc41a}
(ts)^\theta=(s^{\theta^{-1}})^2t^\theta.
\end{equation}
Setting $t=1$ in \eqref{abc41a}, we obtain
\begin{equation}\label{abc41b}
s^\theta=(s^{\theta^{-1}})^2.
\end{equation}
Substituting this back into \eqref{abc41a}, we then have
\begin{equation}\label{abc41c}
(ts)^\theta=s^\theta t^\theta.
\end{equation}
Let $x=s^{\theta^{-1}}$. Substituting $x^\theta$ for $s$ in \eqref{abc41b},
we conclude that
\begin{equation}\label{abc41d}
(x^\theta)^\theta=x^2.
\end{equation}
Thus
\begin{equation}\label{abc41e}
(x^2)^\theta=(x^{\theta^2})^\theta=(x^\theta)^{\theta^2}=(x^\theta)^2
\end{equation}
for all $x\in K$.
Choose $u\in K$. Then $u^2$ and $t^2$ are in $F$, so
$$(t^2u^2)^\theta=(t^2)^\theta(u^2)^\theta$$
by \eqref{abc41c}. By \eqref{abc41e}, it follows
that $\theta$ is multiplicative. By \eqref{abc41d}, therefore,
$\theta$ is a Tits endomorphism. Thus (i) holds.

Now let $v\in V$ and $t\in K$. Then
\begin{alignat}{2}
\varphi(tv)&=\varphi(v[t]_F)& &\qquad\text{by (F1)}\notag\\
&=\varphi(v)\varphi^{-1}([t]_F)& &\qquad\text{by \ref{abc15}(i)}\notag\\
&=\varphi(v)[t^\theta]_K=t^\theta\varphi(v)& &\qquad\text{by (F2),}\notag
\end{alignat}
so (ii) holds, and
\begin{alignat}{2}
[q(v)^\theta]_K&=\varphi^{-1}[q(v)]_F\notag\\
&=[\hat q(\varphi(v))]_K& &\qquad\text{by \ref{abc8}(ii),}\notag
\end{alignat}
so (iii) holds.

Now choose another $u\in V$. Then
\begin{alignat}{2}
u\varphi(tv)&=u\cdot t^\theta\varphi(v)& &\qquad\text{by (ii)}\notag\\
&=u\varphi(v)\cdot [t^\theta]_F& &\qquad\text{by (F3)}\notag\\
&=t^\theta\cdot u\varphi(v)& &\qquad\text{by (F1).}\notag
\end{alignat}
Thus (iv) holds.
\end{proof}

\begin{notation}\label{abc62}
Let $x_1,\ldots,x_4$ be as in \ref{abc61}.
We replace $x_i$ by $x_i\cdot\varphi$ for $i=1$ and $3$.
After these replacements,
we have $x_i\colon V\to U_i$ for all $i\in[1,4]$,
$$[x_1(u),x_3(v)]=x_2([\hat f(\varphi(u),\varphi(v))]_K)$$
and
\begin{align*}
[x_2(u),x_4(v)]&=x_3(\varphi^{-1}([f(u,v)]_F))\\\notag
&=x_3([\hat f(\varphi(u),\varphi(v)]_K)
\end{align*}
for all $u,v\in V$ by \ref{abc14},
\begin{align*}
[x_1(u),x_4(v)]&=x_2(v\varphi(u))x_3\big(\varphi^{-1}(\varphi(u)v)\big)\\\notag
&=x_2(v\varphi(u))x_3(u\varphi(v))
\end{align*}
for all $u,v\in V$ by \ref{abc15} and
\begin{equation}\label{abc23}
x_i(u)^\rho=x_{5-i}(u)
\end{equation}
for all $u\in V$ and for all $i\in[1,4]$.
We define a product on $V$ by
\begin{equation}\label{abc53}
uv=u\varphi(v)
\end{equation}
and a symmetric map $g\colon V\times V\to[F]_K\subset V$ by
\begin{equation}\label{abc54}
g(u,v)=[\hat f(\varphi(u),\varphi(v))]_K
\end{equation}
for all $u,v\in V$. With this notation, we have
\begin{align}
[x_1(u),x_3(v)]&=x_2(g(u,v))\notag\\
[x_2(u),x_4(v)]&=x_3(g(u,v))\label{abc20}\\
[x_1(u),x_4(v)]&=x_2(vu)x_3(uv)\notag
\end{align}
for all $u,v\in V$ as well as $[U_1,U_2]=[U_2,U_3]=[U_3,U_4]=1$.
\end{notation}

\begin{notation}\label{nzz34}
From now on, we set $[t]=[t^\theta]_K$ for all $t\in K$. Thus
$$[K]:=\{[t]\mid t\in K\}=[F]_K$$
is a vector space over $K$ with scalar multiplication given by $a[t]=[a^\theta t]$
for all $a,t\in K$, and
\begin{equation}\label{nzz35}
g(u,v)=[f(u,v)^\theta]_K=[f(u,v)]
\end{equation}
for all $u,v\in V$ by \ref{abc69} and \eqref{abc54}.
\end{notation}

\begin{proposition}\label{abc51}
Let the multiplication on $V=E\oplus E\oplus[K]$
and the map $g$ be as in {\rm\ref{abc53}} and {\rm\ref{nzz35}}.
Then the following hold for all $u,v\in V$ and all $t\in K$:
\begin{itemize}
\item[(R1)] The map $x\mapsto xv$ from $V$ to itself is $K$-linear.
\item[(R2)] $v[t]=tv$.
\item[(R3)] $u\cdot tv=t^{\theta}\cdot uv$.
\item[(R4)] $[t]v=[tq(v)]$.
\item[(R5)] $uv\cdot v=q(v)^{\theta}\cdot u$.
\item[(R6)] $v\cdot uv=q(v)\cdot vu$.
\item[(R7)] $u(v+w)=uv+uw+g(vu,w)$.
\end{itemize}
\end{proposition}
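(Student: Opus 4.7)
The plan is to derive each of (R1)--(R7) as a translation, via the additive bijection $\varphi\colon V\to\hat V$, of one of the axioms (F0)--(F12) of Proposition \ref{abc30}, using the compatibility of $\varphi$ with $q$, $\hat q$, $f$, $\hat f$ and scalars recorded in \ref{abc41}, \ref{abc15}, \ref{abc40}, and equation \eqref{abc69}. Since by \eqref{abc53} the new product is $uv=u\varphi(v)$ (with the right-hand side being the original $V\times\hat V\to V$ product), each identity will simply unfold by rewriting every new product via $\varphi$, applying the corresponding old identity, and folding back.

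Concretely, here is how I would dispatch each case. (R1) is immediate from (F0) applied to $\hat w=\varphi(v)$. For (R2), note that $\varphi([t])=\varphi([t^\theta]_K)=[t]_F$ by \ref{abc40}, so $v[t]=v\varphi([t])=v[t]_F=tv$ by (F1). Identity (R3) is literally \ref{abc41}(iv). For (R4), $[t]v=[t^\theta]_K\varphi(v)=[t^\theta\hat q(\varphi(v))]_K$ by (F6), which equals $[t^\theta q(v)^\theta]_K=[tq(v)]$ by \ref{abc41}(iii). For (R5), iterating the new product gives
\begin{equation*}
uv\cdot v=(u\varphi(v))\varphi(v)=u\cdot[\hat q(\varphi(v))]_F=u\cdot[q(v)^\theta]_F=q(v)^\theta\cdot u
\end{equation*}
by (F7), \ref{abc41}(iii) and (F1). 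For (R6), use \ref{abc15}(i) to compute $\varphi(uv)=\varphi(u)\varphi^{-1}(\varphi(v))=\varphi(u)v$, so
\begin{equation*}
v\cdot uv=v\cdot\varphi(uv)=v\cdot\bigl(\varphi(u)v\bigr)=q(v)\cdot v\varphi(u)=q(v)\cdot vu
\end{equation*}
by (F9). For (R7), expand $u(v+w)=u\varphi(v)+u\varphi(w)+[\hat f(\varphi(v)u,\varphi(w))]_K$ by (F11); the last summand equals $g(vu,w)$ because \ref{abc15}(ii) gives $\varphi^{-1}(\varphi(v)u)=v\varphi(u)=vu$, whence \eqref{abc69} yields $\hat f(\varphi(v)u,\varphi(w))=f(vu,w)^\theta$, which is precisely $g(vu,w)$ by \eqref{nzz35}.

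In short, the verification is a mechanical pushforward along $\varphi$ of the radical-quadrangular-system axioms already established in \ref{abc30}, with no genuinely difficult step. The one place that requires a moment of care is (R7), where one must correctly identify the ``correction term'' $[\hat f(\varphi(v)u,\varphi(w))]_K$ produced by (F11) as $g(vu,w)$; this is precisely where \ref{abc15}(ii) (the counterpart of \ref{abc15}(i) for the other product) and the defining property \eqref{abc69} of the Tits endomorphism $\theta$ combine. Every other identity collapses immediately once the substitution $uv=u\varphi(v)$ is made.
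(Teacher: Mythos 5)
Your proof is correct and follows essentially the same route as the paper's: each identity (R1)--(R7) is translated via $\varphi$ into the corresponding axiom among (F0), (F1), (F6), (F7), (F9), (F11), and \ref{abc41}(iv), with \ref{abc15}(i)--(ii) used exactly where the paper uses them (for (R6) and (R7)). The only cosmetic difference is that you close (R5) via (F1) directly and identify the correction term in (R7) via \eqref{abc69} and \eqref{nzz35} rather than via the equivalent \eqref{abc54}; the substance is identical.
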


\begin{proof}
Just for this proof, we will denote by $*$ both the map from $V\times\hat V$ to
$V$ and the map from $\hat V\times V$ to $V$ defined in \ref{abc80} to distinguish them
from the multiplication on $V$ defined
in \ref{abc53}. Thus, in particular, $uv=u*\varphi(v)$ for all $u,v\in V$.

Let $u,v,w\in V$ and $t\in K$. The assertion (R1) is just a special case of (F0). We have
$$v[t]=v[t^\theta]_K=v*[t]_F=tv$$
by (F1). Thus (R2) holds. The assertion (R3) follows from \ref{abc41}(iv).
To see that (R4) holds, we observe that
\begin{alignat}{2}
[t]v&=[t^\theta]_K*\varphi(v)\\
&=[t^\theta\hat q(\varphi(v)]_K& &\qquad\text{by (F6)}\notag\\
&=[t^\theta q(v)^\theta]_K& &\qquad\text{by \ref{abc41}(iii)}\notag\\
&=[tq(v)].\notag
\end{alignat}
Next we have
\begin{alignat}{2}
uv\cdot v&=u*[\hat q(\varphi(v)]_F& &\qquad\text{by (F7)}\notag\\
&=u*[q(v)^\theta]_F& &\qquad\text{by \ref{abc41}(iii)}\notag\\
&=u[q(v)^\theta]\notag\\
&=q(v)^\theta\cdot u& &\qquad\text{by (R2),}\notag
\end{alignat}
so (R5) holds, and
\begin{alignat}{2}
v\cdot uv&=v*\varphi\big(u*\varphi(v)\big)\notag\\
&=v*\big(\varphi(u)*v)& &\qquad\text{by \ref{abc15}(i)}\notag\\
&=q(v)\cdot vu& &\qquad\text{by (F9),}\notag
\end{alignat}
so (R6) holds. Finally, we have
\begin{alignat}{2}
u(v+w)&=u*\varphi(v+w)\notag\\
&=uv+uw+[\hat f\big(\varphi(v)*u,\varphi(w)\big)]_K& &\qquad\text{by (F11)}\notag\\
&=uv+uw+[\hat f\big(\varphi\big(v*\varphi(u)\big),\varphi(w)\big)]_K
& &\qquad\text{by \ref{abc15}(ii)}\notag\\
&=uv+uw+[\hat f\big(\varphi(vu),\varphi(w)\big)]_K\notag\\
&=uv+uw+g(vu,w)& &\qquad\text{by \eqref{abc54}.}\notag
\end{alignat}
Thus (R7) holds.
\end{proof}

\section{Polarity Algebras}\label{sec5}

In this section we introduce polarity algebras and prove a series of identities.
Some (for instance \ref{q(uv)}) we have included only because they are
compelling, not because we will apply them later on.

\begin{definition}\label{fat1}
A {\it polarity algebra} is a 6-tuple $(K,V,q,\theta,t\mapsto[t],\cdot)$,
where $K$ is a field of characteristic~$2$,
$(K,V,q)$ is an anisotropic quadratic space such that the bilinear form $f:=\partial q$
is not identically zero,
$\theta$ is a Tits endomorphism of $K$,
$t\mapsto[t]$ is a $K$-linear embedding of the $K$-vector space $[K]=[K^\theta]_K$
defined in \ref{def0} and \ref{nzz34} into the radical of $f$,
so
\begin{equation}\label{abc50a}
a[t]=[a^\theta t]
\end{equation}
for all $a,t\in K$,
and $(u,v)\mapsto u\cdot v$ is a multiplication on $V$ (which we
often denote by juxtaposition), satisfying
the conditions (R1)--(R7) in \ref{abc51} with
$$g(u,v)=[f(u,v)]$$
for all $u,v\in V$ in (R7).
\end{definition}

Throughout this section we assume that
$(K,V,q,\theta,t\mapsto[t],\cdot)$ is a polarity algebra. We let $f$ and $g$
be as in \ref{fat1} and we set
$$v^{-1}=q(v)^{-1}v$$
for all non-zero $v\in V$. Since $q$ is anisotropic, this is allowed.

\begin{remark}\label{abc70}
Let $K$ and $f$ be as in \ref{fat1}.
An anisotropic form over a finite field is either 1-dimensional
or similar to the norm of a quadratic extension.
Since $f$ is not identically zero and its radical is non-trivial,
we conclude that $K$ is infinite.
\end{remark}

\begin{proposition}\label{abc24}
The following hold for all $u,v,w\in V$ and all $t\in K$:
\begin{enumerate}[\rm(i)]
\item $g(u,uw)=0$.
\item $g(u,vw)=g(uw,v)$.
\item $g(u,v)w=g(q(w)u,v)$.
\item $tg(u,v)=g(t^\theta u,v)$.
\end{enumerate}
\end{proposition}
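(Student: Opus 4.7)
The plan is to prove the four identities in the order (iv), (iii), (ii), (i), using only the definition $g(u,v)=[f(u,v)]$ and the axioms (R1)--(R7) of a polarity algebra. Parts (iii) and (iv) are essentially formal consequences of the definition, while (i) and (ii) require a short manipulation of (R7).

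I would begin with (iv): compute $tg(u,v)=t[f(u,v)]=[t^\theta f(u,v)]$ using the scalar action \eqref{abc50a} on $[K]$, and match this with $g(t^\theta u,v)=[f(t^\theta u,v)]=[t^\theta f(u,v)]$ by $K$-bilinearity of $f=\partial q$. For (iii), (R4) gives $[f(u,v)]w=[f(u,v)q(w)]$, while $g(q(w)u,v)=[q(w)f(u,v)]$ by bilinearity of $f$; these coincide.

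For (ii), the idea is to exploit the asymmetry of (R7) in its second and third arguments: apply (R7) once as stated and once with $v$ and $w$ swapped, then use $u(v+w)=u(w+v)$ to conclude
\[
g(vu,w)=g(wu,v)\quad\text{for all }u,v,w\in V.
\]
Relabeling $(u,v,w)\mapsto(w,v,u)$ rewrites this as $g(vw,u)=g(uw,v)$, and the symmetry of $g$ (inherited from the symmetry of $f$) converts the left side to $g(u,vw)$, giving (ii).

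Finally, (i) is the only step that genuinely uses characteristic $2$: substitute $w=v$ into (R7). The left side $u(v+v)=u\cdot 0$ vanishes (the identity $u\cdot 0=0$ follows, for instance, by specializing (R7) to $v=0$ and using $g(0,w)=0$), while the right side collapses to $g(vu,v)$, so $g(vu,v)=0$ for all $u,v\in V$. Symmetry of $g$ and relabeling $v\to u$, $u\to w$ then yield $g(u,uw)=0$. I do not foresee any substantive obstacle; the only subtlety is the characteristic-$2$ cancellation needed for (i).
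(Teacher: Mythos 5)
Your proof is correct and follows essentially the same route as the paper, which simply states that (i) and (ii) follow from (R7), (iii) from (R4), and verifies (iv) by the same bracket computation you give; you have merely spelled out the (R7)-manipulations—swapping $v$ and $w$, setting $w=v$, and the needed $u\cdot 0=0$ observation—in more detail.
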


\begin{proof}
Assertions (i) and (ii) follow immediately from (R7) and assertion (iii)
follows immediately from (R4). We have $tg(u,v)=t[f(u,v)]=[t^\theta f(u,v)]=g(t^\theta u,v)$
for all $u,v\in V$ and all $t\in K$, so also assertion (iv) holds.
\end{proof}

\begin{proposition}\label{yht1}
$v\cdot wu=f(u,vw)u+f(u,v)uw+q(u)vw$ for all $u,v,w\in V$.
\end{proposition}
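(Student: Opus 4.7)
The plan is to polarize (R6) $v\cdot uv = q(v)vu$ in the outer variable by substituting $v\mapsto v+tu$ with a free scalar $t\in K$, then extract coefficients of powers of $t$, using that $K$ is infinite (\ref{abc70}) to conclude that each coefficient must vanish.

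Concretely, after substituting in (R6) (renaming the middle variable to $w$ to match the target), one obtains $(v+tu)\cdot w(v+tu) = q(v+tu)(v+tu)w$. I would first expand $w(v+tu) = wv + t^\theta wu + [tf(vw,u)]$ using (R3) and (R7), and then apply (R1) to split $(v+tu)\cdot X = vX + t(uX)$. For $v$ applied to that sum, I would use (R7) stepwise, ordering the summands strategically so that each correction term vanishes: by (R5), $(wv)v = q(v)^\theta w$, and then \ref{abc24}(iv) and~(i) give $g(q(v)^\theta w,\, t^\theta wu) = t^\theta q(v)\,g(w,wu) = 0$; the final correction is $g(\cdot,\,[tf(vw,u)])$, which vanishes because $[K]\subset\mathrm{rad}(f)$. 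Combined with (R6), (R3), and (R2), and using $t^{\theta^2}=t^2$ (since $\theta^2=\mathrm{Frob}_K$), this yields
\[
v\cdot\bigl[wv+t^\theta wu+[tf(vw,u)]\bigr] = q(v)\,vw + t^2 v(wu) + tf(vw,u)\,v.
\]
An entirely analogous computation, starting the decomposition with $t^\theta wu$, produces
\[
u\cdot\bigl[wv+t^\theta wu+[tf(vw,u)]\bigr] = t^2 q(u)\,uw + u(wv) + tf(vw,u)\,u.
\]

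Combining these and expanding the right-hand side via $q(v+tu)=q(v)+t^2q(u)+tf(u,v)$ and $(v+tu)w = vw + t(uw)$, I cancel common terms in characteristic~$2$ to arrive at
\[
t\bigl[u(wv)+q(v)uw+f(u,v)vw+f(vw,u)v\bigr]+ t^2\bigl[v(wu)+q(u)vw+f(u,v)uw+f(vw,u)u\bigr]=0.
\]
Since this vector equation in the scalar $t$ holds identically for all $t\in K$ and $K$ is infinite, each bracketed coefficient must be zero in $V$. The coefficient of $t^2$, rewritten via the symmetry $f(vw,u)=f(u,vw)$, is precisely the target identity $v(wu)=f(u,vw)u+f(u,v)uw+q(u)vw$; the coefficient of $t$ gives the same identity with $u\leftrightarrow v$, confirming internal consistency.

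The main obstacle is the bookkeeping of correction terms produced by the right-non-distributivity (R7). These are controlled by strategically ordering the summands so that the corrections vanish via \ref{abc24}(i) and~(iv), and by noting systematically that every $g(\cdot,[K])$ term is killed because $[K]$ lies in the radical of $f$. The role of \ref{abc70} is essential: without an infinite ground field, one could not separate the $t$- and $t^2$-coefficients, and the polarization would only produce the sum $v(wu)+u(wv)$, which in characteristic~$2$ is insufficient to isolate $v(wu)$ by symmetry alone.
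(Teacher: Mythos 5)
Your proof is correct and follows essentially the same route as the paper: the paper polarizes (R6) by substituting $u+tv$ into the outer slot and extracts the $t$-coefficient, while you substitute $v+tu$ and extract the $t^2$-coefficient, which is the same argument up to relabeling $u\leftrightarrow v$. The handling of the $g$-corrections via \ref{abc24}(i),(iv) and the appeal to \ref{abc70} to separate coefficients are likewise identical.
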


\begin{proof}
Let $t\in K$. By (R6), we have
\begin{align}
(u+tv)\cdot(w(u+tv))&=q(u+tv)(u+tv)w\notag\\
&=(q(u)+tf(u,v)+t^2q(v))(u+tv)w\notag\\
&=q(u)uw+t\big(q(u)vw+f(u,v)uw\big)+t^2\big(q(v)uw\notag\\
&\qquad\ +f(u,v)vw\big)+t^3q(v)vw.\label{yht1a}
\end{align}
By (R7) and (R3), on the other hand, we have
\begin{align}
(u+tv)\cdot(w(u+tv))&=(u+tv)\big(wu+t^{\theta}wv+g(u,tvw)\big)\notag\\
&=u(wu+t^{\theta}wv+g(u,tvw))+tv(wu+t^{\theta}wv+g(u,tvw))\notag\\
&=u(wu)+u\cdot t^{\theta}wv+ug(u,tvw)+g(wu\cdot u, t^{\theta}wv)\notag\\
&\qquad+tv\cdot wu+t^3v\cdot wv+tv\cdot g(u,tvw)+g(wu,t^{\theta}wv\cdot tv)\notag\\
&=q(u)uw+t^2u\cdot wv+ug(u,tvw)+g(wu\cdot u, t^{\theta}wv)\notag\\
&\qquad+tv\cdot wu+t^3q(v)vw+tv\cdot g(u,tvw)+g(wu,t^{\theta}wv\cdot tv).\label{yht1b}
\end{align}
Thus the sum of the expressions \eqref{yht1a} and \eqref{yht1b} is zero.
By (R2) and \ref{abc24}(iv), this sum lies in $K[t]$ and the
the coefficient of $t$ is
$$q(u)vw+f(u,v)uw+f(u,vw)u+v\cdot wu.$$
To verify this, we need only observe that
$ug(u,tvw)=u[f(u,tvw)]=f(u,tvw)u$ by (R2) and $g(wu\cdot u,wv)=q(u)g(w,wv)=0$
by (R5) and \ref{abc24}(i).
It follows from \ref{abc70} and \cite[2.26]{TW}
that the coefficient of each power of $t$ in this sum is zero.
\end{proof}

\begin{proposition}\label{yht2}
$uv\cdot w+uw\cdot v=g(v,f(v,wu)w)+f(v,w)^{\theta}u$ for all $u,v,w\in V$.
\end{proposition}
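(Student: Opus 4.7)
The plan is to mimic the linearization strategy used to prove \ref{yht1}: substitute a linear combination into a known identity, expand, and collect. The natural starting point is (R5), $uv\cdot v=q(v)^{\theta}u$, with the substitution $v\mapsto v+w$.

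Before grinding through the expansion, I would record two auxiliary observations. Comparing the two ways of expanding $u(v+w)=u(w+v)$ by (R7) gives $g(vu,w)=g(wu,v)$, and hence, by injectivity of $t\mapsto[t]$, the sharper identity $f(vu,w)=f(wu,v)$ for all $u,v,w\in V$. Substituting $u\mapsto uw$ in the former and then applying (R6) yields the useful formula
\[
g\bigl(v\cdot uw,\,w\bigr)=g\bigl(w\cdot uw,\,v\bigr)=g\bigl(q(w)\,wu,\,v\bigr)=\bigl[q(w)f(vu,w)\bigr].
\]

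Next, substitute $v\mapsto v+w$ in (R5). Because $\theta$ is a field endomorphism, the right-hand side expands as $q(v+w)^{\theta}u=q(v)^{\theta}u+f(v,w)^{\theta}u+q(w)^{\theta}u$. The left-hand side I handle in two stages: first (R7) yields $u(v+w)=uv+uw+g(vu,w)$; then (R1) distributes the multiplication by $(v+w)$ over this sum; and finally (R7) is applied once more to each of the three resulting products. Using (R5), (R6) and the auxiliary identities above, one finds that $(uv)(v+w)=q(v)^{\theta}u+uv\cdot w+[q(v)f(vu,w)]$, that $(uw)(v+w)=uw\cdot v+q(w)^{\theta}u+[q(w)f(vu,w)]$, and (via (R4)) that $g(vu,w)(v+w)=[q(v)f(vu,w)]+[f(v,w)f(vu,w)]+[q(w)f(vu,w)]$. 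Summing the three lines, the duplicated terms $[q(v)f(vu,w)]$ and $[q(w)f(vu,w)]$ cancel in characteristic~$2$; after cancelling $q(v)^{\theta}u+q(w)^{\theta}u$ against the matching summands on the right, what remains is
\[
uv\cdot w+uw\cdot v=f(v,w)^{\theta}u+\bigl[f(v,w)f(vu,w)\bigr].
\]

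To finish, I reinterpret the remaining $[K]$-term. The first auxiliary identity together with the symmetry of $f$ in characteristic~$2$ gives $f(vu,w)=f(v,wu)$, whence by $K$-bilinearity of $f$,
\[
\bigl[f(v,w)f(vu,w)\bigr]=\bigl[f(v,w)f(v,wu)\bigr]=\bigl[f\bigl(v,\,f(v,wu)w\bigr)\bigr]=g\bigl(v,f(v,wu)w\bigr),
\]
which is exactly the remaining term on the right-hand side of the claim. The main obstacle is purely bookkeeping: tracking where each scalar lives (in $K$, in $K^{\theta}$, or in the radical slot $[K]$), making sure that every application of (R7) contributes the correct $g$-cross-term, and verifying that the superfluous $[K]$-terms pair up and vanish in characteristic~$2$. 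There is no genuinely new idea beyond the linearization of (R5) and the two symmetries extracted from (R7).
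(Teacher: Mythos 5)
Correct, and essentially the same approach as the paper: both proofs linearize (R5) by expanding $u(v+w)\cdot(v+w)$ in two ways and cancelling the duplicated $[K]$-terms in characteristic~$2$. The symmetry identities you extract from (R7) are precisely what \ref{abc24}(ii) encodes, so the bookkeeping differs only cosmetically.
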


\begin{proof}
By (R5), we have
\begin{align*}
u(v+w)\cdot (v+w)&=q(v+w)^\theta u\\
&=(q(v)+f(v,w)+q(w))^\theta u\\
&=q(v)^\theta u+f(v,w)^\theta u+q(w)^\theta u.
\end{align*}
By (R5), (R6), (R7) and \ref{abc24}(iii), on the other hand, we have
\begin{align*}
u(v+w)\cdot (v+w)&=(uv+uw+g(v,wu))(v+w)\\
&=uv(v+w)+uw(v+w)+g(v,wu)(v+w)\\
&=uv\cdot v+uv\cdot w+g(v\cdot uv,w)\\
&\qquad+uw\cdot v+uw\cdot w+g(v,w\cdot uw)\\
&\qquad+g(q(v)v,wu)+g(q(w)v,wu)+g(f(v,w)v,wu)\\
&=q(v)^{\theta}u+uv\cdot w+\overbrace{g(q(v)vu,w)}^1\\
&\qquad+uw\cdot v+q(w)^{\theta}u+\overbrace{g(v,q(w)wu)}^2\\
&\qquad+\overbrace{g(q(v)v,wu)}^1+\overbrace{g(q(w)v,wu)}^2+g(v,f(v,wu)w).
\end{align*}
Note that
$$g(f(v,w)v,wu)=[f(v,w)f(v,wu)]=[f(v,f(v,wu)w)]=g(v,f(v,wu)w).$$
Therefore $f(v,w)^{\theta}u=uv\cdot w+uw\cdot v+g(v,f(v,wu)w)$.
\end{proof}

The following observation says that the quadratic form $q$ is multiplicative
``with a twist.''

\begin{proposition}\label{q(uv)}
$q(uv)=q(u)q(v)^{\theta}$ for all $u,v\in V$.
\end{proposition}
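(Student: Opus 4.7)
The plan is to apply $q$ to the identity of~\ref{yht1}, convert the resulting formula into a functional equation for the defect $\Phi(x,y):=q(xy)-q(x)q(y)^{\theta}$, and then collapse that equation by specializing $v=[1]$.

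I first collect three preliminary observations. From \ref{abc24}(ii), $f(u,vw)=f(uw,v)$, so the map $x\mapsto xw$ is self-adjoint with respect to $f$; combined with (R5), this gives the bilinear identity
\[
f(xw,yw)=f(x,(yw)w)=q(w)^{\theta}f(x,y).
\]
Next, setting $v=u$ in \ref{yht1} and comparing with (R6) forces $f(u,uw)=0$. Finally, from (R2), $[1][1]=[1]$, while (R4) gives $[1][1]=[q([1])]$; injectivity of $[\cdot]$ yields $q([1])=1$, and a similar manipulation using \eqref{abc50a} gives $q([t])=t^{\theta}$ for every $t\in K$. In particular $\Phi([1],x)=0$ for every $x\in V$, since $[1]\cdot x=[q(x)]$ by (R4) has $q$-value $q(x)^{\theta}=q([1])\,q(x)^{\theta}$.

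Now apply $q$ to the identity $v\cdot wu=f(u,vw)u+f(u,v)uw+q(u)vw$ of~\ref{yht1}. Expanding with the characteristic-$2$ polarization $q(a+b+c)=q(a)+q(b)+q(c)+f(a,b)+f(a,c)+f(b,c)$, the cross term $f(u,uw)$ vanishes, the cross term $f(uw,vw)$ collapses to $q(w)^{\theta}f(u,v)$ by the bilinear identity, and the two equal contributions $q(u)f(u,vw)^{2}$ cancel in characteristic~$2$, leaving
\[
q(v\cdot wu)=f(u,v)^{2}\,\Phi(u,w)+q(u)^{2}\,q(vw).
\]
Now rewrite each $q(\cdot)$ on both sides as $\Phi(\cdot,\cdot)+q(\cdot)q(\cdot)^{\theta}$, using $(q(u)^{\theta})^{\theta}=q(u)^{2}$ (since $\theta^{2}=\operatorname{Frob}$). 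The common summand $q(u)^{2}q(v)q(w)^{\theta}$ cancels on both sides, yielding the functional equation
\[
\Phi(v,wu)+q(v)\,\Phi(w,u)^{\theta}=f(u,v)^{2}\,\Phi(u,w)+q(u)^{2}\,\Phi(v,w). \qquad(\star)
\]

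The main (and only) step that requires insight is to recognize that $(\star)$ trivializes when $v=[1]$. Indeed $\Phi([1],wu)=0$ and $\Phi([1],w)=0$ by the preliminary computation, $f(u,[1])=0$ because $[1]\in[K]\subset\operatorname{rad}(f)$, and $q([1])=1$; hence every term of $(\star)$ vanishes except $\Phi(w,u)^{\theta}$, giving $\Phi(w,u)^{\theta}=0$. Since $\theta$ is injective (as a nonzero endomorphism of a field), $\Phi(w,u)=0$ for all $w,u\in V$, which after relabeling is exactly $q(uv)=q(u)q(v)^{\theta}$.
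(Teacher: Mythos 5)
Your proof is correct, and it rests on the same pivotal specialization as the paper's — setting $v=[1]$ in \ref{yht1} — but it reaches that point by a genuinely longer route. The paper notices that by (R4) the algebra product $[1]\cdot wu$ already equals $[q(wu)]$, so after using $f(V,[K])=0$ to kill the first two summands, the specialized identity $[1]\cdot wu = q(u)[1]w$ is literally an equation inside $[K]$ and yields $[q(wu)]=[q(w)q(u)^\theta]$ at once. You instead apply $q$ to both sides of \ref{yht1} in full generality, compute all six polarization terms (taking care to re-derive $f(u,uw)=0$ and $f(xw,yw)=q(w)^\theta f(x,y)$ directly from the axioms rather than from \ref{abc95y}, which would have created a circularity), package the result as the functional equation $(\star)$ for the defect $\Phi$, and only then specialize $v=[1]$. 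Every step checks out, including the cancellation $f(a,b)=f(u,vw)f(u,v)f(u,uw)=0$, the collapse $f(b,c)=f(u,v)^2q(u)q(w)^\theta$, the use of $\theta^2=\mathrm{Frob}$ to turn $q(u)^{\theta^2}$ into $q(u)^2$, and the injectivity of $\theta$ at the very end. The trade-off is clear: the paper's proof is two lines because it exploits the fact that $t\mapsto[t]$ encodes $q$-values internally, whereas your computation is an order of magnitude longer but leaves the identity $(\star)$ as an interesting by-product that is not in the paper.
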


\begin{proof}
Choose $u,w\in V$ and recall that $f(V,[K])=0$.
Setting $v=[1]$ in \ref{yht1}, we obtain
$$[1]\cdot wu=f(u,[1]w)u+f(u,[1])uw+q(u)[1]w.$$
By (R3) and (R4), therefore, $[q(wu)]=q(u)[q(w)]=[q(w)q(u)^\theta]$.
\end{proof}

\begin{proposition}\label{q([t])}
$q([t])=t^{\theta}$ for all $t\in K$.
\end{proposition}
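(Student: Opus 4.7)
The plan is to extract $q([t])$ from two different evaluations of the product $[1]\cdot[t]$ in the polarity algebra, comparing them via the injectivity of the bracket map.

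First I would apply (R2) with $v=[1]$ and scalar $t$, which gives $[1]\cdot[t]=t\cdot[1]$, where the right side is now scalar multiplication in $V$. Using the $K$-action formula \eqref{abc50a}, namely $a[s]=[a^\theta s]$, with $a=t$ and $s=1$, this simplifies to $[1]\cdot[t]=[t^\theta]$. Second I would apply (R4) — written as $[s]v=[sq(v)]$ — with $s=1$ and $v=[t]$ to obtain $[1]\cdot[t]=[q([t])]$.

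Combining the two expressions yields $[q([t])]=[t^\theta]$, and since by Definition~\ref{fat1} the map $t\mapsto[t]$ is an embedding (in particular injective), we conclude $q([t])=t^\theta$.

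There is essentially no obstacle here; the only minor care needed is to avoid collision in the name of the scalar appearing in (R2) and (R4) with the fixed element $t\in K$ we are computing $q$ on, and to remember that the scalar multiplication $t\cdot[1]$ on the right side of (R2) must be re-expressed through \eqref{abc50a} before it becomes useful.
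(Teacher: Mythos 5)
Your proof is correct and is essentially identical to the paper's: both evaluate $[1]\cdot[t]$ two ways — via (R2) and the scalar action \eqref{abc50a} to get $[t^\theta]$, and via (R4) to get $[q([t])]$ — and conclude by injectivity of $t\mapsto[t]$.
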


\begin{proof}
If $t\in K$, then $[q([t])]=[1][t]=t[1]=[t^\theta]$ by (R2) and (R4).
\end{proof}

\begin{proposition}\label{abc95y}
The following hold for all $u,v,w\in K$:
\begin{enumerate}[\rm(i)]
\item\label{f(uv,uw)} $f(uv,uw)=f(v,wu)^{\theta}+q(u)f(v,w)^{\theta}$
\item\label{f(uv,wv)} $f(uv, wv)=q(v)^{\theta}f(u,w)$.
\item\label{uv-1} $(uv)^{-1}=u^{-1}v^{-1}$ if $u,v\ne0$.
\end{enumerate}
\end{proposition}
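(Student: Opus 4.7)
My plan is to prove the three identities in the order (iii), (ii), (i), since (iii) is essentially formal, (ii) is a clean polarization argument using \ref{q(uv)}, and (i) requires one further ingredient beyond the scheme of (ii).

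For (iii), I would simply compute both sides. By definition $(uv)^{-1}=q(uv)^{-1}uv$, and by \ref{q(uv)} this equals $q(u)^{-1}q(v)^{-\theta}uv$. On the other hand, using (R1) to pull the scalar $q(u)^{-1}$ out of the first slot and (R3) to pull $q(v)^{-1}$ out of the second slot (picking up $\theta$), one gets $u^{-1}v^{-1}=q(u)^{-1}u\cdot q(v)^{-1}v=q(u)^{-1}q(v)^{-\theta}\,uv$. So (iii) is immediate.

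For (ii), I would polarize \ref{q(uv)}. Since $x\mapsto xv$ is $K$-linear by (R1), we have $(u+w)v=uv+wv$, so
\[
q(uv)+f(uv,wv)+q(wv)=q((u+w)v)=q(u+w)q(v)^\theta,
\]
and expanding the right-hand side and applying \ref{q(uv)} to $q(uv)$ and $q(wv)$ yields $f(uv,wv)=q(v)^\theta f(u,w)$.

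For (i), I would polarize in the other slot, but now there is an extra term coming from (R7). Write $u(v+w)=uv+uw+g(vu,w)$ with $g(vu,w)=[f(vu,w)]\in[K]$. Since $[K]$ lies in the radical of $f$, evaluating $q$ on this sum gives
\[
q(u(v+w))=q(uv+uw)+q([f(vu,w)])=q(uv)+f(uv,uw)+q(uw)+f(vu,w)^\theta,
\]
using \ref{q([t])} at the end. On the other hand, by \ref{q(uv)},
\[
q(u(v+w))=q(u)q(v+w)^\theta=q(u)q(v)^\theta+q(u)f(v,w)^\theta+q(u)q(w)^\theta.
\]
Cancelling $q(uv)=q(u)q(v)^\theta$ and $q(uw)=q(u)q(w)^\theta$ via \ref{q(uv)} leaves
\[
f(uv,uw)=q(u)f(v,w)^\theta+f(vu,w)^\theta.
\]
The final step is to replace $f(vu,w)$ by $f(v,wu)$. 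This follows from \ref{abc24}(ii): $[f(u,vw)]=g(u,vw)=g(uw,v)=[f(uw,v)]$, and since $t\mapsto[t]$ is injective, $f(u,vw)=f(uw,v)$, so after renaming we get $f(vu,w)=f(v,wu)$, completing (i).

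The only nontrivial point is (i), where one must notice that the $g$-correction in (R7) is harmless for $q$ (it lies in the radical of $f$) but contributes additively through \ref{q([t])}, and one must invoke \ref{abc24}(ii) at the end to symmetrize the expression; aside from that, the whole proposition is a polarization of \ref{q(uv)}.
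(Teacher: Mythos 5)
Your proof is correct and follows essentially the same route as the paper: each identity is obtained by polarizing \ref{q(uv)} (using (R1) for (ii) and (R7) together with \ref{q([t])} for (i)), with (iii) being a direct computation via \ref{q(uv)} and (R3). The only cosmetic difference is that in (i) you keep $g(vu,w)$ from (R7) and convert to $f(v,wu)$ at the end via \ref{abc24}(ii), whereas the paper writes $g(v,wu)$ immediately -- the same fact is used, just placed differently.
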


\begin{proof}
Let $u,v,w\in K$. We have
\begin{align*}
q(u(v+w)) &= q(u)q(v+w)^{\theta}\\
&=q(u)(q(v)+q(w)+f(v,w))^{\theta}\\
&=q(uv)+q(uw)+q(u)f(v,w)^{\theta}
\end{align*}
by \ref{q(uv)}, whereas
\begin{alignat*}{2}
q(u(v+w))&=q(uv+uw+g(v,wu))& &\qquad\text{by (R7)}\\
&=q(uv)+q(uw)+q([f(v,wu)])+f(uv,uw)& &\qquad\text{by \ref{fat1}}\\
&=q(uv)+q(uw)+f(v,wu)^\theta+f(uv,uw)& &\qquad\text{by \ref{q([t])}}.
\end{alignat*}
Thus (i) holds.

We have
\begin{align*}
q((u+w)v)&=q(u+w)q(v)^{\theta}\\
&=(q(u)+q(w)+f(u,w))q(v)^{\theta}\\
&=q(uv)+q(wv)+q(v)^{\theta}f(u,w),
\end{align*}
by \ref{q(uv)}, whereas
$$q((u+w)v)=q(uv+wv)=q(uv)+q(wv)+f(uv,wv)$$
by (R1). Thus (ii) holds. Finally, we have
\begin{align*}
(uv)^{-1}&=q(uv)^{-1}uv\\
&=q(u)^{-1}q(v)^{-\theta}uv=q(u)^{-1}u\cdot q(v)^{-1}v=u^{-1}v^{-1}
\end{align*}
by (R3) and \ref{q(uv)}. Thus (iii) holds.
\end{proof}

\begin{proposition}\label{abc95x}
The following hold for all $u,v,w\in K$:
\begin{enumerate}[\rm(i)]
\item\label{u-1.vu} $u^{-1}\cdot vu=uv$ if $u\ne0$.
\item\label{(uv)v-1} $uv\cdot v^{-1}=u$ if $v\ne0$.
\end{enumerate}
\end{proposition}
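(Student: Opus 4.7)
The plan is to derive both identities by direct application of the axioms (R1)--(R7) from Proposition \ref{abc51}, using the definition $v^{-1} = q(v)^{-1}v$.

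For part (ii), I would write
\[
uv\cdot v^{-1}=uv\cdot q(v)^{-1}v,
\]
and apply (R3), which states $x\cdot ty=t^{\theta}\cdot xy$, with $x=uv$, $y=v$, and $t=q(v)^{-1}$, to pull the scalar out: $uv\cdot v^{-1}=q(v)^{-\theta}(uv\cdot v)$. Then (R5) gives $uv\cdot v=q(v)^{\theta}u$, and the two scalar factors cancel to yield $u$.

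For part (i), I would use the $K$-linearity of the map $x\mapsto xw$ from (R1) to write
\[
u^{-1}\cdot vu=q(u)^{-1}u\cdot vu=q(u)^{-1}(u\cdot vu).
\]
Then I would apply (R6) in the form $u\cdot vu=q(u)\cdot uv$ (obtained from the axiom $v\cdot uv=q(v)\cdot vu$ by swapping the names of $u$ and $v$). Substituting gives $u^{-1}\cdot vu=q(u)^{-1}\cdot q(u)\,uv=uv$.

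Both identities are thus essentially one-line consequences of the axioms; no obstacle is anticipated. The only subtlety is correctly tracking which argument of the multiplication is $K$-linear (the first, by (R1)) versus merely $\theta$-semilinear (the second, by (R3)), and reading (R6) with the appropriate substitution.
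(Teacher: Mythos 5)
Your proof is correct and follows essentially the same route as the paper, whose proof consists of the single sentence that the identities ``follow immediately from (R3), (R5) and (R6).'' Your write-up is a useful unpacking of that one-liner; the only small addition is your explicit appeal to (R1) in part (i) to pull the scalar $q(u)^{-1}$ out of the first argument, which the paper leaves implicit.
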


\begin{proof}
These identities follow immediately from (R3), (R5) and (R6).
\end{proof}

\begin{remark}\label{tru3}
Let $v\in V^*$. By (R1) and \ref{abc95x}\eqref{(uv)v-1}, the map $x\mapsto xv$
is an automorphism of $V$. By \ref{q(uv)}, This map is a similitude of $q$
with similarity factor $q(v)^\theta$.
\end{remark}

\begin{proposition}\label{abc95}
The following hold for all $u,v,z,w\in V$:
\begin{enumerate}[\rm(i)]
\item\label{g(uv.w, z.v} $g(uv\cdot w,zv)=f(w,v)g(uv,z)+q(v)g(uw,z).$
\item\label{g(uv.w,uz)} $g(uv\cdot w, uz)=
f(vu,z)w+f(wu,v)z+f(wu,z)v\\
\phantom{a}\qquad+f(w,v)zu+f(w,z)vu+f(v,z)wu.$
\item\label{f(uv.zw)} $f(uv, zw)+f(uw, zv)=f(u,z)f(v,w)^{\theta}$.
\item\label{uv.vu} $uv\cdot vu=q(uv)u$.
\item\label{uv.vw} $uv\cdot vw=q(wv)u+f(u,w)q(v)^{\theta}w+f(uv,w)wv$.
\item\label{uv.vw=(u.vw).v} $uv\cdot vw=(u\cdot vw)\cdot v$.
\item\label{(vv)-1(vv)} $(vv)^{-1}\cdot vv=v$ if $v\ne0$.
\item\label{u-1u.u-1u} $u^{-1}u\cdot u^{-1}u=u$ if $u\ne0$.
\end{enumerate}
\end{proposition}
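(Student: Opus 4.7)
The plan is to prove the eight identities in a carefully chosen order, each building on the previous together with the standing toolkit \ref{yht1}, \ref{yht2}, \ref{q(uv)}, \ref{q([t])}, \ref{abc95y}, and \ref{abc95x}. The single most useful preliminary is the vanishing $f(v, vu) = 0$ for all $u, v \in V$: specializing \ref{abc95y}(i) to $w = v$ gives $f(uv, uv) = f(v, vu)^\theta$, and since $q$ is anisotropic the left side is zero while $\theta$ is injective. By $f$-symmetry one also gets $f(u, uv) = 0$. A second preliminary, which is the engine behind (i) and (ii), is the adjoint identity $f(uv, z) = f(u, zv)$ for all $u, v, z$: compute $f((uv)v, zv)$ in two ways---(R5) gives $q(v)^\theta f(u, zv)$, while \ref{abc95y}(ii) gives $q(v)^\theta f(uv, z)$---and cancel the nonzero factor $q(v)^\theta$ for $v \neq 0$.

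Identity (iii) is then obtained by polarizing \ref{abc95y}(i) in the variable $u$: replacing $u$ by $u + z$, expanding via (R1) and (R7), discarding the radical correction $g(vu, z)$ (which pairs trivially under $f$), and subtracting the two original instances. Identity (iv) follows from \ref{yht1} with $v \mapsto uv$, $w \mapsto v$, $u \mapsto u$: (R5) simplifies $uv \cdot v$ to $q(v)^\theta u$, and the two middle terms collapse via $f(u,u)=0$ and $f(u,uv)=0$, leaving $q(u) q(v)^\theta u = q(uv) u$ by \ref{q(uv)}. Identity (v) is the parallel application of \ref{yht1} with $v \mapsto uv$, $w \mapsto v$, $u \mapsto w$; after (R5), \ref{q(uv)}, and $f$-symmetry the three terms match the claimed RHS. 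Identity (vi) comes from substituting $w \mapsto vw$ in \ref{yht2}: both correction terms on the right vanish because $f(v, vw) = 0$, leaving $uv \cdot vw = (u \cdot vw) \cdot v$ in characteristic~$2$.

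For (vii) and (viii) I would first record that $(xx)(xx) = q(x) q(x)^\theta x$, obtained by applying \ref{yht1} with $v \mapsto xx$, $w \mapsto x$, $u \mapsto x$ and invoking (R5) together with $f(x, x) = 0$ and $f(x, xx) = 0$. Identity (vii) is then immediate from $a^{-1} \cdot a = q(a)^{-1}(aa)$ with $a = vv$ and $q(vv) = q(v) q(v)^\theta$. For (viii), (R1) and (R3) give $(u^{-1} u)(u^{-1} u) = q(u)^{-1-\theta}(uu)(uu) = q(u)^{-1-\theta} \cdot q(u) q(u)^\theta u = u$.

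Identities (i) and (ii) draw most of their content from the adjoint identity. For (i), pair \ref{yht2} with $g(-, zv)$: the $g$-term on the right vanishes (its first argument lies in $[K] \subseteq \mathrm{rad}(f)$), the $f(v,w)^\theta u$-term yields $f(v,w) g(u, zv) = f(w, v) g(uv, z)$ via \ref{abc24}(iv) together with the adjoint identity, and $g((uw) v, zv)$ yields $q(v) g(uw, z)$ by \ref{abc95y}(ii). For (ii), the adjoint identity combined with (iii) and with \ref{yht2} shows that $f((uv) w, uz)$ is symmetric under each transposition of the three variables $v, w, z$, and the explicit six-term expansion is teased out by expanding one of the equivalent expressions (say $f(uv, (uz) w)$) via \ref{yht1} and then repeatedly applying the adjoint identity and $f$-symmetry to collect terms. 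The main obstacle is precisely this bookkeeping in (i) and (ii): one must track how the scalar rule $a[t] = [a^\theta t]$ interacts with $\theta$, how \ref{abc24}(iv) and \ref{abc95y}(i)--(ii) convert $f$-values into $g$-values, and how the adjoint identity permutes arguments, so that the six distinct $S_3$-orbit representatives on the RHS of (ii) emerge only after all these reconciliations.
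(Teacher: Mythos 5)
Parts (i) and (iii)--(viii) of your proposal are sound. Your arguments for (iv)--(viii) coincide essentially with the paper's (\ref{yht1} with (R5) for (iv)--(v); \ref{yht2} with $w\mapsto vw$ for (vi), where the single fact $f(v,vw)=0$ indeed kills both correction terms; and (iv) plus the scalar identities for (vii)--(viii)). For (iii) you polarize \ref{abc95y}(i) in $u$ rather than, as the paper does, feeding \ref{yht2} into $f(uv\cdot w+uw\cdot v,z)$; both work, though the radical correction in the expansion of $w(u+z)$ is $g(uw,z)$, not $g(vu,z)$ as you wrote -- harmless since you discard it. For (i) you apply $g(\cdot,zv)$ to both sides of \ref{yht2} and use what you call the adjoint identity, $f(u,zv)=f(uv,z)$, together with \ref{abc95y}(ii); note that this adjoint identity is not a new observation but is exactly \ref{abc24}(ii), so no separate derivation via (R5) and \ref{abc95y}(ii) is needed. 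Modulo that, your route to (i) is genuinely shorter than the paper's two-fold expansion of $w\cdot(u+z)v$ via \ref{yht1}.

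The gap is in (ii). That identity is an equality in $V$: the six-term vector sum on the right equals $g(uv\cdot w,uz)$, an element of $[K]\subset V$. Your plan lives at the scalar level -- you observe (correctly) that the scalar $f(uv\cdot w,uz)$ is $S_3$-symmetric in $(v,w,z)$, and then propose to extract the six-term expansion by applying \ref{yht1} to $f(uv,(uz)w)$. This step cannot go through: \ref{yht1} gives a formula for $a\cdot bc$, with the product in the \emph{second} factor, whereas in $(uz)\cdot w$ the product sits in the first factor, so \ref{yht1} is not applicable to it. More fundamentally, computing the scalar $f(uv\cdot w,uz)$ would only identify the element $[f(uv\cdot w,uz)]$ of $[K]$; it would not establish that the six-term vector sum lies in $[K]$ at all, and that containment is the real content of (ii). The paper's proof never leaves the vector level: it expands $w\cdot u(v+z)$ two ways -- once by first applying (R7) to $u(v+z)$ and then \ref{yht1} to each piece, and once by substituting $v+z$ directly into \ref{yht1} for $w\cdot u(\cdot)$ and expanding bilinearly -- and reads (ii) off from the difference. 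You need a vectorial computation of this kind to close (ii).
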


\begin{proof}
On the one hand,
\begin{alignat*}{2}
w\cdot(u+z)v&=f(w(u+z),v)v+f(w,v)v(u+z)+q(v)w(u+z)& &\qquad\text{by \ref{yht1}}\\
&=f(wu+wz,v)v+f(w,v)\big(vu+vz+g(uv,z)\big)\\
&\qquad\qquad+q(v)\big(wu+wz+g(uw,z)& &\qquad\text{by (R7)},
\end{alignat*}
and on the other,
\begin{alignat*}{2}
w\cdot(u+z)v&=w\cdot(uv+zv)& &\qquad\text{by (R1)}\\
&=w\cdot uv+w\cdot zv+g(uv\cdot w,zv)\\
&=f(v,wu)v+f(v,w)vw+q(v)wu\\
&\qquad+f(v,wz)v+f(v,w)vz+q(v)wz\\
&\qquad\quad+g(uv\cdot w,zv)& &\qquad\text{by \ref{yht1}}.
\end{alignat*}
Thus (i) holds.

Using \ref{yht1}, (R2) and (R7), we have
\begin{align*}
w\cdot u(v+z)&=w\cdot\big(uv+uz+g(vu,z)\big)\\
&=w(uv+uz)+w[f(vu,z)]\\
&=w(uv+uz)+f(vu,z)w\\
&=w\cdot uv+w\cdot uz+g(uv\cdot w,uz)+f(vu,z)w,
\end{align*}
whereas
\begin{align*}
w\cdot u(v+z)&=f(wu,v+z)(v+z)+f(w,v+z)(vu+zu)+q(v+z)wu\\
&=\big(f(wu,v)v+f(w,v)vu+q(v)wu\big)\\
&\qquad+\big(f(wu,z)z+f(w,z)zu+q(z)wu\big)\\
&\quad\qquad+f(wu,v)z+f(wu,z)v+f(w,v)zu+f(w,z)vu+f(v,z)wu\\
&=w\cdot uv\\
&\qquad+w\cdot uz\\
&\quad\qquad+f(wu,v)z+f(wu,z)v+f(w,v)zu+f(w,z)vu+f(v,z)wu.
\end{align*}
by several applications of \ref{yht1}. Thus (ii) holds.

Using \eqref{abc24}(ii) and \ref{yht2}, we obtain
$$uv\cdot w+uw\cdot v=g(v,f(v,wu)w)+f(v,w)^{\theta}u$$
and
\begin{align*}
f(uv,zw)+f(uw,zv)&=f(uv\cdot w,z)+f(uw\cdot v,z)\\
&=f(uv\cdot w+uw\cdot v,z)\\
&=f(f(v,w)^{\theta}u,z)=f(u,z)f(v,w)^{\theta}.
\end{align*}
Thus (iii) holds.

We have
$$uv\cdot vu=f(u,uv\cdot v)u+f(u,uv)uv+q(u)uv\cdot v$$
by \ref{yht1} and
$$uv\cdot v=uv\cdot q(v)v^{-1}=q(v)^{\theta}u$$
by \ref{abc95x},\eqref{(uv)v-1} and (R2).
Hence
$$uv\cdot vu=q(u)q(v)^{\theta}v=q(uv)u$$
by \eqref{abc24}(i), \ref{q(uv)} and \ref{abc95x}\eqref{(uv)v-1}. Thus (iv) holds.

By \ref{yht1}, \ref{q(uv)} and (R5), we have
\begin{align*}
uv\cdot vw&=f(uv\cdot v,w)w+f(uv,w)wv+q(w)uv\cdot v\\
&=f(u,w)q(v)^{\theta}w+f(uv,w)wv+q(wv)u.
\end{align*}
Thus (v) holds.

Using \ref{yht2}, we have
\begin{align*}
\overbrace{u}^P\overbrace{v}^U\cdot\overbrace{vw}^V+(u\cdot vw)\cdot v
&=g(U,f(U,VP)V)+f(U,V)^{\theta}P\\
&=g(v,f(v,vw\cdot u)vw)+f(v,vw)^{\theta}u=0.
\end{align*}
Thus (vi) holds.

By \eqref{uv.vu}, we have $vv\cdot vv=q(vv)\cdot v$.  Hence
$$v=q(vv)^{-1}vv\cdot vv=(vv)^{-1}(vv)$$
and thus (vii) holds.

Using (R3), \ref{q(uv)} and \eqref{uv.vu}, finally, we have
$$u^{-1}u\cdot u^{-1}u=q(u)^{-1}q(u)^{-\theta}uu\cdot uu=q(uu)^{-1}q(uu)u=u.$$
Thus (viii) holds.
\end{proof}

\begin{proposition}\label{tru46}
For each nonzero $a\in V$, there exists $b\in V$
such that $a=b^{-1}b$.
\end{proposition}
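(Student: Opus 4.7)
The plan is essentially one line: identity \ref{abc95}\eqref{(vv)-1(vv)} already asserts that $v=(vv)^{-1}(vv)$ for every nonzero $v\in V$, so given a nonzero $a\in V$ I simply take $b=aa$ and apply that identity with $v=a$.

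More precisely, I would first observe that $b:=aa$ is nonzero, since by \ref{q(uv)} we have $q(aa)=q(a)q(a)^{\theta}$, which is nonzero because $q$ is anisotropic and $a\neq 0$, so $q(a)\neq 0$ and hence also $q(a)^{\theta}\neq 0$ (as $\theta$ is a Tits endomorphism and therefore injective). This justifies writing $b^{-1}=q(b)^{-1}b$. Then \ref{abc95}\eqref{(vv)-1(vv)} applied to $v=a$ gives
\[
 b^{-1}b \;=\; (aa)^{-1}(aa) \;=\; a,
\]
which is exactly what is required.

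There is no real obstacle, because the underlying computation is already packaged in the preceding results: the formula $v=(vv)^{-1}(vv)$ of \ref{abc95}\eqref{(vv)-1(vv)} was in turn deduced by specialising \ref{abc95}\eqref{uv.vu} to $u=v$, giving $vv\cdot vv=q(vv)v$, and then dividing by $q(vv)$ using (R3) and the definition of the inverse. Thus the proposition follows immediately from \ref{abc95}\eqref{(vv)-1(vv)} together with \ref{q(uv)}, without further calculation.
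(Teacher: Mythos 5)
Your proposal is correct and takes exactly the same route as the paper: both observe that~\ref{abc95}\eqref{(vv)-1(vv)} already gives $a=(aa)^{-1}(aa)$, so $b=aa$ works. Your extra check that $aa\neq 0$ via~\ref{q(uv)} is a harmless bit of care that the paper leaves implicit.
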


\begin{proof}
This holds by \ref{abc95}\eqref{(vv)-1(vv)}.
\end{proof}

\section{Tits Endomorphisms}\label{sec6}

We begin this section by proving some elementary properties of arbitrary Tits endomorphisms
in \ref{abc90} and \ref{abc71}.

\begin{definition}\label{abc88}
Let $(K,\theta)$ be an octagonal set as defined in \ref{abc0}.
We will call an element of $K$ a {\it Tits trace} (with respect to $\theta$) if it is of the form
$x^\theta+x$ for some $x\in K$.
\end{definition}

\begin{proposition}\label{abc90}
Let $(K,\theta)$ be an octagonal set. Then the following hold:
\begin{enumerate}[\rm(i)]
\item The map $x\mapsto x^\theta+x$ from $K$ to itself is additive.
\item If $u^\theta=u$ for some $u\in K$, then $u=0$ or $1$.
\item If $u^\theta+u=v^\theta+v$ for $u,v\in K$, then either $u=v$ or $u=v+1$.
\item If $z^\theta$ is a Tits trace, then so is $z$.
\item $u^2+u$ is a Tits trace for every $u\in K$.
\item $1$ is not a Tits trace.
\end{enumerate}
\end{proposition}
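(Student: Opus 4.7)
The plan is to work directly from the defining identity $x^{\theta^2}=x^2$ for the Tits endomorphism together with additivity of $\theta$ and the fact that $\mathrm{char}(K)=2$. Each of the six assertions is an elementary manipulation; no deep ingredient is needed, and the only mildly clever moves are selecting the right witnesses in (iv) and (v). I therefore do not expect a substantive obstacle.

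For (i), since $\theta$ and the identity are both additive, so is their sum; there is nothing more to do. For (ii), applying $\theta$ to $u^\theta=u$ yields $u^{\theta^2}=u^\theta$, i.e.\ $u^2=u$, so $u(u+1)=0$ and $u\in\{0,1\}$. Assertion (iii) is a consequence of (i) and (ii): setting $w=u+v$, additivity gives $w^\theta+w=0$, hence $w^\theta=w$, and (ii) forces $w\in\{0,1\}$.

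For (iv), write $z^\theta=x^\theta+x$ for some $x\in K$ and put $y:=z+x$. Rearranging the hypothesis gives $(z+x)^\theta=x$, that is, $y^\theta=x$; consequently $z=y+x=y+y^\theta$, exhibiting $z$ as a Tits trace. For (v), take $y:=u+u^\theta$; then
\[
y^\theta+y=u^\theta+u^{\theta^2}+u+u^\theta=u^2+u,
\]
using $u^{\theta^2}=u^2$ and characteristic $2$.

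Finally, for (vi), suppose $1=x^\theta+x$ for some $x\in K$. Applying $\theta$ yields $1=x^{\theta^2}+x^\theta=x^2+x^\theta$. Adding these two equations and cancelling in characteristic $2$ gives $0=x^2+x$, whence $x\in\{0,1\}$ by the computation used in (ii). But $0^\theta+0=0$ and $1^\theta+1=0$ by (ii), neither of which equals $1$; this contradiction proves (vi). The only step that requires slight foresight is choosing $y=z+x$ in (iv) and $y=u+u^\theta$ in (v); both are forced once one expands and matches terms.
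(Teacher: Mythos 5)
Your proof is correct and follows essentially the same line of reasoning as the paper: the verifications of (i)--(v) coincide step by step (with (v) being precisely the paper's observation that applying $x\mapsto x^\theta+x$ twice yields $u^2+u$), and for (vi) both approaches reduce to $x^2=x$ and rule out $x=0,1$ by inspection.
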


\begin{proof}
Since $\theta$ is additive, (i) holds. Suppose that $u^\theta=u$ for some $u\in K$,
then $u^2=u$ and hence $u=0$ or $1$.
Thus (ii) holds, and (iii) follows from (i) and (ii). Suppose that
$z^\theta=u^\theta+u$ for some $u\in K$. Let $v=u+z$.
Then $v^\theta=u$ and hence $z=v^\theta+v$. Thus (iv) holds. Applying
the map $x\mapsto x^\theta+x$ twice to an element $u\in K$ yields $u^2+u$.
Thus (v) holds. Suppose, finally, that $u^\theta+u=1$ for some $u\in K$.
Then $u^2=(u^\theta)^\theta=(u+1)^\theta=u^\theta+1=u$ and hence
$u=0$ or $1$. Thus (vi) holds.
\end{proof}

\begin{proposition}\label{abc71}
Let $\theta$ be a Tits endomorphism of a field $K$,
let $\delta\in K$, let $L$ be the splitting field over $K$ of the polynomial $x^2+x+\delta$
and let $\chi$ be the non-trivial element of ${\rm Gal}(L/K)$.
Then the following hold:
\begin{enumerate}[\rm(i)]
\item $\theta$ extends to a Tits endomorphism of $L$ if and only if $\delta$
is a Tits trace.
\item If $\theta$ extends to a Tits endomorphism of $L$, then there are
exactly two extensions $\theta_1$ and $\theta_2$, both commute with $\chi$
and $\chi=\theta_2^{-1}\cdot\theta_1$.
\end{enumerate}
\end{proposition}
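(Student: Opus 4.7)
I would work with the $K$-basis $\{1,\omega\}$ of $L$, where $\omega$ is a root of $x^2+x+\delta$, so $\omega^2=\omega+\delta$ and the nontrivial Galois element satisfies $\chi(\omega)=\omega+1$. If $\delta=u^2+u$ for some $u\in K$ then $x^2+x+\delta$ already splits in $K$, so $L=K$ and both assertions are trivial (using \ref{abc90}(v) for the Tits-trace side of (i)); hence I may assume $L/K$ is a proper quadratic extension.

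Any additive map $\tilde\theta\colon L\to L$ extending $\theta$ is determined by $y:=\tilde\theta(\omega)\in L$ via $\tilde\theta(a+b\omega)=a^\theta+b^\theta y$ for $a,b\in K$. The first step is to check that this formula is multiplicative precisely when $y^2+y=\delta^\theta$. Writing $y=c+d\omega$ with $c,d\in K$ and expanding $(c+d\omega)^2+(c+d\omega)=c^2+d^2\delta+c+(d^2+d)\omega$, comparison with $\delta^\theta\in K$ forces $d\in\{0,1\}$ and $c^2+d^2\delta+c=\delta^\theta$.

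Next I would impose the Tits condition $\tilde\theta^2={\rm Frob}_L$, i.e., $\tilde\theta^2(\omega)=\omega+\delta$. The case $d=0$ is ruled out, since then $\tilde\theta^2(\omega)=c^\theta\in K$ while $\omega+\delta\notin K$. With $d=1$ we have $y=c+\omega$ and $\tilde\theta^2(\omega)=c^\theta+c+\omega$, so we need $c^\theta+c=\delta$; this is exactly the statement that $\delta$ is a Tits trace, proving the forward direction of (i). Conversely, given $\delta=c^\theta+c$, the formula $\tilde\theta(a+b\omega):=a^\theta+b^\theta(c+\omega)$ satisfies the multiplicativity requirement because $c^2+c=\delta^\theta+\delta$ (apply $x\mapsto x^\theta+x$ to both sides of $c^\theta+c=\delta$), and by construction its square is ${\rm Frob}_L$; this yields the reverse direction of (i).

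For (ii), by \ref{abc90}(iii) the equation $c^\theta+c=\delta$ has exactly the two solutions $c$ and $c+1$, producing $\theta_1(\omega)=c+\omega$ and $\theta_2(\omega)=(c+1)+\omega$. Since $\theta_1(\omega)=\theta_2(\omega)+1$ lies in the image of $\theta_2$, and symmetrically, the images of $\theta_1$ and $\theta_2$ coincide, so $\theta_2^{-1}\theta_1$ is a well-defined map $L\to L$. A direct computation gives $\theta_i(\chi(\omega))=\theta_i(\omega)+1=\chi(\theta_i(\omega))$, so each $\theta_i$ commutes with $\chi$, and $\theta_2(\chi(\omega))=c+\omega=\theta_1(\omega)$ shows $\theta_1=\theta_2\chi$, i.e., $\chi=\theta_2^{-1}\theta_1$. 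I expect the main obstacle to be purely bookkeeping: correctly excluding the spurious $d=0$ solution (which gives a ring endomorphism extending $\theta$ but not a Tits endomorphism), and being precise about how to interpret $\theta_2^{-1}\theta_1$ since the extensions $\theta_i$ are generally not surjective.
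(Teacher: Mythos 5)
Your proof is correct and follows essentially the same strategy as the paper's own: parametrize the candidate extension by its value $y=\tilde\theta(\omega)=c+d\omega$ on a root $\omega$ of $x^2+x+\delta$, and impose $\tilde\theta^2={\rm Frob}_L$. The one structural variation is that you first impose only the ring-homomorphism constraint $y^2+y=\delta^\theta$, which yields $d\in\{0,1\}$ together with $c^2+d^2\delta+c=\delta^\theta$, and then use the Frobenius condition separately to discard the spurious $d=0$ family (a genuine $K$-valued ring-endomorphism extension of $\theta$ that the paper never surfaces) and arrive at $c^\theta+c=\delta$. The paper instead applies $\theta_1^2={\rm Frob}_L$ directly to $\gamma^{\theta_1}=a+b\gamma$, reads off $b^{\theta+1}=1$ from the coefficient of $\gamma$, and deduces $b=1$ via $b=b^{\theta^2-1}=(b^{\theta+1})^{\theta-1}$; both paths land in the same place, but yours makes the distinction between ``extends as a ring map'' and ``extends as a Tits endomorphism'' explicit, which is informative. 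Your closing remark that $\theta_1(L)$ is $\chi$-invariant, hence $\theta_1(L)=\theta_2(L)$ and $\theta_2^{-1}\theta_1$ is an honest map $L\to L$, addresses a well-definedness issue the paper leaves tacit, and is worth keeping.
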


\begin{proof}
Let $\gamma$ be a root of $x^2+x+\delta$ in $L$.
Suppose that $\delta=\lambda^\theta+\lambda$ for some $\lambda\in K$. We extend $\theta$ to
an endomorphism $\theta_1$ of $L$ by setting $\gamma^{\theta_1}=\gamma+\lambda$ and
$$(a+b\gamma)^{\theta_1}=a^\theta+b^\theta\gamma^{\theta_1}$$
for all $a,b\in K$. We have
\begin{align*}
(\gamma+\lambda)^{\theta_1}&=\gamma^{\theta_1}+\lambda^\theta\\
&=\gamma+\lambda+\lambda^\theta\\
&=\gamma+\delta=\gamma^2.
\end{align*}
Hence $\theta_1^2={\rm Frob}_L$. Thus $\theta_1$ is a Tits endomorphism of $L$.

Suppose, conversely, that $\theta$ extends to a Tits endomorphism $\theta_1$ of $L$.
Then $\gamma^{\theta_1}=a+b\gamma$ for some $a,b\in K$.
Therefore
\begin{align*}
\gamma+\delta=\gamma^2&=a^\theta+b^\theta\gamma^{\theta_1}\\
&=a^\theta+b^\theta(a+b\gamma)\\
&=a^\theta+ab^\theta+b^{\theta+1}\gamma,
\end{align*}
so $\delta=a^\theta+ab^\theta$ and $b^{\theta+1}=1$.
Hence $b=b^{\theta^2-1}=(b^{\theta+1})^{\theta-1}=1$ and therefore
\begin{equation}\label{abc91}
\delta=a^\theta+a,
\end{equation}
so $\delta$ is a Tits trace and hence (i) holds. Furthermore
$$\gamma^{\chi\theta_1}=\gamma^{\theta_1}+1=a+1+\gamma=a+\gamma^\chi
=\gamma^{\theta_1\chi},$$
so $\theta_1$ commutes with $\chi$ and thus the product $\theta_2:=\chi\theta_1$ is a
second Tits endomorphism of $L$ extending $\theta$. By \ref{abc90}(iii) and \eqref{abc91},
there are no others.
\end{proof}

We now go back to assuming that $S$, $K$, $V$, $q$, $f$  and $\Xi={\mathcal Q}(S)$ are as in
\ref{abc83} and \ref{abc61}, that $\rho$, $\varphi$ and $\theta$
are as is as in \ref{abc84}, \ref{abc97} and \ref{abc40} and that the product
$uv$ on $V$ is as in \eqref{abc53}. Our goal for the rest of this section is
to prove \ref{abc92}.

\begin{proposition}\label{abc92}
Let $d$ be a non-zero element of $V$. Then there exists an element $e\in V$ such that
$f(d,e)=1$, $f(d,ed)=0$,
\begin{equation}\label{abc92d}
f(dd,ed)=q(d)^\theta
\end{equation}
as well as
\begin{equation}\label{abc92p}
q(d)^\theta\cdot de=f(de,ed)\cdot dd+q(d)\cdot ed
\end{equation}
and $q(d)q(e)=f(de,ed)+f(de,ed)^\theta$. In particular, $q(d)q(e)$ is a Tits trace.
\end{proposition}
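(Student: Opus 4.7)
The plan is to construct $e$ explicitly, verify the scalar identities via the axioms of a polarity algebra as developed in Section~\ref{sec5}, and obtain the Tits trace identity by applying $q$ to \eqref{abc92p} together with a supplementary argument to select the correct branch via \ref{abc71}.

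\emph{Existence of $e$.} Pick any $e_{0}\in V$ with $f(d,e_{0})=1$, which is possible because $d$ cannot lie in the radical of $f$ (else the conclusion would be unattainable). By \ref{abc24}(i) with $u=w=d$ we have $f(d,dd)=0$, and by \ref{abc95y}(ii) with $u=d$, $v=d$, $w=e_{0}$ we have $f(d,(e_{0}d)\cdot d)=q(d)^{\theta}f(d,e_{0})=q(d)^{\theta}\neq 0$. Modifying $e_{0}$ by a suitable element of $\langle d,e_{0}d\rangle+[K]$ then solves the linear system $f(d,e)=1$, $f(d,ed)=0$ simultaneously.

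\emph{The scalar identity $f(dd,ed)=q(d)^{\theta}$ and the main identity \eqref{abc92p}.} The first follows immediately from \ref{abc95y}(ii) with $u=d$, $v=d$, $w=e$ and $f(d,e)=1$. For the main identity, right-multiplying by $d$ and using (R5) reduces it to the equivalent statement $de\cdot d=f(de,ed)\cdot d+q(d)\cdot e$; equivalence is valid because $x\mapsto xd$ is a bijective similitude by \ref{tru3}. The key ingredients are: (a) \ref{yht2} with $u=v=d$, $w=e$ gives $dd\cdot e+de\cdot d=d$, using $f(d,ed)=0$ and $f(d,e)=1$; (b) \ref{abc95}(v) with the same substitution gives $dd\cdot de=q(d)^{\theta}(q(e)d+e)$, using $f(dd,e)=f(d,ed)=0$ from \ref{abc24}(ii); and (c) the pairings $f(de\cdot d,d)=f(de,dd)=q(d)$ and $f(de\cdot d,e)=f(de,ed)$ via \ref{abc24}(ii) and \ref{abc95y}(i). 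These pin down the component of $de\cdot d$ in $\langle d,e\rangle$, and the residual component in the orthogonal complement is ruled out by a further application of \ref{abc95}(ii), or, alternatively, by coordinatising through \ref{abc77} applied to the triple $(d,e,\varphi(d))$.

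\emph{The Tits trace identity.} Apply $q$ to \eqref{abc92p}, using $q(x+y)=q(x)+q(y)+f(x,y)$ together with \ref{q(uv)} and the values $q(dd)=q(d)^{1+\theta}$, $q(de)=q(d)q(e)^{\theta}$, $q(ed)=q(e)q(d)^{\theta}$, $f(dd,ed)=q(d)^{\theta}$. Dividing through by the non-zero factor $q(d)^{1+\theta}$ yields $(q(d)q(e))^{\theta}+q(d)q(e)=Y^{2}+Y$ with $Y=f(de,ed)$. Since $Y^{2}+Y=(Y+Y^{\theta})^{\theta}+(Y+Y^{\theta})$, \ref{abc90}(iii) gives either $q(d)q(e)=Y+Y^{\theta}$ or $q(d)q(e)=Y+Y^{\theta}+1$. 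The main obstacle is ruling out the second alternative; for this I would invoke \ref{abc71}(i), arguing that the polarity algebra structure --- specifically the action of $x\mapsto xd$ on the two-dimensional $K$-subspace $\langle d,e\rangle$ on which $q$ is proportional to the norm of the splitting field $L$ of $q(d)x^{2}+x+q(e)$ --- furnishes an extension of $\theta$ to $L$, which by \ref{abc71}(i) forces $q(d)q(e)$ to be a Tits trace and selects the first alternative.
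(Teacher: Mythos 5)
There are genuine gaps at the two hardest points. \textbf{First}, for~\eqref{abc92p}: your pairings in~(c) only determine the component of $de\cdot d$ in $\langle d,e\rangle$, but $[K]$ is precisely the radical of $f$, so no $f$-pairing can detect a $[K]$-component of either $de$ or $de\cdot d$; indeed $x\mapsto xd$ sends $[K]$ to $[K]$ by~(R4), so passing to $de\cdot d$ does not dispose of the problem. Your fallbacks do not work as stated: \ref{abc95}(ii) computes $g$-values, which again lie in the radical, and \ref{abc77} by itself only puts $d,e,d\xi,e\xi$ into coordinates --- it does not give you the coordinates of $\varphi(e)$, hence not of $de=d\cdot\varphi(e)$, without precisely the content of \ref{abc92} and \ref{tru4}. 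You have also silently skipped the further normalization of $e$: the paper replaces $e$ by $e+\lambda\cdot dd$ so that $f(ee,d)=0$ (this leaves $f(d,e)=1$, $f(d,ed)=0$ and~\eqref{abc92d} intact); without this the $[K]$-component of $de$ is in general nonzero and~\eqref{abc92p} is false. The paper then writes $de=\kappa\cdot dd+\mu\cdot ed+[t]$ in the basis $\langle dd,ed\rangle+[K]$ of $\langle d,e\rangle^\perp$, determines $\kappa,\mu$ by pairings (as you do), and kills $t$ by exhibiting $f(de,ed)$ as a root of two Artin--Schreier-type polynomials whose difference is $t^\theta q(d)^{\theta-1}$; this polynomial argument crucially uses $f(ee,d)=0$ and is the missing idea in your sketch.

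\textbf{Second}, your argument for the Tits-trace identity is circular. Applying $q$ to~\eqref{abc92p} is fine and does give $(q(d)q(e))^\theta+q(d)q(e)=Y^2+Y$ with $Y=f(de,ed)$, whence $q(d)q(e)\in\{Y+Y^\theta,\ Y+Y^\theta+1\}$ by~\ref{abc90}(iii). But to exclude the ``$+1$'' branch you invoke an extension of $\theta$ to $L$ and \ref{abc71}(i); in the paper this extension is established in \ref{tru1} \emph{from} the fact that $q(d)q(e)$ is a Tits trace, i.e.\ from the very statement you are trying to prove. No independent construction of the extension is available at this stage; \ref{tru4}, which would justify the semilinearity picture you gesture at, is proved from~\eqref{abc92p} \emph{and} the Tits-trace identity. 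The paper avoids the two-branch issue altogether: it left-multiplies~\eqref{abc92p} by $d$, simplifies using (R3), (R6), (R7), \ref{yht1} and $g(q(d)^\theta dd,e)=0$, and compares the resulting expression for $q(d)^\theta\cdot de$ with~\eqref{abc92p} itself to get $s^\theta+q(d)q(e)=s$ directly for $s=f(de,ed)$.

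Your verification of~\eqref{abc92d} via \ref{abc95y}\eqref{f(uv,wv)} agrees with the paper and is correct.
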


\begin{proof}
By \cite[Thm. 2.1(i)]{tom1}, there exists $e\in V$ such that
\begin{equation}\label{abc92z}
f(d,e)=1\text{ and }f(d,ed)=0.
\end{equation}
By \ref{abc24}(ii), it follows that
\begin{equation}\label{abc92a}
f(dd,e)=0.
\end{equation}
By \ref{abc24}(i), we have
\begin{equation}\label{abc92b}
f(d,dd)=0
\end{equation}
and by \ref{abc24}(ii), we have
\begin{equation}\label{abc92g}
f(d,dd\cdot d)=f(dd,dd)=0.
\end{equation}
By \ref{abc95y}\eqref{f(uv,uw)} and \eqref{abc92z}, we have
\begin{equation}\label{abc92c}
f(dd,de)=f(d,ed)^\theta+q(d)f(d,e)^\theta=q(d)
\end{equation}
and by \ref{abc95}\eqref{f(uv,wv)} and \eqref{abc92z}, we have
$$f(dd,ed)=q(d)^\theta\cdot f(d,e)=q(d)^\theta.$$
Thus \eqref{abc92d} holds.

Choose $\lambda\in K$. The image of the map $g$ is the radical of $f$.
By (R1), (R3) and (R7), therefore, we have
\begin{align*}
f\big((e+\lambda\cdot dd)(e+\lambda\cdot dd),d\big)
&=f(ee,d)+\lambda f(dd\cdot e,d)+\lambda^\theta f(e\cdot dd,d)\\
&\qquad\qquad+\lambda^{\theta+1}f(dd\cdot dd,d).
\end{align*}
We observe that $f(dd\cdot e,d)=f(dd,de)=q(d)$ by \ref{abc24}(ii) and \eqref{abc92c} as well as
\begin{alignat}{2}
f(e\cdot dd,d)&=f(e,d\cdot dd)& &\qquad\text{by \ref{abc24}(ii)}\notag\\
&=q(d)f(e,dd)& &\qquad\text{by (R6)}\notag\\
&=0& &\qquad\text{by \eqref{abc92a},}\notag
\end{alignat}
and
$$f(dd\cdot dd,d)=f(q(dd)\cdot d,d)=q(dd)f(d,d)=0$$
by \ref{abc95}\eqref{uv.vu}. It follows that
$$f\big((e+\lambda\cdot dd)(e+\lambda\cdot dd),d\big)=f(ee,d)+\lambda q(d).$$
Furthermore,
$$f(d,e+\lambda\cdot dd)=f(d,e)=1$$
by \eqref{abc92z} and \eqref{abc92b}
and $f(d,(e+\lambda\cdot dd)d)=f(d,ed)+\lambda f(d,dd\cdot d)$ by (R1),
so, in fact,
$$f\big(d,(e+\lambda\cdot dd)d\big)=0$$
by \eqref{abc92z} and \eqref{abc92g}.
Thus if we replace $e$ by $e+\lambda\cdot dd$
and $\lambda$ by $f(ee,d)/q(d)$, we can assume that $f(ee,d)=0$
while \eqref{abc92z} and therefore also \eqref{abc92d} remain valid. Hence also
\begin{equation}\label{abc92t}
f(e,de)=0
\end{equation}
by \ref{abc24}(ii). By \ref{abc95y}\eqref{f(uv,uw)}, it follows that
\begin{equation}\label{abc92s}
f(ed,ee)=f(e,de)^\theta+q(e)f(d,e)=q(e).
\end{equation}
By \ref{abc24}(i) and \eqref{abc92t}, we have
$de\in\langle d,e\rangle^\perp$ (where $\langle d,e\rangle^\perp$ denotes
the subspace orthogonal to $\langle d,e\rangle$
with respect to the bilinear form $f$). Setting $\xi=d$ in \cite[Thm.~2.1]{tom1}, we obtain
$\langle d,e\rangle^\perp=\langle dd,ed\rangle+[K]$.
Hence
\begin{equation}\label{abc92i}
de=\kappa\cdot dd+\mu\cdot ed+[t]
\end{equation}
for some $\kappa,\mu,t\in K$.
Therefore
$$f(dd,de)=f(dd,\kappa\cdot dd+\mu\cdot ed)=\mu\cdot f(dd,ed)=\mu\cdot q(d)^\theta$$
and
$$f(de,ed)=f(\kappa\cdot dd+\mu\cdot ed,ed)=\kappa\cdot f(dd,ed)=\kappa\cdot q(d)^\theta$$
by \eqref{abc92d}, so
\begin{equation}\label{abc92j}
\mu=q(d)^{1-\theta}
\end{equation}
by \eqref{abc92c} and
\begin{equation}\label{abc92k}
\kappa=f(de,ed)/q(d)^\theta.
\end{equation}
Substituting \eqref{abc92j} but not \eqref{abc92k} in \eqref{abc92i} and
applying $q$, we obtain
\begin{align*}
q(de)=q(d)q(e)^\theta&=\kappa^2q(d)^{\theta+1}+q(d)^{2-2\theta}\cdot q(e)q(d)^\theta\\
&\qquad+t^\theta+\kappa q(d)^{1-\theta}f(dd,ed)
\end{align*}
by \ref{q(uv)} and \ref{q([t])}.
Multiplying by $q(d)^{\theta-1}$ and applying \eqref{abc92d},
we then obtain
$$q(d)^\theta q(e)^\theta=\kappa^2q(d)^{2\theta}+\kappa q(d)^\theta+q(d)q(e)+t^\theta q(d)^{\theta-1}.$$
Thus $\kappa q(d)^\theta$ is a root of the polynomial
\begin{equation}\label{abc92m}
p(x):=x^2+x+q(d)q(e)+q(d)^\theta q(e)^\theta+t^\theta q(d)^{\theta-1}.
\end{equation}
By \ref{abc92k}, $\kappa q(d)^\theta=f(de,ed)$ and
by \ref{abc95}\eqref{f(uv.zw)} and \eqref{abc92z}, we have
\begin{equation}\label{abc92n}
f(de,ed)+f(dd,ee)=f(d,e)f(e,d)^\theta=1.
\end{equation}
We conclude that $f(de,ed)$ and $f(dd,ee)$ are the two roots of the polynomial $p$.

Next we observe that
\begin{alignat}{2}
q(e)^\theta&=q(e)^\theta f(d,e)& &\qquad\text{by \eqref{abc92z}}\notag\\
&=f(de,ee)& &\qquad\text{by \ref{abc95y}\eqref{f(uv,wv)}}\notag\\
&=\kappa\cdot f(dd,ee)+\mu\cdot q(e)& &\qquad\text{by \eqref{abc92s} and \eqref{abc92i}}\notag\\
&=\big(f(de,ed)\cdot f(dd,ee)+ q(d)q(e)\big)/q(d)^\theta& &\qquad\text{by \eqref{abc92j} and
\eqref{abc92k},}\notag
\end{alignat}
so
$$f(de,ed)\cdot f(dd,ee)=q(d)q(e)+q(d)^\theta q(e)^\theta.$$
Thus by \eqref{abc92n}, $f(de,ed)$ is a root of the polynomial.
$$x^2+x+q(d)q(e)+q(d)^\theta q(e)^\theta.$$
Since $f(de,ed)$ is also a root of the polynomial $p(x)$ defined
in \eqref{abc92m}, we conclude that $t=0$. Hence
\begin{alignat}{2}
q(d)^\theta\cdot de&=q(d)^\theta\big(\kappa\cdot dd+\mu\cdot ed\big)
& &\qquad\text{by \eqref{abc92i}}\notag\\
&=f(de,ed)\cdot dd+q(d)\cdot ed& &\qquad\text{by \eqref{abc92j} and \eqref{abc92k}}\notag
\end{alignat}
Thus \eqref{abc92p} holds.

Let $s=f(de,ed)$.
We multiply \eqref{abc92p} on the left by $d$. Applying (R3) and (R7), we obtain
\begin{equation}\label{eq:dde}
q(d)^2 d \cdot de=s^\theta d \cdot dd+q(d)^\theta d \cdot ed
+g\bigl( s \cdot(dd \cdot d), q(d) \cdot ed \bigr).
\end{equation}
By \ref{yht1}, \eqref{abc92z} and \eqref{abc92a}, we have
\[ d \cdot de = f(e, dd) e + f(e, d) ed + q(e) dd = ed + q(e) dd.\]
By (R6), we have $d \cdot dd = q(d) dd$ and $d \cdot ed = q(d) de$. We also have
\[ g(dd \cdot d, ed) = g(q(d)^\theta d, ed) = g(q(d)^\theta dd, e) = 0 \]
by (R5) and \eqref{abc92a}. Hence we can rewrite \eqref{eq:dde} as
\[ q(d)^2 \cdot ed + q(d)^2 q(e) \cdot dd = s^\theta q(d) \cdot dd + q(d)^{\theta+1} \cdot de.\]
Dividing by $q(d)$ and rearranging terms, we obtain
\[ q(d)^\theta \cdot de = \bigl( s^\theta + q(d) q(e) \bigr) \cdot dd + q(d) \cdot ed . \]
Comparing this equation with \eqref{abc92p}, we conclude that
\[ s^\theta + q(d) q(e) = s. \]
In other words, $q(d)q(e)=f(de,ed)+f(de,ed)^\theta$.
\end{proof}

\section{Polar Triples}\label{sec6x}

We continue to assume that
$S$, $K$, $F$, $V$, $q$, $f$, $\Xi={\mathcal Q}(S)$, $\Sigma$, $c$ and $x_1,\ldots,x_4$ are as in
\ref{abc83} and \ref{abc61}, that $\rho$, $\varphi$ and $\theta$
are as is as in \ref{abc84}, \ref{abc97} and \ref{abc40}
and that the product
$uv$ on $V$ is as in \eqref{abc53}. Thus $F=K^\theta$, $\theta$ is a Tits endomorphism
of $K$ by \ref{abc41}(i) and
$$\Xi={\mathcal Q}_{\mathcal F}(K,V,q)=B_2^{\mathcal F}(K,V,q)$$
by \ref{abc85x}. The main results of this section are \ref{tru68} and \ref{onk111}.

\begin{proposition}\label{tru1}
Let $d$ and $e$ as in {\rm\ref{abc92}} and let $\xi=\varphi(d)$, so that
$f(d,e)=1$ and $f(d,e\xi)=0$. Let
$$S_0:=(L/K,K^\theta,\alpha_0,\beta_0)$$
be the standard decomposition of $q$ obtained by applying
{\rm\ref{abc77}} to the triple $d$, $e$ and $\xi$. Then $\alpha_0=\beta_0^{-\theta}$
and $\theta$ has an extension to a Tits endomorphism of $L$.
\end{proposition}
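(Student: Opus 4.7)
The plan is to unwind the notational translation between Proposition \ref{abc77} (which uses the $V\times\hat V\to V$ product denoted $d\xi$, $e\xi$) and the polarity-algebra notation of \S\ref{sec5} (where $dd$, $ed$ are computed via $u\cdot v=u\varphi(v)$). Under the substitution $\xi=\varphi(d)$, the expressions $d\xi$ and $e\xi$ in \ref{abc77} become $d\cdot\varphi(d)=dd$ and $e\cdot\varphi(d)=ed$ in the sense of \eqref{abc53}. So the two parameters provided by \ref{abc77} read
\[
\alpha_0 = f(d\xi,e\xi) = f(dd,ed), \qquad \beta_0 = q(d)^{-1}
\]
in polarity-algebra terms.

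For the first claim, I apply the identity \eqref{abc92d} of \ref{abc92}, which says $f(dd,ed)=q(d)^\theta$. Therefore
\[
\alpha_0 = q(d)^\theta = (q(d)^{-1})^{-\theta} = \beta_0^{-\theta},
\]
which is the required relation.

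For the second claim, I first use \ref{tru2} to observe that $L$, the splitting field over $K$ of $p(x)=q(d)x^2+x+q(e)$, is equally the splitting field of $x^2+x+q(d)q(e)$ over $K$. By \ref{abc71}(i), $\theta$ extends to a Tits endomorphism of $L$ if and only if the constant term $q(d)q(e)$ is a Tits trace with respect to $\theta$, that is, belongs to the image of the map $x\mapsto x^\theta+x$. But the final assertion of \ref{abc92} gives exactly
\[
q(d)q(e) = f(de,ed) + f(de,ed)^\theta,
\]
which exhibits $q(d)q(e)$ as $x^\theta+x$ for $x=f(de,ed)$. Hence $q(d)q(e)$ is a Tits trace, and \ref{abc71}(i) yields the required extension of $\theta$ to $L$.

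No serious obstacle is expected: both assertions are immediate repackagings of the content of \ref{abc92}, once one notices that the two auxiliary parameters produced by \ref{abc77} for the standard decomposition are precisely the two quantities that \ref{abc92} controls. The only thing to be careful about is the correct dictionary between the $*$-products of \ref{abc80} and the polarity-algebra product of \eqref{abc53}, which is handled by the single substitution $\xi=\varphi(d)$.
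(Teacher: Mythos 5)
Your proof is correct and follows precisely the same route as the paper's: translate $d\xi,e\xi$ into $dd,ed$ via $\xi=\varphi(d)$, read off $\alpha_0,\beta_0$ from \ref{abc77}, apply \eqref{abc92d} for the first claim, then invoke \ref{tru2} to move to the monic polynomial $x^2+x+q(d)q(e)$, the last assertion of \ref{abc92} to see that $q(d)q(e)$ is a Tits trace, and \ref{abc71}(i) to extend $\theta$. No substantive difference.
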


\begin{proof}
By \ref{tru2}, the field $L$ is the splitting field of the polynomial
$$x^2+x+q(d)q(e)$$
over $K$. Hence \ref{abc71}(i) and the last assertion in \ref{abc92}, $\theta$ has an
extension to a Tits endomorphism of $L$. By \ref{abc77},
we have $\alpha_0=f(d\xi,e\xi)$ and $\beta_0=q(d)^{-1}$. Thus
by \eqref{abc92d}, $\alpha_0=q(d)^\theta=\beta_0^{-\theta}$.
\end{proof}

\begin{hypothesis}\label{tru5x}
We assume from now on that $d$ and $e$ are as in \ref{abc92} and
that $\xi=\varphi(d)$ and that the standard decomposition $S$ in \ref{abc83}
is the standard decomposition $S_0$ in \ref{tru1}. Thus $E/K$ is now
the extension called $L/K$ and $\alpha$ and $\beta$ are now the constants called
$\alpha_0$ and $\beta_0$ in \ref{tru1},
the Tits endomorphism $\theta$ has an extension to the
field $E$ and $\beta^\theta=\alpha^{-1}$. The group $U_+$ is unchanged
by this assumption, but we assume that $V$, $\hat V$ and the isomorphisms $x_1,\ldots,x_4$
are as in \ref{abc61} with respect to the new $S$.
\end{hypothesis}

\begin{notation}\label{tru5}
By \ref{abc71}(ii), $\theta$ has exactly
two extensions to $E$. We denote these extensions by $\theta_1$ and $\theta_2$.
Both commute with the non-trivial element $\chi$ of ${\rm Gal}(E/K)$
and $\theta_2=\chi\theta_1$. Let $\bar x=x^\chi$ for all $x\in E$.
\end{notation}

\begin{remark}\label{tru6}
Let $\theta_1$, $\theta_2$ and $\chi$
be as in \ref{tru5}, let $\gamma\in E$ be a root of
$$x^2+x+q(d)q(e)$$
and let $i=1$ or $2$.
Since $\chi$ commutes with $\theta_i$, we have $\gamma^{\theta_i}\not\in K$.
Thus $E=K(\gamma^{\theta_i})$ and hence
$$D=E^2F=F(\gamma^2)=K(\gamma^{\theta_i})^{\theta_i}=E^{\theta_i}.$$
\end{remark}

\begin{proposition}\label{tru4}
For either $i=1$ or $i=2$, the map $\varphi$ from $V=E\oplus E\oplus[F]_K$ to
$\hat V=D\oplus D\oplus[K]_F$ is given by
$$\varphi(a,b,s)=(a^{\theta_i},\beta^{-2}b^{\theta_i},s^{\theta^{-1}})$$
for all $(a,b,s)\in V$.
\end{proposition}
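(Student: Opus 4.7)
The plan is to exploit $\theta$-semilinearity to reduce computing $\varphi$ to its values on a $K$-spanning set. Under the identifications from \ref{abc77}, $V=E\oplus E\oplus[F]_K$ is $K$-spanned by $d,e,dd,ed$ together with $[F]_K$, and by \ref{abc41}(ii), $\varphi$ satisfies $\varphi(tv)=t^\theta\varphi(v)$ for $t\in K$. It therefore suffices to determine $\varphi$ on these five generators and check that the result agrees, for one choice of $i$, with the formula in the statement.

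Three of the values are essentially immediate. The formula $\varphi^{-1}([t]_F)=[t^\theta]_K$ from \ref{abc40} gives $\varphi((0,0,s))=(0,0,s^{\theta^{-1}})$ for $s\in F$, and $\varphi(d)=\xi=(1,0,0)$ by construction. Applying \ref{abc15}(i) with $v=d$ and $\hat v=\xi$ gives $\varphi(dd)=\xi\cdot d$, which by the explicit $\hat V\times V\to\hat V$ commutator formula in \ref{abc61} evaluates to $(0,\beta^{-2},0)$; the same lemma with $\hat v=\varphi(e)$ gives $\varphi(de)=\xi\cdot e=(0,\beta^{-2}\omega^2,0)$ (this last value will be used as a constraint below). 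The first three match the proposed formula for any $i$.

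The substantive step is to compute $\varphi(e)$. Write $\varphi(e)=(x_0,y_0,t_0)$ with $x_0,y_0\in D$ and $t_0\in K$. Identity \eqref{abc69} applied to the pairs $(e,de)$ and $(d,ed)$, using $f(e,de)=0$ (equation \eqref{abc92t}) and $f(d,ed)=0$ (from \eqref{abc92z}), yields $y_0\bar\omega^2+\bar{y_0}\omega^2=0$ and $y_0+\bar{y_0}=0$; since $\omega^2+\bar\omega^2=\beta^2\neq0$, these force $y_0=0$. Next, applying $\varphi$ to the identity \eqref{abc92p} of \ref{abc92} and using $\theta$-semilinearity, the third coordinate of the resulting equation in $\hat V$ collapses to $t_0=0$, while the second coordinate, after substituting $\beta^\theta=\alpha^{-1}$ and $\omega^2=\beta\omega+\beta q(e)$, simplifies to $x_0=\alpha^{-1}(\beta^{-1}\omega+\mu)$ with $\mu:=q(d)q(e)+f(de,ed)^\theta$.

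The main obstacle I anticipate is showing that $\mu$ coincides with one of the two parameters $\lambda_i$ from \ref{abc71}(ii). Writing $\gamma=q(d)\omega$ (so $\gamma^2+\gamma+q(d)q(e)=0$, $\omega=\beta\gamma$, and $\gamma^{\theta_i}=\gamma+\lambda_i$), the desired equality $x_0=\omega^{\theta_i}$ is equivalent to $\mu=\lambda_i$ for some $i$, and by \ref{abc90}(iii) this in turn reduces to verifying $\mu^\theta+\mu=q(d)q(e)$. This identity follows by combining two facts from \ref{abc92}: the concluding equation $q(d)q(e)=f(de,ed)+f(de,ed)^\theta$, together with the relation $f(de,ed)^2+f(de,ed)=q(d)q(e)+q(d)^\theta q(e)^\theta$ extracted from its proof (where $f(de,ed)$ is exhibited as a root of $x^2+x+q(d)q(e)+q(d)^\theta q(e)^\theta$). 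Once $x_0=\omega^{\theta_i}$ is in hand, one more application of \ref{abc15}(i) yields $\varphi(ed)=\varphi(e)\cdot d=(0,\beta^{-2}\omega^{\theta_i},0)$, and the stated formula on all of $V$ then follows by additivity and $\theta$-semilinearity.
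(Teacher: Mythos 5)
Your argument is correct, and it reaches the same endpoint as the paper's proof, but by a genuinely different route. The paper first establishes the block-diagonal form $\varphi(a,b,s)=(\varphi_1(a),\varphi_2(b),s^{\theta^{-1}})$ abstractly, by using (R5) and \eqref{abc92p} to show $\xi e\cdot d\in\langle\xi,\eta\rangle$, a linear-independence argument giving $\eta\in\langle\xi,\xi e\cdot d\rangle$, and the dual span statement $\varphi(\langle d\xi,e\xi\rangle)=\langle\xi d,\xi e\rangle$; it then pins down $\varphi_1(\omega)$ not via an explicit equation but by noting that $\hat f(\xi,\eta)=1$ and $\hat q(\eta)=q(e)^\theta$ force $\varphi_1(\omega)$ to be a root of $q(d)^\theta x^2+x+q(e)^\theta$, whose roots are exactly $\omega^{\theta_1},\omega^{\theta_2}$; finally $\varphi_2$ is obtained by conjugating a commutator relation by $\rho$. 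You instead bypass the abstract block-diagonality step by working on the explicit $K$-spanning set $d,e,dd,ed,[F]_K$, compute $\varphi(dd)=\xi d$ and $\varphi(ed)=\varphi(e)\cdot d$ directly from the $\hat V\times V\to\hat V$ commutator formula, and extract $\varphi(e)$ by applying $\varphi$ to \eqref{abc92p} coordinatewise (with \eqref{abc69} killing $y_0$ first). The price is that you must then match $\mu=q(d)q(e)+f(de,ed)^\theta$ against $\lambda_1,\lambda_2$ by a Tits-trace computation using both the conclusion of \ref{abc92} and the polynomial $x^2+x+q(d)q(e)+q(d)^\theta q(e)^\theta$ buried in its proof; the paper avoids this by the more conceptual norm/trace argument, which identifies $\varphi_1(\omega)$ as a root of a quadratic without ever needing the value of $f(de,ed)$. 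Both proofs rely on \eqref{abc92p}, but the paper uses it qualitatively (to control a span) whereas you use it quantitatively (to solve for the unknown coordinate of $\varphi(e)$). Your approach is more calculational but self-contained; the paper's is shorter once the span arguments are in place.
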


\begin{proof}
Let $\eta=\varphi(e)$. Then by (R5) and \eqref{abc92p}, we have
$$d\eta\cdot\xi\in\langle d,e\rangle.$$
Applying $\varphi$, we obtain
\begin{equation}\label{tru4a}
\xi e\cdot d \in \langle\xi,\eta\rangle
\end{equation}
by \ref{abc15}(i). By \eqref{abc69} and \ref{abc24}(ii), we have
\begin{align*}
\hat f(\xi e\cdot d,\xi)&=\hat f(\varphi(d\eta\cdot\xi),\varphi(d))\\
&=f(d\eta\cdot\xi,d)^\theta=f(d\eta,d\xi)^\theta.
\end{align*}
By \eqref{abc92c}, it follows that $\hat f(\xi e\cdot d,\xi)\ne0$.
Thus $\xi e\cdot d$ and $\xi$ are linearly independent.
By \eqref{tru4a}, therefore,
\begin{equation}\label{tru4b}
\eta\in\langle\xi,\xi e\cdot d\rangle.
\end{equation}
By \ref{abc15}(i) again, we have $\varphi(e\xi)=\eta d$ and $\varphi(d\xi)=\xi d$.
Hence by (R1), (R5) and \eqref{tru4b},
$$\varphi(e\xi)\in\langle\xi d,\xi e\rangle.$$
By \ref{abc41}(ii), $\varphi$ is an isomorphism of vector spaces. Thus
\begin{equation}\label{tru4c}
\varphi(\langle d,e\rangle)=\langle \xi,\xi e\cdot d\rangle
\end{equation}
and
\begin{equation}\label{tru4d}
\varphi(\langle d\xi,e\xi\rangle)=\langle\xi d,\xi e\rangle.
\end{equation}

Let $\gamma$, $\chi$, $\theta_1$ and $\theta_2$ be as in \ref{tru6} and
let $N(x)=x\cdot x^\chi$ and $T(x)=x+x^\chi$ for all $x\in E$.
We set $\omega=\beta\gamma$. Thus
$$\hat q(\xi)=\hat q(\varphi(d))=q(d)^\theta=\beta^{-\theta}=\alpha$$
and $\omega$ is a root in $E$ of $q(d)x^2+x+q(e)$.
By \ref{abc77}, we can assume that $d=(1,0,0)$, $e=(\omega,0,0)$,
$d\xi=(0,1,0)$ and $e\xi=(0,\omega,0)$ in $V$ and
$\xi=(1,0,0)$, $\xi e\cdot d^{-1}=(\omega^2,0,0)$, $\xi d^{-1}=(0,1,0)$ and
$\beta^2\xi e=(0,\omega^2,0)$ in $\hat V$.
Thus $\varphi(d)=\xi=(1,0,0)$.

By \ref{abc40}, \eqref{tru4c} and \eqref{tru4d},
there exist maps $\varphi_1$ and $\varphi_2$ from $E$ to $D$ such that
\begin{equation}\label{tru4f}
\varphi(a,b,s)=(\varphi_1(a),\varphi_2(b),s^{\theta^{-1}})
\end{equation}
for all $(a,b,s)\in V$. By \ref{abc41}(ii), $\varphi_1$ and $\varphi_2$ are $\theta$-linear and
since $\varphi(d)=\xi$, we have $\varphi_1(1)=1$. Furthermore,
$\hat f(\xi,\eta)=\hat f(\varphi(d),\varphi(e))=f(d,e)^\theta=1$ and
$\hat q(\eta)=\hat q(\varphi(e))=q(e)^\theta$. By \eqref{def2x}, therefore,
$T(\varphi_1(\omega))=\alpha^{-1}$ and $N(\varphi_1(\omega))=\alpha^{-1}q(e)^\theta$.
Thus $\varphi_1(\omega)$ is a root of $q(d)^\theta x^2+x+q(e)^\theta$. Hence we can
choose $i\in\{1,2\}$ such that
$$\varphi_1(\omega)=\omega^{\theta_i}.$$
Since $\varphi_1(1)=1$ and $\varphi_1$ is $\theta$-linear, it follows that
\begin{equation}\label{tru4e}
\varphi_1=\theta_i.
\end{equation}

Let $v\in E$. By \cite[16.7]{TW}, we have
$$[x_1(1,0,0),x_4(0,v,0)]_2=x_2(\alpha v,0,0)$$
and
$$[x_4(1,0,0),x_1(0,\varphi_2(v),0)]_3=x_3(\beta^{-2}\varphi_2(v),0,0).$$
Applying $\rho$ to the first of these equations, we obtain
$$[x_4(1,0,0),x_1(0,\varphi_2(v),0)]_3=x_3(\varphi_1(\alpha v),0,0).$$
Hence
$$\varphi_2(x)=\varphi_1(\alpha x)=\beta^{-2}\varphi_1(x)=\beta^{-2}x^{\theta_i}.$$
Thus by \eqref{tru4f} and \eqref{tru4e}, we have
$$\varphi(a,b,s)=(a^{\theta_i},\beta^{-2}b^{\theta_i},s^{\theta^{-1}})$$
for all $(a,b,s)\in V$.
\end{proof}

We summarize our results as follows:

\begin{theorem}\label{tru68}
Let $\Xi=B_2^{\mathcal F}(K,V,q)$ for some quadratic space $(K,V,q)$
of type $F_4$ and suppose that $\rho$ is a polarity of $\Xi$. Then there exists a
standard decomposition
$$S=(E/K,F,\alpha,\beta)$$
of $q$ and a Tits endomorphism $\theta$ of $E$ such that the following hold:
\begin{enumerate}[\rm(i)]
\item $F=K^\theta$.
\item $\alpha=\beta^{-\theta}$.
\item $\Xi$ can be identified with ${\mathcal Q}(S)$ in such a way that
$\rho$ stabilizes $\Sigma$ and $c$
and $x_i(u,v,s)^\rho=x_{5-i}(u^\theta,\beta^{-2}v^\theta,s^{\theta^{-1}})$
for $i=2$ and $4$ and all $(u,v,s)\in V$, where
$\Sigma$, $c$, $V$ and $x_i$ for $i\in[1,4]$ are as in {\rm\ref{abc61}} applied to $S$.
\end{enumerate}
\end{theorem}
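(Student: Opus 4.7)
The plan is to observe that this theorem is essentially a repackaging of the results already proved in Sections~\ref{sec4}--\ref{sec6x}, and to stitch them together in the right order. Starting from the given polarity $\rho$ of $\Xi=B_2^{\mathcal F}(K,V,q)$, I would first apply \ref{abc84x} to assume $\rho$ stabilizes an apartment $\Sigma$ and a chamber $c\in\Sigma$, which is the setup of Section~\ref{sec4a}. The reparametrization in \ref{abc62} puts all four isomorphisms $x_i$ in the form $V\to U_i$ with $\varphi([1]_K)=[1]_F$, and \ref{abc41}(i) then supplies a Tits endomorphism $\theta$ of $K$ with $F=K^\theta$, giving (i).

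Next, I would apply \ref{abc92} to pick $d\in V\setminus\{0\}$ and an associated $e\in V$ satisfying $f(d,e)=1$, $f(d,ed)=0$ and $q(d)q(e)=f(de,ed)+f(de,ed)^\theta$, and set $\xi:=\varphi(d)$. The identity $f(d,e\xi)=0$ (equivalent to $f(d,ed)=0$ via \ref{abc15} and \eqref{abc69}) means the triple $(d,e,\xi)$ verifies the hypothesis of \ref{abc77}. That theorem produces a standard decomposition $S_0=(L/K,F,\alpha_0,\beta_0)$ of $q$, together with isometries $\pi,\hat\pi$ and an isomorphism of root group sequences (\ref{abc77}(iv)) identifying $\Xi$ with $\mathcal{Q}(S_0)$. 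Invoking \ref{tru1} then yields $\alpha_0=\beta_0^{-\theta}$ and shows that $\theta$ extends to a Tits endomorphism of $L$; renaming $L$, $\alpha_0$, $\beta_0$ as $E$, $\alpha$, $\beta$, this delivers (ii) and puts us in the setting of Hypothesis~\ref{tru5x}.

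For (iii), under the identification of $\Xi$ with $\mathcal{Q}(S)$, Proposition~\ref{tru4} gives the explicit formula
\[
\varphi(a,b,s)=(a^{\theta_i},\beta^{-2}b^{\theta_i},s^{\theta^{-1}})
\]
for some extension $\theta_i$ of $\theta$ to $E$. Taking $\theta:=\theta_i$ henceforth, \eqref{abc23} says $x_i(u)^\rho=x_{5-i}(u)$ in the reparametrized system of \ref{abc62}; but since \ref{abc62} only modifies $x_1,x_3$, the identities for $i=2,4$ translate in the original parametrization of \ref{abc61} to exactly
\[
x_i(u,v,s)^\rho=x_{5-i}(u^\theta,\beta^{-2}v^\theta,s^{\theta^{-1}}).
\]

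The main obstacle is bookkeeping rather than new mathematics: one has to keep track of two parametrizations of $U_+$ (the one in \ref{abc61} attached to the decomposition $S$ versus the modified one in \ref{abc62} that absorbs $\varphi$ into $x_1,x_3$), verify that the rescaling freedom used in \ref{abc97} to normalize $\varphi([1]_K)=[1]_F$ is compatible with the isomorphism of \ref{abc77}(iv), and check that, once we replace $S$ by the standard decomposition $S_0$ coming from the chosen pair $(d,e)$, the Tits endomorphism $\theta$ built in \ref{abc40} and its extension $\theta_i$ on $E$ produced by \ref{tru1} do serve in the stated action formula. Every substantive ingredient (existence of $(d,e)$, the Tits-trace property, extendability of $\theta$, and the shape of $\varphi$) has been established already, so the proof reduces to citing \ref{abc41}, \ref{abc92}, \ref{abc77}, \ref{tru1} and \ref{tru4} in sequence.
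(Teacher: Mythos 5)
Your proposal is correct and follows essentially the same route as the paper: fix a standard decomposition so that \ref{abc84}--\ref{abc41} apply, then pass through \ref{abc92}, \ref{abc77} and \ref{tru1} (which the paper's proof cites as a single reference to \ref{tru1}) to obtain the adapted decomposition $S$ satisfying (i) and (ii), and conclude (iii) from \ref{tru4}. The extra checks you flag about reparametrizations and the normalization $\varphi([1]_K)=[1]_F$ are legitimate bookkeeping, but the paper handles them the same way by appealing directly to \ref{tru4} after replacing $\rho$ by a conjugate.
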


\begin{proof}
By \ref{def1}, we can choose a standard decomposition $S$ of $q$ and
by \ref{abc85x}, $\Xi\cong{\mathcal Q}(S)$.
Let $\Sigma$ and $c$ be as in \ref{abc61} applied to $S$.
By \ref{abc84x}, there exists an isomorphism $\xi_S$ from $\Xi$ with ${\mathcal Q}(S)$
such that $\xi_S^{-1}\rho\xi_S$ stabilizes $c$ and $\Sigma$.
By \ref{tru1}, the standard decomposition $S$ and a Tits endomorphism $\theta$ of $E$
can be chosen so that (i) and (ii) hold.
If we identify $\Xi$ with ${\mathcal Q}(S)$ via $\xi_S$
and replace $\rho$ by $\xi^{-1}\rho\xi$ for his choice of $S$, then (iii) holds by \ref{tru4}.
\end{proof}

\begin{remark}\label{onk96}
In \ref{tru7} we give an example of $\Xi$, $\rho$,
$S=(E/K,F,\alpha,\beta)$ and $\theta$ satisfying the conditions~(i) and~(ii) in \ref{tru68}
and a splitting field $\tilde E$ of $q_S$ (as defined in \ref{def1})
such that the restriction of $\theta$ to $K$
does {\it not} have an extension to a Tits endomorphism of $\tilde E$.
See also \ref{tru50}. In \ref{tru51}, we give
an example of a Moufang quadrangle of type $F_4$ that has non-type-preserving
automorphisms but no polarity.
\end{remark}

\begin{notation}\label{tru90}
Let $\Xi$, $\rho$, $S=(E/K,F,\alpha,\beta)$, $\theta$, $\Sigma$, $c$ and the identification of $\Xi$
with ${\mathcal Q}(S)$ be as in \ref{tru68}, let
$$(U_+,U_1,\ldots,U_4)$$
and $x_1,\ldots,x_4$ be as in \ref{abc61} applied to $S$,
let $x\mapsto\bar x$ be as in \ref{tru5}
and let $\iota$ denote the map $(a,b,s)\mapsto(a,b,s^{\theta^{-1}})$ from $V_S=E\oplus E\oplus[F]_K$
to $E\oplus E\oplus[K]$, where $[K]$ is as in \ref{nzz34}. We identify $V=V_S$ with its image
under $\iota$ and we reparametrize $U_+$ by replacing $x_i$ by $\varphi\cdot x_i$ for $i=1$ and $3$
as in \ref{abc62} and then replacing $x_i$ by $\iota\cdot x_i$ for all $i\in[1,4]$.
Thus $x_i$ is an isomorphism from the additive group of $V=E\oplus E\oplus[K]$ to $U_i$
for all $i\in[1,4]$ and the following identities hold:
\begin{align*}
[x_1(a,b,r),x_3(a',b',r')]
&=x_2\bigl(0,\ 0,\ \beta^{-1}\bigl(a\bar{a}'+\bar{a}a'+\alpha(b\bar{b}'+\bar{b}b')\bigr)\bigr),\\
[x_2(u,v,s), x_4(u',v',s')]
&=x_3\bigl(0,\ 0,\ \beta^{-1}\bigl(u\bar{u}'+\bar{u}u'+\alpha(v\bar{v}'+\bar{v}v')\bigr)\bigr),\\
[x_1(a,b,r),x_4(u,v,s)]
&=x_2\bigl(ru+\alpha(\bar{a}^\theta v+\beta^{-1} b^\theta \bar{v}),
\ rv+a^\theta u+\beta^{-1}b^\theta \bar{u}, \\
&\hspace*{10ex} r^\theta s + \beta^{-1} s (a\bar{a} + \alpha b\bar{b})\\
&\hspace*{15ex}+\alpha\beta^{-1}\bigl(u^\theta a\bar{b}
+\bar{u}^\theta\bar{a}b+\beta^{-1}(v^\theta \bar{a}\bar{b}+\bar{v}^\theta ab)\bigr)\bigr) \\
& \hspace*{2.6ex}\cdot x_3\bigl(sa+\alpha(\bar{u}^\theta b+\beta^{-1}v^\theta\bar{b}),
\ sb+u^\theta a+\beta^{-1}v^\theta \bar{a},\\
&\hspace*{10ex}s^\theta r+\beta^{-1}r(u\bar{u}+\alpha v\bar{v})\\
&\hspace*{15ex}+\alpha\beta^{-1}\bigl(a^\theta u\bar{v}+\bar{a}^\theta\bar{u}v
+\beta^{-1}(b^\theta\bar{u}\bar{v}+\bar{b}^\theta uv)\bigr)\bigr)
\end{align*}
for all $(a,b,r),(a',b',r'),(u,v,s),(u',v',s')\in V$,
$$[U_1,U_2]=[U_2,U_3]=[U_3,U_4]=1$$
and
\begin{equation}\label{tru90x}
x_i(v)^\rho=x_{5-i}(v)
\end{equation}
for all $i\in[1,4]$ and all $v\in V$.
\end{notation}

\begin{proposition}\label{tru99}
Let $\Xi$, $S=(E/K,F,\alpha,\beta)$, $\theta$, $\rho$ and the identification of $\Xi$
with ${\mathcal Q}(S)$ be as in {\rm\ref{tru68}},
let $\cdot$ be the multiplication on $V$ defined in {\rm\eqref{abc53}}, let $x\mapsto\bar x$
be as in {\rm\ref{tru5}} and let $V$
be identified with $E\oplus E\oplus[K]$ as in {\rm\ref{tru90}}. Then
\begin{align*}
(a,b,r)\cdot(u,v,s)&=
\bigl(sa+\alpha(\bar{u}^\theta b+\beta^{-1}v^\theta\bar{b}),
\ sb+u^\theta a+\beta^{-1}v^\theta \bar{a},\\
&\hspace*{10ex}s^\theta r+\beta^{-1}r(u\bar{u}+\alpha v\bar{v})\\
&\hspace*{15ex}+\alpha\beta^{-1}\bigl(a^\theta u\bar{v}+\bar{a}^\theta\bar{u}v
+\beta^{-1}(b^\theta\bar{u}\bar{v}+\bar{b}^\theta uv)\bigr)
\end{align*}
for all $(a,b,r),(u,v,s)\in V$.
\end{proposition}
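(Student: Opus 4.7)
The plan is to observe that $(a,b,r)\cdot(u,v,s)$ is, essentially by construction, nothing other than the argument of $x_3$ in the commutator $[x_1(a,b,r),x_4(u,v,s)]$ under the parametrization fixed in \ref{tru90}, so the proposition can simply be read off from the third commutator identity already displayed there. First I would combine the definition $uv:=u\varphi(v)$ from \eqref{abc53} with \ref{abc15}(i)--(ii) to rewrite the original commutator \eqref{abc9}, after the reparametrization of \ref{abc62} (which precomposes $x_1$ and $x_3$ with $\varphi$), in the form
\[
[x_1(u),x_4(v)]=x_2(v\cdot u)\,x_3(u\cdot v)
\]
for all $u,v\in V$, where $\cdot$ is the multiplication on $V$ defined in \eqref{abc53}.

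Next I would check that the additional reparametrization in \ref{tru90} --- precomposing all four $x_i$'s with the bijection $\iota\colon(a,b,s)\mapsto(a,b,s^{\theta^{-1}})$ that identifies $V_S=E\oplus E\oplus[F]_K$ with $V=E\oplus E\oplus[K]$ --- is merely a change of coordinates on the domain and therefore leaves the formal shape of the displayed identity intact. Consequently, in the parametrization of \ref{tru90},
\[
[x_1(a,b,r),x_4(u,v,s)]=x_2\bigl((u,v,s)\cdot(a,b,r)\bigr)\,x_3\bigl((a,b,r)\cdot(u,v,s)\bigr),
\]
so the product $(a,b,r)\cdot(u,v,s)$ coincides with the argument of $x_3$ in the third commutator relation listed in \ref{tru90}. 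Comparing the two expressions yields the formula of \ref{tru99}.

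I do not expect any genuine obstacle: the commutator relations of \ref{tru90} have already been derived by applying exactly these two substitutions (the explicit form of $\varphi$ given by \ref{tru4} and the coordinate change $\iota$) to the original commutator relations of \ref{abc61}, so no factor of $\theta$, $\beta$, or bar can have been dropped along the way. In this sense \ref{tru99} is a pure bookkeeping consequence of \ref{tru90} that requires no new calculation.
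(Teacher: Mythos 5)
Your proposal is correct and follows essentially the same route as the paper, whose entire proof is a citation of \eqref{abc20} and \ref{tru90}: one rewrites the third commutator relation of \ref{abc61} in the form $[x_1(u),x_4(v)]=x_2(vu)\,x_3(uv)$ after the $\varphi$-reparametrization of \ref{abc62}, notes that the further identification via $\iota$ in \ref{tru90} transports both the multiplication and the group isomorphisms in tandem and so preserves that formal identity, and then reads the formula for $(a,b,r)\cdot(u,v,s)$ off as the argument of $x_3$ in the explicit commutator displayed in \ref{tru90}. The only slip is cosmetic: you say "precomposing all four $x_i$'s with $\iota$," whereas one precomposes with $\iota^{-1}$ (or equivalently one transports the structure along $\iota$); the content is unaffected.
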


\begin{proof}
This holds by \eqref{abc20} and \ref{tru90}.
\end{proof}

\begin{notation}\label{nzz31}
Let $\Xi$, $(K,V,q)$, $\theta$ and $S=(E/K,F,\alpha,\beta)$ and the identification
of $\Xi$ with ${\mathcal Q}(S)$ be as in \ref{tru68}, so $F=K^\theta$
and $\alpha=\beta^{-\theta}$.
We identify $V$ with $E\oplus E\oplus[K]$ as in \ref{tru90}, so that
\begin{equation}\label{nzz31x}
q(u,v,t)=\beta^{-1}(N(u)+\alpha N(v))+t^\theta
\end{equation}
for all $(u,v,t)\in V$. Let $[t]=(0,0,t)$ for all $t\in K$ and let $\cdot$
be the multiplication on $V$ given by the formula in \ref{tru99}.
By \ref{abc51}, $(K,V,q,\theta,t\mapsto[t],\cdot)$
is a polarity algebra. We denote this polarity algebra by $A=A(E/K,\theta,\beta)$.
\end{notation}

\begin{definition}\label{nzz39}
A {\it polar triple} is a triple $(E/K,\theta,\beta)$, where $E/K$ is a
separable quadratic extension in characteristic~$2$, $\theta$ is a
Tits endomorphism of $E$ such that $F:=K^\theta\subset K$ and $\beta$ is an
element of $K$ such that the quadratic form on $E\oplus E\oplus[K]$ given
by \eqref{nzz31x} is anisotropic, where $[K]$ is as defined in \ref{nzz34}.
\end{definition}

In the next result, we show that every polarity algebra is of the form
$A(E/K,\theta,\beta)$ for some polar triple $(E/K,\theta,\beta)$
as defined in \ref{nzz39}. See also \ref{nzz30}.

\begin{theorem}\label{tru98}
Let $P=(K,V,q,\theta,t\mapsto[t],\cdot)$ be a polarity algebra as defined in {\rm\ref{fat1}}. Then
$q$ is a quadratic form of type $F_4$ and there exists:
\begin{enumerate}[\rm(i)]
\item a standard decomposition $S=(E/K,F,\alpha,\beta)$
of $q$ such that $\alpha=\beta^{-\theta}$ and $F=K^\theta$,
\item an extension of $\theta$
to a Tits endomorphism of $E$ and
\item an identification of $V$ with $E\oplus E\oplus[K]$ with respect to which $t\mapsto[t]$ is
the map $t\mapsto(0,0,t)$, $\cdot$ is given by the formula in {\rm\ref{tru99}} and
$$q(u,v,t)=\beta^{-1}(N(u)+\beta^\theta N(v))+t^\theta$$
for all $(u,v,t)\in E\oplus E\oplus[K]$, where $N$ is the norm of the extension $E/K$.
\end{enumerate}
\end{theorem}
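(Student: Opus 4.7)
The goal is to realize the abstract polarity algebra $P$ as one of the concrete algebras $A(E/K,\theta,\beta)$. The plan mirrors the proofs of \ref{tru1}, \ref{tru4}, and \ref{tru68}, except that we work directly from the polarity-algebra axioms rather than from a Moufang quadrangle with polarity. Pick any $d \in V$ outside the radical of $f = \partial q$ and apply \ref{abc92}. This produces $e \in V$ satisfying $f(d,e)=1$, $f(d,ed)=0$, $f(dd,ed)=q(d)^\theta$, equation \eqref{abc92p}, and the decisive conclusion that $q(d)q(e)$ is a Tits trace. (The proof of \ref{abc92} relies only on the identities of Section \ref{sec5} and on \cite[Thm.~2.1]{tom1}, both pure polarity-algebra statements, so it applies in our abstract setting.)

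Let $E$ be the splitting field over $K$ of $x^2 + x + q(d)q(e)$; by \ref{abc71}(i), $\theta$ extends to a Tits endomorphism of $E$, and by \ref{abc71}(ii) there are exactly two such extensions, related by the nontrivial element $\chi$ of $\mathrm{Gal}(E/K)$. Set $F := K^\theta$, $\beta := q(d)^{-1}$, $\alpha := \beta^{-\theta} = q(d)^\theta$, and fix a root $\omega \in E$ of $q(d)x^2 + x + q(e)$, so that $\omega + \bar\omega = 1/q(d)$ and $\omega\bar\omega = q(e)/q(d)$. Define a $K$-linear map $\pi \colon E \oplus E \oplus [K] \to V$ by $\pi(1,0,0) = d$, $\pi(\omega,0,0) = e$, $\pi(0,1,0) = dd$, $\pi(0,\omega,0) = ed$, and $\pi(0,0,t) = [t]$. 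Then $\pi$ is an isometry from the quadratic form $(u,v,t) \mapsto \beta^{-1}(N(u) + \alpha N(v)) + t^\theta$ onto $q$: the restrictions of $q$ to $\langle d,e\rangle$ and $\langle dd,ed\rangle$ realize the (scaled) norm of $E/K$ by \ref{q(uv)} and \eqref{abc92d}, orthogonality of the three summands follows from \ref{abc24}(i), the defining properties of $e$, and $[K] \subseteq \mathrm{rad}(f)$, and $q([t]) = t^\theta$ by \ref{q([t])}. Surjectivity of $\pi$ uses the decomposition $V = \langle d,e\rangle \oplus \bigl(\langle dd,ed\rangle + [K]\bigr)$ established inside the proof of \ref{abc92}. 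This shows $q$ is of type $F_4$ with standard decomposition $S = (E/K, F, \alpha, \beta)$ satisfying conclusions (i) and (ii) of the theorem.

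It remains to identify the multiplication with that of \ref{tru99}. Transport the polarity-algebra product on $V$ to $E \oplus E \oplus [K]$ via $\pi^{-1}$ and compare with the formula in \ref{tru99}. The pivotal comparison is $d \cdot e$: equation \eqref{abc92p} gives $de = \beta^\theta s \cdot dd + \beta^{\theta-1} \cdot ed$, where $s := f(de,ed) \in K$ and $s + s^\theta = q(d)q(e)$. Meanwhile, the formula in \ref{tru99} applied to $(1,0,0)\cdot(\omega,0,0)$ yields $(0, \omega^\theta, 0)$, so matching forces $\omega^\theta = \beta^\theta s + \beta^{\theta-1}\omega$, which is precisely the extension of $\theta$ to $E$ produced by \ref{abc71}(ii) corresponding to the Tits trace $s$. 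Having pinned down this extension, the remaining products on the basis $\{d, e, dd, ed\} \cup [K]$ are determined by iterated applications of (R1)--(R7) and the identities of Section \ref{sec5} (notably \ref{yht1} and \ref{abc95}), and each matches the corresponding coordinate of the formula in \ref{tru99}. Axioms (R1), (R3), and (R7) then propagate the equality from the basis to all of $V$.

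The main obstacle is the tedious, term-by-term verification that the polarity-algebra multiplication agrees with all four components of $(a,b,r)\cdot(u,v,s)$ in \ref{tru99} on a basis. Once the extension of $\theta$ is fixed via the $d \cdot e$ comparison, however, each remaining entry follows mechanically from the axioms and the identities already proved in Sections \ref{sec5}--\ref{sec6x}.
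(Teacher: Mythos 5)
Your plan is attractive, but it is circular at the crucial point: the theorem is supposed to \emph{prove} that $q$ is of type $F_4$, yet you invoke \ref{abc92}, whose proof is carried out under the standing hypothesis of \S\ref{sec6} that $\Xi={\mathcal Q}(S)$ and $q$ is already a quadratic form of type $F_4$, and which cites \cite[Thm.~2.1]{tom1} twice. That citation is used precisely to produce the special vector $e$ with $f(d,e)=1$ and $f(d,ed)=0$, and to establish the decomposition $\langle d,e\rangle^\perp=\langle dd,ed\rangle+[K]$. Both are structural facts about quadratic spaces of type $F_4$ (indeed the second one immediately forces $\mathrm{rad}(f)=[K]$, since $dd,ed$ pair nontrivially by \eqref{abc92d}); they do not follow from the polarity--algebra axioms alone. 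Note in particular that \ref{fat1} only asks that $t\mapsto[t]$ be an \emph{embedding} of $[K]$ into the radical of $f$, not an isomorphism onto it, so the equality $\mathrm{rad}(f)=[K]$ that your isometry $\pi$ needs for surjectivity is not available at the outset. Your parenthetical assertion that \ref{abc92} ``relies only on the identities of Section~\ref{sec5} and on \cite[Thm.~2.1]{tom1}, both pure polarity-algebra statements'' is therefore exactly where the argument breaks.

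The paper sidesteps this by first building a candidate Moufang quadrangle $\Xi$ directly from the polarity algebra: it manufactures the companion space $\hat V$ and the $V\times\hat V$ pairings \eqref{tthetahatv}--\eqref{vhatw}, verifies (F0)--(F12) so that $(V,\hat V,\ldots)$ is a radical quadrangular system, and then invokes the classification machinery (\cite[Ch.~4]{tom2}, \cite[7.5, 8.5, 17.4]{TW}) together with the fact that the resulting root group sequence admits an anti-automorphism and satisfies $[U_2,U_4]\ne1$ to conclude that $\Xi$ is of type $F_4$. Only after this is established (and only then is $q$ known to be of type $F_4$ via \cite[35.8]{TW}) does it bring in \cite[Thm.~8.107]{tom2} and \ref{tru1} to obtain the standard decomposition satisfying (i)--(iii). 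To repair your argument you would have to replace the appeal to \ref{abc92} with a genuine derivation, from (R1)--(R7) alone, of the existence of $e$ with $f(d,e)=1$, $f(d,ed)=0$, of the decomposition $V=\langle d,e\rangle\oplus\langle dd,ed\rangle\oplus\mathrm{rad}(f)$, and of $\mathrm{rad}(f)=[K]$; that is a nontrivial task and amounts to re-proving the content of \cite[Thm.~2.1]{tom1} in the abstract setting. The later steps of your sketch (the computation with $\omega^\theta$ and \ref{abc71}(ii), the basis-by-basis matching) are fine once those foundations are in place, but as written the argument begs the question.
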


\begin{proof}
Let $F := K^\theta$, and let $\hat V$ be the set consisting of the symbols $\hat v$ for all $v \in V$,
i.e.\@ the map $v \mapsto \hat v$ is a bijection from $V$ to $\hat V$.
We make $\hat V$ into an $F$-vector space by defining
\[ s \cdot \hat v := \widehat{s^{\theta^{-1}} v} \]
for all $s \in F$ and all $\hat v \in \hat V$, or equivalently,
\begin{equation}\label{tthetahatv}
t^\theta \cdot \hat v := \widehat{tv}
\end{equation}
for all $t \in K$ and all $v \in V$.
The map $\hat q\colon\hat V\to F$ given by
\begin{equation}\label{hatqhatv}
\hat q(\hat v) := q(v)^\theta
\end{equation}
for all $\hat v \in \hat V$ is a quadratic form over $F$. For each $t \in K$, we define
\begin{equation}\label{tF}
[t]_F := \widehat{[t]} \in \widehat{[K]} \subset \hat V ,
\end{equation}
and for each $s \in F$, we define
\begin{equation}\label{sK}
[s]_K := [s^{\theta^{-1}}] \in [K] \subset V .
\end{equation}
Next we define maps from $V \times \hat V$ to $V$ and
from $\hat V \times V$ to $\hat V$ (both denoted by juxtaposition) by
\begin{equation}\label{vhatw}
v \hat{w} = vw\text{\quad and \quad} \hat{v} w = \widehat{vw}
\end{equation}
for all $v,w \in V$, where the
multiplication on the right hand side of both equations is the multiplication of the polarity algebra.
We claim that these data satisfy the axioms (F0)--(F12) of~\ref{abc30}.
To illustrate this,
we will prove (F2), (F4) and (F7) and leave the verification of the other axioms to the reader.

So let $v,w \in V$, $s \in F$, and $t \in K$;
then using \eqref{sK}, \eqref{vhatw}, (R2) and \eqref{tthetahatv}, we obtain
\[ \hat v [s]_K = \hat v [s^{\theta^{-1}}] = \widehat{v [s^{\theta^{-1}}]}
= \widehat{s^{\theta^{-1}} v} = s \hat v . \]
Thus (F2) holds.
Next, by \eqref{vhatw}, (R3), \eqref{tthetahatv} again and (F2), we obtain
\[ \hat v \cdot tw = \widehat{v \cdot tw} = \widehat{t^\theta \cdot vw}
= (t^\theta)^\theta \cdot \widehat{vw} = t^2 \hat v w = \hat v w \cdot [t^2]_K . \]
Thus (F4) holds.
By \eqref{vhatw}, (R5), \eqref{hatqhatv} and (F1),  we obtain
\[ v \hat w \cdot \hat w = vw \cdot w = q(w)^\theta \cdot v
= \hat q(\hat w) \cdot v = v \cdot [\hat q(\hat w)]_F . \]
Thus (F7) holds.
This (together with the proof of the remaining identities)
shows that $V$ and $\hat V$, together with the maps we just defined,
form a radical quadrangular system as defined
in~\cite[Appendix A.3.2]{tom2}. We denote this quadrangular system by $\Theta$.

Let $\Omega=(U_+,U_1,\ldots,U_4)$ and $x_1,\cdots,x_4$ be as in \ref{abc12}.
By \cite[Chapter~4]{tom2}, $\Omega$ is a root group sequence and thus
$\Omega$ determines a unique Moufang quadrangle $\Xi$ by \cite[7.5 and 8.5]{TW}.
It follows from \eqref{hatqhatv}, \eqref{tF},
\eqref{sK} and \eqref{vhatw} that there is a unique
anti-automorphism $\rho$ of $\Omega$ extending the maps
$x_i(\hat v)\mapsto x_{5-i}(v)$ for $i=1$ and $3$, and
$x_i(v)\mapsto x_{5-i}(\hat v)$ for $i=2$ and $4$. The square
$\rho^2$ centralizes $U_+$. Thus $\rho$ induces a polarity of the
Moufang quadrangle $\Xi$ (by \cite[7.5]{TW}).

By \ref{fat1}, $\partial f$ is not identically zero. By \ref{abc12}, therefore,
$[U_2,U_4]\ne1$. By \cite[17.4]{TW}, $[U_2,U_4]\ne1$ and the existence of an
anti-automorphism of $\Omega$ imply that $\Xi$ is a quadrangle of type $F_4$. In other words,
$\Xi$ is isomorphic to the
quadrangle ${\mathcal Q}_{\mathcal F}(\tilde\Lambda)=B_2^{\mathcal F}(\tilde\Lambda)$
obtained by applying
\cite[16.7]{TW} to some quadratic space $\tilde\Lambda=(\tilde K,\tilde V,\tilde q)$.
of type $F_4$. Let $Y_1=C_{U_1}(U_3)$, let $Y_3=C_{U_3}(U_1)$ and let
$Y_+=Y_1U_2Y_3U_4$. By (F5) and \ref{abc12}, $Y_i=x_i([K]_F)$ for $i=1$ and $3$,
$Y_+$ is a subgroup of $U_+$ and $(Y_+,Y_1,U_2,Y_3,U_4)$ is a root group sequence isomorphic to
the root group sequence ${\mathcal Q}_{\mathcal Q}(K,V,q)$ obtained by applying
\cite[16.3]{TW} to $(K,V,q)$. By \cite[16.7]{TW}, on the other hand,
$(Y_+,Y_1,U_2,Y_3,U_4)$ is a root group sequence isomorphic to
${\mathcal Q}_{\mathcal Q}(\tilde\Lambda)$. By \cite[35.8]{TW}, it follows
that $K\cong\tilde K$ and $q$ is similar to $\tilde q$. Thus $q$ is of type $F_4$
and $\Xi\cong B_2^{\mathcal F}(K,V,q)$ (by \cite[35.12]{TW}).
We conclude that the quadrangular system $\Theta$ is an extension of the
quadrangular system associated with $(K,V,q)$;
see the beginning of \cite[Chapter~8]{tom2} for the definition of these terms.
This is exactly the situation investigated in \cite[\S8.5]{tom2} (and \cite[Chapter~28]{TW}).
By \cite[Thm.~8.107]{tom2}, there exists a standard decomposition $S$ of $q$ and
an isomorphism $\xi$ from $\Omega$ to the root group sequence $\Omega_S$ obtained by applying
\ref{abc61} to $S$ extending the maps $x_i([t]_F)\mapsto x_i(0,0,t)$ for $i=1$ and $3$
and $x_i([s]_K)\mapsto x_i(0,0,s)$ for $i=2$ and $4$. We now replace $\rho$
by the unique automorphism of ${\mathcal Q}(S)$ obtained by applying \cite[7.5]{TW}
to the automorphism $\xi^{-1}\rho\xi$ of $\Omega_S$.
By \eqref{tF} and \eqref{sK}, we have $x_i(0,0,t)^\rho=x_i(0,0,t^\theta)$
for all $t\in K$. Thus $\theta$ is as in \ref{abc40}.
By \ref{tru1}, we conclude that (i)--(iii) hold.
\end{proof}

\begin{corollary}\label{onk111}
Every polarity algebra is of the form $A(E/K,\theta,\beta)$ for some polar triple
$(E/K,\theta,\beta)$ as defined in {\rm\ref{nzz39}}.
\end{corollary}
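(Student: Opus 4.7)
The plan is to deduce this as a near-immediate consequence of Theorem~\ref{tru98}, which has already done essentially all the work. Given a polarity algebra $P=(K,V,q,\theta,t\mapsto[t],\cdot)$, I would apply \ref{tru98} to obtain a standard decomposition $S=(E/K,F,\alpha,\beta)$ of $q$ with $F=K^\theta$ and $\alpha=\beta^{-\theta}$, an extension of $\theta$ to a Tits endomorphism of $E$, and an identification of $V$ with $E\oplus E\oplus[K]$ under which the map $t\mapsto[t]$ becomes $t\mapsto(0,0,t)$, the multiplication becomes the one in \ref{tru99}, and
$$q(u,v,t)=\beta^{-1}(N(u)+\beta^\theta N(v))+t^\theta.$$

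The second step is simply to verify that $(E/K,\theta,\beta)$ fits the definition of a polar triple in~\ref{nzz39}. Since $S$ is an $F_4$-datum, $E/K$ is separable quadratic in characteristic~$2$; since $\theta$ has been extended to a Tits endomorphism of $E$ with $K^\theta=F\subset K$, the endomorphism condition holds; and since $q$ is anisotropic (it was anisotropic on the polarity algebra and the identification is an isometry), the quadratic form defining the polar triple condition is anisotropic with the given $\beta\in K$. Thus $(E/K,\theta,\beta)$ is a polar triple.

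Finally, comparing~\ref{nzz31} with what \ref{tru98} delivers, the identification of $V$ with $E\oplus E\oplus[K]$ carries the datum $(K,V,q,\theta,t\mapsto[t],\cdot)$ of $P$ precisely onto the datum $A(E/K,\theta,\beta)$: the underlying vector spaces coincide, the quadratic forms agree because $\alpha=\beta^{-\theta}$ forces $\beta^\theta=\alpha$ (so the formula of \ref{tru98}(iii) matches \eqref{nzz31x}), the map $t\mapsto[t]$ is the same, and the multiplications coincide by \ref{tru99}. Hence $P=A(E/K,\theta,\beta)$. There is no real obstacle — Theorem~\ref{tru98} has been formulated precisely so that this corollary is a matter of reading off the conclusion and checking that the triple $(E/K,\theta,\beta)$ meets Definition~\ref{nzz39}.
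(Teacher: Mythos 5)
Your proposal is correct and is essentially the paper's own argument, just spelled out: the paper's proof of this corollary is the one-line ``This holds by \ref{nzz31} and \ref{tru98},'' and you have simply unpacked that line by reading off the conclusion of \ref{tru98}, checking that $(E/K,\theta,\beta)$ satisfies \ref{nzz39}, and matching against the construction in \ref{nzz31}.
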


\begin{proof}
This holds by \ref{nzz31} and \ref{tru98}.
\end{proof}

\section{Two Examples}\label{sec88}

In this section, we give two examples illustrating the results of the
previous section; see \ref{onk96}.

\begin{example}\label{tru7}
Let $K={\mathbb F}_2(\alpha,\beta)$ be a purely transcendental extension of
the field ${\mathbb F}_2$, let $E$ be the splitting field of the polynomial
$$p(x)=x^2+x+\alpha+\beta^2$$
over $K$, let $\gamma\in E$ be a root of $p(x)$,
let $\theta$ denote the unique Tits endomorphism of $K$ that maps
$\beta$ to $\alpha$ and let $F=K^\theta$. By \ref{abc71}(i), $\theta$ has an extension to a
Tits endomorphism of $E$. We leave it to the reader to check that
$S:=(E/K,F,\alpha^{-1},\beta)$ is an $F_4$-datum
as defined in \ref{def0a}.
Let $q=q_S$ be the quadratic form of type $F_4$ on $E\oplus E\oplus[F]_K$
as defined in \ref{def0a}.
We claim that $\beta$ is not a Tits trace of $K$
(with respect to $\theta$). To show this, we assume that
$$\beta=g(\alpha,\beta)+g(\alpha,\beta)^\theta=g(\alpha,\beta)+g(\beta^2,\alpha)$$
for some rational function $g(\alpha,\beta)\in K$.
Let $k={\rm deg}_\alpha(g)$ and $m={\rm deg}_\beta(g)$.
(If $g=g_1/g_2$ for polynomials $g_1$ and $g_2$ in ${\mathbb F}_2[\alpha,\beta]$
and $u=\alpha$ or $\beta$, then
${\rm deg}_u(g)={\rm deg}_u(g_1)-{\rm deg}_u(g_2)$.) We have
\begin{equation}\label{tru7a}
1={\rm deg}_\beta(\beta)={\rm deg}_\beta\big(g(\alpha,\beta)+g(\beta^2,\alpha)\big)\le
{\rm max}(2k,m)
\end{equation}
and
\begin{equation}\label{tru7b}
1={\rm max}(2k,m)\text{ if $m\ne 2k$}
\end{equation}
and
$$0={\rm deg}_\alpha(\beta)={\rm deg}_\beta\big(g(\alpha,\beta)+g(\beta^2,\alpha)\big)\le
{\rm max}(k,m)$$
and $0={\rm max}(k,m)$ if $k\ne m$.
By \eqref{tru7a}, we have $0\ne{\rm max}(k,m)$. Thus $k=m\ne0$. Hence $m\ne 2k$ and
${\rm max}(2k,m)\ge2$, which is impossible by \eqref{tru7b}. We conclude that $\beta$
is not a Tits trace in $K$ as claimed. By \ref{abc90}(iv), it follows that also
$\beta^2$ is not a Tits trace in $K$. Let $L$ be the splitting field of the polynomial
$$p_1(x)=x^2+x+\beta^2$$
over $K$. By \ref{abc71}(i), $\theta$ does not
have an extension to a Tits endomorphism of $L$. Let $d=(\beta,0,0)$ and $e=(\gamma,\alpha,0)$
in $V$. Then $q(d)=q(e)=\beta$ and $f(d,e)=1$, so $L$ is also the splitting field
of $q(d)x^2+x+q(e)$ over $K$.
Applying \ref{abc77} with $\xi=(1,0,0)\in\hat V$, we
conclude that $L$ is a splitting field of $q$. Thus the Tits endomorphism
$\theta$ of $K$ has an extension to a Tits endomorphism of some of the splitting
fields of $q$ but there are also splitting fields of $q$ to which $\theta$
does not have an extension to a Tits endomorphism.
\end{example}

\begin{proposition}\label{tru50}
Let $S=(E/K,F,\alpha,\beta)$ be an $F_4$-datum, let $\Xi={\mathcal Q}(S)$ and suppose that
$\theta$ is a Tits endomorphism of $K$ such that $F=K^\theta$
and $\alpha=\beta^{-\theta}$. Choose $\lambda\in K$ such that $E$ is the splitting
field of the polynomial $x^2+x+\lambda$ over $K$.
Then $\Xi$ has a polarity if and only if there exists $u\in K$ such that
$$\lambda+\alpha u^2$$
is a Tits trace with respect to $\theta$.
\end{proposition}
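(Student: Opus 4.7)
The plan is to prove the equivalence by constructing, in both directions, a standard decomposition of $q_S$ of the form $(L/K,F,\alpha,\beta)$ via \ref{abc77} and invoking the extension criterion \ref{abc71}(i). The key computational point is that for $d=(1,0,0)\in V_S$ and $\xi=(1,0,0)\in\hat V_S$ and suitable $e=(a,b,s)\in V_S$, the Artin--Schreier class $q(d)q(e)$ (which determines the splitting field $L$) ranges, modulo Tits traces, precisely over the set $\{\lambda+\alpha u^2:u\in K\}$.

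For the backward direction, assume $\lambda+\alpha u^2$ is a Tits trace for some $u\in K$. I take $d=(1,0,0)$, $e=(\beta\gamma,u\beta,0)\in V_S$ and $\xi=(1,0,0)\in\hat V_S$. Direct computation using the formulas in \ref{def0a} and \ref{abc61} yields $f(d,e)=1$ (since $\beta\gamma+\overline{\beta\gamma}=\beta$), $f(d,e\xi)=0$, $q(d)=\beta^{-1}$ and $q(e)=\beta(\lambda+\alpha u^2)$, so $q(d)q(e)=\lambda+\alpha u^2$; furthermore $d\xi=(0,1,0)$, $e\xi=(\alpha u\beta,\beta\gamma,0)$, and $f(d\xi,e\xi)=\alpha$. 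By \ref{abc77}, $S_0:=(L/K,F,\alpha,\beta)$ with $L=K[y]$, $y^2+y+(\lambda+\alpha u^2)=0$, is a standard decomposition of $q_S$. By \ref{abc71}(i), the Tits trace hypothesis extends $\theta$ to a Tits endomorphism $\tilde\theta$ of $L$, so $(L/K,\tilde\theta,\beta)$ is a polar triple in the sense of \ref{nzz39}. The polarity algebra $A(L/K,\tilde\theta,\beta)$ of \ref{nzz31} yields, via the construction in the proof of \ref{tru98}, a polarity of ${\mathcal Q}(S_0)\cong\Xi$ (the isomorphism by \ref{abc85x}).

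For the forward direction, assume $\Xi$ has a polarity $\rho$. Since the Frobenius is injective in characteristic~$2$, there is at most one Tits endomorphism of $K$ with image $F$, so the Tits endomorphism associated to $\rho$ by \ref{tru68} restricts on $K$ to our given $\theta$. Inspecting the construction of \ref{tru68} (which goes through \ref{tru1}, \ref{abc92} and \ref{abc77}), I take the vector $d$ in \ref{abc92} to be $(1,0,0)$, so that $\beta_\rho=q(d)^{-1}=\beta$ and $\alpha_\rho=\alpha$; after replacing $S$ by an isomorphic standard decomposition compatible with $\rho$, \ref{tru4} gives $\xi=\varphi(d)=(1,0,0)$. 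The conditions $f(d,e)=1$ and $f(d,e\xi)=0$ force $a=\beta\gamma+k$ with $k\in K$ and $b\in K$; writing $m=k/\beta$ and $u=b/\beta$, a routine expansion yields
\[
q(d)q(e)=\lambda+(m+m^2)+\alpha u^2+\beta^{-1}s^\theta,
\]
which is a Tits trace by \ref{abc71}(i) because $\theta$ extends to $L=K[y]$ with $y^2+y=q(d)q(e)$. Since $m+m^2$ is a Tits trace by \ref{abc90}(v), and with $w:=\beta^{-1}s^\theta$ the identity $w+w^\theta=\beta^{-1}s^\theta+\alpha s^2$ holds (using $\alpha=\beta^{-\theta}$ and $\theta^2=\mathrm{Frob}_K$), so $\beta^{-1}s^\theta+\alpha s^2$ is also a Tits trace; subtracting both (via \ref{abc90}(i)) leaves $\lambda+\alpha(u+s)^2$ as a Tits trace. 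Setting $u_0:=u+s$ gives the required element.

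The main obstacle is the forward-direction bookkeeping: arranging the standard decomposition so that $d$ and $\xi$ take the simple form $(1,0,0)$ (and hence $\beta_\rho=\beta$, $\alpha_\rho=\alpha$), and tracking the interplay of the $u$- and $s$-parameters in $q(d)q(e)$. Once $\alpha=\beta^{-\theta}$ is exploited to collapse cross terms such as $\alpha\beta^\theta=1$, the backward direction becomes a direct construction, and in the forward direction the identity $w+w^\theta=\beta^{-1}s^\theta+\alpha s^2$ is exactly what converts the extraneous $s$-contribution into the $(u+s)^2$-combination demanded by the theorem.
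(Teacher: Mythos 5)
Your backward direction is close to the paper's, but with a genuinely better choice of auxiliary vector: taking $e=(\beta\gamma,u\beta,0)$ directly yields $q(d)q(e)=\lambda+\alpha u^2$ (and indeed $f(d,e)=1$, $f(d,e\xi)=0$, $d\xi=(0,1,0)$, $f(d\xi,e\xi)=\alpha$ all check out), whereas the paper's $e'=e+(0,0,\alpha u^2)$ gives $q(d)q(e')=\lambda+\alpha\beta^{-1}u^2$, whose Artin--Schreier class need not coincide with that of $\lambda+\alpha u^2$. However, the closing step — inferring that $A(L/K,\tilde\theta,\beta)$ is a polarity algebra ``by \ref{nzz31}'' and then invoking \ref{tru98} — is circular. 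Definition \ref{nzz31} establishes the axioms (R1)--(R7) by appealing to \ref{abc51}, whose proof requires a polarity of $\Xi$ to already exist; the paper never shows that an arbitrary polar triple produces a polarity algebra. You should instead (as the paper does) explicitly write down $\varphi\colon V_{S_0}\to\hat V_{S_0}$ by the formula of \ref{tru4} using $\tilde\theta$, define $\rho$ via \ref{abc97}, and verify that $\rho$ is an anti-automorphism of $U_+$.

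The forward direction has a more serious gap. You assert that ``after replacing $S$ by an isomorphic standard decomposition compatible with $\rho$,'' one has $\xi=\varphi(d)=(1,0,0)$, and you then use $f(d,e\xi)=0$ to conclude that the $E$-component $b$ of $e$ lies in $K$. But this normalization is achieved only by passing to the new decomposition $S_0=(L/K,F,\alpha,\beta)$ furnished by \ref{abc77}, in which $\gamma$ is replaced by a root of $x^2+x+q(d)q(e)$, so the ``$\lambda$'' appearing in your formula for $q(d)q(e)$ is no longer the $\lambda$ of the hypothesis but is the Artin--Schreier class $q(d)q(e)$ itself — at which point your computation collapses to a tautology. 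If instead you stay in the original coordinates (where $\gamma$ satisfies $\gamma^2+\gamma=\lambda$), then $\xi=\varphi(d)$ is merely an element of $\hat V$ with $\hat q(\xi)=\alpha$; it need not equal $(1,0,0)$, and $f(d,e\xi)=0$ does not force $b\in K$. This is precisely where the paper does its real work: it writes $e'=(\beta\gamma,y+z\gamma,s)$ with $z$ possibly nonzero, introduces $d\xi'=(w,u+v\gamma,r^\theta)$, exploits $q(d\xi')=q(d)\hat q(\xi')$ together with \eqref{tru50b} and \eqref{tru50d}, and case-analyzes $v=0$ versus $v\neq 0$ to extract the required $\lambda+\alpha a^2$. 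Your argument bypasses all of this, so it does not establish the forward implication.

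Finally, your justification ``there is at most one Tits endomorphism of $K$ with image $F$'' is false: one can construct two distinct Tits endomorphisms $\theta,\theta'$ of $\mathbb F_2(x,y)$ with $\theta(\mathbb F_2(x,y))=\theta'(\mathbb F_2(x,y))$. The paper silently treats the $\theta$ of \ref{abc40} as the given $\theta$, so there is a genuine compatibility point to address, but one should not fill it with a claim that is untrue.
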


\begin{proof}
Let $q=q_S$, let $f=\partial q$ and let $\gamma\in E$ be a root of $x^2+x+\lambda$.
Let $V$, $D$ and $\hat V$ be as in \ref{abc83}, let $d=(1,0,0)$ and $e=(\beta\gamma,0,0)$ in $V$ and
let $\xi=(1,0,0)\in\hat V$. Then $q(d)=\beta^{-1}$ and $q(e)=\beta\lambda$. Hence
$\omega:=\beta\gamma$ is a root of $q(d)x^2+x+q(e)$.

We suppose now that $u$ is an element of $K$ such that $\lambda+\alpha u^2$
is a Tits trace and let $E'$ be the splitting field of $x^2+x+\lambda+\alpha u^2$
over $K$. By \ref{abc71}(i) and
the choice of $u$, we can choose a Tits endomorphism $\theta_1$ of $E'$ extending $\theta$.
As in \ref{tru6}, we have $(E')^{\theta_1}=(E')^2F$. Since $\alpha u^2\in F$,
we can set $e'=e+(0,0,\alpha u^2)$. Thus $f(d,e')=1$ and, by \ref{abc30}(F0) and (F6),
$f(d,e'\xi)=0$. Applying \ref{abc77} with $e'$ in place of $e$, it follows
that we can assume that $E'=E$. Now let
$\varphi\colon V\to\hat V$ be given by the formula in \ref{tru4} and let $\rho$
be defined by the equations in \ref{abc97} with $\varphi_1=\varphi$ and $\hat\varphi=\hat\varphi_1=\varphi^{-1}$.
Then $\rho$ is an automorphism of $U_+$ of order~$2$ mapping $U_i$ to $U_{5-i}$ for
each $i\in[1,4]$. Thus $\Xi$ has a polarity.

Suppose, conversely, that $\Xi$ has a polarity $\rho$. Our goal is to find an
element $u\in K$ such
that $\lambda+\alpha u^2$ is a Tits trace.
By \ref{tru1}, we can choose $e'\in V$ and $\xi'\in\hat V$ such that
$f(d,e')=1$, $f(d,e'\xi')=0$ and $\hat q(\xi')=\alpha$ such that $\theta$ has an extension
to the splitting field of $x^2+x+q(d)q(e')$ over $K$. Thus $q(d)q(e')$ is
a Tits trace. Since $f(d,e')=1$, we have
$e'=(t+\beta\gamma,y+z\gamma,s)$ for some $t,y,z\in K$
and some $s\in F$. Let $e''=e'+(t,0,0)$. By \cite[Lemma~2.1]{tom1},
we have $f(d,e''\xi')=0$. We also have $q(e'')=q(e')+\beta^{-1}t^2+t$ and
hence $q(d)q(e'')+q(d)q(e')=\beta^{-2}t^2+\beta^{-1}t$.
By \ref{abc90}(v), this expression is a Tits trace. It follows that we can assume that
\begin{equation}\label{tru50a}
e'=(\beta\gamma,y+z\gamma,s).
\end{equation}
Hence
\begin{align*}
q(d)q(e')&=\beta^{-2}\big(N(\beta\gamma)+\alpha(y^2+yz+\lambda z^2)\big)+\beta^{-1}s\\
&=\lambda+\alpha\beta^{-2}(y^2+yz+\lambda z^2)+\beta^{-1}s.
\end{align*}
We have $s=x^\theta$ for some $x\in K$ and thus
$$(\beta^{-1}s)^\theta=\alpha s^\theta=\alpha x^2.$$
By \ref{abc90}(i), therefore,
\begin{align}
p:&=\lambda+\alpha\beta^{-2}(y^2+yz+\lambda z^2)+\alpha x^2\notag\\
&=\lambda+\alpha\beta^{-2}\big((y+\beta x)^2+z(y+\lambda z)\big)\label{tru50f}
\end{align}
is a Tits trace.

By \ref{abc30}(F12) and the choice of $e'$ and $\xi'$, we have
\begin{equation}\label{tru50b}
f(d\xi',e')=f(d,e'\xi')=0.
\end{equation}
By \ref{abc30}(F12), we also have $f(d\xi',d)=0$, from which it follows that
there exist $w,u,v,r\in K$ such that
\begin{equation}\label{tru50e}
d\xi'=(w,u+v\gamma,r^\theta).
\end{equation}
By \cite[8.95]{tom2}, we have $q(d\xi')=q(d)\hat q(\xi')=\beta^{-1}\alpha$.
Hence
\begin{equation}\label{tru50c}
w^2+\alpha(u^2+uv+\lambda v^2+1)+\beta r^\theta=0.
\end{equation}
By \eqref{tru50a}, \eqref{tru50b} and \eqref{tru50e}, we have
\begin{equation}\label{tru50d}
\beta w+\alpha(zu+yv)=0.
\end{equation}
Suppose that $v=0$. Then $w^2+\alpha(u^2+1)+\beta r^\theta=0$ by \eqref{tru50c},
hence $\beta r^\theta\in F$ and therefore, $r=0$ since $\beta\not\in F$.
Hence $\alpha(u+1)^2\in K^2$ and therefore $u=1$ since $\alpha\not\in K^2$.
Hence $w=0$. By \eqref{tru50d}, therefore, $z=0$. Thus by \eqref{tru50f},
we have $p=\lambda+\alpha a^2$ for $a=\beta^{-1}(y+\beta x)$.

Suppose, finally, that $v\ne0$. Then $y=v^{-1}(\alpha^{-1}\beta w+zu)$
by \eqref{tru50d}. Hence
\begin{alignat}{2}
\alpha\beta^{-2}z(y+\lambda z)&=\alpha\beta^{-2}zv^{-1}(\alpha^{-1}\beta w+zu+\lambda vz)\notag\\
&=\beta^{-1}zv^{-1}w+\beta^{-2}z^2v^{-2}\cdot\alpha(uv+\lambda v^2)\notag\\
&=\beta^{-1}zv^{-1}w+\beta^{-2}z^2v^{-2}(\alpha(u^2+1)+w^2+\beta r^\theta)
& &\qquad\text{by \eqref{tru50c}}
\notag\\
&=\alpha\big(\beta^{-1}zv^{-1}(u+1)\big)^2\notag\\
&\qquad+\beta^{-1}zv^{-1}w+(\beta^{-1}zv^{-1}w)^2+\beta^{-1}z^2v^{-2}r^\theta.\notag
\end{alignat}
By \ref{abc90}(v) and \eqref{tru50f}, it follows that
\begin{equation}\label{tru50g}
\lambda+\alpha b^2+\beta^{-1}z^2v^{-2}r^\theta
\end{equation}
is a Tits trace for $b=\beta^{-1}\big(zv^{-1}(u+1)+(y+\beta x)\big)$.
Adding the Tits trace
$$\beta^{-1}z^2v^{-2}r^\theta+(\beta^{-1}z^2v^{-2}r^\theta)^\theta$$
to the expression \eqref{tru50g}, we conclude that
$$\lambda+\alpha b^2+(\beta^{-1}z^2v^{-2}r^\theta)^\theta$$
is also a Tits trace. Finally, we observe that
$$(\beta^{-1}z^2v^{-2}r^\theta)^\theta=\alpha c^2$$
for $c=z^\theta v^{-\theta}r$. Thus $\lambda+\alpha(b+c)^2$ is a Tits trace.
\end{proof}

\begin{example}\label{tru51}
Let $K={\mathbb F}_2(\alpha,\beta)$ be a purely transcendental extension of
the field ${\mathbb F}_2$, let $E$ be the splitting field of the polynomial
$$p(x)=x^2+x+1$$
over $K$, let $\gamma\in E$ be a root of $p(x)$,
let $\theta$ denote the unique Tits endomorphism of $K$ that maps
$\beta$ to $\alpha$ and let $F=K^\theta$. By \cite[14.25]{TW},
$S:=(E/K,F,\alpha^{-1},\beta)$ is an $F_4$-datum,
so we can set $\Xi={\mathcal Q}(S)$. There are exactly three elements
of $E^*$ of finite order. Let $\hat\theta$ be the unique extension
of $\theta$ to an endomorphism of $E$ which acts trivially on these
three elements and let $\chi$ denote the non-trivial element of ${\rm Gal}(E/K)$.
The endomorphism $\hat\theta$ is, of course, not a Tits endomorphism of $E$.
(By \ref{abc90}(vi) and \ref{abc71}(i),
$\theta$ does not have an extension to a Tits endomorphism of $E$.)
Let $V$, $\hat V$,
$\Omega:=(U_+,U_1,\ldots,U_4)$ and $x_1,\ldots,x_4$ be as in \ref{abc61},
let $\varphi$ denote
the map from $V$ to $\hat V$ given by
$$\varphi(u,v,s)=(u^{\hat\theta},\beta^{-2}v^{\hat\theta},s^{\theta^{-1}})$$
for all $(u,v,s)\in V$ and let $\psi$ denote the automorphism of $V$ given by
$$\psi(u,v,s)=(u^\chi,v^\chi,s)$$
for all $(u,v,s)\in V$. There is a unique anti-automorphism $\kappa$ of $\Omega$
extending the maps
$x_i(b)\mapsto x_{5-i}(\varphi(\psi(b))$
for $i=2$ and $4$ and
$x_i(a)\mapsto x_{5-i}(\varphi^{-1}(a))$
for $i=1$ and $3$. The square of $\kappa$ is an involution. By \cite[7.5]{TW},
therefore, $\kappa$ gives rise to a non-type-preserving automorphism of $\Xi$ of order~$4$.

We claim that $\Xi$ does not, however,
have any polarities. Let $\Gamma$ be the additive group
$$\{a+b\sqrt{2}\mid a,b\in{\mathbb Z}\},$$
let $k$ denote the field of Hahn series ${\mathbb F}_2(t^\Gamma)$ and
let $\nu\colon k^*\to\Gamma$ be the canonical valuation on $k$ (as described, for example,
in \cite[3.5.6]{prestel}). There is a unique embedding $\pi$ from $K$ to $k$ which sends
$\beta$ to $t$ and $\alpha$ to $t^{\sqrt{2}}$. We identify $K$ with
its image under $\pi$. Modulo this identification, there is a unique
extension of $\theta$ to a Tits endomorphism of $k$ which we also
denote by $\theta$. If $u,v\in k$, then the constant coefficient of
$t^{\sqrt{2}}u^2$ is $0$ and the constant
coefficient of $v$ is the same as the constant coefficient of $v^\theta$.
It follows that there do not exist $u,v\in k$ such that
$$1+t^{\sqrt{2}}u^2=v+v^\theta.$$
By \ref{tru50} with $\alpha^{-1}$ in place of $\alpha$,
it follows that our Moufang quadrangle $\Xi$ does not
have a polarity, as claimed.
\end{example}

\smallskip
\section{Buildings of Type $F_4$}\label{sec25}

The main results of this section are \ref{onk13} and \ref{onk13x}.

\begin{notation}\label{baf103}
Let $L/E$ be a field extension such that ${\rm char}(E)=2$ and $L^2\subset E$
and let $\Delta={\sf F}_4(L,E)$ as defined in \cite[30.15]{affine}.
Let $\Phi$ be a root system of type $F_4$, let $\Sigma$
be an apartment of $\Delta$, let $c$ be a chamber of $\Sigma$ and for
each $\alpha\in\Phi$, let $s_\alpha$ denote the corresponding reflection.
Let $\alpha_1,\ldots,\alpha_4$ be a basis of $\Phi$ ordered
so that $\alpha_1$ and $\alpha_2$ are long and $|s_{\alpha_2}s_{\alpha_3}|=4$, let
$S$ be the set of reflections $s_{\alpha_i}$ for $i\in[1,4]$ and let
$W=\langle S\rangle$ be the Weyl group of $\Phi$. We think of the map
$i\mapsto\alpha_i$ as a bijection from the vertex set of the Coxeter
diagram $F_4$ to $S$. There is a unique action of $W$ on $\Sigma$
with respect to which $s_{\alpha_i}$ interchanges $c$ with the unique
chamber of $\Sigma$ that is $i$-adjacent to $c$, there is a unique chamber
$C$ of $\Phi$ contained in the half-space determined by $\alpha_i$ for all $i\in[1,4]$
and there is a unique
$W$-equivariant bijection $\iota$ from the set of chambers of $\Sigma$ to the set
of chambers of $\Phi$ mapping $c$ to $C$. The bijection $\iota$ induces a
bijection from the set of roots of $\Sigma$ to $\Phi$ and its inverse
induces an injection from ${\rm Aut}(\Phi)$ to ${\rm Aut}(\Sigma)$. From
now on, we identify ${\rm Aut}(\Phi)$ with its image under this injection
and we identify the roots of $\Sigma$ with the corresponding elements of $\Phi$.
Thus for each $\beta\in\Phi$, we have a root group $U_\beta$ of $\Delta$.
\end{notation}

\begin{theorem}\label{wax31}
There exists a collection of isomorphisms $x_\beta\colon E\to U_\beta$, one
for each long root $\beta$ of $\Phi$, and
a collection of isomorphisms $x_\beta\colon L\to U_\beta$, one for each short root $\beta$
of $\Phi$, such that for all $\alpha,\beta\in\Phi$ at an
angle $\omega<180^\circ$ to each other and for
all $s$ in the domain of $x_\alpha$ and all $t$ in the domain of $x_\beta$,
the following hold:
\begin{enumerate}[\rm(i)]
\item If $\omega=120^\circ$, then $\alpha+\beta\in\Phi$ and
$[x_\alpha(s),x_\beta(t)]=x_{\alpha+\beta}(st)$.
\item If $\omega=135^\circ$, then $\alpha$ and $\beta$ have different lengths;
if $\alpha$ is long, then
$\alpha+\beta\in\Phi$, $\alpha+2\beta\in\Phi$ and
$[x_\alpha(s),x_\beta(t)]=x_{\alpha+\beta}(st)x_{\alpha+2\beta}(st^2)$.
\item $[x_\alpha(s),x_\beta(t)]=1$ if $\omega$ is neither $120^\circ$ nor $135^\circ$.
\end{enumerate}
\end{theorem}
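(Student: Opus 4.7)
The plan is to adapt the classical Chevalley--Steinberg pinning procedure to the mixed setting $L/E$ with $L^2\subset E$, building the parametrization from the four simple root groups and propagating outward by rank-$2$ residues. I would first choose, for each of the four simple roots $\alpha_1,\ldots,\alpha_4$, an arbitrary additive isomorphism $x_{\alpha_i}^0$ from $E$ (for $i=1,2$, long) or $L$ (for $i=3,4$, short) onto $U_{\alpha_i}$; such isomorphisms exist because $\Delta$ is Moufang and the root groups of simple roots are by definition the root groups of the corresponding rank-$1$ residues of $F_4(L,E)$.

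Next, I would extend to all positive roots by induction on height, using the rank-$2$ residues of $\Delta$ containing $c$. For each pair $\{i,j\}$ with $i\neq j$, the $\{i,j\}$-residue of $\Delta$ is a Moufang polygon whose root group sequence is determined by \cite[30.15]{affine}: the residue of type $\{1,2\}$ is the split Moufang triangle over $E$, the residue of type $\{3,4\}$ is the split Moufang triangle over $L$, the residue of type $\{2,3\}$ is the mixed Moufang quadrangle $B_2(L,E)$, and the residues of type $\{1,3\}$, $\{1,4\}$, $\{2,4\}$ are of type $A_1\times A_1$. The explicit commutator formulas in \cite[16.1 and 16.3]{TW} then determine isomorphisms $x_{\alpha_i+\alpha_j}$, $x_{\alpha_i+2\alpha_j}$, etc.\@ for all positive roots in each residue, up to a multiplicative scalar that is absorbed into $x_{\alpha_i}^0$. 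Because every positive root of $F_4$ of height $\geq 2$ can be written as $\alpha_i+\gamma$ for some simple $\alpha_i$ and some positive root $\gamma$ of strictly smaller height, the induction terminates, giving an $x_\beta$ for each positive $\beta$; the isomorphisms for negative roots are then produced by conjugation with $w_0\in W$, or equivalently by applying the opposite root group convention at $c$ versus the chamber opposite $c$ in $\Sigma$.

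With the $x_\beta$ defined, I would verify the three commutator identities. For (i), both $\alpha$ and $\beta$ have the same length, so both $\alpha+\beta\in\Phi$ and $\alpha,\beta,\alpha+\beta$ span a sub-root-system of type $A_2$; the claim is the Chevalley commutator formula for the split $A_2$-residue, and the structure constant reduces to $1$ in characteristic $2$. For (ii), $\alpha$ long and $\beta$ short at $135^\circ$ places $\alpha,\beta$ in a $B_2$-residue, and the formula is precisely the commutator relation $[x_\alpha(s),x_\beta(t)]=x_{\alpha+\beta}(st)x_{\alpha+2\beta}(st^2)$ of the mixed quadrangle $B_2(L,E)$, which is well-defined since $t^2\in L^2\subset E$ and therefore $st^2\in E$ parametrizes the long root group $U_{\alpha+2\beta}$. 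For (iii), the angle is $60^\circ$ or $90^\circ$: in the $60^\circ$ case $\alpha+\beta\notin\Phi$ and $\alpha,\beta$ lie in a rank-$2$ residue of type $A_2$ with $\beta=-s_\alpha(\alpha+\beta)$ forcing the commutator to vanish; in the $90^\circ$ case $\alpha,\beta$ span an $A_1\times A_1$-residue, where the two root groups commute.

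The principal obstacle is the global compatibility of the scalar choices: when a single positive root $\beta$ arises as a sum in two different rank-$2$ residues, the two inductive definitions of $x_\beta$ must agree, and likewise the structure constants arising in (i) and (ii) must all be normalisable to $1$ simultaneously. This is the content of Chevalley's theorem on the existence of a Chevalley basis, reformulated here in the mixed characteristic-$2$ setting: the obstruction cocycle on triples of roots is computed inside rank-$3$ sub-systems of $F_4$, and vanishes because the relevant Hall--Witt identities in the free nilpotent quotient of $U_+$ are forced by the rank-$2$ commutator relations together with the relation $L^2\subset E$. In the split case $L=E$ this is standard, and for $L\neq E$ the additional verification is essentially already carried out in the construction of $F_4(L,E)$ in \cite[30.15]{affine}; the argument therefore reduces to invoking that construction together with a direct rescaling of the four isomorphisms $x_{\alpha_i}^0$ to normalise the four remaining free constants.
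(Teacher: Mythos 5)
Your proof is essentially correct and follows the same route as the paper: both defer the existence of Chevalley-type parametrizations for the mixed group $F_4(L,E)$ to the Chevalley--Steinberg pinning machinery (the paper cites Carter [5.2.2] and Tits [bn, 10.3.2]; you ultimately invoke the construction in [affine, 30.15] after normalizing the simple-root isomorphisms). The extra scaffolding you build around rank-2 residues and height induction is a faithful unwinding of that same theory rather than a genuinely different argument, though the aside about $\beta = -s_\alpha(\alpha+\beta)$ in the $60^\circ$ case is a misstatement and not needed---the vanishing there follows directly from $\alpha+\beta\notin\Phi$.
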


\begin{proof}
This holds by \cite[5.2.2]{carter} and \cite[10.3.2]{bn}.
\end{proof}

\begin{definition}\label{baf901}
We call a set $\{x_\beta\}_{\beta\in\Phi}$ satisfying the three conditions in \ref{wax31}
a {\it coordinate system} for $\Delta$.
\end{definition}

\begin{theorem}\label{wax34}
Let $\{x_\beta\}_{\beta\in\Phi}$ be a coordinate system for $\Delta$,
let $\gamma\in{\rm Aut}(\Phi)$,
let $\lambda_1,\lambda_2$ be non-zero elements of $E$,
let $\lambda_3,\lambda_4$ be non-zero elements of $L$ and let $\chi$
be an element of ${\rm Aut}(L)$
stabilizing $E$. Then the following hold:
\begin{enumerate}[\rm(i)]
\item There exists a unique automorphism
$$g=g_{\gamma,\lambda_1,\lambda_2,\lambda_3,\lambda_4,\chi}$$
of $\Delta$ that stabilizes the apartment $\Sigma$ such that
$$x_{\alpha_i}(t)^g=x_{\gamma(\alpha_i)}(\lambda_i t^\chi)$$
for all $t$ in the domain of $x_{\alpha_i}$ and for all $i\in[1,4]$.
\item If
$$\beta=\sum_{i=1}^4c_i\alpha_i\in\Phi,$$
then
$$x_\beta(t)^g=x_{\gamma(\beta)}(\lambda_\beta t^\chi)$$
for all $t$ in the domain of $x_\beta$, where
$$\lambda_\beta=\prod_{i=1}^4\lambda_i^{c_i}.$$
\end{enumerate}
\end{theorem}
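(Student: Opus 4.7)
The plan is to prove uniqueness and existence in (i) separately, and then observe that (ii) follows directly from the construction.

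For uniqueness, suppose $g_1,g_2$ both satisfy (i); then $h:=g_1g_2^{-1}$ stabilizes $\Sigma$, acts trivially on $\Phi$, and restricts to the identity on each fundamental root group $U_{\alpha_i}$. Writing any $\beta\in\Phi^+$ of height at least two in the form $\beta=\alpha+\beta'$ or $\beta=\alpha+2\beta'$ with $\alpha,\beta'$ of smaller height and applying the commutator formulae of \ref{wax31}(i)--(ii), induction on height gives that $h$ acts trivially on $U_\beta$ for every $\beta\in\Phi^+$. Conjugating by the Weyl elements $n_{\alpha_i}=x_{\alpha_i}(1)x_{-\alpha_i}(1)x_{\alpha_i}(1)$ handles the negative roots, and since $\Sigma$ together with all $U_\beta$ generate $\Delta$, we get $h=1$.

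For existence, I would write $g$ as a product $g_\gamma\circ g_d\circ g_\chi$ of three automorphisms constructed separately. The field automorphism $g_\chi$ is defined by $x_\beta(t)\mapsto x_\beta(t^\chi)$; it preserves \ref{wax31} because the structure constants there are integers $\pm 1$, fixed by $\chi\in{\rm Aut}(L,E)$. The diagonal automorphism $g_d$ is defined by $x_\beta(t)\mapsto x_\beta(\lambda_\beta t)$ with $\lambda_\beta:=\prod_i\lambda_i^{c_i}$; for long $\beta$ the coefficients $c_3,c_4$ of the short simple roots are even (a combinatorial property of $F_4$), so $\lambda_3^{c_3}\lambda_4^{c_4}\in L^2\subset E$ and $\lambda_\beta\in E$, placing $\lambda_\beta t$ in the domain of $x_\beta$. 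Compatibility with \ref{wax31} reduces to $\lambda_{\alpha+\beta}=\lambda_\alpha\lambda_\beta$ and $\lambda_{\alpha+2\beta}=\lambda_\alpha\lambda_\beta^2$, which are immediate from the product formula. Finally, $g_\gamma$ realizes the action of $\gamma$ on $\Phi$: for $\gamma\in W$ we take a word in $n_\alpha=x_\alpha(1)x_{-\alpha}(1)x_\alpha(1)$, and for the nontrivial coset of ${\rm Aut}(\Phi)/W$ (the long-short polarity) we use the characteristic-$2$ graph automorphism of $F_4(L,E)$ made available by the hypothesis $L^2\subset E\subset L$. Each of the three maps extends to an automorphism of $\Delta$ stabilizing $\Sigma$ by the standard Chevalley--Steinberg principle that a building of this type is determined by its root-group generators and the commutator relations among them (cf.\ \cite[10.3.2]{bn}).

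Assertion (ii) then follows directly: tracking $x_\beta(t)$ through the composition gives $x_\beta(t)\mapsto x_\beta(t^\chi)\mapsto x_\beta(\lambda_\beta t^\chi)\mapsto x_{\gamma(\beta)}(\lambda_\beta t^\chi)$. Alternatively, (ii) can be verified by induction on the height of $\beta$ using \ref{wax31}, matching $g[x_\alpha(s),x_\beta(t)]=[gx_\alpha(s),gx_\beta(t)]$ against the product formula for $\lambda$. The main obstacle will be the construction of $g_\gamma$ in the length-swapping coset of ${\rm Aut}(\Phi)/W$: this is the characteristic-$2$ graph automorphism underlying the Tits polarity of \S\ref{sec4a}, and verifying that, after composition with $g_d$ and $g_\chi$, it induces precisely the prescribed action on the fundamental root groups requires careful tracking of how the swap between $E$ and $L$ interacts with the scalars $\lambda_i$.
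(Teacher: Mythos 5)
Your overall plan — build $g$ explicitly as a product of a field automorphism, a diagonal automorphism and a Weyl conjugation, and then observe that (ii) follows from the construction — is a reasonable and genuinely more hands-on alternative to the paper's proof, which instead black-boxes existence via \cite[Lemma~58]{stein} (with $\chi$ inserted and scalars restricted to $E$ in the long root groups) and uniqueness via \cite[9.7]{spherical}. The observation that for a long root in $F_4$ the coefficients of the two short simple roots are forced to be even (so $\lambda_3^{c_3}\lambda_4^{c_4}\in L^2\subset E$) is correct and is exactly what ``restricting scalars to $E$ in the long root groups'' amounts to.

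There is, however, a genuine gap in your uniqueness argument. After the height induction you know that $h=g_1g_2^{-1}$ centralizes $U_\beta$ for all positive $\beta$; you then say that ``conjugating by the Weyl elements $n_{\alpha_i}=x_{\alpha_i}(1)x_{-\alpha_i}(1)x_{\alpha_i}(1)$ handles the negative roots.'' But to deduce anything from such a conjugation you need to know that $h$ fixes $n_{\alpha_i}$, and that in turn requires knowing that $h$ fixes $x_{-\alpha_i}(1)$ — which is exactly what you are trying to prove. The argument is circular as written. The correct way to close the gap is the one the paper uses: \cite[9.7]{spherical} says that a type-preserving automorphism of a Moufang building fixing an apartment $\Sigma$ and a chamber $c$ pointwise and centralizing $U_\alpha$ for every root $\alpha$ of $\Sigma$ containing $c$ (i.e.\ every \emph{positive} root) is the identity. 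Relatedly, the closing claim ``since $\Sigma$ together with all $U_\beta$ generate $\Delta$, we get $h=1$'' does not type-check: $\Delta$ is a building, not a group, and what is actually needed is the rigidity statement just quoted.

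A second, more minor, point: the worry in your final paragraph about ``the length-swapping coset of ${\rm Aut}(\Phi)/W$'' is a red herring. For the crystallographic root system $F_4$, the Dynkin diagram has no non-trivial automorphisms (the arrow breaks the Coxeter symmetry), so ${\rm Aut}(\Phi)=W$ and the coset does not exist. Your instinct that something would go wrong there is actually sound — if $\gamma$ interchanged long and short roots and $E\ne L$, the prescribed formula $x_{\alpha_i}(t)\mapsto x_{\gamma(\alpha_i)}(\lambda_it^\chi)$ would map $U_{\alpha_i}$ onto a proper subgroup $x_{\gamma(\alpha_i)}(\lambda_iE)\subsetneq U_{\gamma(\alpha_i)}$ and could not be an automorphism of $\Delta$ — but that case simply never arises under the hypotheses.
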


\begin{proof}
Inserting $\chi$ into \cite[Lemma~58]{stein} and restricting
scalars to $E$ in the long root groups, we obtain the existence assertion in (i);
(see also \cite[Thm.~29]{stein}). The uniqueness assertion
holds by \cite[9.7]{spherical}. By \ref{wax31}(i)--(iii), \cite[\S10.2, Lemma~A]{humph} and induction,
it follows that (ii) holds for all $\beta\in\Phi^+$ (i.e.~for all $\beta\in\Phi$
that are positive with respect to the basis
$\{\alpha_1,\ldots,\alpha_4\}$).
For each $i\in[1,4]$, there exists a unique $j\in[1,4]$ such that the angle between
$\alpha_i$ and $\alpha_j$ is $120^\circ$. By \ref{wax31}(i),
$\beta:=\alpha_i+\alpha_j\in\Phi$
and $[x_\beta(1),x_{-\alpha_j}(t)]=x_{\alpha_i}(t)$ for all $t$ in the domain of $x_{-\alpha_j}$
(which is the same as the domain of $x_{\alpha_j}$).
Conjugating by $g$ and applying (i),
we conclude that $x_{-\alpha_j}(t)^g=x_{\gamma(-\alpha)}(\lambda_j^{-1}t)$ for all $t$ in
the domain of $x_{\alpha_j}$.
Thus by \ref{wax31}(i)--(iii), \cite[\S10.2, Lemma~A]{humph} and induction again,
(ii) holds for all $\beta\in\Phi^-$.
\end{proof}

\begin{proposition}\label{wax34x}
Every type-preserving automorphism of $\Delta$ that stabilizes $\Sigma$
is of the form
$$g_{\gamma,\lambda_1,\ldots,\lambda_4,\chi}$$
for some
$\gamma\in{\rm Aut}(\Phi)$, some $\lambda_1,\lambda_2\in E$, some $\lambda_3,\lambda_4\in L$
and some $\chi\in{\rm Aut}(L,E)$.
\end{proposition}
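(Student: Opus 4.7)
My plan is to read off the data $(\gamma,\lambda_1,\ldots,\lambda_4,\chi)$ from the action of $g$ on the root groups of $\Sigma$, and then to invoke the uniqueness clause of~\ref{wax34}(i).

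Since $g$ is type-preserving and stabilizes $\Sigma$, and the type-preserving automorphisms of the thin Coxeter complex $\Sigma$ act simply transitively on its chambers, $g$ corresponds to a unique element $\gamma\in W\subset{\rm Aut}(\Phi)$ under the identifications of~\ref{baf103}. In particular, $U_\beta^g=U_{\gamma(\beta)}$ for every $\beta\in\Phi$, and $\gamma$ preserves root lengths. Using the coordinate system, I would introduce additive maps $\phi_\beta$ (with domain $E$ if $\beta$ is long and $L$ if $\beta$ is short) by $x_\beta(t)^g=x_{\gamma(\beta)}(\phi_\beta(t))$, then set $\lambda_\beta:=\phi_\beta(1)$ and $\chi_\beta(t):=\phi_\beta(t)/\lambda_\beta$.

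Conjugating the commutator relations of~\ref{wax31} by $g$ and using that $\gamma\in W$ acts linearly on the root lattice and preserves angles, the $120^\circ$ case~\ref{wax31}(i) yields $\phi_{\alpha+\beta}(st)=\phi_\alpha(s)\phi_\beta(t)$, while the $135^\circ$ case~\ref{wax31}(ii) yields both this identity and $\phi_{\alpha+2\beta}(st^2)=\phi_\alpha(s)\phi_\beta(t)^2$. Specializing one of the two arguments to $1$ at each edge of the Coxeter diagram $F_4$ shows that $\chi_{\alpha_1}=\chi_{\alpha_2}$ on $E$ (simple bond, $\alpha_1,\alpha_2$ long), that $\chi_{\alpha_3}|_E=\chi_{\alpha_2}$ (double bond, $\alpha_2$ long and $\alpha_3$ short) and that $\chi_{\alpha_3}=\chi_{\alpha_4}$ on $L$ (simple bond, $\alpha_3,\alpha_4$ short); hence the $\chi_{\alpha_i}$ assemble into a single additive map $\chi\colon L\to L$ with $\chi(1)=1$ that restricts to a map $E\to E$. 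Multiplicativity of $\chi$ follows from the same identities with both arguments free: the pair $(\alpha_3,\alpha_4)$ yields $\chi(st)=\chi(s)\chi(t)$ on $L$ and the pair $(\alpha_1,\alpha_2)$ yields it on $E$. Applying the same analysis to $g^{-1}$ shows that $\chi^{-1}$ also carries $E$ into $E$, so $\chi\in{\rm Aut}(L,E)$.

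Finally, with $\lambda_i:=\lambda_{\alpha_i}$ for $i\in[1,4]$, the automorphism $g':=g_{\gamma,\lambda_1,\lambda_2,\lambda_3,\lambda_4,\chi}$ provided by~\ref{wax34}(i) agrees with $g$ on each simple root group $U_{\alpha_i}$ and stabilizes $\Sigma$; the uniqueness clause of~\ref{wax34}(i), which rests on~\cite[9.7]{spherical}, then forces $g=g'$. I expect the main obstacle to lie in the previous paragraph: one must verify carefully that the $\chi_{\alpha_i}$ for long roots and for short roots really restrict consistently to a single field automorphism of $L$ stabilizing $E$, rather than merely to a compatible family of additive maps, and this depends crucially on the precise form of the $135^\circ$ commutator relation at the double bond of the diagram.
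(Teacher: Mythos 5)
Your argument is correct, and its overall architecture matches the paper's: reduce to the simple root groups, read off the data $(\lambda_i,\chi)$, and invoke rigidity of spherical buildings to conclude. The paper first normalizes by an element $g_{\gamma,1,1,1,1,\mathrm{id}}$ so that it may assume $g$ fixes $\Sigma$ pointwise (exactly what your use of $\gamma\in W$ accomplishes in a different order), and both versions ultimately rest on \cite[9.7]{spherical}. Where the two diverge is in how the semi-linear form $\phi_{\alpha_i}(t)=\lambda_i t^\chi$ with a single $\chi\in{\rm Aut}(L,E)$ is established: the paper delegates this to the classification of apartment-stabilizing automorphisms of the rank-$2$ residues $A_2(E)$, $B_2^{\mathcal D}(L,L,E)$ and $A_2(L)$ via \cite[37.13]{TW}, and then uses \cite[37.32]{TW} for the indifferent set $(L,L,E)$ to identify the field automorphism on $E$ with the restriction of the one on $L$; you instead derive multiplicativity of the normalized maps $\chi_{\alpha_i}$ and their mutual compatibility directly by conjugating the commutator relations of \ref{wax31} at each edge of the diagram and using the $120^\circ$ and $135^\circ$ cases. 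Your route is more self-contained but essentially reproduces the content of those TW lemmas. Two details in your sketch deserve to be spelled out: first, that what you call the ``uniqueness clause of \ref{wax34}(i)'' is really the assertion that an automorphism stabilizing $\Sigma$ with prescribed action on the $U_{\alpha_i}$ is unique, and this is legitimately what \cite[9.7]{spherical} gives; second, in deducing $\chi\in{\rm Aut}(L,E)$ you should note explicitly that composing the semi-linear data of $g$ with that of $g^{-1}$ forces $\psi\chi=\chi\psi=\mathrm{id}$, so that $\chi$ is bijective on $L$ and restricts to a bijection of $E$, not merely an additive map carrying $E$ into $E$.
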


\begin{proof}
By \ref{wax34}(i), it suffices to show that every
type-preserving automorphism of $\Delta$ that stabilizes $\Sigma$
pointwise is of the desired form. Let $g$ be such an element.
By \cite[9.7]{spherical}, $g$ is uniquely
determined by its restrictions to the irreducible rank~$2$ residues containing $c$.
These are isomorphic to $A_2(E)$, $B_2^{\mathcal D}(\Lambda)$ and $A_2(L)$,
where $\Lambda$ is the indifferent set $(L,L,E)$.
By \cite[37.13]{TW}, it follows that there exist $\lambda_1,\lambda_2\in E^*$,
$\lambda_3,\lambda_4\in L$, $\chi_E\in{\rm Aut}(E)$ and $\chi_L\in{\rm Aut}(L)$
such that $x_{\alpha_i}(t)^g=x_{\alpha_i}(\lambda_it^{\chi_E})$ for all $t\in E$ if $i=1$ or 2
and $x_{\alpha_i}(t)^g=x_{\alpha_i}(\lambda_i t^{\chi_L})$ for all $t\in L$ if $i=3$ or 4.
By \cite[37.32]{TW} applied to the indifferent set $(L,L,E)$, $\chi_L\in{\rm Aut}(L,E)$
and the restriction of $\chi_L$
to $E$ equals $\chi_E$. Thus $g=g_{{\rm id},\lambda_1,\ldots,\lambda_4,\chi}$ for
$\chi=\chi_L$.
\end{proof}

\begin{remark}\label{onk110}
By \cite[28.8]{MPW}, $\{L/E,E/L\}$ is the pair of defining extensions of $\Delta$; see \ref{onk2x}.
Let $G^\circ$ and $G^\dagger$ be as in \ref{goo3}. By \cite[2.8 and 11.12]{spherical},
the stabilizer $G^\dagger_\Sigma$ induces the same group as the stabilizer $G^\circ_\Sigma$
on $\Sigma$. Thus every element in $G^\circ_{\Sigma}$ is conjugate by an element in $G^\dagger$
to one which fixes the chamber $c$ of $\Sigma$.
By \ref{goo5}(i)--(ii), therefore, we can choose a Galois map $\psi$ of $\Delta$ such that
$$\psi(g_{\gamma,\lambda_1,\ldots,\lambda_4,\chi})=\chi$$
for all $\gamma\in{\rm Aut}(\Phi)$, for all $\lambda_1,\lambda_2\in E$, for all
$\lambda_3,\lambda_4\in L$ and for all $\chi\in{\rm Aut}(L,E)$.
\end{remark}

\begin{notation}\label{onk10}
Let $w_1=(s_{\alpha_2}s_{\alpha_3})^2\in{\rm Aut}(\Phi)$, where $s_{\alpha_2}$ and $s_{\alpha_3}$
are as in \ref{baf103}.
\end{notation}

\begin{notation}\label{onk11}
Let $\chi$ be an involution in the group ${\rm Aut}(L,E)$ defined as in \ref{onk2x},
let $F_0={\rm Fix}_L(\chi)$,
let $K=F_0\cap E$ and
let $N$ be the norm of the extension $L/F_0$. Thus $F_0/K$ is a purely
inseparable extension such that $F_0^2\subset K$, the restriction of $N$ to $E$
is the norm of the extension $E/K$ and $L$ is the composite $EF_0$.
\end{notation}

\begin{notation}\label{onk12}
Let $\chi$, $F_0$ and $K$ be as in \ref{onk11}, let $F=F_0^2$ and suppose that
$$S=(E/K,F,\alpha,\beta)$$
is an $F_4$-datum for some $\alpha\in F$ and
some $\beta\in K$. Let $\lambda_1=\alpha\beta^{-1}$, let $\lambda_2=\alpha^{-1}$, let
$\lambda_3=\beta$, let $\lambda_4$ be the unique element of $F_0$ such that
$\lambda_4^2=\beta^{-2}\alpha$ and let
$$\xi=g_{w_1,\lambda_1,\lambda_2,\lambda_3,\lambda_4,\chi},$$
where $w_1$ is as in \ref{onk10}.
\end{notation}

In the next two results, we use the term ``$\chi$-involution'' (as defined in \ref{nzz10})
with respect to the Galois map $\psi$ chosen in \ref{onk110}.

\begin{theorem}\label{onk13}
Let $\Delta$ be as in {\rm\ref{baf103}}, let
$S$ and $\xi$ be as in {\rm\ref{onk12}} and let $\Gamma=\langle\xi\rangle$.
Then $\xi$ is a type-preserving isotropic $\chi$-involution of $\Delta$, $\Gamma$-chambers
are residues of type $\{2,3\}$ and
$$\Delta^\Gamma\cong{\mathcal Q}(S).$$
\end{theorem}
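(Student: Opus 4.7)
My plan proceeds in three stages.

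First, I would verify that $\xi$ is a type-preserving $\chi$-involution of $\Delta$. Type-preserving holds because $w_1\in W$ acts inner-automorphically on the diagram. The Galois map $\psi$ normalized in \ref{onk110} satisfies $\psi(\xi)=\chi$, so $\xi$ is a $\chi$-involution and $\Gamma=\langle\xi\rangle$ is a Galois subgroup. To verify $\xi^2=1$, I would apply \ref{wax34}(ii) to obtain $\xi^2\cdot x_{\alpha_i}(t)\cdot\xi^{-2}=x_{\alpha_i}(\lambda_{w_1(\alpha_i)}\lambda_i^\chi t)$. Using the Weyl-group identities $w_1(\alpha_1)=\alpha_1+2\alpha_2+2\alpha_3$, $w_1(\alpha_2)=-\alpha_2$, $w_1(\alpha_3)=-\alpha_3$ and $w_1(\alpha_4)=\alpha_4+\alpha_2+2\alpha_3$ (which express $w_1$ as the longest element of $W_{\{2,3\}}$), the defining values of $\lambda_1,\ldots,\lambda_4$, and the fixity properties $\lambda_1,\lambda_2,\lambda_3\in K\subset\mathrm{Fix}(\chi)$, $\lambda_4\in F_0=\mathrm{Fix}(\chi)$, the four identities $\lambda_{w_1(\alpha_i)}\lambda_i^\chi=1$ are verified by direct computation; \cite[9.7]{spherical} then yields $\xi^2=1$.

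Second, since $w_1\in W_{\{2,3\}}$, the $\{2,3\}$-residue $R$ of $\Delta$ containing $c$ is $\xi$-invariant, and hence $\xi$ is isotropic. As $|\Gamma|=2$ all orbits are finite, so \ref{toy1} gives that $\Gamma$ is a descent group. The common type $A$ of $\Gamma$-chambers (per \ref{nzz8}(ii)) satisfies $A\subseteq\{2,3\}$. A check of the Tits-index axioms in \ref{nzz24} (with $\Pi=F_4$ and $\Theta=1$) shows that among subsets of $\{2,3\}$ only $A=\emptyset$ and $A=\{2,3\}$ are admissible, since $\{2\}$ fails invariance under $\mathrm{op}_{\{1,2\}}$ and $\{3\}$ under $\mathrm{op}_{\{3,4\}}$. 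To exclude $A=\emptyset$, I would show that $\xi|_R$ fixes no chamber of $R$: a fixed chamber would yield, via \ref{abc7}, an apartment $\Sigma'$ of $R$ pointwise fixed by $\xi$; but combining the constraints that $\xi^2=1$ imposes on the scalar parameters of $\xi$ in coordinates adapted to $\Sigma'$ (via \ref{wax34x}) with the action of $\xi$ on $\Sigma\cap R$ as the $B_2$-longest element forces a contradiction with the non-triviality of $w_1$ in $W_{\{2,3\}}$.

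Third, with $A=\{2,3\}$ established, \ref{nzz22} ensures that $\Delta^\Gamma$ is a Moufang quadrangle of type~$F_4$. To identify it with $\mathcal{Q}(S)$, I would compute the root groups of $\Delta^\Gamma$ as the centralizers of $\xi$ in $U_R$, decomposed by relative root. Organize the positive roots of $\Phi$ with nonzero projection onto $\langle\alpha_2,\alpha_3\rangle^\perp$ into four classes corresponding to the four positive roots $e_2$, $(e_1-e_2)/2$, $(e_1+e_2)/2$, $e_1$ of the relative $B_2$-system (where $e_1,\ldots,e_4$ is the standard $F_4$-basis). For each class, pairs of fine roots swapped by $w_1$ contribute copies of $E$ (if long) or $L$ (if short), while the $w_1$-fixed fine roots contribute their $\chi$-fixed subfields. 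Tracking the scalar corrections $\lambda_\beta$ from \ref{wax34}(ii) yields root groups isomorphic to $V_S=E\oplus E\oplus[F]_K$ over the long relative roots and to $\hat V_S=D\oplus D\oplus[K]_F$ over the short ones; verifying, using \ref{wax31}, that the induced commutator relations match the formulas in \ref{abc61} defining $\mathcal{Q}(S)$ completes the identification. The main obstacle is this final matching of commutator relations together with establishing the dictionary $F=F_0^2=K^\theta$, which is what forces $\chi|_E$ to be the non-trivial Galois involution of $E/K$ and which provides the Tits endomorphism $\theta$ underlying the polar structure.
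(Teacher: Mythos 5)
The paper itself does not give an inline proof of this statement: it simply cites the bottom of p.~368 of \cite{canada} and \cite[17.14]{galois}, and notes in \ref{onk103} that the hard part, namely that the minimal $\xi$-stable residues have type $\{2,3\}$, is exactly where those two references differ. So your proposal is an original attempt rather than a reconstruction of the paper's argument, and it should be judged on its own merits.

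Your Step 1 is correct. The $w_1$-images of the simple roots ($w_1(\alpha_1)=\alpha_1+2\alpha_2+2\alpha_3$, $w_1(\alpha_2)=-\alpha_2$, $w_1(\alpha_3)=-\alpha_3$, $w_1(\alpha_4)=\alpha_2+2\alpha_3+\alpha_4$) are right, and plugging in $\lambda_1=\alpha\beta^{-1}$, $\lambda_2=\alpha^{-1}$, $\lambda_3=\beta$, $\lambda_4^2=\beta^{-2}\alpha$ with $\lambda_i^\chi=\lambda_i$ (all lie in $F_0$) gives $\lambda_{w_1(\alpha_i)}\lambda_i^\chi=1$ for $i\in[1,4]$, so $\xi^2=1$. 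The identification of the only admissible $A\subseteq\{2,3\}$ with $\emptyset$ or $\{2,3\}$ via \ref{nzz24}, \ref{nzz25} is also correct. Step 3 is under-specified but the approach (computing $C_{U_R}(\xi)$ relative-root by relative-root and matching the commutator relations of \ref{abc61}) is sound and is essentially what \ref{onk50} carries out in a parallel situation.

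The gap is in Step 2, and it is serious. Your argument for excluding $A=\emptyset$ never invokes the hypothesis that $S=(E/K,F,\alpha,\beta)$ is an $F_4$-\emph{datum}, i.e.\ that the quadratic form $q_S$ of \ref{def0a} is anisotropic. That hypothesis is precisely the input that forces the restriction $\xi|_R$ to the $\{2,3\}$-residue $R$ to be anisotropic; drop it and $\xi$ can perfectly well fix a chamber. Concretely, the contradiction you propose, ``$\xi$ acts as $w_1\neq1$ on $\Sigma\cap R$ while acting trivially on $\Sigma'\cap R$'', is not a contradiction at all: a type-preserving automorphism of a Moufang polygon routinely acts as a non-trivial element of the Weyl group on one apartment while fixing some other apartment pointwise (an element of a root group already does this). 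So the ``non-triviality of $w_1$'' does not do the work you want it to; you need to translate the clause ``$\xi|_R$ fixes a chamber'' into an identity $N_{E/K}(a)+\alpha N_{E/K}(b)=\beta s$ with $(a,b,s)\ne0$ and then contradict anisotropy of $q_S$. There is also a smaller slip: \ref{abc7} only asserts the existence of a $\xi$-\emph{stabilized} apartment, not a pointwise-fixed one; the fact that a type-preserving automorphism fixing a chamber of an apartment it stabilizes must fix that apartment pointwise is true, but it requires the separate observation about the simply transitive Coxeter action rather than following from \ref{abc7} as cited.
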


\begin{proof}
This holds by \cite[p.~368 at the bottom]{canada}. See \cite[17.14]{galois} for a shorter proof.
See also \ref{onk103}.
\end{proof}

\begin{theorem}\label{onk13x}
Let $\Delta$ be as in {\rm\ref{baf103}}, let
$\xi$ be an arbitrary type-preserving $\chi$-involution of $\Delta$ for
some $\chi\in{\rm Aut}(L,E)$, let $\Gamma=\langle\xi\rangle$ and
suppose that $\Gamma$-chambers are residues of type $\{2,3\}$.
Then the following hold:
\begin{enumerate}[\rm(i)]
\item There exist $\alpha\in F$ and $\beta\in K$
such that $\xi$ is conjugate by an element in $G^\dagger$ to
$$g_{w_1,\alpha\beta^{-1},\alpha^{-1},\beta,\beta^{-1}\sqrt{\alpha},\chi}.$$
\item $\Delta^\Gamma$ is a Moufang quadrangle of type $F_4$.
\end{enumerate}
\end{theorem}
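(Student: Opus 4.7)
The plan is to put $\xi$ into the explicit form described in Theorem \ref{onk13} by successive conjugations in $G^\dagger$, and then read part (ii) off of that theorem.

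First, I would conjugate $\xi$ by an element of $G^\dagger$ so that $\xi$ stabilizes the reference apartment $\Sigma$ of \ref{baf103}. By Proposition \ref{abc7} some apartment is $\xi$-stable, and since $G^\dagger$ acts transitively on apartments (Moufang property), we may assume $\Sigma^\xi=\Sigma$. Being type-preserving, $\xi$ then acts on $\Sigma$ via an element $w\in W$. The hypothesis that $\Gamma$-chambers are residues of type $\{2,3\}$ says that the minimal $\xi$-stable residues are $\{2,3\}$-residues, and since $\{2,3\}\neq\emptyset$, $\xi$ does not fix any chamber of $\Sigma$. Inside $\Sigma$ this forces $w\in W_{\{2,3\}}$ with no proper $w$-stable sub-residue at $c$, hence $w$ is the longest element of $W_{\{2,3\}}$, namely $w_1=(s_{\alpha_2}s_{\alpha_3})^2$.

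By Proposition \ref{wax34x} we can now write $\xi=g_{w_1,\lambda_1,\lambda_2,\lambda_3,\lambda_4,\chi'}$ for some $\lambda_1,\lambda_2\in E^*$, $\lambda_3,\lambda_4\in L^*$, and some $\chi'\in{\rm Aut}(L,E)$. With the Galois map $\psi$ chosen in \ref{onk110} we have $\psi(\xi)=\chi'$; on the other hand $\xi$ is a $\chi$-involution, so by \ref{nzz10} $\chi'=\chi$. Next I would impose $\xi^2=1$. By \ref{wax34}(ii), iterating the formula $x_{\alpha_i}(t)^\xi=x_{w_1(\alpha_i)}(\lambda_i t^\chi)$ yields $x_{\alpha_i}(t)^{\xi^2}=x_{\alpha_i}\bigl(\lambda_i^{\chi}\,\lambda_{w_1(\alpha_i)}\,t\bigr)$, where $\lambda_{w_1(\alpha_i)}=\prod_j\lambda_j^{c_{ij}}$ and $w_1(\alpha_i)=\sum_j c_{ij}\alpha_j$. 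Setting each such scalar equal to $1$, together with an explicit computation of the action of $w_1$ on the simple roots of $F_4$, gives a small set of polynomial relations that the $\lambda_i$ must satisfy; in particular, relations of the form $\lambda_i^{\chi}\lambda_i=\ldots$ that pin down which subfield the $\lambda_i$ live in.

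The remaining step, and the main obstacle, is to exploit conjugation by torus elements $g_{{\rm id},\mu_1,\mu_2,\mu_3,\mu_4,{\rm id}}\in G^\dagger$ to normalize $(\lambda_1,\lambda_2,\lambda_3,\lambda_4)$ into the asserted form. Such a conjugation rescales each $\lambda_i$ by a ratio involving the $\mu_j^{\chi}$ and $\mu_j$, and the task is to show that the resulting orbit contains a representative of the form $(\alpha\beta^{-1},\alpha^{-1},\beta,\beta^{-1}\sqrt{\alpha})$ with $\alpha\in F=F_0^2$ and $\beta\in K$. The restriction $\alpha\in F$ (rather than merely $K$) and the existence of $\sqrt{\alpha}\in F_0$ should both drop out of the equations $\lambda_i^\chi\lambda_{w_1(\alpha_i)}=1$ once one uses $F_0={\rm Fix}_L(\chi)$ and $K=F_0\cap E$ from \ref{onk11}. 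Finally, $S=(E/K,F,\alpha,\beta)$ must be shown to be an $F_4$-datum, i.e.\ the quadratic form $q_S$ of \ref{def0a} must be anisotropic; this should follow from the isotropy hypothesis on $\xi$ together with the fact that any isotropy of $q_S$ would force $\xi$ to stabilize a proper sub-residue of the $\{2,3\}$-residue at $c$, contradicting the minimality hypothesis.

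Once (i) is established, part (ii) is immediate: applying Theorem \ref{onk13} to the $F_4$-datum $S$ gives $\Delta^\Gamma\cong{\mathcal Q}(S)$, which by \ref{abc85} is a Moufang quadrangle of type $F_4$.
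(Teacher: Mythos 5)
Your overall outline matches the paper's route, but there is a genuine gap in the passage from ``a conjugate of $\xi$ stabilizes $\Sigma$'' to ``the induced automorphism of $\Sigma$ is $w_1$.'' Proposition \ref{abc7} only produces \emph{some} apartment $\Sigma'$ stabilized by $\xi$; it says nothing about whether $\Sigma'$ meets a $\Gamma$-chamber. After conjugating $\Sigma'$ to $\Sigma$, the hypothesis that $\Gamma$-chambers have type $\{2,3\}$ does not yet tell you anything about the $w$-stable residues \emph{of $\Sigma$}, because none of the $\Gamma$-chambers of $\Delta$ need contain a chamber of $\Sigma$. In particular, the minimal $\xi$-stable residue of $\Delta$ meeting $\Sigma$ could a priori strictly contain a $\Gamma$-chamber that avoids $\Sigma$, so the claim ``inside $\Sigma$ this forces $w\in W_{\{2,3\}}$ with no proper $w$-stable sub-residue at $c$'' has not been established. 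The paper closes this exact hole by citing \cite[Lemma~3.2]{canada}, which supplies an apartment that is simultaneously stabilized by $\xi$ and meets a $\Gamma$-chamber; only then does transitivity of $G^\dagger$ on pairs (apartment, $\{2,3\}$-residue) let one put $\xi$ into a position where it stabilizes both $\Sigma$ and the $\{2,3\}$-residue at $c$, and \cite[25.17]{MPW} applied to that residue forces $w=w_1$. Your argument skips this compatibility step.

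On the remaining part: applying \ref{wax34x}, extracting $\chi'=\chi$ from the Galois map, imposing $\xi^2=1$, and normalizing the torus coefficients by conjugation is indeed where the content lies, but you have only sketched it, and the paper delegates it wholesale to \cite[Lemma~4.3]{canada} --- including the conclusions $\alpha\in F$, $\beta\in K$, the existence of $\sqrt{\alpha}\in F_0$, and the verification that the resulting $S$ is an $F_4$-datum (anisotropy) so that \ref{onk13} is applicable. Your proposed reason for anisotropy (``any isotropy of $q_S$ would force $\xi$ to stabilize a proper sub-residue'') is a plausible heuristic but is not a proof; the logical relationship between isotropy of $q_S$ and the fixed-point structure of $\xi$ is not immediate. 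Part (ii) by \ref{onk13} is handled correctly once these points are resolved.
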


\begin{proof}
By \cite[Lemma~3.2]{canada}, for every $\Gamma$-chamber $R$, there exists
an apartment that is stabilized by $\xi$ and contains chambers of $R$.
By \cite[11.12]{spherical}, there exists an element $\delta$ in the group $G^\dagger$ such that
$\xi^\delta$ stabilizes the apartment $\Sigma$ and the unique $\{2,3\}$-residue
containing $c$, where $\Sigma$ and $c$ are as in \ref{baf103}.
By \cite[25.17]{MPW}, $\xi^\delta$ induces the automorphism $w_1$ on $\Sigma$
and by \ref{goo5}(i), $\xi^\delta$ is also a $\chi$-involution.
By \ref{wax34x} and \cite[Lemma~4.3]{canada}, it follows that
there exist $\alpha\in F$ and $\beta\in K$ such that
$\xi^\delta=g_{w_1,\alpha\beta^{-1},\alpha^{-1},\beta,\beta^{-1}\sqrt{\alpha},\chi}$. Thus (i)
holds. By \ref{onk13}, (ii) follows from (i).
\end{proof}

\section{$F_4$-Buildings with Polarity}

The goal of this section is to prove \ref{onk50}.

\begin{notation}\label{onk43}
Suppose now that
$\Xi$, $\rho$, $S=(E/K,F,\alpha,\beta)$, $\theta$ and the identification of $\Xi$
with ${\mathcal Q}(S)$ are as in \ref{tru68}. Let $F_0=F^{1/2}$ in the algebraic closure
of $E$. Thus $K\subset F_0$ and $F_0^2=F$. Let $L$ be the composite field $EF_0$.
Choose $\gamma\in K$ such that $E=K(\gamma)$. Then $L=F_0(\gamma)$.
In particular, $L/F_0$ is a separable quadratic extension. Let
$\chi$ be the generator of ${\rm Gal}(L/F_0)$.
The map $x\mapsto ((x^2)^\theta)^{1/2}$
is the unique extension of $\theta$ to a Tits endomorphism of $L$. We denote this extension
by the same letter $\theta$.
Since $K^\theta=F$, we have $F_0^\theta=K$. Thus
$K=F_0^\theta\ne L^\theta=K(\gamma^\theta)$. Since $E^\theta\subset E$,
it follows that $E=K(\gamma^\theta)$.
Hence $L^\theta=E$. By \ref{abc71}(ii), $\theta$ commutes with $\chi$.
We set $\Delta=F_4(L,E)$.
\end{notation}

Let $c$, $\Sigma$, $\Phi$, $\{\alpha_1,\ldots,\alpha_4\}$, the identification of
$\Phi$ with the set of roots of $\Sigma$, etc., be as in \ref{baf103}
applied to $\Delta=F_4(L,E)$,
let $\{x_\alpha\}_{\alpha\in\Phi}$
be as in \ref{wax31}, let
$$|\Phi|=\{\alpha/|\alpha|\mid \alpha\in\Phi\}$$
and let $\pi$ be denote the bijection $\alpha\mapsto\alpha/|\alpha|$ from $\Phi$
to $|\Phi|$. We now identify the set of roots of $\Sigma$ with $|\Phi|$ via $\pi$.

\begin{notation}\label{nzz59}
Let $\dot x_{\pi(\alpha)}(t)=x_\alpha(t)$ for all $t\in E$
and all long $\alpha\in\Phi$, let $\dot x_{\pi(\alpha)}(t)=x_\alpha(t^{\theta^{-1}})$
for all $t\in E$
and all short $\alpha\in\Phi$ and let $U_{\pi(\alpha)}=U_\alpha$ for all $\alpha\in\Phi$.
Thus $\dot x_\alpha$ is an isomorphism from the
additive group of $E$ to $U_\alpha$ for each $\alpha\in|\Phi|$ and by \ref{wax31},
if $s,t\in E$ and $\alpha$ and $\beta$ are elements of $|\Phi|$ with an angle $\omega<180^\circ$
between them, then the following hold:
\begin{enumerate}[\rm(i)]
\item If $\omega=120^\circ$, then $\alpha+\beta\in|\Phi|$ and
$[\dot x_\alpha(s),\dot x_\beta(t)]=\dot x_{\alpha+\beta}(st)$
\item If $\omega=135^\circ$, then $\sqrt{2}\alpha+\beta\in|\Phi|$,
$\alpha+\sqrt{2}\beta\in|\Phi|$ and
$[\dot x_\alpha(s),\dot x_\beta(t)]
=\dot x_{\sqrt{2}\alpha+\beta}(s^\theta t)\dot x_{\alpha+\sqrt{2}\beta}(st^\theta)$.
\item $[\dot x_\alpha(s),\dot x_\beta(t)]=1$ if $\omega$ is neither $120^\circ$ nor $135^\circ$.
\end{enumerate}
\end{notation}

Let
$$B:=\{\eta_1,\ldots,\eta_4\}$$
be the image of the basis $\{\alpha_1,\ldots,\alpha_4\}$ of $\Phi$ under $\pi$.
We set $m'=\sqrt{2}m$ for each positive integer $m$ and
$$abcd=a\eta_1+b\eta_2+c\eta_3+d\eta_4$$
for all $a,b,c,d\in{\mathbb N}\cup\sqrt{2}{\mathbb N}$. Thus, for example,
$$1'2'21=\sqrt{2}\eta_1+2\sqrt{2}\eta_2+2\eta_3+\eta_4.$$
We then set
\begin{align*}
W_0&=\{0100,0010,011'0,01'10\},\\
W_1&=\{0001,0011,011'1',01'11,01'21\},\\
W_2&=\{111'1',121'1',1'2'32,122'1',132'1'\},\\
W_3&=\{1'1'11,1'1'21,232'1',1'2'21,1'2'31\},\\
W_4&=\{1000,1100,1'1'10,111'0,121'0\}.
\end{align*}
Let $|\Phi^+|$ denote the image under $\pi$ of the set of positive
roots of $\Phi$ with respect to the basis $\{\alpha_1,\ldots,\alpha_4\}$. Then
$$|\Phi^+|=W_0\cup W_1\cup W_2\cup W_3\cup W_4.$$

\begin{notation}\label{onk49}
Let $R_1$ be the unique $\{2,3,4\}$-residue of $\Delta$ containing $c$, let
$R_4$ be the unique $\{1,2,3\}$-residue containing $c$, let $R=R_1\cap R_4$ and
for $i=1$ and $4$, let $R_i'$ be the unique residue such that $R_i'\cap\Sigma$
is opposite $R\cap\Sigma$ in $R_i\cap\Sigma$. Then
$W_i$ is the set of roots of $\Sigma$ that contain $R\cap\Sigma$
but are disjoint from $R_i'\cap\Sigma$ for $i=1$ and $4$.
\end{notation}

\begin{notation}\label{onk101}
There exists a unique set $X$ of $\{2,3\}$-residues of $\Sigma$
containing $R\cap\Sigma$ with the property that there exists a bijection
$i\mapsto T_i$ from ${\mathbb Z}_8$ to $X$ such that
for each $i\in{\mathbb Z}_8$, $T_{i-1}$ and $T_i$ are opposite residues of
a residue of rank~3 of $\Sigma$. We denote by $\Lambda$ the graph with vertex
set $X$, where $T_i$ is adjacent to $T_j$ whenever $i-j=\pm1$. Thus the residues
$R_1'\cap\Sigma$ and $R_4'\cap\Sigma$ are the two vertices adjacent to $R\cap\Sigma$ in
$\Lambda$.
\end{notation}

\begin{notation}\label{onk102}
Let $\tilde X$ be the graph obtained from the set $X$ in \ref{onk101}
by replacing each vertex $T_i$ by the unique residue $\tilde T_i$ of $\Delta$ such
that $\tilde T_i\cap\Sigma=T_i$. Let $\tilde\Sigma$ be the graph with vertex set
$\tilde X$, where $\tilde T_i$ is adjacent to $\tilde T_j$ whenever $i-j=\pm1$.
\end{notation}

\begin{notation}\label{onk40}
Let $\kappa$ denote the unique involutory permutation of $|\Phi|$ which
interchanges $abcd$ with $dcba$ for all $abcd\in|\Phi|$.
Note that $W_0^\kappa=W_0$ and $W_i^\kappa=W_{5-i}$ for each $i\in[1,4]$.
By \cite[1.2]{octagons}, there is a unique polarity of $\Delta$ stabilizing
$c$ and $\Sigma$ and interchanging
$\dot x_\alpha(t)$ and $\dot x_{\kappa(\alpha)}(t)$ for all $\alpha\in|\Phi|$
and all $t\in E$. We denote this polarity by $\sigma$.
\end{notation}

\begin{notation}\label{onk48}
Let $[abcd]$ denote the reflection associated with the vector $abcd$
for all $abcd\in|\Phi|$. Let $r_1=[011'1']$ and $r_4=[1'1'10]$ and let $R$,
$R_1$, $R_4$, $R_1'$ and $R_4'$ be as in \ref{onk49}.
Then $|r_1r_4|=4$. The reflection $r_1$ stabilizes
$R_1\cap\Sigma$ and interchanges $R\cap\Sigma$ with $R_1'\cap\Sigma$ as
well as $W_2$ and $W_4$. The reflection $r_4$ stabilizes
$R_4\cap\Sigma$ and interchanges $R\cap\Sigma$ with $R_4'\cap\Sigma$ as well as
$W_1$ and $W_3$. In particular, $r_1$ induces the reflection on the graph
$\Lambda$ defined in \ref{onk101} that interchanges $R\cap\Sigma$ and $R'_1\cap\Sigma$
and $r_4$ induces the reflection that interchanges $R\cap\Sigma$ and $R'_4\cap\Sigma$.
\end{notation}

\begin{notation}\label{onk47}
We denote by $r$ the square of the product
$$[0100]\cdot[0010].$$
The element $r$ is an involution commuting with $\kappa$ and with $r_1$ and $r_4$.
It stabilizes the residue $R$ and hence acts trivially on the graph $\Lambda$.
It stabilizes the four sets $W_1,\ldots,W_4$ and
fixes the vectors $011'1'\in W_1$, $1'2'32\in W_2$, $232'1'\in W_3$ and $1'1'10\in W_4$,
but does not fix any other elements of $W_1\cup W_2\cup W_3\cup W_4$.
\end{notation}

By \ref{wax34}(i), there exists a unique automorphism $\zeta$
of $\Delta$ stabilizing $\Sigma$ such that
\begin{equation}\label{onk45}
\dot x_v(t)^\zeta=\dot x_{r(v)}(t)
\end{equation}
for all $t\in E$.

We set
$$\lambda^{m'}=\lambda^{m\theta}$$
for all $\lambda\in K$ and all $m\in{\mathbb N}$ and let
$$h_{\lambda_1,\lambda_2,\lambda_3,\lambda_4}=
g_{1,\lambda_1,\lambda_2,\lambda_3^{\theta^{-1}},\lambda_4^{\theta^{-1}},1}$$
for all $\lambda_1,\ldots,\lambda_4\in E^*$.
Let $h=h_{\lambda_1,\ldots,\lambda_4}$ for some choice of $\lambda_1,\ldots,\lambda_4\in E^*$.
By \ref{wax34}(ii), we have
\begin{equation}\label{nzz70}
\dot x_{abcd}(t)^h=\dot x_{abcd}(\lambda t)
\end{equation}
for all $abcd\in|\Phi|$ and all $t\in E$, where
$$\lambda=\lambda_1^a\lambda_2^b\lambda_3^c\lambda_4^d.$$
Thus, for example,
$$\dot x_{1'2'21}(t)^h=\dot x_{1'2'21}(\lambda t)$$
for all $t\in L$, where
$$\lambda=\lambda_1^\theta\lambda_2^{2\theta}\lambda_3^2\lambda_4.$$

\begin{notation}\label{onk51}
We set
$$\xi=g_{w_1,\beta^{-(\theta+1)},\beta^\theta,\beta,\beta^{-(\theta+1)\theta^{-1}},\chi},$$
where $w_1$ is as in \ref{onk10}. By \ref{tru68}(ii) and \ref{onk43}, we have
$\alpha=\beta^{-\theta}$; thus $\xi$ is the same as the element $\xi$ in \ref{onk12}.
Note that
$$\dot x_{abcd}(t)^\xi=\big(\dot x_{abcd}(t^\chi)^h\big)^\zeta$$
for all $abcd\in|\Phi|$, where $\zeta$ is as in \eqref{onk45} and
$$h=h_{\beta^{-(\theta+1)},\beta^\theta,\beta^\theta,\beta^{-(\theta+1)}}.$$
\end{notation}

\begin{notation}\label{onk52}
By \ref{onk13}, we already know that the automorphism $\xi$ is a type-preserving $\chi$-involution
of $\Delta$ and that $\tilde\Xi:=\Delta^{\langle\xi\rangle}$ is isomorphic to $\Xi$.
The polarity $\sigma$ defined in \ref{onk40} commutes with $\xi$
and thus induces a polarity of $\tilde\Xi$ which we denote by $\tilde\rho$. Our goal in \ref{onk50}
is to show that there is an isomorphism from $\tilde\Xi$ to $\Xi$ which carries
$\tilde\rho$ to $\rho$.
\end{notation}

\begin{remark}\label{onk104}
Since (by \ref{onk13}) the minimal residues stabilized by $\xi$
are of type $\{2,3\}$,
the residue $R$ in \ref{onk49} is a chamber of $\tilde\Xi$. Since $\xi$ stabilizes $\Sigma$,
the graph $\tilde\Sigma$ defined in \ref{onk102} is an apartment of $\tilde\Xi$
containing $R$. The polarity $\tilde\rho$ stabilizes both $R$ and $\tilde\Sigma$.
\end{remark}

\begin{remark}\label{onk103}
It might appear that we are giving a new proof of \ref{onk13} in \ref{onk50}. In fact, however,
the proof of \ref{onk50} we give relies on \ref{onk104} which, in turn, relies on the fact that the
the minimal residues stabilized by $\xi$ are of type $\{2,3\}$.
It is exactly in the proof of this fact that the proof of \ref{onk13}
in \cite{galois} differs from the proof in \cite{canada}.
\end{remark}

\begin{proposition}\label{onk50}
Let $\Xi$, $\Sigma$, $c$ and $\rho$ be as {\rm\ref{tru90}} and
let $\tilde\Xi$, $\tilde\Sigma$, $R$ and $\tilde\rho$ be as in
{\rm\ref{onk49}}, {\rm\ref{onk52}} and {\rm\ref{onk104}}.
Then there is an
isomorphism from $\tilde\Xi$ to $\Xi$ mapping the pair $(\tilde\Sigma,R)$ to the pair
$(\Sigma,c)$
that carries the polarity $\tilde\rho$ to $\rho$.
\end{proposition}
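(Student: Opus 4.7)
The plan is to construct the required isomorphism $f\colon\tilde\Xi\to\Xi$ by parametrizing the four root groups of $\tilde\Xi$ attached to $R$ along $\tilde\Sigma$, and then verifying that the polarity $\sigma$ of $\Delta$ acts on these root groups by the same formula \eqref{tru90x} that defines $\rho$ on the root groups of $\Xi$. First I would identify the four root groups of $\tilde\Xi$ at $R$: by the general descent theory underlying \ref{nzz8}(iv)-(v), applied to each of the rank-one ``half-buildings'' on either side of $R$ in $\tilde\Sigma$, each such root group is the centralizer $\tilde U_i:=C_{U_i^\Delta}(\xi)$, where $U_i^\Delta$ is the subgroup of the unipotent radical $U_R$ in $\Delta$ generated by $\{U_\beta\mid\beta\in W_i\}$ for $i\in[1,4]$.

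Next I would parametrize $\tilde U_i$ by $V=E\oplus E\oplus[K]$ in such a way that the resulting isomorphism matches the description of $U_i$ in \ref{tru90}. Using \ref{wax34}(ii), the action of $\xi=g_{w_1,\beta^{-(\theta+1)},\beta^\theta,\beta,\beta^{-(\theta+1)\theta^{-1}},\chi}$ on each $\dot x_\beta(t)$ is $\dot x_\beta(t)\mapsto\dot x_{w_1(\beta)}(\lambda_\beta t^\chi)$, where $\lambda_\beta$ is the multiplicative combination dictated by the coordinates of $\beta$ in the basis and is carefully calibrated by the chosen $\lambda_1,\ldots,\lambda_4$. An ordered-product normal form for $U_i^\Delta$ over $W_i$ (via the Chevalley commutator formulas from \ref{wax31} and \ref{nzz59}) then expresses $\tilde U_i$ explicitly: the five coordinates along $W_i$ collapse under the $\xi$-fixed-point condition to a triple $(a,b,r)\in E\oplus E\oplus[K]$, with the fixed coordinates of $w_1$ on $W_i$ (see \ref{onk47}) providing the $[K]$-component, and the $\chi$-invariant combinations of the remaining pairs providing the two $E$-components. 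Writing out the commutators $[\tilde U_i,\tilde U_j]$ from the Chevalley relations should produce exactly the mixed terms with factors $\beta^{-1}$, $\alpha=\beta^{-\theta}$, and $\theta$ appearing in \ref{tru90}; this is the precise reason why the scalars in $\xi$ were chosen as they were. Having matched these commutator relations, the map $\tilde x_i(v)\mapsto x_i(v)$ extends uniquely to an isomorphism of root group sequences and, by \cite[7.5]{TW} and \cite[8.3]{TW}, to an isomorphism $f\colon\tilde\Xi\to\Xi$ sending $(\tilde\Sigma,R)$ to $(\Sigma,c)$.

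Finally, for the compatibility with polarities, the combinatorial involution $\kappa$ of \ref{onk40} reverses $abcd\mapsto dcba$, so it interchanges $W_i$ and $W_{5-i}$ and commutes with the action of $w_1$ and with the scalars $\lambda_\beta$ obtained by reversing the coordinate tuple. Since the parametrization $\tilde x_i\colon V\to\tilde U_i$ constructed in the previous step is built from an ordered product along $W_i$ whose $\kappa$-image is the analogous product along $W_{5-i}$, the polarity $\sigma$, which sends $\dot x_\beta(t)$ to $\dot x_{\kappa(\beta)}(t)$, induces $\sigma(\tilde x_i(v))=\tilde x_{5-i}(v)$ for all $v\in V$. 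Combined with \eqref{tru90x}, this yields $f\circ\tilde\rho=\rho\circ f$, completing the proof.

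The main obstacle is the explicit centralizer computation in the second paragraph: one must iterate the commutator formulas of \ref{wax31} through all five roots in each $W_i$, keep track of which monomials in the $\lambda_i$'s arise, and check that the $\xi$-fixed part collapses to $V$ in a way that is consistent with the mixed commutator relation $[x_1(a,b,r),x_4(u,v,s)]$ of \ref{tru90}. The choice of $\lambda_1,\ldots,\lambda_4$ in \ref{onk12}/\ref{onk51} is essentially forced by this matching, and verifying that it produces the correct parametrization (rather than some twisted variant) is where the bulk of the work lies.
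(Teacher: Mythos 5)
Your proposal follows essentially the same route as the paper: identify the root groups of $\tilde\Xi$ at $R$ as $C_{\langle U_\alpha\mid\alpha\in W_i\rangle}(\xi)$, parametrize each by $V=E\oplus E\oplus[K]$ using ordered products of $\dot x_\beta$ over $W_i$ (with the $w_1$-fixed root giving the $[K]$-component and the $\xi$-paired roots giving the $E$-components, exactly as the paper's maps $X_1,\ldots,X_4$ do), check via the Chevalley commutator relations \ref{nzz59} that the resulting root group sequence matches \ref{tru90}, and then use the $\kappa$-symmetry $W_i\leftrightarrow W_{5-i}$ to get $\sigma$-compatibility before extending to a building isomorphism via \cite[7.5]{TW}. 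Your sketch correctly identifies the bulk of the work as the explicit centralizer and commutator computation, which is precisely what the paper carries out in the displayed formulas for the $X_i$ and the commutators $[X_i(\cdot),X_j(\cdot)]$.
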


\begin{proof}
We define maps $X_1,\ldots,X_4$ from
$V=E\oplus E\oplus[K]$ to ${\rm Aut}(\Delta)$
as follows:
\begin{align*}
X_1(u,v,t)&=\dot x_{0011}(u)\dot x_{01'11}(\beta^{-1}\bar u)
\cdot\dot x_{0001}(v)\dot x_{01'21}(\beta^{-(\theta+1)}\bar v)
\cdot\dot x_{011'1'}(t)\\
X_2(u,v,t)&=\dot x_{121'1'}(u)\dot x_{122'1'}(\beta^{-1}\bar u)
\cdot\dot x_{111'1'}(v)\dot x_{132'1'}(\beta^{-(\theta+1)}\bar v)
\cdot\dot x_{1'2'32}(t)\\
X_3(u,v,t)&=\dot x_{1'1'21}(u)\dot x_{1'2'21}(\beta^{-1}\bar u)
\cdot\dot x_{1'1'11}(v)\dot x_{1'2'31}(\beta^{-(\theta+1)}\bar v)
\cdot\dot x_{232'1'}(t)\\
X_4(u,v,t)&=\dot x_{1100}(u)\dot x_{111'0}(\beta^{-1}\bar u)
\cdot\dot x_{1000}(v)\dot x_{121'0}(\beta^{-(\theta+1)}\bar v)
\cdot\dot x_{1'1'10}(t)
\end{align*}
for all $(u,v,t)\in V$, where $\bar x=x^\chi$ for all $x\in E$. Note that
\begin{equation}\label{nzz89}
X_i(u,v,t)^{\tilde\rho}=X_{5-i}(u,v,t)
\end{equation}
for all $(u,v,t)\in V$.
Let $M_i=X_i(V)$ for all $i\in[1,4]$,
let $M_+$ denote the subgroup generated by $M_1,\ldots,M_4$ and let
$$\tilde\Psi:=(M_+,M_1,M_2,M_3,M_4).$$
We have $M_i=C_{\langle U_\alpha\mid\alpha\in W_i\rangle}(\xi)$ for each $i\in[1,4]$.
By \ref{onk49}, \ref{onk104} and \cite[24.32]{MPW}, $M_1$ and $M_4$ are root groups of $\Delta$
corresponding to the two roots of the apartment $\tilde\Sigma$ containing $R$
but not some chamber of $\tilde\Sigma$ adjacent to $R$. By \ref{onk48}, we
conclude that $\tilde\Psi$ is a root group sequence of the Moufang quadrangle $\tilde\Xi$.

It follows from \ref{nzz59}(i)--(iii) that the map $X_i$ is additive
and thus $M_i$ is abelian for all $i\in[1,4]$, that
$$[M_1,M_2]=[M_2,M_3]=[M_3,M_4]=[M_1,M_3]=1,$$
and that
\begin{align*}
[X_2(a,b,s),X_4(u,v,t)]&=X_3(0,0,\beta^{-1}(u\bar a+a\bar u)+\beta^{-(\theta+1)}(v\bar b+b\bar v))\\
&=X_3(0,0,f((a,b,s),(u,v,t))
\end{align*}
for all $(a,b,s),(u,v,t)\in V$, where $f=\partial q$.
Applying also the identities \eqref{nzz90}, we find that
\begin{align*}
[X_1(a,0,0),X_4(u,0,0)]&=X_2(0,a^\theta u,0)X_3(0,u^\theta a,0)\\
[X_1(a,0,0),X_4(0,v,0)]&=X_2(\beta^{-\theta}\bar a^\theta v,0,0)X_3(0,\beta^{-1}v^\theta\bar a,0)\\
[X_1(a,0,0),X_4(0,0,s)]&=X_2(0,0,s\beta^{-1}N(a))X_3(sa,0,0)\\
[X_1(0,b,0),X_4(0,v,0)]&=X_2(\beta^{-(\theta+1)}b^\theta\bar v,0,0)
X_3(\beta^{-(\theta+1)}v^\theta\bar v,0,0)\\
[X_1(0,b,0),X_4(0,0,s)]&=X_2(0,0,s\beta^{-(\theta+1)}N(b))X_3(0,sb,0)\\
[X_1(0,0,r),X_4(0,0,s)]&=X_2(0,0,r^\theta s)X_3(0,0,s^\theta r)
\end{align*}
for all $(a,b,r),(u,v,s)\in V$. The commutator of an arbitrary element of $M_i$ with
an arbitrary element of $M_j$ for $(i,j)=(1,3)$ and $(1,4)$ is uniquely determined by
the identities \eqref{nzz90}, \eqref{nzz89} and the identities above.
It follows that there is an isomorphism $\omega$ from $\tilde\Psi$ to
the root group sequence $\Psi:=(U_+,U_1,\ldots,U_4)$ defined by the commutator
relations in \ref{tru90}
sending $X_i(u,v,t)$ to $x_i(u,v,t)$
for all $(u,v,t)\in V$ and all $i\in[1,4]$. By \eqref{tru90x} and \eqref{nzz89},
$\omega$ carries the anti-automorphism
of $\tilde\Psi$ induced by $\tilde\rho$ to the anti-automorphism of $\Psi$
induced by $\rho$. By \cite[7.5]{TW}, there exists a unique isomorphism $\omega_1$ from
$\tilde\Xi$ to $\Xi$ mapping the the pair $(\tilde\Sigma,R)$ to the pair $(\Sigma,c)$
and inducing the map
$\omega$ from $\tilde\Psi$ to $\Psi$. By \cite[3.7]{TW}, $\omega_1$ carries
$\tilde\rho$ to $\rho$.
\end{proof}

\section{Moufang Octagons}\label{onk15}

Let $\Omega$ be a Moufang octagon. By \cite[17.7]{TW}, $\Omega={\mathcal O}(E,\theta)$ for
some octagonal set $(E,\theta)$ as defined in \ref{abc0}. Let $\Delta=
F_4(E,\theta)=F_4(E,E^\theta)$ as defined in \ref{abc4}.

\begin{notation}\label{onk56}
Let $L/E$ and the extension of $\theta$ to $L$ be as in \ref{onk43}. Then $\theta$ maps the pair
$(L,E)$ to the pair $(E,E^\theta)$ and hence induces an isomorphism from $F_4(L,E)$ to
$\Delta$. Let $c$, $\Sigma$, $\Phi$ and the identification
of $\Phi$ with the set of roots of $\Sigma$ be as in \ref{baf103}, let
$\{x_\alpha\}_{\alpha\in\Phi}$ be as in \ref{wax31} and let $\sigma$
be the polarity of $F_4(L,E)$ defined in \ref{onk40}. We identify $F_4(L,E)$ with
$\Delta$ via the isomorphism induced by $\theta$.
\end{notation}

\begin{proposition}\label{onk55}
$\Omega\cong\Delta^{\langle\sigma\rangle}$.
\end{proposition}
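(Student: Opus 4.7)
The plan is to combine Proposition~\ref{nzz5} with an explicit construction of a root group sequence for $\Delta^{\langle\sigma\rangle}$ isomorphic to the octagonal root group sequence of \cite[16.9]{TW} that defines $\Omega$, in exact parallel with the proof of \ref{onk50}. This is essentially the content of \cite{octagons}.

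First, since $\sigma$ is a polarity of the Moufang building $\Delta$ of type $F_4$, \ref{nzz5} already yields that $\langle\sigma\rangle$ is a descent group, that $\langle\sigma\rangle$-chambers are chambers of $\Delta$, and that $\Delta^{\langle\sigma\rangle}$ is a Moufang octagon. Since $\sigma$ stabilizes $\Sigma$ and $c$, the chamber $c$ is a $\langle\sigma\rangle$-chamber, and the $\langle\sigma\rangle$-chambers of $\Sigma$ form an apartment $\tilde\Sigma$ of $\Delta^{\langle\sigma\rangle}$ containing $c$.

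The main step is to produce, for each $i\in\{0,1,\ldots,7\}$, a root group $M_i$ of $\Delta^{\langle\sigma\rangle}$ based at $(\tilde\Sigma, c)$ as a centralizer $C_{U_i}(\sigma)$, where $U_i$ is the subgroup of $U_c$ generated by the $U_\alpha$ for $\alpha$ ranging over a $\kappa$-invariant subset of positive roots of $\Phi$ determined by the $i$-th root of $\tilde\Sigma$ containing $c$. Because $\sigma$ interchanges $\dot x_\alpha(t)$ with $\dot x_{\kappa(\alpha)}(t)$ for every $\alpha\in|\Phi|$ and every $t\in E$, the centralizer of $\sigma$ in $\langle U_\alpha, U_{\kappa(\alpha)}\rangle$ for $\alpha\ne\kappa(\alpha)$ is parametrized by
\[
Y_\alpha(t) := \dot x_\alpha(t)\,\dot x_{\kappa(\alpha)}(t),\qquad t\in E,
\]
while the centralizer in $U_\alpha$ for a $\kappa$-fixed root $\alpha$ is $U_\alpha$ itself. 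Using these building blocks, one defines explicit isomorphisms $X_0,\ldots,X_7\colon E\to M_i$ as products of $Y_\alpha$'s (plus possibly one factor $\dot x_\beta(t)$ for the $\kappa$-fixed root $\beta$ in the block). The commutator identities \ref{nzz59}(i)--(iii), together with the conventions \eqref{nzz90}, then allow one to compute $[M_i,M_j]$ for all $0\le i<j\le 7$ and verify that they match the defining commutator relations of ${\mathcal O}(E,\theta)$ in \cite[16.9]{TW}. An application of \cite[7.5]{TW} lifts the resulting isomorphism of root group sequences to the desired isomorphism $\Delta^{\langle\sigma\rangle}\cong\Omega$.

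The principal obstacle is the combinatorial bookkeeping: one must identify the correct $\kappa$-orbit decomposition of the $24$ positive roots of $\Phi$ into the eight blocks producing $M_0,\ldots,M_7$, and then carry out the commutator computations, paying particular attention to the $\theta$-twists produced by the mixed-length relation \ref{nzz59}(ii). These twists are precisely what cause the Tits endomorphism $\theta$ to reappear in the commutator relations of $\Omega$ and give the sequence its octagonal character. An alternative route is simply to invoke Tits's original theorem \cite{octagons} once \ref{nzz5} has been used to identify $\Delta^{\langle\sigma\rangle}$ as a Moufang octagon, but the explicit construction outlined above fits more naturally with the strategy already used in \ref{onk50}.
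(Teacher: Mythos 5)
The paper's proof is a one-line citation of Tits's theorem in \cite{octagons}, which you offer only as a fallback; that citation suffices and is exactly what the authors do. Your primary proposal --- an explicit root-group-sequence construction in the style of \ref{onk50} --- is sound in spirit but misstates the shape of the root groups: you claim to get isomorphisms $X_0,\ldots,X_7\colon E\to M_i$, yet in ${\mathcal O}(E,\theta)$ the root groups of one parity are abelian and isomorphic to $(E,+)$, while those of the other parity are nilpotent of class~$2$, parametrized by $E\times E$ with the twisted product $(a,b)\cdot(u,v)=(a+u,\,b+v+au^\theta)$ (cf.~\cite[16.9]{TW} and \ref{onk91}). Correspondingly, the centralizers $M_i$ of $\sigma$ must alternate between these two isomorphism types, so half of them cannot be the image of any isomorphism from $E$. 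A related gap: your formula $Y_\alpha(t)=\dot x_\alpha(t)\dot x_{\kappa(\alpha)}(t)$ parametrizes $C_{\langle U_\alpha,U_{\kappa(\alpha)}\rangle}(\sigma)$ only when $[U_\alpha,U_{\kappa(\alpha)}]=1$; when $\alpha$ and $\kappa(\alpha)$ do not commute, the centralizer requires correction terms, and those corrections are precisely what produce the class-$2$ root groups and the $\theta$-twist. Identifying the correct $\kappa$-invariant blocks of positive roots and working out the twisted parametrizations so that the result matches \cite[16.9]{TW} is exactly the content of the theorem in \cite{octagons} that the authors cite.
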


\begin{proof}
This holds by \cite[Theorem (on p.~540)]{octagons}.
\end{proof}

For the rest of this section, we will simply identify $\Omega$ with $\Delta^{\langle\sigma\rangle}$.

\begin{proposition}\label{onk30}
Every automorphism of $\Omega$ has a unique extension to a type-preserving
automorphism of $\Delta$, and all of these extensions commute with $\sigma$.
\end{proposition}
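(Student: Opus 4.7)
My plan is to prove the three assertions---uniqueness, existence, and commutativity with $\sigma$---in that order, with the third being a direct consequence of the first.

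For \textbf{uniqueness}, let $\tilde\eta_1, \tilde\eta_2$ be two type-preserving automorphisms of $\Delta$ inducing the same automorphism of $\Omega$, and set $\mu := \tilde\eta_1^{-1}\tilde\eta_2$. Then $\mu$ is type-preserving and fixes every chamber of $\Omega = \Delta^{\langle\sigma\rangle}$, i.e.\ every $\sigma$-fixed chamber of $\Delta$. Pick a $\sigma$-stable apartment $\Sigma$ of $\Delta$ by \ref{abc7}. The $\sigma$-fixed chambers in $\Sigma$ form an octagonal apartment of $\Omega$; two of these are opposite in $\Omega$ and hence---because the Tits index of $\langle\sigma\rangle$ has $A=\emptyset$ by \ref{nzz27}, so chambers of $\Omega$ are literally chambers of $\Delta$ and opposition descends---opposite in $\Delta$. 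Since $\mu$ fixes two opposite chambers, $\mu$ stabilizes $\Sigma$, and being type-preserving while fixing a chamber of $\Sigma$, it acts trivially on $\Sigma$. By \ref{wax34x}, $\mu = g_{1,\lambda_1,\lambda_2,\lambda_3,\lambda_4,\chi}$. The $\sigma$-fixed chambers in the $\{1,4\}$-residue $R_{14}$ at a chamber of $\Sigma$ (which, in the generalized digon $R_{14}$, form a family parameterized by its type-$1$ panels) together with those in the $\{2,3\}$-residue $R_{23}$ (which form a Moufang set by \ref{nzz5}) are all fixed by $\mu$; a direct computation in each rank-$2$ residue forces the induced permutations to be trivial, yielding $\lambda_i = 1$ for $i\in[1,4]$ and $\chi = \mathrm{id}$. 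Hence $\mu = 1$.

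For \textbf{existence}, given $\eta \in \mathrm{Aut}(\Omega)$, I would reduce to the case where $\eta$ stabilizes a chamber $c$ of $\Omega$ and an apartment $\Sigma_\Omega$ of $\Omega$ through $c$, using that the centralizer $C_{\mathrm{Aut}(\Delta)^\circ}(\sigma)$ contains enough root elations and diagonal-type elements---constructed explicitly via \ref{wax31} and \ref{wax34}---to act transitively on such pairs of $\Omega$. In the reduced setting, $\eta$ acts on the two root groups of $\Omega$ at $c$ as an automorphism preserving the Moufang structure. By the realization of these root groups as $\sigma$-centralizers inside unipotent subgroups of $\Delta$ (cf.\ the higher-rank generalization of \ref{nzz8}(v) in \cite[24.32]{MPW}), this action matches the restriction of a unique $g = g_{1,\lambda_1,\lambda_2,\lambda_3,\lambda_4,\chi} \in \mathrm{Aut}(\Delta)^\circ$, whose parameters automatically satisfy the $\sigma$-compatibility conditions dictated by the Moufang-preserving nature of $\eta$. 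This $g$ is the desired type-preserving lift.

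For \textbf{commutativity}, let $\tilde\eta$ be any type-preserving extension of $\eta$ and set $\sigma':=\tilde\eta\sigma\tilde\eta^{-1}$, again a polarity of $\Delta$. For any chamber $c$ of $\Omega$,
\[
\sigma'(c) = \tilde\eta\sigma(\eta^{-1}(c)) = \tilde\eta(\eta^{-1}(c)) = c,
\]
since $\eta^{-1}(c)\in\Omega$ is $\sigma$-fixed. Hence $\sigma\sigma'$ is a type-preserving automorphism of $\Delta$ acting as the identity on $\Omega$; by the uniqueness just proved, $\sigma\sigma' = 1$, so $\sigma' = \sigma$ and $\tilde\eta$ commutes with $\sigma$. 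The main obstacle is the existence step: one must match the abstract Moufang automorphism $\eta$ of $\Omega$, parametrized via the octagonal set $(E,\theta)$, with a concrete type-preserving automorphism of $\Delta$, parametrized via the pair $(L,E)$ of \ref{onk43}; the compatibility is mediated by the extension of $\theta$ to the quadratic extension $L$.
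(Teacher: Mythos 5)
Your proof has a genuine gap in the \textbf{existence} step. You describe a plan---reduce to $\eta$ stabilizing a chamber $c$ and an apartment of $\Omega$ through $c$, then ``match'' the action of $\eta$ on the two root groups of $\Omega$ at $c$ with a $g_{1,\lambda_1,\ldots,\lambda_4,\chi}$---but you never actually carry it out. In particular the claim that the root-group action ``matches the restriction of a unique $g=g_{1,\lambda_1,\ldots,\lambda_4,\chi}$, whose parameters automatically satisfy the $\sigma$-compatibility conditions'' is precisely what needs to be shown, not a consequence of anything you've established; nor do you verify that $C_{G^\circ}(\sigma)$ acts transitively on incident chamber--apartment pairs of $\Omega$. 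You acknowledge this yourself (``the main obstacle is the existence step''). The paper handles existence by citing \cite[1.6 and 1.13.1(ii)]{octagons}, where Tits proves directly that every automorphism of $\Omega$ extends to an element of $C_{G^\circ}(\sigma)$; it is not reconstructed from scratch.

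Your \textbf{uniqueness} argument follows the same outline as the paper's (reduce to a type-preserving $\mu$ fixing a $\sigma$-stable apartment $\Sigma$ and a chamber, then show $\lambda_i=1$ and $\chi=\mathrm{id}$), but the final step is also just asserted---``a direct computation in each rank-$2$ residue forces the induced permutations to be trivial.'' The paper pins this down with \cite[1.5]{octagons}: the parametrizations $t\mapsto x_{\alpha_1}(t^\theta)x_{\alpha_4}(t)$ and $t\mapsto x_{\alpha_2}(t^\theta)x_{\alpha_3}(t)x_{\alpha_2+2\alpha_3}(t^{\theta+2})$ of the two root groups of $\Omega$ at $c$ are \emph{injective} into $\mathrm{Aut}(\Omega)$, so a $g$ acting trivially on $\Omega$ must centralize each $U_{\alpha_i}$, forcing $\lambda_i=1$ and $\chi=1$ by \ref{wax34}(i). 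Your side remark about the $\sigma$-fixed chambers of $R_{14}$ being ``parameterized by its type-$1$ panels'' is also not right: the $\sigma$-fixed chambers of each of $R_{14}$ and $R_{23}$ form a panel of $\Omega$ (a Moufang set), not something indexed by one family of panels of a digon.

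Your \textbf{commutativity} argument is correct and is a genuinely different (and rather elegant) route: you deduce $\sigma'=\tilde\eta\sigma\tilde\eta^{-1}=\sigma$ directly from uniqueness, since $\sigma\sigma'$ is type-preserving and fixes every chamber of $\Omega$. The paper instead gets commutativity for free from the cited Tits construction, which produces the extension already inside $C_{G^\circ}(\sigma)$. Your argument buys a cleaner logical dependence (commutativity from uniqueness alone), but this advantage is moot as long as existence itself is unproved.
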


\begin{proof}
Let $G^\circ$ denote the group of type-preserving automorphisms of $\Delta$ (as in \ref{goo3}
applied to $\Delta$).
By \cite[1.6 and 1.13.1(ii)]{octagons}, every automorphism of $\Omega$
can be extended to an element in the centralizer $C_{G^\circ}(\sigma)$.
Suppose that $g$ is a type-preserving automorphism of $\Delta$ that acts trivially on $\Omega$.
It remains only to show that $g$ is trivial.
Opposite chambers of $\Omega$ are opposite chambers of $\Delta$ and opposite
chambers of $\Delta$ are contained in a unique apartment of $\Delta$. We can
thus assume that $g$ fixes the apartment $\Sigma$ and chamber $c$ in \ref{onk56}.
Since $g$ is type-preserving, it acts trivially on $\Sigma$ and hence normalizes the root group
$U_\alpha$ for all $\alpha\in\Phi$.
By \cite[1.5]{octagons},
the map from the additive group of $E$ to ${\rm Aut}(\Omega)$ which sends $t\in E$
to the element of ${\rm Aut}(\Omega)$ induced by $x_{\alpha_1}(t^\theta)x_{\alpha_4}(t)$
is injective as is the map from the additive group of $E$ to ${\rm Aut}(\Omega)$ which sends $t\in E$
to the element of ${\rm Aut}(\Omega)$ induced by
$x_{\alpha_2}(t^\theta)x_{\alpha_3}(t)x_{\alpha_2+2\alpha_3}(t^{\theta+2})$.
It follows that $g$ centralizes $U_{\alpha_i}$ for each $i\in[1,4]$. By \ref{wax34}(i),
therefore, $g=1$.
\end{proof}

By \cite[28.8]{MPW}, $E$ is the defining field of $\Omega$ and $\{E/E^\theta,E^\theta/E\}$ is the
pair of defining extensions of $\Delta$.

\begin{proposition}\label{onk30x}
Let $A={\rm Aut}(E,E^\theta)$ be as in {\rm\ref{onk2}}, let $\iota$ denote
the inclusion map from $A$ to ${\rm Aut}(E)$ and let $\psi_\Delta$
denote the Galois map of $\Delta$ in {\rm\ref{onk110}}.
Then there is a unique Galois map $\psi_\Omega$ of $\Omega$
such that
\begin{equation}\label{onk30p}
\psi_\Omega(\kappa)=\iota(\psi_\Delta(\zeta))
\end{equation}
for all pairs $(\kappa,\zeta)$, where
$\kappa\in{\rm Aut}(\Omega)$ and $\zeta$ is the unique extension of $\kappa$
to a type-preserving automorphism of $\Delta$.
\end{proposition}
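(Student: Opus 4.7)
The plan is to \emph{define} $\psi_\Omega$ by the formula \eqref{onk30p} and then verify that it is a Galois map; uniqueness is then immediate, since \eqref{onk30p} already specifies the value $\psi_\Omega(\kappa)$ for every $\kappa\in\mathrm{Aut}(\Omega)$. The assignment $\kappa\mapsto\iota(\psi_\Delta(\zeta))$ is a group homomorphism because, by \ref{onk30}, the map $\kappa\mapsto\zeta$ is an injective group homomorphism from $\mathrm{Aut}(\Omega)$ into $G^\circ(\Delta)$, and $\iota\circ\psi_\Delta$ is itself a homomorphism. Its image lies in $\iota(A)\subset\mathrm{Aut}(E)$, which by \ref{onk2} (since $E$ is the field of definition of $\Omega$) is the target group in which a Galois map of $\Omega$ must take values.

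Next, I would verify the three defining properties (i)--(iii) of \ref{goo5} for $\psi_\Omega$. Property (iii) is vacuous: since the Coxeter diagram $I_2(8)$ of $\Omega$ is not simply laced, the ambient group $G$ for $\Omega$ defined in \ref{goo3} is $G^\circ(\Omega)$, so there are no non-type-preserving elements to test. Property (i) reduces to the observation that each root group of $\Omega$ is contained in a product of $\sigma$-centralized pieces of the root groups $U_\alpha$ of $\Delta$ with $\alpha\in\Phi$ (see \cite[1.5]{octagons}); therefore the unique extension to $\Delta$ of any element of $G^\dagger(\Omega)$ lies in $G^\dagger(\Delta)$, and by \ref{goo5}(i) applied to $\psi_\Delta$ it lies in $\ker\psi_\Delta$.

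The genuine work is property (ii). I would choose a standard edge $(s,t)\in B_{I_2(8)}$ for $\Omega$ and an apartment of $\Omega$ contained in the apartment $\Sigma$ of $\Delta$ of \ref{onk56}, and fix an isomorphism $\varphi$ from $\Omega_{st}$ to one of the root group sequences listed in \cite[16.1--16.9]{TW}. For $\kappa\in\mathrm{Aut}(\Omega)$ acting trivially on this apartment of $\Omega$, the unique extension $\zeta$ acts trivially on $\Sigma$, so by \ref{wax34x} it has the form $g_{1,\lambda_1,\lambda_2,\lambda_3,\lambda_4,\chi}$ for some $\lambda_i$ and some $\chi\in\mathrm{Aut}(L,E)$, and by \ref{onk110} we have $\psi_\Delta(\zeta)=\chi$. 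The explicit formula \ref{wax34}(ii) together with the description of the $\Omega$-root groups inside products of $\Delta$-root groups yields an explicit formula for the induced automorphism $\kappa^*_{st}$ of the target root group sequence in \cite[16.1--16.9]{TW}.

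The hard part will be matching $\lambda_{\Omega_{st}}(\kappa^*_{st})$, as defined in \cite[29.5]{MPW}, with $\iota(\chi)\in\mathrm{Aut}(E)$. This is a coordinate bookkeeping exercise tracking how $\theta$, the scaling factors $\lambda_i$ and the identification of the $\Omega$-root-group parameters with those of \cite[16.1--16.9]{TW} interact; with the normalization of $\psi_\Delta$ fixed in \ref{onk110}, the two values coincide. Once this identification is made, condition (ii) holds, $\psi_\Omega$ is a Galois map, and the proof is complete.
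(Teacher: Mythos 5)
Your proposal follows the same strategy as the paper's proof: define $\psi_\Omega$ by \eqref{onk30p}, note that uniqueness is free since the formula pins down every value, verify that one gets a homomorphism via \ref{onk30}, and then check conditions (i)--(iii) of \ref{goo5}. Your disposal of (iii) as vacuous (the diagram $I_2(8)$ not being simply laced, so $G=G^\circ$ has no non-type-preserving elements) is correct and makes explicit a point the paper silently suppresses. Your argument for (i) is also sound: elements of root groups of $\Omega$ extend to elements of $G^\dagger(\Delta)$ (cf.\ \cite[24.32]{MPW}, which is what the paper cites), and since extension is a homomorphism the image of $G^\dagger(\Omega)$ lands in $G^\dagger(\Delta)\subset\ker\psi_\Delta$.

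The gap is in the verification of \ref{goo5}(ii), which you correctly identify as ``the genuine work'' but then abandon as ``a coordinate bookkeeping exercise'' with an assertion that ``the two values coincide.'' This is not a one-line bookkeeping fact; it is the decisive step, and your draft contains only its statement, not its proof. What is needed, and what the paper supplies, is the content of Tits's \cite[1.5--1.7]{octagons}: for $\kappa$ fixing $\Sigma_\Omega$ pointwise and $\zeta=g_{\gamma,\lambda_1,\ldots,\lambda_4,\chi}$ its extension, there exists an isomorphism $\varphi\colon\Omega_{st}\to\Theta$ onto the root group sequence of \cite[16.9]{TW} built from $(E,\theta)$, together with scalars $\delta_1,\ldots,\delta_8\in E^*$, such that $h:=\varphi^{-1}\kappa\varphi$ acts on $\hat x_i$ by $\hat x_i(u)^h=\hat x_i(\delta_iu^\chi)$ for odd $i$ and $\hat x_i(u,v)^h=\hat x_i(\delta_iu^\chi,\delta_i^{\theta+1}v^\chi)$ for even $i$. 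It is this explicit normal form — not a generic observation about ``coordinate bookkeeping'' — that identifies $\chi$ with the element $\lambda_\Omega(h)$ of \cite[29.5]{MPW}, which is precisely what \ref{goo5}(ii) requires. Your plan points in that direction but does not supply the reference or the formula that closes the argument; without it, the equality $\psi_\Omega(\kappa)=\iota(\psi_\Delta(\zeta))$ being a Galois map remains unverified.
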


\begin{proof}
Let $G^\circ$ and $G^\dagger$ be as in \ref{goo3} applied to $\Delta$
and let $H={\rm Aut}(\Omega)$.
By \ref{onk30}, there is a unique homomorphism $\psi=\psi_\Omega$ from $H$ to ${\rm Aut}(E)$
such that \eqref{onk30p} holds. Let $\kappa\in{\rm Aut}(\Omega)$ and let $\zeta$ be its
unique extension to an element of $G^\circ$. If $\kappa$ lies in a root group
of $\Omega$, then by \cite[24.32]{MPW}, $\zeta\in G^\dagger$ and hence $\psi(\kappa)=1$.
Thus $\psi$ satisfies \ref{goo5}(i). Let $d$ be the chamber opposite $c$ in $\Sigma$.
Then $d$ is a chamber of $\Omega$ opposite $c$. Hence there exists a unique
apartment $\Sigma_\Omega$ containing $c$ and $d$. Suppose that
$\kappa$ acts trivially on $\Sigma_\Omega$. Then $\zeta$ acts trivially on $\Sigma$ since
it is type-preserving. By \ref{wax34x}, therefore,
$$\zeta=g_{\gamma,\lambda_1,\ldots,\lambda_4,\chi}$$
for some $\gamma\in{\rm Aut}(\Phi)$, some $\lambda_1,\lambda_2\in E^\theta$,
some $\lambda_3,\lambda_4\in E$
and some $\chi\in{\rm Aut}(E,E^\theta)$. By \ref{onk110}, $\psi_\Delta(\zeta)=\chi$.
Let $B=B_\Pi$ be as in \ref{onk1}
for $\Pi=I_2(8)$, let $(s,t)$ be the standard element of $B$ as defined in \ref{onk3} and
let $\Theta=(\hat U_+,\hat U_1,\ldots,\hat U_8)$ be the root group sequence and
$\hat x_1,\ldots,\hat x_8$ the isomorphisms obtained by applying \cite[16.9]{TW} to
the octagonal set $(E,\theta)$. By \cite[1.5--1.7]{octagons}, there exists
an isomorphism $\varphi\colon\Omega_{st}\to\Theta$ such that
there exist $\delta_1,\ldots,\delta_8\in E^*$ so that
for each $i\in[1,8]$,
$\hat x_i(u)^h=\hat x_i(\delta_iu^\chi)$ for all $u\in E$ if $i$ is odd and
$\hat x_i(u,v)^h=\hat x_i(\delta_iu^\chi,\lambda_i^{\theta+1}v^\chi)$
for all $u,v\in E$ if $i$ is even,
where $h:=\varphi^{-1}\kappa\varphi$. Thus $\chi=\psi_\Omega(\kappa)$ equals the element
called $\lambda_\Omega(h)$ in \cite[29.5]{MPW} with $\Omega=\Theta$. By
\ref{goo5}, $\psi_\Omega$ is the unique Galois map of $\Omega$ determined by $\varphi$.
\end{proof}

\begin{remark}\label{onk30a}
Let $\kappa\in{\rm Aut}(\Omega)$ and let
$\zeta$ be the unique extension $\kappa$ to a type-preserving automorphism of $\Delta$.
By \ref{onk30}, $\kappa$ is an involution if and only if $\zeta$ is.
If we choose Galois maps as in \ref{onk30x}, it follows that $\zeta$ is, in fact,
a $\chi$-involution for some $\chi\in A$ if and only if $\kappa$ is a
$\iota(\chi)$-involution, where $A$ and $\iota$ are as in \ref{onk30x}; see \ref{nzz10}.
\end{remark}

\begin{proposition}\label{onk31}
Let $\kappa$ be a Galois involution of $\Omega$ that fixes panels of one
type but none of the other type. Then $\kappa$ has a unique extension to
a type-preserving Galois involution $\zeta$ of $\Delta$, $\langle\zeta\rangle$
is a descent group of $\Delta$ and $\langle\zeta\rangle$-chambers are
residues of type $\{2,3\}$.
\end{proposition}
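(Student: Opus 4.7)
The plan is to uniquely extend $\kappa$ to a type-preserving involution $\zeta$ of $\Delta$ via \ref{onk30}, to transfer the Galois-involution property through \ref{onk30a}, to apply \ref{toy1} so that $\langle\zeta\rangle$ becomes a descent group, and finally to pin down the type of the $\langle\zeta\rangle$-chambers by combining the Tits-index axioms with the hypothesis on $\kappa$.

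By \ref{onk30}, $\kappa$ admits a unique extension $\zeta$ to a type-preserving automorphism of $\Delta$ (which handles the uniqueness clause of the proposition), and this $\zeta$ commutes with $\sigma$. The square $\zeta^2$ is type-preserving and restricts to the identity on $\Omega$, so the uniqueness clause of \ref{onk30} applied to the identity automorphism of $\Omega$ forces $\zeta^2={\rm id}$. Hence $\zeta$ is a type-preserving involution. Then \ref{onk30a} transfers the Galois-involution property from $\kappa$ to $\zeta$, so $\langle\zeta\rangle$ is a Galois subgroup of ${\rm Aut}(\Delta)$. To invoke \ref{toy1}, observe that $\zeta$ has order~$2$ and hence acts on chambers of $\Delta$ with finite orbits; moreover, any panel $P_\Omega$ of $\Omega$ stabilized by $\kappa$ is, under the identification $\Omega=\Delta^{\langle\sigma\rangle}$ from \ref{onk55} and the description of $\langle\sigma\rangle$-panels in the proof of \ref{nzz5}, a residue of $\Delta$ of type $\{1,4\}$ or $\{2,3\}$ stabilized also by $\zeta$. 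Hence $\langle\zeta\rangle$ is isotropic, and \ref{toy1} upgrades it to a descent group.

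Let $A\subseteq S=\{1,2,3,4\}$ be the common type of the $\langle\zeta\rangle$-chambers, which exists by \ref{nzz8}(ii). Since $\zeta$ is type-preserving, the group $\Theta$ of \ref{nzz8} is trivial, so by \ref{nzz24} the triple $(F_4,1,A)$ must be a Tits index. Because the opposite map on each $A_2$-subdiagram of $F_4$ swaps its two vertices, a short case analysis rules out every non-empty $A\subseteq\{1,4\}$ (including the singletons $\{1\}$ and $\{4\}$), and shows that the only non-empty $A$ contained in $\{2,3\}$ is $\{2,3\}$ itself. Since $A$ is contained in the type of the residue stabilized by $\zeta$ coming from $P_\Omega$ (which is either $\{1,4\}$ or $\{2,3\}$), the stabilized residue must in fact be of type $\{2,3\}$, and $A$ is either $\emptyset$ or $\{2,3\}$.

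The main obstacle is excluding $A=\emptyset$, i.e., showing that $\zeta$ fixes no chamber of $\Delta$. Suppose $\zeta(c_0)=c_0$. Commutativity of $\zeta$ and $\sigma$ makes $\sigma(c_0)$ also $\zeta$-fixed. If $\sigma(c_0)=c_0$, then $c_0\in\Omega$ is $\kappa$-fixed, and since $\kappa$ is type-preserving it stabilizes both panels of $\Omega$ through $c_0$, contradicting the hypothesis that $\kappa$ fixes panels of only one type. The remaining case $\sigma(c_0)\ne c_0$ requires genuine work: using \ref{abc7} to choose a $\langle\sigma,\zeta\rangle$-stable apartment through $c_0$ and $\sigma(c_0)$, the $\sigma$-invariance of the type of the minimal $\langle\sigma,\zeta\rangle$-residue $R$ containing both, and the sharp transitivity statement \ref{onk98} applied in the style of the proof of \ref{nzz5}, one argues that $R$ must contain a $\sigma$-fixed chamber which is then also $\zeta$-fixed, thereby reducing back to the previous case and completing the exclusion.
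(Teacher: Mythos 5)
Your restructuring is mostly fine up to the point of excluding $A=\emptyset$, and your Tits-index case analysis correctly shows that every nonempty $A$ contained in $\{1,4\}$ or $\{2,3\}$ must equal $\{2,3\}$. (For the uniqueness clause you take a small detour through \ref{onk30} followed by a squaring argument, where the paper simply invokes \ref{onk30a} directly, but your way works.) The genuine gap is in the final paragraph.

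The case $\sigma(c_0)\ne c_0$ is not handled; it is hand-waved, and the sketch does not go through as written. First, \ref{abc7} applies only to a single involution, not to the order-$4$ group $\langle\sigma,\zeta\rangle$, and even for an involution it gives no control over which chambers the stable apartment contains. Second, and more seriously, the claim that the minimal $\langle\sigma,\zeta\rangle$-residue $R$ ``must contain a $\sigma$-fixed chamber which is then also $\zeta$-fixed'' is not justified: although $\sigma$ induces a polarity on $R$ and therefore has fixed chambers in $R$ (by the argument in the proof of \ref{nzz5}), there is no reason any such chamber should be fixed by $\zeta$. In general, for a rank-$2$ Moufang building with commuting involutions $\sigma$ (a polarity) and $\zeta$ (type-preserving), both having nonempty fixed-chamber sets, those fixed-chamber sets can be disjoint, so reducing to the previous case does not follow. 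This is exactly the step where the real content of the proposition lives, and the sketch is a placeholder rather than an argument.

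The paper takes a different, local route that sidesteps the problem. It fixes a $J$-residue $R$ of $\Delta$ corresponding to a $\kappa$-stabilized panel of $\Omega$ (so $R$ is stabilized by $\langle\sigma,\zeta\rangle$), takes a minimal $\zeta$-residue $R_1\subseteq R$ of type $J_1\subseteq J$, and shows directly that $\sigma$ stabilizes $R_1$ by a minimality argument applied to $R_1\cap R_1^\sigma$. Since $\sigma$ is non-type-preserving, $J_1$ is invariant under the diagram automorphism, hence $J_1=\emptyset$ or $J_1=J$. If $J_1=\emptyset$, then $R_1$ is a chamber fixed by both $\sigma$ and $\zeta$, and the unique $J'$-residue through it is $\langle\sigma,\zeta\rangle$-stable, producing a $\kappa$-fixed panel of the opposite type — contradiction. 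This is the key step you are missing; without something of its kind (for example, applying \ref{nzz5} to the $F_4$-building $\Delta^{\langle\zeta\rangle}$ and the polarity induced by $\sigma$ to manufacture a common fixed chamber), the exclusion of $A=\emptyset$ is unproved.
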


\begin{proof}
By \ref{onk30a}, $\kappa$ has a unique extension to a type-preserving Galois involution $\zeta$
of $\Delta$. By \ref{toy1}, $\langle\zeta\rangle$ is a descent group of $\Delta$.
By \ref{onk55}, some panels of $\Omega$ are $\{1,4\}$-residues of $\Delta$ and
the others are $\{2,3\}$-residues (with respect to the standard numbering of the
vertex set of the Coxeter diagram $F_4$). Since $\kappa$ fixes panels of $\Omega$,
we can choose a $J$-residue $R$ of $\Delta$ stabilized by $\langle\kappa,\sigma\rangle$,
where $J$ is either $\{1,4\}$ or $\{2,3\}$. Let $R_1$ be a minimal
$\langle\zeta\rangle$-residue contained in $R$ and let $J_1$ be its type.
Since $\zeta$ commutes with $\sigma$, $R_1\cap R_1^\sigma$ is also stabilized by $\zeta$.
By the choice of $R_1$, it follows that $R_1$ is stabilized by $\sigma$.
Thus $J_1$ is a
subset of $J$ invariant under the non-trivial automorphism of the Coxeter diagram of $\Delta$,
so either $J_1=\emptyset$ or $J_1=J$.
Suppose that $J_1=\emptyset$. Then $R_1$ is contained in a unique $J'$-residue $R_2$,
where $J'$ denotes the complement of $J$ in the vertex set of the Coxeter diagram $F_4$.
Since $\langle\sigma,\zeta\rangle$ stabilizes $R_1$, it must stabilize $R_2$ as well.
This contradicts the assumption, however, that $\kappa$ does not fix panels of
$\Omega$ of both types. We conclude that $J_1=J$
and hence $R_1=R$. Thus $R$ is a $\langle\zeta\rangle$-chamber.

Let $\Pi$ be the Coxeter diagram of type $F_4$ and let $\Theta$ denote
the trivial subgroup of ${\rm Aut}(\Pi)$. By \ref{nzz25} and \ref{nzz24}, the triple
$(\Pi,\Theta,\{1,4\})$ is not a Tits index.
By \ref{nzz8}(iii), we conclude that $J=\{2,3\}$.
\end{proof}

\begin{proposition}\label{onk32}
Let $\chi=\psi_\Delta(\xi)$, where $\psi_\Delta$ is as in {\rm\ref{onk30x}} and
$\xi$ is as in {\rm\ref{onk31}}. Then the following hold:
\begin{enumerate}[\rm(i)]
\item $\chi\theta=\theta\chi$ and $\chi\theta$ is a Tits endomorphism of $E$.
\item $\Omega_\xi:=\Delta^{\langle\xi\sigma\rangle}$ is isomorphic to ${\mathcal O}(E,\chi\theta)$.
\item {\rm\ref{onk30}} holds with $\Omega_\xi$ and $\xi\sigma$ in place of $\Omega$ and $\sigma$.
\item There exists a Galois map $\psi_{\Omega_\xi}$ of $\Omega_\xi$ such that
\eqref{onk30p} holds with $\psi_{\Omega_\xi}$ in place of $\psi_\Omega$.
\end{enumerate}
\end{proposition}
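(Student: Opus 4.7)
The plan is to exploit the symmetry between the polarities $\sigma$ and $\xi\sigma$ of $\Delta$ by viewing $\Delta$ simultaneously as $F_4(E,\theta)$ and as $F_4(E,\chi\theta)$, and then applying \ref{onk55}, \ref{onk30} and \ref{onk30x} verbatim in the twisted setting.

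For~(i), first observe that $\chi=\psi_\Delta(\xi)\in A$ is an involution since $\xi$ is. The automorphism $\kappa$ is an involution of $\Omega={\mathcal O}(E,\theta)$, and by \ref{onk30x} its image $\psi_\Omega(\kappa)=\iota(\chi)\in{\rm Aut}(E)$ acts, through the isomorphism of \cite[1.5--1.7]{octagons}, on the two kinds of root groups of $\Omega$ via $t\mapsto\delta_i t^\chi$ and $(t,u)\mapsto(\delta_i t^\chi,\lambda_i u^\chi)$ for suitable scalars. Since the commutator formulae defining ${\mathcal O}(E,\theta)$ in \cite[16.9]{TW} mix entries $t$ and $t^\theta$, preservation of these identities under $t\mapsto t^\chi$ forces $\chi\theta=\theta\chi$. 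Granted this, $(\chi\theta)^2=\chi^2\theta^2={\rm Frob}_E$, so $\chi\theta$ is a Tits endomorphism of~$E$.

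For~(ii), since $\xi$ extends $\kappa$ and is type-preserving, \ref{onk30} gives $\xi\sigma=\sigma\xi$; hence $\xi\sigma$ is an involution, and it is a polarity because $\xi$ is type-preserving while $\sigma$ is not. By~(i), $\chi\theta$ is a Tits endomorphism of $E$ with image $\chi(E^\theta)=E^\theta$, so
\[ F_4(E,\chi\theta)=F_4(E,E^{\chi\theta})=F_4(E,E^\theta)=\Delta. \]
I would then re-coordinatize $\Delta$ as $F_4(E,\chi\theta)$: the isomorphisms $\dot x_\alpha$ of \ref{nzz59} get rescaled by scalars governed by $\xi$ (as in \ref{wax34}(ii)), so that in the new coordinate system $\xi\sigma$ becomes the canonical polarity produced by the construction of \ref{onk40} applied to $\chi\theta$. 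Then \ref{onk55}, applied to $F_4(E,\chi\theta)$ in this new coordinatization, yields $\Omega_\xi=\Delta^{\langle\xi\sigma\rangle}\cong{\mathcal O}(E,\chi\theta)$.

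Parts~(iii) and~(iv) will be exact analogues of \ref{onk30} and \ref{onk30x} with $(\sigma,\Omega,\theta)$ replaced by $(\xi\sigma,\Omega_\xi,\chi\theta)$; the identification $\Omega_\xi\cong{\mathcal O}(E,\chi\theta)$ obtained in~(ii) allows us to apply \cite[1.5--1.7, 1.6, 1.13.1(ii)]{octagons} to $\Omega_\xi$. This produces the extension of any automorphism of $\Omega_\xi$ to an element of $C_{G^\circ}(\xi\sigma)$, establishes uniqueness of the extension via the same root-group injectivity argument as in~\ref{onk30}, and yields a Galois map $\psi_{\Omega_\xi}$ satisfying \ref{goo5}(i)--(iii) together with the compatibility $\psi_{\Omega_\xi}(\kappa')=\iota(\psi_\Delta(\zeta'))$ for every $\kappa'\in{\rm Aut}(\Omega_\xi)$ with unique type-preserving extension $\zeta'$. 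The main obstacle will be~(ii): explicitly showing there is a coordinate system in which $\xi\sigma$ becomes the canonical polarity of $F_4(E,\chi\theta)$. This requires tracking how the long-root scalars in $\xi$ (cf.~\ref{onk51}) interact with the permutation of $|\Phi|$ defining $\sigma$ in~\ref{onk40}, and verifying that the resulting polarity coincides with the one attached to the Tits endomorphism $\chi\theta$ by~\cite[1.2]{octagons}.
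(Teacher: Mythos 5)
Your overall plan --- re-coordinatize $\Delta$ so that $\xi\sigma$ becomes the canonical polarity of $F_4(E,\chi\theta)$, then quote \ref{onk55}--\ref{onk30x} in the twisted setting --- is exactly the strategy the paper uses, so the architecture is right. However, you have left the key step unfilled and your argument for (i) is on a different (and shakier) footing than the paper's.

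For (i), you argue inside the octagon: you claim the root-group action of $\kappa$ is of the form $\hat x_i(u)\mapsto\hat x_i(\delta_i u^\chi)$, $\hat x_i(u,v)\mapsto\hat x_i(\delta_i u^\chi,\lambda_i v^\chi)$, and that compatibility with the commutator formulae in \cite[16.9]{TW} forces $\chi\theta=\theta\chi$. But the formula you quote from the proof of \ref{onk30x} is derived there only for elements acting \emph{trivially} on $\Sigma_\Omega$, whereas $\kappa$ fixes panels of one type and none of the other and so permutes the root groups indexed by $\Sigma_\Omega$ rather than fixing each of them. One can still make an argument along these lines by tracking the permutation, but as stated it does not go through. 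The paper avoids this entirely by working in $\Delta$: after conjugating $\xi$ to a normal form $g_{w_1,\lambda_1,\ldots,\lambda_4,\chi}$ via \ref{onk13x}(i), it introduces a rotated basis $\beta_1,\ldots,\beta_4$ of $\Phi$ (using $s=s_{\alpha_3}s_{\alpha_2+2\alpha_3}$) so that $\sigma\xi$ induces the basis-reversing diagram automorphism, reads off from \ref{wax34}(ii) that $x_{\beta_3}(t)^{\xi\sigma}=x_{\beta_2}(\varepsilon_3 t^{\chi\theta})$ and $x_{\beta_3}(t)^{\sigma\xi}=x_{\beta_2}(\varepsilon_3' t^{\theta\chi})$, and then concludes $\chi\theta=\theta\chi$ directly from $[\sigma,\xi]=1$.

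For (ii) you say yourself that ``the main obstacle will be (ii): explicitly showing there is a coordinate system in which $\xi\sigma$ becomes the canonical polarity of $F_4(E,\chi\theta)$'' and then say ``I would then re-coordinatize $\Delta$ \ldots'' without doing so. This is precisely the content of the paper's proof: it constructs the diagonal element $h=g_{1,\varepsilon_3\varepsilon_4^{-1},\varepsilon_3,\varepsilon_3^{-1},\varepsilon_3,1}$ to absorb the scalars $\varepsilon_3,\varepsilon_4$, defines the new coordinate system $\ddot x_\alpha=x_\alpha\cdot h_\alpha$, and checks that in it $\ddot x_{\beta_i}(t)^{\sigma\xi}=\ddot x_{\beta_{5-i}}(t^{\chi\theta})$ for all $i$, which is exactly the form needed so that \ref{onk55}--\ref{onk30x} apply with $\xi\sigma$ in place of $\sigma$. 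Without producing some such explicit re-coordinatization (and it requires the rotated basis $\beta_i$, not the original $\alpha_i$, because $\sigma\xi$ does not invert the basis $\alpha_i$), parts (ii)--(iv) are not proved. So the proposal identifies the plan but stops short of the lemma that makes it work.
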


\begin{proof}
By \ref{onk13x}(i) and the choice of $\psi_\Delta$ in \ref{onk30x}, we can assume that
$$\xi=g_{w_1,\lambda_1,\ldots,\lambda_4,\chi}$$
for some $\lambda_1,\lambda_2\in E^\theta$ and some $\lambda_3,\lambda_4\in E$. For each
$\alpha\in\Phi$, we denote by $s_\alpha$ the corresponding reflection of $\Phi$ (as in \ref{baf103}).
Let $s=s_{\alpha_3}s_{\alpha_2+2\alpha_3}$, let $\beta_i=\alpha_i^s$
and let $d=c^s$ with respect to the action of $W$ on
$\Sigma$ described in \ref{baf103}. Then $\beta_1,\ldots,\beta_4$ is a basis of $\Phi$
and $\beta_1=\alpha_1+\alpha_2+2\alpha_3$, $\beta_2=-\alpha_2-2\alpha_3$, $\beta_3=\alpha_2+\alpha_3$
and $\beta_4=\alpha_3+\alpha_4$. The restriction of $\sigma\xi$ to $\Sigma$ induces
the the unique automorphism of $\Phi$ that interchanges $\beta_i$ and $\beta_{5-i}$ for all
$i\in[1,4]$
(via the identification of the roots of $\Sigma$ with $\Phi$ in \ref{baf103}).
By \ref{wax34}(ii), there exist non-zero $\varepsilon_3,\varepsilon_3',\varepsilon_4\in E^\theta$ such
that
\begin{equation}\label{onk32x}
x_{\beta_i}(t)^{\xi\sigma}=x_{\beta_{5-i}}(\varepsilon_it^{\chi\theta})
\end{equation}
for $i=3$ and $4$ and all $t\in E^\theta$ and
$$x_{\beta_3}(t)^{\sigma\xi}=x_{\beta_2}(\varepsilon'_3t^{\theta\chi})$$
for all $t\in E^\theta$.
Since $\sigma$ and $\xi$ commute, we conclude that $\varepsilon_3=\varepsilon_3'$ and
$\theta\chi=\chi\theta$. Thus (i) holds.

Let $h=g_{1,\varepsilon_3\varepsilon_4^{-1},\varepsilon_3,\varepsilon_3^{-1},\varepsilon_3,1}$.
By \ref{wax34}(ii) again,
$h$ centralizes $U_{\beta_3}$ and $U_{\beta_4}$ and
$x_{\beta_1}(t)^h=x_{\beta_1}(\varepsilon_4^{-1}t)$ and
$x_{\beta_2}(t)^h=x_{\beta_2}(\varepsilon_3^{-1}t)$
for all $t\in F$. Let $\ddot x_\alpha=x_\alpha\cdot h_\alpha$ for all $\alpha\in\Phi$, where
$h_\alpha$ denotes the automorphism $a\mapsto a^h$ of $U_\alpha$. Then
$\{\ddot x_\alpha\}_{\alpha\in\Phi}$ is a coordinate system for $\Delta$ (as defined in \ref{baf901}),
$\ddot x_{\beta_i}=x_{\beta_i}$ for $i=3$ and $4$ and
$\ddot x_{\beta_1}(\varepsilon_4 t)=x_{\beta_1}(t)$ and
$\ddot x_{\beta_2}(\varepsilon_3t)=x_{\beta_2}(t)$ for all $t\in E$. By \eqref{onk32x},
therefore,
$\ddot x_{\beta_i}(t)^{\sigma\xi}=\ddot x_{\beta_{5-i}}(t^{\chi\theta})$ for all $i\in[1,4]$
and for all $t\in E$.
We can thus apply \ref{onk55}--\ref{onk30x} with $\xi\sigma$
and $\{\ddot x_\alpha\}_{\alpha\in\Phi}$ in place of $\sigma$ and $\{x_\alpha\}_{\alpha\in\Phi}$
to conclude that (ii)--(iv) hold.
\end{proof}

\section{Proofs of \ref{abc1} and \ref{abc2}}\label{nzz40}

We first prove \ref{abc1}.
Suppose that $\Xi$ and $\rho$ satisfy the hypotheses, let
$$S=(E/K,F,\alpha,\beta)$$
and $\theta$ be as in \ref{tru68} and let $\Delta=F_4(L,E)$ and $\chi$ be as in \ref{onk43}.
The Tits endomorphism $\theta$ commutes with $\chi$; it also
maps the pair $(L,E)$ to the pair $(E,E^\theta)$ and hence
induces an isomorphism from $\Delta$ to $F_4(E,\theta)$.
Let $\sigma$ be as in \ref{onk40}
and let $\xi$ be as in \ref{onk51}. By \ref{onk52}, $[\sigma,\xi]=1$, $\xi$ is a type-preserving
$\chi$-involution of $\Delta$ and
$\sigma$ induces a polarity on $\tilde\Xi:=\Delta^{\langle\xi\rangle}$.
By \ref{nzz5}, the restriction of $\langle\sigma\rangle$ to $\tilde\Xi$ is
a descent group of relative rank~$1$. It follows that $\langle\xi,\sigma\rangle$ is a descent group
of $\Delta$. Thus (i) holds. By \ref{onk50}, (ii) holds. By \ref{nzz5} again,
$\Delta^{\langle\sigma\rangle}$ and $\Delta^{\langle\sigma\xi\rangle}$ are Moufang
octagons. By \ref{onk56} and \ref{onk55}, the first of these octagons is isomorphic
to ${\mathcal O}(E,\theta)$ and by \ref{onk32}(ii),
the second is isomorphic to ${\mathcal O}(E,\chi\theta)$. Thus (iii) holds.

By \ref{nzz5}, $\Delta^\Gamma$, $(\Delta^{\langle\xi\rangle})^{\langle\sigma\rangle}$,
$(\Delta^{\langle\sigma\rangle})^{\langle\xi\rangle}$
and $(\Delta^{\langle\sigma\xi\rangle})^{\langle\xi\rangle}$ are all Moufang
sets. The underlying set of each of them is the set $X$ of all $\Gamma$-chambers
and by \ref{nzz8}(v) and \cite[24.32]{MPW},
the root group corresponding to a $\Gamma$-chamber $R$ is the permutation group
induced by $C_{\Gamma}(U_R)$ on $X$, where $U_R$ is the unipotent radical of $R$ in
$\Delta$. Thus (iv) holds.
By \ref{nzz8}(ii) and \ref{onk13}, there are $\{2,3\}$-residues of $\Delta$ stabilized by $\xi$ but
none of type $\{1,4\}$. By \ref{onk30x} and \ref{onk32}(iv), therefore, (v) holds.
This concludes the proof of \ref{abc1}.

We turn now to \ref{abc2}.
Suppose that $\chi$, $(E,\theta)$, $\Delta$, $\Omega$ and $\kappa$ satisfy
the hypotheses, let $\sigma$ be as in \ref{onk56} and let $\xi$
be the type-preserving automorphism of $\Delta$ obtained by applying \ref{onk30}
to $\kappa$. Then $\xi$ and $\sigma$ commute and
by \ref{onk30a}, $\xi$ is a $\chi$-involution. Let $\Gamma=\langle\xi,\sigma\rangle$.
Then $\Delta^\Gamma=\Omega^{\langle\kappa\rangle}$. By \ref{nzz37}, it follows that
$\Gamma$ is a descent group of $\Delta$. Thus (i) holds.
Assertion~(ii) holds by \ref{onk55} and the choice of $\xi$. By \ref{onk31},
$\langle\xi\rangle$-chambers are of type $\{2,3\}$ and by \ref{nzz22},
$\Xi:=\Delta^{\langle\sigma\rangle}$ is a Moufang quadrangle of type $F_4$.
Since $\xi$ and $\sigma$ commute, $\sigma$ induces a polarity on $\Xi$. Thus (iii) holds.
Assertion~(iv) holds for the same reason that \ref{abc1}(iv) holds and assertion~(v)
holds by \ref{onk32}. This concludes the proof of \ref{abc2}.

\section{Moufang Sets of Outer $F_4$-Type}\label{onk60a}

Our goal in the remaining sections is to determine a few essential properties
of the Moufang sets of outer $F_4$-type defined in \ref{nzz29}.

\begin{notation}\label{onk60}
Let $S$, $V=E\oplus E\oplus[K]$, $U_+$, $x_1,\ldots,x_4$ and $\theta$ be as in \ref{tru90},
let $\theta_K$ be the restriction of $\theta$ to $K$,
let $\Xi={\mathcal Q}(S)$, $\Sigma$ and $c$ be as in \ref{abc61},
let $q=q_S$, let $f=\partial q$, let $g$ be as in \ref{nzz34},
let $\rho$ be as in \eqref{tru90x} and let
$M=(X,\{U_x\}_{x\in X})$ be the Moufang set $\Xi^{\langle\rho\rangle}$
obtained by applying \ref{nzz8}(v) with $\langle\rho\rangle$ in
place of $\Gamma$. Note that $c\in X$.
\end{notation}

\begin{notation}\label{onk69}
We have $c\in X$ and by \ref{nzz8}(v),
the centralizer $C_{U_+}(\rho)$ equals the root group $U_c$ of the Moufang set $M$.
We set $U=U_c$ and write $U$ additively even though it
is not, as we will see, abelian. In this section we use \eqref{abc20} and \ref{abc51}
to compute a few basic properties of $U$.
\end{notation}

Let $\eta\in U_+$. Thus
$$\eta=x_1(b)x_2(w)x_3(v)x_4(u)$$
for some $b,w,u,v\in V$. Note that $f(a,g(y,z))=0$ for all $a,y,z\in V$
by \ref{abc25} and \ref{nzz34} (and, of course, that the characteristic of $K$ is~$2$). Thus
\begin{align*}
\eta^\rho &= x_4(b)x_3(w)x_2(v)x_1(u)\\
&= x_4(b)x_1(u)x_3(w)x_2(v+g(u,w))\\
&=x_1(u)x_2(bu)x_3(ub)x_4(b)x_3(w)x_2(v+g(u,w))\\
&=x_1(u)x_2(bu)x_3(ub)x_3(w)x_3(g(b,v))x_2(v+g(u,w))x_4(b)\\
&=x_1(u)x_2(bu+v+g(u,w))x_3(ub+w+g(b,v))x_4(b),
\end{align*}
so $\eta\in U$ if and only if $b=u$, $w=bu+v+g(u,w)$ and $v=ub+w+g(b,v)$.
Note that $g(u,w)=g(u,uu)+g(u,v)=g(u,v)$
by \ref{abc24}(i). It follows that
\begin{equation}\label{eq U with uw}
U:=\{x_1(u)x_2(w)x_3(uu+w+g(u,w))x_4(u)\mid u,w\in V\}.
\end{equation}
Let
\begin{equation}\label{nzz32}
\lbrace u,w\rbrace=x_1(u)x_2(w)x_3(uu+w+g(u,w))x_4(u)
\end{equation}
for all $u,w\in V$. Then
\begin{align*}
\lbrace u,w\rbrace+\lbrace a,b\rbrace&=x_1(u)x_2(w)x_3(uu+w+g(u,w))x_4(u)\\
&\qquad\qquad\cdot x_1(a)x_2(b)x_3(aa+b+g(a,b))x_4(a)\\
&\in x_1(u+a)x_2\big(w+b+ua+g(a,uu+w+g(u,w))\big)U_{[3,4]}
\end{align*}
and thus
\begin{equation}\label{tru64}
\lbrace u,w\rbrace+\lbrace a,b\rbrace=\lbrace u+a, w+b+ua+g(a,w)+g(a,uu)\rbrace
\end{equation}
for all $u,w,a,b\in V$. It follows that
$$-\lbrace u,w\rbrace=\lbrace u,w+uu+g(u,w)\rbrace$$
and the commutator $-\lbrace u,w\rbrace-\lbrace a,b\rbrace+\lbrace u,w\rbrace+\lbrace a,b\rbrace$ equals
\begin{equation}\label{eq commutator}
\lbrace0,\ ua+au+g(u,b)+g(a,w)+g(u,aa)+g(a,uu)\rbrace
\end{equation}
for all $u,w,a,b\in V$.

\begin{proposition}\label{tru61}
$U'=\lbrace0,V\rbrace$ and $[U,U']=Z(U)=\lbrace0,[K]\rbrace$.
In particular, $U$ is nilpotent and has nilpotency class~$3$.
\end{proposition}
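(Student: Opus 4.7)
The plan is to read everything off the explicit commutator formula \eqref{eq commutator} together with the axioms (R1)--(R7) and the fact that $[K]$ is exactly the radical of $f$ (so any $g$-term with one of its two entries in $[K]$ is automatically zero). The abelian subgroup $\lbrace 0,V\rbrace$ — which by \eqref{tru64} is isomorphic to the additive group of $V$ — will serve as the natural target.

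First I would show $U'=\lbrace 0,V\rbrace$. The inclusion $U'\subseteq\lbrace 0,V\rbrace$ is immediate from \eqref{eq commutator}, whose first coordinate is always $0$, so $U/\lbrace 0,V\rbrace$ is abelian. For the reverse inclusion, I would compute two families of commutators. Setting $w=0$, $b=0$, $a=[1]\in[K]$ in \eqref{eq commutator}, every $g$-term vanishes (by $[K]\subset\mathrm{rad}(f)$), and (R2) together with (R4) yields $u\cdot[1]+[1]\cdot u=u+[q(u)]$, so
\[
[\lbrace u,0\rbrace,\lbrace[1],0\rbrace]=\lbrace 0,\,u+[q(u)]\rbrace.
\]
Setting instead $w=0$, $a=0$, $b$ arbitrary gives $[\lbrace u,0\rbrace,\lbrace 0,b\rbrace]=\lbrace 0,[f(u,b)]\rbrace$. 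Since the image of $f$ additively generates $K$ (e.g.\ because the trace of the separable extension $E/K$ is surjective), the second family spans $\lbrace 0,[K]\rbrace$; combining the two families, every $v\in V$ is recovered as $v=(v+[q(v)])+[q(v)]$ with the second summand contributed from $\lbrace 0,[K]\rbrace$. Hence $\lbrace 0,V\rbrace\subseteq U'$.

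Next I would pin down $Z(U)$. If $\lbrace u,w\rbrace$ is central, specializing \eqref{eq commutator} to $a=0$ forces $g(u,b)=[f(u,b)]=0$ for all $b$, so $u\in[K]$, say $u=[t]$. Substituting back, all remaining $g$-terms involving $u$ vanish and the condition becomes $[tq(a)]+ta+[f(a,w)]=0$ for every $a\in V$. Plugging in $a=[s]$ with $s\in K$ and using (R2), (R4) and $[K]\subset\mathrm{rad}(f)$, this reduces to $ts^\theta+t^\theta s=0$ for all $s\in K$. Taking $s=1$ gives $t^\theta=t$, so $t^2=(t^\theta)^\theta=t$; by \ref{abc90}(ii) this means $t\in\{0,1\}$. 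The case $t=1$ would force $s^\theta=s$ for every $s\in K$, impossible because $K$ is infinite by \ref{abc70} while a Tits endomorphism fixes only $0$ and $1$. So $u=0$, and the remaining condition $[f(a,w)]=0$ for all $a$ forces $w\in[K]$. Hence $Z(U)=\lbrace 0,[K]\rbrace$.

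Having identified $U'=\lbrace 0,V\rbrace$, the subgroup $[U,U']$ is computed from $[\lbrace u,w\rbrace,\lbrace 0,b\rbrace]=\lbrace 0,[f(u,b)]\rbrace$, whose span is $\lbrace 0,[K]\rbrace$ by the same surjectivity of $f$ used above. Thus $[U,U']=\lbrace 0,[K]\rbrace=Z(U)$. Since $Z(U)\neq 1$ (as $[K]\neq 0$) and $U'/Z(U)\cong V/[K]\neq 0$ (the $E\oplus E$-summand survives), the lower central series is $U\supsetneq U'\supsetneq Z(U)\supsetneq 1$, giving nilpotency class exactly $3$. The most delicate point will be the center calculation: it requires careful bookkeeping of which $g$-terms vanish, followed by the somewhat miraculous reduction to the scalar equation $t^\theta=t$, which is precisely where the defining property $\theta^2=\mathrm{Frob}$ of a Tits endomorphism does the work.
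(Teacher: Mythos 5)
Your argument is correct and follows essentially the paper's own route: both proofs read $U'$, $[U,U']$ and $Z(U)$ directly off the commutator formula \eqref{eq commutator} by specializing the arguments (to $a=[1]$, to $a=0$, to $a=[t]$, $u=[s]$). The only local variation is in the center computation, where the paper observes that $x\mapsto x^{\theta-1}$ is injective on $K^*$ and hence $s^\theta t+st^\theta\ne 0$ for distinct nonzero $s,t$, while you specialize $s=1$ to force $t^\theta=t$ and then invoke \ref{abc90}(ii) together with the infiniteness of $K$ from \ref{abc70} — equivalent conclusions reached by equally elementary means; you also spell out the final step giving nilpotency class exactly~$3$, which the paper leaves implicit.
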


\begin{proof}
Setting $a=[1]$ in \eqref{eq commutator}, we obtain
$$\lbrace0,u+[q(u)+f(u,b)]\rbrace\in U'$$
for all $u,b\in V$.
For each $u\in V\backslash[K]$, there exists $b$ such that $q(u)=f(u,b)$.
Therefore $\lbrace0,u\rbrace$ is in the commutator group $U'$ of $U$
for all $u\in V\backslash [K]$. Hence $U'=\lbrace0, V\rbrace$.
If we set $u=0$ in \eqref{eq commutator}, we are left with only $\lbrace0,g(a,w)\rbrace$.
It follows that $[U',U]=\lbrace0,[K]\rbrace\subset Z(U)$,
$Z(U)\subset\lbrace[K],V\rbrace$ and
\begin{equation}\label{tru60}
Z(U)\cap\lbrace0,V\rbrace=\lbrace0,[K]\rbrace.
\end{equation}
Let $t\in K^*$. By \ref{abc70}, we can choose $s\in K$ with $s \neq 0$ and $s \neq t$.
Since $(x^{\theta-1})^{\theta+1}=x$ for all $x\in K^*$, it follows that
$s^{\theta-1}\ne t^{\theta-1}$ and hence $s^\theta t+st^\theta\ne0$.
Setting $u=[s]$ and $a=[t]$ in \eqref{eq commutator}, we obtain
$\lbrace0,[s^\theta t+st^\theta]\rbrace$.
It follows that $Z(U)\subset\lbrace0,V\rbrace$.
By \eqref{tru60}, we conclude that $Z(U)=\lbrace0,[K]\rbrace$.
\end{proof}

\section{The Element $\tau$}\label{onk81}

Let $\Xi$, $\Sigma$, $c$,
$U_+$, $x_1,\ldots,x_4$, $\rho$, $X$, etc., be as in \ref{onk60} and let $\phi$ and
$\Omega={\mathcal G}(\Theta,U_+,\phi)$ be as in \cite[7.2]{TW} with $n=4$, where
$\Theta$ is a circuit of length~8 whose vertex set $V(\Theta)$ has been
numbered by the integers modulo~8 so that the vertex $x$ is adjacent to the vertex $x-1$
for all $x$. The vertex set of $\Omega$ consists of pairs
$(x,B)$, where $x\in V(\Theta)$ and $B$ is a right coset in $U_+$ of the subgroup $\phi(x)$.
The vertices $(x,\phi(x))$ span an apartment of $\Omega$ which we identify with $\Theta$
via the map $x\mapsto(x,\phi(x))$.
We set $\bullet=(4,\phi(4))$ and $\star=(5,\phi(5))$.
Thus $e:=\{\bullet,\star\}$ is an edge of $\Omega$.
For all vertices $(x,B)$ of $\Omega$ other than $\bullet$ and $\star$, the vertex $x$
of $\Theta$ is uniquely determined by $B$ and we can denote the vertex $(x,B)$
simply by $B$. The elements of $U_+$ fix $\bullet$ and $\star$ and
acts on all other vertices by right multiplication.

By \cite[8.11]{TW}, $\Omega$ is a Moufang quadrangle. We identify $\Xi$
with the corresponding bipartite graph as described in \cite[1.8]{spherical} and
let $\pi$ be an isomorphism $\Sigma$ to $\Theta$
mapping the chamber $c$ to the edge $\{\bullet,\star\}$. By \cite[7.5]{TW}, $\pi$ extends to
a unique $U_+$-equivariant isomorphism from $\Xi$ to $\Omega$.
We identify $\Xi$ with $\Omega$ via this extension, so that $\Sigma=\Theta$, $c=e$ and
the polarity $\rho$ is an element of ${\rm Aut}(\Xi)$
stabilizing $\Sigma$ and interchanging the vertices $\bullet$ and $\star$.
In particular, $c$ and $d$ are in $X$, where $d=\{U_1,U_4\}$ is the chamber of $\Sigma$
opposite $e=\{\bullet,\star\}$.
Let $U=U_e=U_c$ be as in \ref{onk69}.

Let $m_1=\mu_\Sigma(x_1(0,0,1))$ and $m_4=\mu_\Sigma(x_4(0,0,1))$ be as in \cite[11.22]{spherical}.
By \eqref{tru90x}, conjugation by the polarity
$\rho$ interchanges $x_1(0,0,1)$ and $x_4(0,0,1)$. By \cite[11.23]{spherical}, therefore,
conjugation by $\rho$ interchanges $m_1$ and $m_4$. By the identities in \cite[14.18 and 32.11]{TW},
$m_1$ and $m_4$ both have order~$2$. By \cite[6.9]{TW}, therefore, $\langle m_1,m_4\rangle$
is a dihedral group of order~$8$. In particular, $(m_1m_4)^2=(m_4m_1)^2$ and hence
\begin{equation}\label{onk120}
\nu:=(m_1m_4)^2
\end{equation}
is centralized by $\rho$.
The element $m_1$ fixes the vertices $\star$ and $U_1$ and reflects $\Sigma$ onto itself
and the element $m_4$ fixes the vertices $\bullet$ and $U_4$ and reflects $\Sigma$ onto itself.
Thus, in particular, $d=c^\nu$. Hence $U_d=\nu U_c\nu=\nu U\nu$ and thus
$\nu U\nu$ acts sharply transitively on $X\backslash\{d\}$.
The map $u\mapsto d^u$ is a bijection from the root group $U$ of $M$ to the
set $X\backslash\{e\}$. Hence there exists a unique permutation $\tau$ of $U^*$ such that
\begin{equation}\label{onk79}
d^{(u)\tau}=c^{\nu u\nu}
\end{equation}
for all $u\in U^*$.

The Moufang set $M$ is isomorphic to the Moufang set ${\mathbb M}(U,\tau)$
defined in \cite[\S3]{jordan}, where $\tau$ is as in \eqref{onk79}.
As the results \cite[Thms.~3.1 and~3.2]{jordan} indicate,
$\tau$ is an essential structural feature of the Moufang set $M$.
Our goal in this section is to compute the formula for $\tau$ in \ref{th:tau-wu}.

Using the definition of the graph $\Omega$ in \cite[7.1]{TW}, one can check that
the permutation of the vertex set of $\Omega$ given in Table~1
(where $U_{ij}:=U_{[i,j]}$ is as in \cite[5.1]{TW}) is
an automorphism of $\Omega$ which, like $m_1$, fixes
the vertices $\star$ and $U_1$ and reflects $\Sigma$ onto
itself. It follows from \cite[32.11]{TW} (even though we have reparametrized $U_+$)
that $m_1$ centralizes $U_3$ and $x_4(u)^{m_1}=x_2(u)$ for all $u\in V$.
Since $m_1$ maps the vertex $U_{13}$ to the vertex $U_{34}$, it maps the
image $U_{13}x_4(u)$ of the vertex $U_{13}$ under the element $x_4(u)\in U_+$
to the image $U_{34}x_2(u)$ of the vertex $U_{34}$ under the element $x_4(u)^{m_1}=x_2(u)$ of $U_+$
for all $u\in V$. Similarly, it maps $U_{12}x_3(u)$ to $U_4x_3(u)$ for all $u\in V$.
It follows by \cite[3.7]{TW} that the automorphism in Table~1 is $m_1$. By
similar arguments, the action of $m_4$ on the vertex set of $\Omega$ is as in in Table~$2$.

\begin{table}[ht!]
\small
\begin{align*}
	\star &\leftrightarrow \star \\
	\bullet &\leftrightarrow U_{24} \\[1.5ex]
	U_{1}\,x_2(u)\,x_3(v)\,x_4(w) &\leftrightarrow U_{1}\,x_2(w)\,x_3(v+g(u,w))\,x_4(u) \\
	U_{12}\,x_3(v)\,x_4(w) &\leftrightarrow U_{4}\,x_2(w)\,x_3(v) \\
	U_{13}\,x_4(w) &\leftrightarrow U_{34}\,x_2(w) \\[1.5ex]
	U_{4}\,x_1(u)\,x_2(v)\,x_3(w) &\xleftrightarrow{u \neq 0} U_{4}\,x_1(u^{-1})\,x_2(v u^{-1})\,x_3(u^{-1}v + w) \\
	U_{4}\,x_2(v)\,x_3(w) &\leftrightarrow U_{12}\,x_3(w)\,x_4(v) \\
	U_{34}\,x_1(u)\,x_2(v) &\xleftrightarrow{u \neq 0} U_{34}\,x_1(u^{-1})\,x_2(v u^{-1}) \\
	U_{34}\,x_2(v) &\leftrightarrow U_{13}\,x_4(v) \\
	U_{24}\,x_1(u) &\xleftrightarrow{u \neq 0} U_{24}\,x_1(u^{-1})
\end{align*}
\caption{The Involution $m_1$}

\begin{align*}
	\star &\leftrightarrow U_{13} \\
	\bullet &\leftrightarrow \bullet \\[1.5ex]
	U_{1}\,x_2(u)\,x_3(v)\,x_4(w) &\xleftrightarrow{w \neq 0} U_{1}\,x_2(w^{-1}v + u + w^{-1}g(u,w))\,x_3(v w^{-1})\,x_4(w^{-1}) \\
	U_{1}\,x_2(u)\,x_3(v) &\leftrightarrow U_{34}\,x_1(v)\,x_2(u) \\
	U_{12}\,x_3(v)\,x_4(w) &\xleftrightarrow{w \neq 0} U_{12}\,x_3(v w^{-1})\,x_4(w^{-1}) \\
	U_{12}\,x_3(v) &\leftrightarrow U_{24}\,x_1(v) \\
	U_{13}\,x_4(w) &\xleftrightarrow{w \neq 0} U_{13}\,x_4(w^{-1}) \\[1.5ex]
	U_{4}\,x_1(u)\,x_2(v)\,x_3(w) &\leftrightarrow U_{4}\,x_1(w)\,x_2(v + g(u,w))\,x_3(u) \\
	U_{34}\,x_1(u)\,x_2(v) &\leftrightarrow U_{1}\,x_2(v)\,x_3(u) \\
	U_{24}\,x_1(u) &\leftrightarrow U_{12}\,x_3(u)
\end{align*}
\caption{The Involution $m_4$}
\end{table}

\begin{table}[ht!]
\small
\begin{align*}
	\star &\mapsto U_{13} \\
	\bullet &\mapsto U_{12} \\
	U_{34} &\mapsto \star \\
	U_{24} &\mapsto \bullet \\[2ex]
	U_{1}\,x_2(u)\,x_3(v)\,x_4(w) &\xmapsto{u \neq 0} U_{1}\,x_2(u^{-1}v+w)\,x_3\bigl( vu^{-1}+g(u^{-1},w) \bigr)\,x_4(u^{-1}) \\
	U_{1}\,x_3(v)\,x_4(w) &\mapsto U_{34}\,x_1(v)\,x_2(w) \\
	U_{12}\,x_3(v)\,x_4(w) &\mapsto U_{4}\,x_1(v)\,x_2(w) \\
	U_{13}\,x_4(w) &\mapsto U_{1}\,x_2(w) \\[2ex]
	U_{4}\,x_1(u)\,x_2(v)\,x_3(w) &\xmapsto{u \neq 0} U_{4}\,x_1(u^{-1}v+w)\,x_2\bigl( vu^{-1}+g(u^{-1},w) \bigr)\,x_3(u^{-1}) \\
	U_{4}\,x_2(v)\,x_3(w) &\xmapsto{v \neq 0} U_{12}\,x_3(wv^{-1})\,x_4(v^{-1}) \\
	U_{4}\,x_3(w) &\mapsto U_{24}\,x_1(w) \\
	U_{34}\,x_1(u)\,x_2(v) &\xmapsto{u \neq 0} U_{1}\,x_2(vu^{-1})\,x_3(u^{-1}) \\
	U_{34}\,x_2(v) &\xmapsto{v \neq 0} U_{13}\,x_4(v^{-1}) \\
	U_{24}\,x_1(u) &\xmapsto{u \neq 0} U_{12}\,x_3(u^{-1})
\end{align*}
\caption{The Product $m_1 m_4$}\label{mouf-m1m4}
\end{table}

In Table~\ref{mouf-m1m4}, which is derived from Tables~1 and~2, we have displayed
the action of the product $m_1 m_4$. We consider the vertex
\[ U_1 \{ w,u \} = U_1\,x_2(u)\,x_3(ww + u + g(u,w))\,x_4(w) \]
with $u,w \in V$, where $\{w,u\}$ is as in \eqref{tru64},
and compute the image of this vertex under $\nu=(m_1m_4)^2$ in
\ref{th:tau-wu} using Table~\ref{mouf-m1m4}. First, though, we need to make
a few preparations.

\begin{lemma}\label{u-1v+w}
Let $u,w \in V$ with $u \neq 0$, and let $v=ww+u+g(u,w) \in V$.
Then $u^{-1}v + w \neq 0$.
\end{lemma}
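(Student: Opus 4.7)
The strategy is a proof by contradiction: assume $u^{-1}v+w=0$ and use the polarity-algebra axioms together with anisotropy of $q$ to force $u=0$. First I would expand $u^{-1}v$. Writing $u^{-1}=q(u)^{-1}u$ and applying (R7) to $u^{-1}((ww+u)+g(u,w))$---the $g$-correction against $g(u,w)\in[K]$ drops out because $[K]\subset\ker f$---and then again to $u^{-1}(ww+u)$, the computation reduces to evaluating $u\cdot(ww)$, $u^{-1}u$, $g((ww)u^{-1},u)$ and $u^{-1}g(u,w)$. These are handled by \ref{yht1} together with \ref{abc24}(ii) for the rewrite $f(w,uw)=f(ww,u)$, the $K$-linearity (R1), identities (R2)--(R5), and the observation $g((ww)u^{-1},u)=q(u)^{-1}g(uu,ww)$ via \ref{abc24}(ii)--(iv). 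Collecting terms yields
\[
u^{-1}v+w = q(u)^{-1}\bigl[(f(ww,u)+q(u))w+f(w,u)ww+q(w)uw+uu+f(u,w)u+g(uu,ww)\bigr].
\]

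Next I would extract scalar consequences of the vanishing of this expression. Pairing with $u$, $w$ and $uw$ under $f$ (using $f(uv,u)=0$ from \ref{abc24}(i), $[K]\subset\ker f$, and \ref{abc95y}(i) for $f(uv,uw)$), and computing $q(u^{-1}v+w)$ via \ref{q(uv)} and \ref{q([t])}, the assumption $u^{-1}v+w=0$ successively forces $f(u,w)=0$, then $f(uu,w)=q(w)f(ww,u)$, then (writing $\mu:=f(ww,u)$) $\mu^{2}+q(w)^{\theta}\mu^{\theta}+q(u)\mu=0$, and finally $q(ww+u)^{\theta}=q(u)q(w)$.

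Finally I would derive the contradiction in two cases. If $\mu=0$, the last identity becomes $q(u)^{\theta}+q(u)q(w)+q(w)^{\theta+2}=0$; applying $\theta$ (using $\theta^{2}=\mathrm{Frob}$) and back-substituting gives $q(u)^{2}=q(u)q(w)^{\theta+1}$, so $q(u)=q(w)^{\theta+1}$, and a second substitution forces $q(w)^{\theta+2}=0$; hence $q(w)=0$ and then $q(u)=0$, contradicting anisotropy. If $\mu\neq 0$ (so $q(w)\neq 0$, else $\mu=f(0,u)=0$), combining the third and fourth scalar relations and rearranging in $K$ yields the factorization
\[ (q(w)+\mu^{\theta-1})(q(w)^{\theta}+\mu^{2-\theta})=0. \]
Taking square roots in characteristic~$2$, either factor gives $\mu^{\theta}=q(w)\mu$. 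Substituting back into the third relation produces $\mu=q(u)+q(w)^{\theta+1}$, and then comparing the two expressions for $\mu^{\theta}$ yields $q(u)^{\theta}=q(u)q(w)$, so $q(w)=q(u)^{\theta-1}=\mu^{\theta-1}$. Thus $(q(u)/\mu)^{\theta}=q(u)/\mu$ and by \ref{abc90}(ii) $\mu=q(u)$; then $q(w)^{\theta+1}=0$ forces $w=0$ and hence $\mu=0$, again a contradiction. The main obstacle is the case $\mu\neq 0$, which hinges on the characteristic-$2$ square-root step to obtain the key factorization and on \ref{abc90}(ii) to close the argument.
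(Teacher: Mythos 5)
Your argument is correct, but it is a genuinely different proof from the one in the paper. The paper works entirely inside $V$: assuming $w=u^{-1}v$, it notes $g(u,w)=g(u,q(u)^{-1}uv)=0$ so $v=ww+u$, then uses the inversion identities $(ab)^{-1}=a^{-1}b^{-1}$ and $a^{-1}\cdot va=av$ (from \ref{abc95y} and \ref{abc95x}) to derive in a few lines the chain $u=ww+w^{-1}u$, hence $v=w^{-1}u$, hence $w=uw^{-1}$, hence $u=ww$, forcing $v=0$ and then $u=w=0$. Your proof instead projects the hypothesis $u^{-1}v+w=0$ (equivalently $uv=q(u)w$) onto $K$ by computing $f$-pairings against $u$, $w$, $uw$ and the $q$-value, reducing the problem to a system of polynomial constraints in $K$ involving $q(u)$, $q(w)$ and $\mu=f(ww,u)$; these are then solved using $\theta^2=\mathrm{Frob}$, the factorization $(q(w)+\mu^{\theta-1})(q(w)^\theta+\mu^{2-\theta})=0$, and \ref{abc90}(ii). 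Both routes are valid. The paper's is shorter and stays at the level of algebra identities, never passing to scalar equations; yours is more mechanical but also more systematic — once the scalar consequences are extracted, the contradiction falls out of Tits-endomorphism arithmetic alone — at the cost of being roughly three times as long and more computation-heavy. As a minor remark, the case split in your factorization step is vacuous: squaring shows both factors yield the same relation $\mu^\theta=q(w)\mu$, so the ``characteristic-2 square-root'' step is not really an obstacle but a simplification.
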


\begin{proof}
Suppose by contradiction that $w = u^{-1}v$.
By~\ref{abc24}(i),
\begin{equation}\label{guw}
g(u,w) = g(u, q(u)^{-1} uv) = 0 ,
\end{equation}
so $v = ww + u$.
Then $u^{-1} = wv^{-1}$ by \ref{abc95x}(ii),
so by~\ref{abc95y}\eqref{uv-1}, $u = w^{-1} v = w^{-1} (ww + u)$.
By (R7), \ref{abc95x}\eqref{u-1.vu} and~\eqref{guw},
\[ u = w^{-1} \cdot ww + w^{-1} u + g(ww \cdot w^{-1}, u) = ww + w^{-1} u + g(w,u) = ww + w^{-1} u . \]
Hence $v = ww + u = w^{-1}u$, so~\ref{abc95x}\eqref{u-1.vu}
implies $w = u^{-1} v = u^{-1} \cdot w^{-1}u = u w^{-1}$.
Then $u = ww$, and hence $v = 0$, so $w = u^{-1}v = 0$, and then $u = ww = 0$, a contradiction.
We conclude that $u^{-1}v + w \neq 0$.
\end{proof}

\begin{notation}\label{def:Nwu}
We set
	\[ \N(\{w,u\}) :=
	\begin{cases}
		q(u) q\big(u^{-1}(ww+u+g(u,w)) + w\big) & \text{if } u \neq 0 , \\
		q(w)^{\theta+2} & \text{if } u = 0
	\end{cases}
	\]
for all $\{w,u\}\in U$. By~\ref{u-1v+w}, $\N(\{w,u\}) = 0$ only if $w = u = 0$.
We call $\N$ the {\it norm} of $M$.
\end{notation}

\begin{lemma}\label{N-expl}
Let $\{ w,u \} \in U$.
Then
\[\N(\{w,u\})=q(u)^\theta + q(u)q(w) + q(w)^{\theta+2} + f(u,ww)^\theta + f(u,wu) + q(w)f(u,ww). \]
\end{lemma}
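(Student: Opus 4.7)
My plan is to reduce the claim to a direct calculation using the identities for polarity algebras from \S\ref{sec5}. First I dispose of the case $u=0$, for which $\N(\{w,0\}) = q(w)^{\theta+2}$ by definition and the right-hand side of the formula collapses to this value (every term involving $u$ vanishes, using $f(u,\cdot)=0$ when $u=0$). From now on I assume $u\ne 0$ and write $v := ww + u + g(u,w)$, so that $\N(\{w,u\}) = q(u)\,q(u^{-1}v+w)$.

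Since $u^{-1} = q(u)^{-1}u$ and the left-multiplication $x\mapsto xv$ is $K$-linear by (R1), we have $u^{-1}v = q(u)^{-1}\,uv$; likewise, by \ref{q(uv)}, $q(u^{-1}v) = q(u)^{-1}q(v)^\theta$. Expanding $q(u^{-1}v+w)$ via bilinearity and multiplying by $q(u)$ therefore gives the clean reduction
\[
\N(\{w,u\}) \;=\; q(v)^\theta \;+\; q(u)q(w) \;+\; f(uv,w).
\]
The middle term already appears in the target formula, so the task reduces to computing $q(v)^\theta$ and $f(uv,w)$.

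For $q(v)^\theta$, I use that $g(u,w)=[f(u,w)]\in[K]$ lies in the radical of $f$, together with $q([t])=t^\theta$ (\ref{q([t])}) and $q(ww)=q(w)^{\theta+1}$ (from \ref{q(uv)}). This yields $q(v) = q(w)^{\theta+1} + q(u) + f(u,ww) + f(u,w)^\theta$; applying $\theta$ and using $\theta^2=\mathrm{Frob}$ gives $q(v)^\theta = q(w)^{\theta+2} + q(u)^\theta + f(u,ww)^\theta + f(u,w)^2$. For $f(uv,w)$, I expand $uv$ by (R7) applied twice, using (R2) for the piece $u\cdot[f(u,w)] = f(u,w)u$ and \ref{abc24}(ii) for the correction $g((ww)u,u)\in[K]$: the result is $uv = u(ww) + uu + g(wwu,u) + f(u,w)u$. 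The commutator-type term $g(wwu,u)$ is radical and contributes nothing to $f(uv,w)$. The term $u(ww)$ is expanded via \ref{yht1} (setting $a=u$, $b=c=w$) to $f(u,ww)w + (f(u,w)+q(w))uw$, and then $f(u,w)$ is recovered from this using $f(w,w)=0$ and $f(uw,w) = f(u,ww)$ by \ref{abc24}(ii). Finally, $f(uu,w)$ equals $f(u,wu)$ by another application of \ref{abc24}(ii).

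Combining these pieces and cancelling the repeated $f(u,w)^2$ contributions in characteristic $2$ yields the stated formula. The main obstacle I anticipate is bookkeeping: several intermediate terms (such as $f(u(ww),w)$ and contributions from $f(u,w)\cdot f(u,ww)$) look extraneous, and ensuring they cancel or combine correctly requires judicious use of the identities \ref{abc95y}(i)--(ii) and \ref{abc95}\eqref{f(uv.zw)}, which, together with $f(w,wu)=0$ (itself a consequence of (R6) and \ref{yht1} applied with $a=w$, $b=u$, $c=w$), control the interaction of $f$ with the multiplication and with the Tits endomorphism $\theta$.
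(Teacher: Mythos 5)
Your strategy follows the paper's quite closely: you reproduce the reduction $\N(\{w,u\}) = q(v)^\theta + q(u)q(w) + f(uv,w)$ (the paper's \eqref{Nwu}) and the computation of $q(v)^\theta$. The only variant is how $f(uv,w)$ is evaluated: the paper moves the factor inside the form via \ref{abc24}(ii), $f(uv,w)=f(u,wv)$, then expands $wv$ using (R7) and applies (R6) to $w\cdot ww$, whereas you expand $uv$ directly and appeal to \ref{yht1} for $u\cdot ww$. That variant is perfectly legitimate, but your application of \ref{yht1} is wrong. Substituting $v=u$, $w=w$, $u=w$ in $v\cdot wu=f(u,vw)u+f(u,v)uw+q(u)vw$ gives
$$u\cdot ww \;=\; f(u,ww)\,w \;+\; f(u,w)\,ww \;+\; q(w)\,uw,$$
not $f(u,ww)\,w+(f(u,w)+q(w))\,uw$ as you wrote. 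With the correct expansion, $f(u\cdot ww,\,w)=f(u,w)\,f(ww,w)+q(w)\,f(uw,w)=q(w)\,f(u,ww)$, since $f(ww,w)=0$ (from \ref{abc24}(i) with $u=w$) and $f(uw,w)=f(u,ww)$; no extraneous term arises at all, and the proof closes immediately. With your incorrect expansion you instead pick up the spurious summand $f(u,w)\,f(u,ww)$, which you acknowledge but then leave to ``judicious use of identities'' to dispose of. There is no such cancellation: that term simply should not be there, and the identities \ref{abc95y}(i)--(ii) and \ref{abc95}\eqref{f(uv.zw)} you cite will not make it vanish. So the final step of your argument, as written, does not go through. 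The fix is simple---correct the middle term in the \ref{yht1} expansion to $f(u,w)\,ww$---after which no further manipulation is needed and your proof coincides in essence with the one in the paper.
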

\begin{proof}
This is obvious if $u=0$, so assume that $u \neq 0$. Let $v=ww+u+g(u,w)$.
Then
\begin{equation}\label{Nwu}
\N(\{w,u\}) = q(u) q(u^{-1}v + w) = q(v)^\theta + f(uv, w) + q(u) q(w)
\end{equation}
by \ref{q(uv)}. We have
\begin{align*}
q(v)&= q(ww) + q(u) + f(u,w)^\theta + f(ww, u) \\
&= q(w)^{\theta+1} + q(u) + f(u,w)^\theta + f(ww, u) ,
\end{align*}
and hence
\begin{equation}\label{qvth}
q(v)^\theta = q(w)^{\theta+2} + q(u)^\theta + f(u,w)^2 + f(ww, u)^\theta .
\end{equation}
We also have
\begin{align*}
f(uv,w)=f(u, wv)&= f\bigl( u, w \cdot (ww + u + g(u,w)) \bigr) \\
&= f\bigl( u, w \cdot ww + wu + f(u,w)w + g(uw, ww) \bigr) \\
&= q(w) f(u, ww) + f(u, wu) + f(u,w)^2 .
\end{align*}
Combining this with \eqref{Nwu} and \eqref{qvth}, we obtain the required formula.
\end{proof}

\begin{theorem}\label{th:tau-wu}
Let $\{ w,u \} \in U^*$. Then
\[ \{ w,u \}^\tau
= \left\{ \frac{q(u) w + f(u,w)u + u(ww + u)}{\N(\{w,u\})}
\ , \ \frac{q(w) u + w(ww + u)}{\N(\{w,u\})} \right\} . \]
\end{theorem}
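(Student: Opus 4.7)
The strategy is to compute $c^{\nu\eta\nu} = (d^\eta)^\nu$ by tracking one endpoint through Table~\ref{mouf-m1m4} applied twice, and then to identify the answer in canonical form. Setting $\eta = \{w,u\}$ and $v := ww + u + g(u,w)$, the relation $x_1(w) \in U_1$ gives $U_1\eta = U_1\,x_2(u)\,x_3(v)\,x_4(w)$, which is precisely the vertex displayed in the paragraph preceding the theorem.

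First I would treat the generic case $u \neq 0$. Lemma~\ref{u-1v+w} guarantees that $u_1 := u^{-1}v + w$ is nonzero, so Table~\ref{mouf-m1m4} can be applied twice:
\[
(U_1\eta)^{m_1 m_4} = U_1\,x_2(u_1)\,x_3(v_1)\,x_4(w_1), \quad v_1 = vu^{-1} + g(u^{-1},w),\ w_1 = u^{-1},
\]
and then
\[
(U_1\eta)^{\nu} = U_1\,x_2(u_1^{-1}v_1 + w_1)\,x_3(v_1 u_1^{-1} + g(u_1^{-1},w_1))\,x_4(u_1^{-1}).
\]
Since $\nu$ and $\eta$ are type-preserving, this vertex has the same color as $U_1$; and because $c^{\nu\eta\nu}$ lies in $X\setminus\{c,d\}$ (as $\eta\in U^*$ cannot fix $d$ and $\rho$ centralizes both $\eta$ and $\nu$), it must take the canonical form $U_1\{w^*,u^*\}$ for a unique $\tau(\eta)=\{w^*,u^*\}\in U$. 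Thus $w^* = u_1^{-1}$, $u^* = u_1^{-1}v_1 + w_1$, and the $x_3$-coordinate automatically equals $w^*w^* + u^* + g(u^*,w^*)$.

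For $w^*$: by Notation~\ref{def:Nwu}, $q(u_1) = \N(\{w,u\})/q(u)$, so $w^* = q(u)u_1/\N(\{w,u\}) = (uv + q(u)w)/\N(\{w,u\})$. Applying (R7) to $v = (ww+u) + g(u,w)$, and using that $g(u,w) = [f(u,w)] \in [K]$ lies in the radical of $f$ (so the (R7)-correction $g((ww+u)u,\,g(u,w))$ vanishes) while $u\cdot[f(u,w)] = f(u,w)\,u$ by (R2), one obtains $uv = u(ww+u) + f(u,w)u$. Substituting yields the stated formula for $w^*$. For $u^*$ the strategy is parallel but longer: expand $u_1^{-1}v_1 = q(u)\,u_1 v_1/\N(\{w,u\})$, then compute $u_1 v_1 = (u^{-1}v + w)(vu^{-1} + g(u^{-1},w))$ by successive applications of (R1), (R7) and (R3), handling the principal term $u^{-1}v\cdot vu^{-1}$ via \ref{abc95}\eqref{uv.vw} together with \ref{q(uv)}, and collapsing the cross terms using \ref{abc95x} and the bilinearity relations \ref{abc95y}. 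After adding $w_1 = u^{-1}$ and clearing the $q(u)$-factor through $\N(\{w,u\})$, the result is $u^* = (q(w)u + w(ww+u))/\N(\{w,u\})$. The boundary case $u=0$, $w\ne 0$ is handled either by passing to the $U_4$-endpoint of the same edge (the analogous computation requires only $w\ne 0$) or directly via the exceptional rows of Table~\ref{mouf-m1m4}; the stated formula specializes correctly since $w(ww) = q(w)\cdot ww$ by (R6) and $(ww)^{-1} = ww/q(w)^{\theta+1}$ by \ref{q(uv)}.

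The main obstacle is bookkeeping in the $u^*$-computation: the multiplication on $V$ is neither associative nor commutative, and (R7) produces a $g$-valued ``defect'' whenever a sum is multiplied. To keep the calculation manageable, one must absorb $g$-contributions as early as possible, exploiting $g(\cdot,[K]) = 0$ and the $\theta$-twist by which scalars interact with $g$, so that in the end only the expected terms $q(w)u$ and $w(ww+u)$ survive and the denominator $\N(\{w,u\})$ emerges cleanly.
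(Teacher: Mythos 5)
Your strategy is correct and essentially identical to the paper's own proof: identify $\tau(\{w,u\})$ by tracking the vertex $U_1\{w,u\}$ through $\nu = (m_1m_4)^2$ using Table~\ref{mouf-m1m4}, invoke Lemma~\ref{u-1v+w} to make the generic row applicable twice, read off the first and second coordinates as $u_1^{-1}$ and $u_1^{-1}v_1 + w_1$ respectively, and simplify to the advertised closed form via the polarity-algebra identities, treating $u=0$ separately. The only cosmetic deviation is in the $u^*$-computation: the paper first collapses $v_1$ to $(ww+u)u^{-1}$ via \ref{abc24}(iii) before multiplying by $a^{-1}$, whereas you propose expanding $u_1 v_1$ with the un-reduced $v_1$; both routes rely on the same identities (\ref{yht1}, \ref{abc95}, \ref{abc95x}, \ref{abc95y}, \ref{q(uv)}) and the same observation that the $g$-defects live in the radical of $f$, so they amount to performing the same cancellations in a slightly different order.
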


\begin{proof}
Assume first that $u=0$.
Then $U_1 \{ w,u \} = U_1 \{ w,0 \} = U_1\,x_3(ww)\,x_4(w)$.
Since $\{w,u\} \in U^*$, we have $w\ne0$ and hence $ww\neq 0$.
Using Table~\ref{mouf-m1m4}, we obtain
\[ U_1\,x_3(ww)\,x_4(w) \xmapsto{m_1m_4} U_{34}\,x_1(ww)\,x_2(w)
\xmapsto{m_1m_4} U_1\,x_2(w \cdot (ww)^{-1})\,x_3((ww)^{-1}). \]
By \ref{abc95x}(i), we have
 $w \cdot (ww)^{-1} = w \cdot w^{-1}w^{-1} = w^{-1}w^{-1} = (ww)^{-1}$ and hence
\[ U_1\,x_2(w \cdot (ww)^{-1})\,x_3((ww)^{-1})
= U_1\,x_2((ww)^{-1})\,x_3((ww)^{-1}) = U_1 \{ 0, (ww)^{-1} \} . \]
By \eqref{onk79}, therefore,
\[ \{ w,0 \}^\tau = \{ 0, (ww)^{-1} \} . \]
Since
$$\frac{w \cdot ww}{\N(\{w,0\})}=q(w)ww/q(w)^{\theta+2}=(ww)^{-1},$$
we obtain the required formula.

	Assume now that $u \neq 0$, and let $v := ww + u + g(u,w)$.
	By~\ref{u-1v+w}, $u^{-1}v + w \neq 0$.
	Let
	\begin{align*}
		a &= u^{-1}v + w , \\
		b &= vu^{-1} + g(u^{-1},w)\text{ and} \\
		c &= u^{-1} ,
	\end{align*}
	so that
\[ U_1 \{ w,u \} = U_1\,x_2(u)\,x_3(v)\,x_4(w) \xmapsto{m_1m_4} U_1\,x_2(a)\,x_3(b)\,x_4(c) , \]
	and hence
\[ U_1 \{ w,u \} \xmapsto{(m_1m_4)^2} U_1 \{ a^{-1}, a^{-1}b + c \} . \]
Thus
\[ \{ w,u \}^\tau = \{ a^{-1}, a^{-1}b + c \}  \]
by \eqref{onk79}. Observe that
\begin{equation}\label{tau0}
	a = u^{-1} \bigl( ww + u + g(u,w) \bigr) + w = w + f(u,w) u^{-1} + u^{-1}(ww + u)
	\end{equation}
by (R2)	and
\begin{equation}\label{tau00}
		b = (ww + u)u^{-1} + g(u,w)u^{-1} + g(u^{-1},w) = (ww + u)u^{-1}
\end{equation}
	by~\ref{abc24}(iii).
	Also notice that
\begin{equation}\label{qa}
		q(a) = q(u^{-1}v + w) = q(u)^{-1} \N(\{w,u\})
\end{equation}
	by \ref{def:Nwu},
	and hence
\begin{equation}\label{tau000}
		a^{-1} = \frac{q(u) a}{\N(\{w,u\})} = \frac{q(u) w + f(u,w)u + u(ww + u)}{\N(\{w,u\})} .
\end{equation}
To compute $a^{-1}b + c$, we first notice that, by~\ref{yht1},
\begin{multline*}
	w\cdot (ww+u)u^{-1} \\
		= f(u^{-1}, w(ww+u))u^{-1} + f(u^{-1}, w)u^{-1}(ww+u) + q(u^{-1})w(ww+u) ,
\end{multline*}
	and hence
\begin{multline}\label{tau1}
		q(u) w\cdot (ww+u)u^{-1} \\
			= f(u^{-1}, w(ww+u))u + f(u,w)u^{-1}(ww+u) + w(ww+u) .
	\end{multline}
	Next, by \ref{abc95x}\eqref{u-1.vu},
	\begin{equation}\label{tau2}
		f(u,w) u \cdot (ww+u)u^{-1} = f(u,w)u^{-1}(ww+u) .
	\end{equation}
	Finally, by~\ref{abc95}\eqref{uv.vu},
\begin{multline}\label{tau3}
u(ww+u) \cdot (ww+u)u^{-1}= q(u) u^{-1}(ww+u) \cdot (ww+u)u^{-1} \\
		= q(u) q\bigl( u^{-1}(ww+u) \bigr) u^{-1}
		= q\bigl( u^{-1}(ww+u) \bigr) u .
\end{multline}
Also observe that
\begin{equation}\label{tau4}
c = u^{-1} = q(u)^{-1} u = q(a)u / \N(\{w,u\})
\end{equation}
by~\eqref{qa}.
By \eqref{tau00}, \eqref{tau000}, \eqref{tau1}, \eqref{tau2}, \eqref{tau3} and \eqref{tau4},
we obtain
\[ a^{-1} b + c
= \frac{f(u^{-1}, w(ww+u))u + w(ww+u) + q\bigl( u^{-1}(ww+u) \bigr) u + q(a)u}{\N(\{w,u\})} . \]
It remains to show that
\begin{equation}\label{tau5}
f(u^{-1}, w(ww+u)) + q\bigl( u^{-1}(ww+u) \bigr) + q(a) = q(w) .
\end{equation}
By~\eqref{tau0}, however, we have
\begin{multline*}
q(a) = q(w) + f(u,w)^2 q(u^{-1}) + q\bigl( u^{-1}(ww+u) \bigr) + f(u,w) f(w, u^{-1}) \\
+ f\bigl( w, u^{-1}(ww + u) \bigr) + f(u,w) f\bigl( u^{-1}, u^{-1}(ww+u) \bigr) .
\end{multline*}
Notice that the last term is $0$ by~\ref{abc24}(i)
and that
$$f(u,w)^2 q(u^{-1}) = f(u,w) f(w, u^{-1}).$$
We conclude that
\begin{align*}
q(a)&= q(w) + q\bigl( u^{-1}(ww+u) \bigr) + f\bigl( w, u^{-1}(ww + u) \bigr) \\
&= q(w) + q\bigl( u^{-1}(ww+u) \bigr) + f\bigl( u^{-1}, w(ww + u) \bigr)
\end{align*}
(by~\ref{abc24}(ii)). Thus \eqref{tau5} holds.
\end{proof}

\section{Moufang Subsets}
Our next goal is to identify three Moufang subsets of the Moufang set $M$.
We continue with the notation in \ref{onk60}. Let $G^\dagger$ be as in \ref{abc6}
applied to $M$.

\begin{remark}\label{onk80}
Let $U$ and $\tau$ be as in \S\ref{onk81} and suppose that $R$ is a subgroup of $U$
such that $R^*$ is $\tau$-invariant.
Let $\tau_R$ denote the restriction of $\tau$ to $R$. By \cite[6.2.2(1)]{yoav},
$M_R:={\mathbb M}(R,\tau_R)$ (as defined in \cite[\S3]{jordan}) is a Moufang set.
Let $G_R^\dagger$ be as in \ref{abc6} applied to $M_R$.
Let $c$, $d$ and $\nu$ be as in \eqref{onk79}, let $X_R=\{c\}\cup d^R$
and let $N=\langle R,R^\nu\rangle$. By \cite[6.2.6]{yoav}, $X_R$ is an $N$-orbit,
$R$ acts faithfully on $X_R$ and the map from the underlying set
$\{\infty\}\cup R$ of $M_R$ to $X_R$ sending $\infty$ to $c$ and
$u$ to $d^u$ for all $u\in R$ is a bijection which induces an isomorphism
from the group induced by $N$ on $X_R$ to $G_R^\dagger$ with kernel $Z(N)$.
\end{remark}

\begin{notation}\label{onk91}
Let $\Lambda=(L,\kappa)$ be an arbitrary octagonal set as defined in \ref{abc0}.
We denote by $\MSuz(\Lambda)$ the
Moufang set corresponding to the group $\Suz(\Lambda)$.
The root groups of $\MSuz(\Lambda)$
are the root groups of $\Suz(\Lambda)$. Each of them
is isomorphic to the group $P_\Lambda$ with underlying set $L\times L$, where
\begin{equation}\label{onk91x}
(a,b)\cdot(u,v)=(a+u,b+v+au^\kappa)
\end{equation}
for all $(a,b),(u,v)\in L\times L$. For each $z\in L^*$ and for each automorphism $\sigma$
of $L$ commuting with $\kappa$ (possibly trivial), let
$$(a,b)^{\tau_{z,\sigma}}=\left(z\Bigl(\frac{b}{a^{\kappa+2}+ab+b^\kappa}\Bigr)^\sigma,
           z^{\kappa+1}\Bigl(\frac{a}{a^{\kappa+2}+ab+b^\kappa}\Bigr)^\sigma\right)$$
for all $(a,b)\in P_\Lambda^*$.
By \cite[Exemple~2]{ree}, we have $\MSuz(\Lambda)\cong{\mathbb M}(P_\Lambda,\tau_{z,\sigma})$
for all $z\in L^*$ and all $\sigma\in{\rm Aut}(L)$ that commute with $\kappa$.
\end{notation}

\begin{remark}\label{onk91p}
Let $\Lambda=(L,\kappa)$ be as in \ref{onk91} and suppose that $|L|>2$. Let
$\MSuz(\Lambda)=(X,\{U_x\})_{x\in X}$ and let $B^\dagger$ be the group obtained
by applying \ref{abc6} to $\MSuz(\Lambda)$. By \cite[33.17]{TW}, we have
$U_x=[B^\dagger_x,U_x]$ for all $x\in X$.
\end{remark}

\begin{notation}\label{onk123}
Let $\chi$, $\theta$ and $\theta_K$ be as in \ref{tru5} and \ref{onk60},
let $\theta_1=\theta$ and let $\theta_2=\chi\theta_1$. Thus
$\chi$ commutes with $\theta_1$, and $\theta_2$ is also
a Tits endomorphism of $E$.
\end{notation}

\begin{proposition}\label{pil0}
Let $\theta_K$, $\theta_1$, $\theta_2$ and $\chi$ be as in {\rm\ref{onk123}}.
\begin{align*}
R_0 &:= \{ \{ (0,0,s), (0,0,t) \} \mid s,t \in K \} , \\
R_1 &:= \{ \{ (a,0,0), (0,b,0) \} \mid a,b \in E \} \text{ and} \\
R_2 &:= \{ \{ (0,b,0), (a,0,0) \} \mid a,b \in E \}
\end{align*}
and let $\tau_i$ denote the restriction of $\tau$ to $R_i$ for $i\in[0,2]$.
Then $\theta_2$ is a Tits endomorphism of $E$,
$R_0$, $R_1$ and $R_2$ are $\tau$-invariant
subgroups of $U$, ${\mathbb M}(R_0,\tau_0)\cong\MSuz(K,\theta_K)$
and ${\mathbb M}(R_i,\tau_i)\cong\MSuz(E,\theta_i)$ for $i\in[1,2]$.
\end{proposition}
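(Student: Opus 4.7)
The assertion that $\theta_2$ is a Tits endomorphism of $E$ is immediate from \ref{tru5}: since $\chi$ is an involution commuting with $\theta_1$, we have $\theta_2^2 = \chi^2\theta_1^2 = \mathrm{Frob}_E$. For the remaining claims, I plan to proceed by direct computation, using the group law \eqref{tru64}, the multiplication formula \ref{tru99}, the $\tau$-formula \ref{th:tau-wu}, and the norm \ref{N-expl}. Two simplifying observations are central: first, the bilinear form $f$ on $V = E\oplus E\oplus[K]$ is block-diagonal with respect to the three summands (so $f(u,v) = 0$ whenever $u,v$ lie in distinct summands, $[K] \subset \mathrm{rad}(f)$, and hence most $g$-terms in \eqref{tru64} vanish); and second, the three basic products read directly from \ref{tru99} are
\[ (a,0,0)(a',0,0) = (0,(a')^{\theta_1} a, 0), \quad (0,b,0)(0,b',0) = (\alpha\beta^{-1}(b')^{\theta_1}\bar b, 0, 0), \quad [s][s'] = [s(s')^\theta]. \]

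For the subgroup verifications, substitute each $R_i$ into \eqref{tru64} and invoke the observations above. In $R_0$, all $g$-terms vanish and the sum collapses to $\{[s_1+s_2],\,[t_1+t_2+s_1 s_2^{\theta_K}]\}$, matching \eqref{onk91x} for $P_{(K,\theta_K)}$ under $(s,t)\mapsto\{[s],[t]\}$. In $R_1$, cross-slot $f$-values vanish and the product of two diagonal elements lands in the middle slot, giving $\{(a_1+a_2,0,0),\,(0,b_1+b_2+a_1 a_2^{\theta_1},0)\}$, matching $P_{(E,\theta_1)}$ under $(a,b)\mapsto\{(a,0,0),(0,b,0)\}$. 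In $R_2$, the product $(0,b_1,0)(0,b_2,0)$ produces a $\chi$-twisted term in the first slot; under the reparametrization $(b,a)\mapsto\{(0,\bar b,0),(\alpha\beta^{-1}a,0,0)\}$, the identity $(\bar b)^{\theta_1} = b^{\chi\theta_1} = b^{\theta_2}$ (using $[\chi,\theta_1]=1$) transports the conjugation onto the Tits exponent, and the scalar $\alpha\beta^{-1}$ on $a$ absorbs the front constant; the sum becomes $(b_1+b_2,\,a_1+a_2+b_1 b_2^{\theta_2})$, matching $P_{(E,\theta_2)}$.

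For $\tau$-invariance and the Moufang set isomorphisms, I apply \ref{th:tau-wu} together with \ref{N-expl} in each case. For $R_0$, (R4) and \ref{q([t])} give $\N(\{[s],[t]\}) = N_{\mathrm{Suz}}(s,t)^{\theta_K}$ where $N_{\mathrm{Suz}}(s,t) := s^{\theta+2} + st + t^\theta$; the numerators reduce to $[t\,N_{\mathrm{Suz}}]$ and $[s\,N_{\mathrm{Suz}}]$, so $\{[s],[t]\}^\tau = \{[t/N_{\mathrm{Suz}}],[s/N_{\mathrm{Suz}}]\}$, exactly $\tau_{1,\mathrm{id}}$ of $\MSuz(K,\theta_K)$ in the sense of \ref{onk91}. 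For $R_1$ and $R_2$, the main obstacle is the factorization
\[ \N(\{x,y\}) = \alpha\beta^{-2}\cdot N_{\mathrm{Suz}}(a,b)\cdot\overline{N_{\mathrm{Suz}}(a,b)}, \quad N_{\mathrm{Suz}}(a,b) := a^{2+\theta_1} + ab + b^{\theta_1}, \]
exhibiting $\N$ as (up to the constant $\alpha\beta^{-2}$) the $L/K$-norm of the Suzuki norm. This identity requires combining all six terms of \ref{N-expl} using $\beta^{-\theta}=\alpha$ (from \ref{tru68}) and $[\chi,\theta_1]=1$, and is the lengthiest calculation of the proof. Once established, the numerators of \ref{th:tau-wu} factor as fixed scalars times $\bar b\,N_{\mathrm{Suz}}$ and $\bar a\,N_{\mathrm{Suz}}$; dividing by $\N$ leaves $\overline{N_{\mathrm{Suz}}}$ in the denominator, so $(a,b)^\tau$ takes precisely the form $\tau_{\beta,\chi}$ of $\MSuz(E,\theta_1)$ (one checks $\beta^{\theta_1+1}=\beta\alpha^{-1}$ via $\beta^{-\theta}=\alpha$), giving $\mathbb{M}(R_1,\tau_1)\cong\MSuz(E,\theta_1)$. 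The analogous computation, post-composed with the reparametrization of the previous paragraph, yields $\mathbb{M}(R_2,\tau_2)\cong\MSuz(E,\theta_2)$; the twist from $\theta_1$ to $\theta_2$ arises precisely from the conjugation introduced by $\bar b$ in the second basic product above.
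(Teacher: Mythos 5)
Your proposal is correct and follows essentially the same route as the paper: specialize the group law \eqref{tru64} and the formulae \ref{tru99}, \ref{th:tau-wu}, \ref{N-expl} to each $R_i$, observe that $f$ is block-diagonal across the three summands so the $g$-terms in \eqref{tru64} vanish, compute the three basic products, and then match the resulting restriction of $\tau$ to $\tau_{z,\sigma}$ as in \ref{onk91}. The one nit: the norm in your factorization of $\N$ on $R_1$ and $R_2$ is $N_{E/K}$ (i.e.\ $z\mapsto z\bar z$ on $E$), not an $L/K$-norm, but this is a naming slip rather than a gap; your identity $\N(\{w,u\}) = \alpha\beta^{-2}\,N_{\mathrm{Suz}}(a,b)\,\overline{N_{\mathrm{Suz}}(a,b)}$ is exactly the paper's $\beta^{-\theta_1-2}N_{E/K}(a^{\theta_1+2}+ab+b^{\theta_1})$ rewritten via $\alpha=\beta^{-\theta}$.
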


\begin{proof}
We calculate using \ref{tru99}. First note that
\begin{align*}
\lbrace[s],[t]\rbrace\cdot\lbrace[a],[b]\rbrace&=
\lbrace[s]+[a],[t]+[b]+[s]\cdot[a]\rbrace\\
&=\lbrace[s+a],[t+b+sa^\theta]\rbrace
\end{align*}
for all $s,t,a,b\in K$ by \ref{tru64}.
Let $w = [s]$ and $u = [t]$ for some $s,t\in K$.
Then $f(u,w) = 0$, $q(u) = t^\theta$ and $q(w) = s^\theta$, $ww + u = [s^{\theta+1} + t]$.
Hence $\N(\{w,u\}) = \bigl( s^{\theta+2} + st + t^\theta \bigr)^\theta$ by~\ref{N-expl}.
Thus
\[\{ [s],[t]\}^\tau=\left\{\Big[\frac{t}{s^{\theta+2}+st+t^\theta}\Big] \ ,
\ \Big[\frac{s}{s^{\theta+2} + st + t^\theta}\Big] \right\}  \]
by \ref{th:tau-wu}. Therefore $R_0$ is $\tau$-invariant subgroup of $U$
and by \ref{onk91}, the map $(s,t)\mapsto\{[s],[t]\}$ is an isomorphism from $P_{(K,\theta_K)}$ to
$R_0$ which carries the map $\tau_{1,1}$ to $\tau_0$.
Hence ${\mathbb M}(R_0,\tau_0)\cong\MSuz(K,\theta_K)$.

Let $w = (a,0,0) \in V$ and $u = (0,b,0) \in V$ for some $a,b \in E$.
Then $q(w)=\beta^{-1}N(a)$ and $q(u)=\beta^{-\theta_1-1}N(b)$,
where $N$ is the norm of the extension $E/K$, and
$v:=ww+u=\bigl(0,\ a^{\theta_1+1} + b,\ 0 \bigr)$. Hence
$$q(u)w+u(ww+u)=\bigl(\beta^{-\theta_1-1} (a^{\theta_1+2}+ab+b^{\theta_1})b^\chi,\ 0,\ 0 \bigr)$$
and
$$q(w)u + w(ww+u)= \bigl( 0,\ \beta^{-1} (a^{\theta_1+2}+ab+b^{\theta_1})a^\chi,\ 0 \bigr).$$
By \eqref{Nwu}, we also have
$$\N(\{w,u\})=q(u)^{-1}q\big(uv+q(u)w\big)
=\beta^{-\theta_1-2}N\big(a^{\theta_1+2}+ab+b^{\theta_1}\big).$$
Setting $[a,b]=\{(a,0,0),(0,b,0)\}$ for all $a,b\in E$, we obtain
$$[a,b]^\tau=\left[\beta\Bigl(\frac{b}{a^{\theta_1+2}+ab+b^{\theta_1}}\Bigr)^\chi,
      \beta^{\theta_1+1} \Bigl( \frac{a}{a^{\theta_1+2}+ab+b^{\theta_1}}\Bigr)^\chi\right]$$
for all $a,b\in E$ by \ref{th:tau-wu}. By \eqref{tru64}, we have
$$[a,b]\cdot[u,v]=[a+u,b+v+au^{\theta_1}]$$
for all $a,b,u,v\in E$. Hence $R_1$ is a $\tau$-invariant subgroup of $U$ and by \ref{onk91},
$(a,b)\mapsto[a,b]$ is
an isomorphism from $P_{(E,\theta_1)}$ to $R_1$ that carries the map $\tau_{\beta,\chi}$
to $\tau_1$. Hence ${\mathbb M}(R_1,\tau_1)\cong\MSuz(E,\theta_1)$.

The computations for $R_2$ are similar. Setting
$[a,b]=\{(0,\beta a^\chi,0),(b,0,0)\}$, we find that
$$[a,b]\cdot[u,v]=[a+u,b+v+au^{\theta_2+1}]$$
for all $a,b,u,v\in E$ and
$$[a,b]^\tau=\left[ \beta^{\theta_2-1}\Bigl( \frac{b}{a^{\theta_2+2}+ab+b^{\theta_2}}\Bigr)^\chi ,
\beta\Bigl(\frac{a}{a^{\theta_2+2}+ab+b^{\theta_2}}\Bigr)^\chi \right]$$
for all $a,b\in E$. Hence $R_2$ is a $\tau$-invariant subgroup of $U$ and by \ref{onk91}, the
the map $(a,b)\mapsto[a,b]$ is an isomorphism from $P_{(E,\theta_2)}$ to $R_2$ that
carries the map $\tau_{\beta^{\theta_2-1},\chi}$ to $\tau_2$.
Hence ${\mathbb M}(R_2,\tau_2)\cong\MSuz(E,\theta_2)$.
\end{proof}

\section{Simplicity}\label{nzz60}

We can now deduce \ref{nzz13} as a corollary of \ref{pil0}.
Let $M$, $X$, $c$, $\Sigma$ and $U_c$ be as in \ref{onk60},
let $G^\dagger$ be as in \ref{abc6}
applied to $M$ and let $R_0$, $R_1$ and $R_2$ be as in \ref{pil0}.
Let $i\in[0,2]$ and let $N$ be as in \ref{onk80} with $R_i$ in place of $R$.
By \ref{abc70}, \ref{onk80}, \ref{onk91p} and \ref{pil0}, we have
$R_i\subset [N_c,R_i]\subset[G^\dagger,G^\dagger]$. Thus
$\langle R_0,R_1,R_2\rangle\subset[G^\dagger,G^\dagger]$.
By \ref{tru99} and \eqref{tru64} and some calculation, on the other hand,
$U_c=\langle R_0,R_1,R_2\rangle$.
Since $G^\dagger$ is generated by conjugates
of $U_c$, it follows that $G^\dagger$ is perfect.
To finish the proof, we proceed with a standard argument
which goes back to \cite{iwa}: Let $I$ be a non-trivial normal subgroup of $G^\dagger$.
Since $I$ is normal, the product $IU_c$ is a subgroup of $G^\dagger$.
Since $G^\dagger$ acts $2$-transitively on $X$, the subgroup $I$ acts transitively. Hence
the subgroup $IU_c$ contains all the root groups of $G^\dagger$. Therefore, $G^\dagger=IU_c$. Thus
$$G^\dagger/I\cong IU_c/I\cong U_c/U_c\cap I.$$
Since $U_c$ is nilpotent, it follows that $G^\dagger/I$ is nilpotent.
Since $G^\dagger$ is perfect, the quotient $G^\dagger/I$ is also perfect.
A perfect nilpotent group must be trivial. It follows that $I=G^\dagger$.
Thus $G^\dagger$ is simple. This concludes the proof of \ref{nzz13}.

\section{Invariants}\label{onk60x}

In this last section,
we show that $q$ is an invariant of $M$ (where $q$ and $M$ are as in \ref{onk60}).

\begin{theorem}\label{nzz30}
Let $(K,V,q,\theta,t\mapsto[t],\cdot)$ and
$(\tilde K, \tilde V, \tilde q, \tilde\theta, t\mapsto[t], *)$
be two polarity algebras as defined in {\rm\ref{fat1}}
and assume that the corresponding Moufang sets $M$ and $\tilde M$ are isomorphic.
Then there is a field isomorphism $\psi\colon K\to\tilde K$, an additive
bijection $\zeta\colon V\to\tilde V$ and an element $e\in\tilde K^\times$ such that
\begin{enumerate}[\rm(i)]
\item $\zeta(v \cdot w) = e^{-1} \zeta(v) * \zeta(w)$ for all $v,w \in V$;
\item $\zeta(tv) = \psi(t) \zeta(v)$ for all $t \in K$ and all $v \in V$;
\item $\tilde q(\zeta(v)) = e^{\tilde\theta} \psi(q(v))$ for all $v \in V$; and
\item $\psi(t^\theta) = \psi(t)^{\tilde\theta}$ for all $t \in K$.
\end{enumerate}
In particular, the quadratic forms $q$ and $\tilde q$ are similar.
\end{theorem}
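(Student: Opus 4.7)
My plan is to exploit the detailed algebraic structure of the root groups developed in \S\ref{onk60a}--\S\ref{onk60x}. An isomorphism $\phi\colon M \to \tilde M$, after composition with elements of $G^\dagger$ and $\tilde G^\dagger$, may be arranged to send the base chamber $c$ to $\tilde c$ and the opposite chamber $d$ to $\tilde d$. It then induces a group isomorphism $\alpha\colon U \to \tilde U$ between the root groups at $c$ and $\tilde c$, which intertwines the inversion permutation $\tau$ of~\ref{th:tau-wu} with $\tilde\tau$ up to the residual action of the Hua subgroup. The central filtration $1 < Z(U) < U' < U$ from~\ref{tru61} is preserved by $\alpha$; via the top quotient $U/U' \cong V$ this yields an additive bijection $\zeta\colon V \to \tilde V$, and via $Z(U)\cong K$ an additive bijection $K\to\tilde K$.

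To recover the bilinear form, the product, and the field, I turn to the group law~\eqref{tru64}: direct computation gives $[\{u,0\},\{0,w\}] = \{0,[f(u,w)]\} \in Z(U)$, so the bilinear form $f$ is transported by $\alpha$; similarly $\{u,0\} + \{a,0\} = \{u+a,\,u\cdot a + [f(a,u\cdot u)]\}$ shows that the product is transported modulo $[K]$. To identify the field together with its Tits endomorphism, I use the canonical subgroup $R_0$ of~\ref{pil0}: it equals the preimage in $U$ of $[K]\subseteq V$ under $U\to U/U'$ and is therefore intrinsic to $U$; since $(R_0,\tau|_{R_0}) \cong \MSuz(K,\theta_K)$ and a Suzuki Moufang set determines its defining octagonal set (see~\cite{yoav}), this yields a field isomorphism $\psi\colon K \to \tilde K$ satisfying $\psi\circ\theta = \tilde\theta\circ\psi$, which is~(iv). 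The semilinearity~(ii) then follows from the conjugation action of the Hua subgroup on $U/U'\cong V$.

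Finally, I recover the quadratic form from the norm $\N$ of~\ref{def:Nwu}: by~\ref{N-expl}, $\N(\{0,u\}) = q(u)^\theta$, from which $q$ itself can be reconstructed since $\theta$ is a bijection of $K$ onto $F$. As $\alpha$ transports $\N$, it also transports $q$ up to a scaling constant $e^{\tilde\theta}\in\tilde K^\times$ arising from the freedom in the Hua normalization, which gives~(iii); assertion~(i) then follows from the transport of the product noted above, once $e$ has been consistently absorbed. The main obstacle will be the coherent bookkeeping of the scalar $e$ across the three sections of the filtration, the transport of $\N$, and the Hua normalization of $\tau$, so that (i)--(iv) are simultaneously realized by the same triple $(\psi,\zeta,e)$; the detailed verification, which relies on the additional identities in~\S\ref{sec5}, is carried out in~\cite{koen}.
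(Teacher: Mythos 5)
Your sketch diverges substantially from the paper's argument, and it contains a concrete error that breaks the plan. You claim that $R_0$ "equals the preimage in $U$ of $[K]\subseteq V$ under $U\to U/U'$" and is therefore intrinsic. That preimage is $\{\{u,w\}: u\in[K],\ w\in V\}$, which strictly contains $R_0=\{\{[s],[t]\}: s,t\in K\}$; the two coincide only when $V=[K]$, which is excluded since $f$ is nontrivial. So $R_0$ is not cut out by the central filtration alone, the isomorphism $\alpha$ has no reason to carry $R_0$ to $\tilde R_0$, and the appeal to recognition of Suzuki Moufang sets (itself a nontrivial external input that you do not pin down precisely) cannot even get started. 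Your subsequent claim that "the product is transported modulo $[K]$" also rests on knowing $\varphi(\{u,0\})\in\{\tilde V,0\}$, i.e. that $\varphi$ respects the decomposition $\{u,w\}=\{u,0\}+\{0,w\}$ componentwise, and you supply no reason for this. Finally, the last paragraph concedes that the hard reconciliation of the scalar $e$ across the various sections is not carried out here but is "carried out in \cite{koen}"; that is a deferral, not a proof.

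The paper's route closes exactly the gap you leave open. After using $U'=\{0,V\}$ (as you do) it applies the explicit formula of \ref{th:tau-wu} to show $\{0,V^*\}\tau_1=\{V^*,0\}$, and then uses the compatibility $\varphi((a)\mu_b)=(\varphi(a))\tilde\mu_{\varphi(b)}$ together with the $\tilde H$-invariance of $\{0,\tilde V\}$ to conclude $\varphi(\{V^*,0\})=\{\tilde V^*,0\}$. This single lemma produces \emph{simultaneously} the two additive maps $\zeta$ and $\gamma$ with $\varphi(\{u,v\})=\{\zeta(u),\gamma(v)\}$, and from there the group law \eqref{tru64} forces $\gamma(ua)=\zeta(u)*\zeta(a)$ directly; the scalar $e$ falls out as $\zeta([1])=[e]$, and (ii)--(iv) follow from (R1), (R2) and \eqref{abc50a} without any transport of the norm $\N$, any recognition theorem for Suzuki Moufang sets, or any post hoc bookkeeping. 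I would encourage you to rework your plan with this $\mu$-map step as the pivot; once $\varphi$ is shown to respect both $\{0,V\}$ and $\{V,0\}$, the filtration and commutator observations you already made become the right ingredients, and you will not need the detour through $R_0$.
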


\begin{proof}
Let $\pi$ be an isomorphism from $M=(X,\{U_x\}_{x\in X})$ to
$\tilde M=(\tilde X,\{\tilde U_{\tilde x}\}_{\tilde x\in\tilde X})$, i.e.~a bijection
from $X$ to $\tilde X$ such that $\pi^{-1}U_z\pi=\tilde U_{(z)\pi}$ for all $z\in X$.
We can assume that $M$, $c$ and $\Sigma$ are as in \ref{onk60}.
Thus by \ref{onk69}, $U_x$ is the group $U$ described in \S\ref{onk81}.
Let $d\in\Sigma$ be the unique
chamber of $\Sigma$ opposite $c$ (as in \S\ref{onk81})
and let $\tilde c$ and $\tilde d$ be the images of $c$ and $d$
under $\pi$. Let $H$ denote the pointwise stabilizer
of $\{c,d\}$ in $M$ and let $\tilde H$ denote the pointwise stabilizer
of $\{\tilde c,\tilde d\}$ in $\tilde M$.
For each $b\in U_c^*$, we denote by $m_b$ the
unique element in $U_dbU_d$ interchanging $c$ and $d$ and
by $\mu_b$ be the unique permutation of $U_c^*$ such that
\begin{equation}\label{onk121}
d^{(a)\mu_b}=c^{m_b^{-1}um_b}
\end{equation}
for each $a\in U_c^*$. We define
$\tilde\mu_{\tilde b}$ for each $\tilde b\in\tilde U_{\tilde c}$ analogously.
Let $\varphi \colon U_c \to \tilde U_{\tilde c}$ be the isomorphism
induced by $\pi$. Then
\begin{equation}\label{eq:moufisom}
\varphi((a)\mu_b)=(\varphi(a))\tilde\mu_{\varphi(b)}\text{ for all $a,b\in U$ with $b\ne1$}.
\end{equation}
Let $\{u,v\}$ for $u,v\in V$ be as in \eqref{nzz32}; we define $\{\tilde u,\tilde v\}$
for $\tilde u,\tilde v\in\tilde V$ analogously. Thus
$U=\{V,V\}$ and $\tilde U=\{\tilde V,\tilde V\}$.
Recall from \ref{tru61} that $U'=\{0,V\}$ and $\tilde U'=\{0,\tilde V\}$.
Therefore
\begin{equation}\label{eq:phi0V}
\varphi(\{0,V\})=\{0,\tilde V\}
\end{equation}
and
\begin{equation}\label{eq:Hinv}
\{0,\tilde V\}\text{ is $\tilde H$-invariant}.
\end{equation}
Let $\tilde c = \varphi(\{0,[1]\})$, $\tau_1=\mu_{\{0,[1]\}}$, let $\nu$ be
as in \eqref{onk120}, let $\tau$ be as in \eqref{onk79} and let
$\tilde\tau=\varphi^{-1}\tau\varphi$. The product $m_{\{0,[1]\}}\nu$ fixes $c$ and $d$ and
hence lies in $H$. By \eqref{onk79} and \eqref{onk121}, it follows that
$\tau_1\in H^\circ\tau$, where $H^\circ$ denotes the
permutation group
$$\{u\mapsto h^{-1}uh\mid h\in H\}$$
on $U_c^*$. Similarly, $\tilde\mu_c\in\tilde H^\circ\tilde\tau$, where
$\tilde H^\circ$ is defined analogously.
By \ref{th:tau-wu} and \eqref{eq:Hinv}, it follows that $\{0,V^* \}\tau_1=\{V^*,0\}$
and $\{0,\tilde V^*\}\tilde\mu_c=\{\tilde V^*,0\}$.
By~\eqref{eq:moufisom}, \eqref{eq:phi0V} and~\eqref{eq:Hinv}, therefore,
\begin{equation}\label{eq:phiV0}
\varphi(\{V^*,0\})=\varphi(\{ 0,V^* \}\tau_1)=\varphi(\{0,V^*\}) \tilde\mu_c
=\{\tilde V^*,0\}.
\end{equation}
By~\eqref{eq:phi0V} and~\eqref{eq:phiV0}, we conclude that there exist
maps $\zeta \colon V \to \tilde V$ and $\gamma \colon V \to \tilde V$ such that
$\varphi(\{u, 0\}) = \{ \zeta(u), 0 \}$ and $\varphi(\{0, v\}) = \{ 0, \gamma(v) \}$
for all $u,v \in V$.
Since $\{ u,v \} = \{ u,0 \} + \{ 0,v \}$ by~\eqref{tru64}, we have
$$\varphi(\{u,v\}) = \{ \zeta(u), \gamma(v) \}$$
for all $u,v \in V$. Therefore $\zeta$ is additive and
\begin{multline}\label{eq:M-beta}
        \gamma\bigl(w + b + ua + g(a,w) + g(a,uu)\bigr) \\
            = \gamma(w) + \gamma(b) +
\zeta(u)*\zeta(a) + \tilde g\bigl( \zeta(a), \gamma(w) \bigr)
+ \tilde g\bigl( \zeta(a), \zeta(u) * \zeta(u) \bigr)
    \end{multline}
for all $u,w,a,b \in V$ by~\eqref{tru64} since $\phi$ is a homomorphism.
    Setting $a=0$ in~\eqref{eq:M-beta}, we see that also $\gamma$ is additive.
By~\eqref{eq:M-beta}, therefore,
\begin{align*}
\gamma(ua) + \gamma\bigl(g(a,w)& + g(a,uu)\bigr) \\
&= \zeta(u)*\zeta(a) + \tilde g\bigl( \zeta(a), \gamma(w) \bigr)
+ \tilde g\bigl( \zeta(a), \zeta(u) * \zeta(u) \bigr)
\end{align*}
for all $u,w,a \in V$.
Substituting $uu$ for $w$ in this identity, we obtain
    \begin{equation}\label{eq:M-beta3}
        \gamma(ua) = \zeta(u)*\zeta(a) + \tilde g\bigl( \zeta(a), \gamma(uu) \bigr) + \tilde g\bigl( \zeta(a), \zeta(u) * \zeta(u) \bigr)
    \end{equation}
and if we set $a=u$ in this identity, we have
$$\gamma(uu) = \zeta(u)*\zeta(u) + \tilde g\bigl( \zeta(u), \gamma(uu) \bigr)
 + \tilde g\bigl( \zeta(u), \zeta(u) * \zeta(u) \bigr).$$
Applying the map $\tilde x\mapsto\tilde g(\zeta(a),\tilde x)$ to this last identity, we obtain
$$\tilde g \bigl( \zeta(a), \gamma(uu) \bigr) = \tilde g \bigl( \zeta(a), \zeta(u)*\zeta(u) \bigr).$$
Substituting this back into~\eqref{eq:M-beta3} now yields
\begin{equation}\label{eq:M-beta6}
        \gamma(ua) = \zeta(u)*\zeta(a)
    \end{equation}
    for all $u,a \in V$. In particular,
    \begin{equation}\label{eq:M-beta7}
        \gamma(u) = \zeta(u)*\zeta([1]).
    \end{equation}
Now observe that by~\ref{tru61} again, $\gamma$ maps $[K]$ onto $[\tilde K]$,
so by~\eqref{eq:M-beta7} and (R4), also $\zeta([K]) = [\tilde K]$.
In particular, there exists an $e \in \tilde K^\times$ such that $\zeta([1]) = [e]$, and hence
$\gamma(u)=e\zeta(u)$ for all $u \in V$ by (R2).
Substituting this back into~\eqref{eq:M-beta6} now yields (i).

Since $\zeta([K]) = [\tilde K]$, there is a unique map $\psi \colon K \to \tilde K$ such that
\begin{equation}\label{eq:epsi}
\zeta([t]) = [e \psi(t)]
\end{equation}
for all $t \in K$.
Substituting $[t]$ for $w$ in (i) and applying (R2), we obtain
$\zeta(tv) = \zeta(v [t]) = e^{-1} \zeta(v)[e \psi(t)] = \psi(t) \zeta(v)$.
Thus (ii) holds.

Since $\zeta$ is additive, so is $\psi$.
By~(ii), we have
\[ \psi(st) \zeta(u) = \zeta(stu) = \psi(s)\zeta(tu) = \psi(s)\psi(t)\zeta(u) \]
for all $s,t\in K$ and all $u\in V$, so $\psi$ is multiplicative.
By the definition of $e$, we have $\psi(1) = 1$. Thus $\psi$ is a field isomorphism.

We have $[1]v = [q(v)]$ for all $v \in V$ by (R1). Applying $\zeta$, we obtain using~(i)
that $[e] * \zeta(v) = e \zeta([q(v)])$.
By~\eqref{abc50a}, \eqref{eq:epsi} and (R1), this implies that
$[e \tilde q(\zeta(v))] = e[e\psi(q(v))] = [e^{\tilde \theta+1} \psi(q(v))]$ for all $v\in V$.
Thus (iii) holds.

Finally, we have $t[1] = [t^\theta]$ for all $t \in K$ by~\eqref{abc50a}.
Applying $\zeta$ again, we obtain using (ii) that
$\psi(t) \zeta([1]) = \zeta([t^\theta])$ and hence, by~\eqref{eq:epsi},
$\psi(t) [e] = [e \psi(t^\theta)]$.
Another application of~\eqref{abc50a}
now yields $[\psi(t)^{\tilde\theta} e] = [e \psi(t^\theta)]$ for all $t\in K$.
We conclude that (iv) holds.
\end{proof}

\nocite{*}
\bibliographystyle{alpha}
\bibliography{endo}

\end{document}